\DeclareMathOperator{\re}{{\mathrm{Re}}}
\DeclareMathOperator{\im}{{\mathrm{Im}}}
\def\P{\mathbb{P}}
\def\E{\mathbb{E}}
\def\Z{\mathbb{Z}}
\def\R{\mathbb{R}}
\def\N{\mathbb{N}}
\def\11{{\mathbf{1}}}
\newcommand{\C}{\mathbb C}
\renewcommand{\bar}{\overline}
\DeclareMathOperator{\OO}{O}
\DeclareMathOperator{\tr}{Tr}
\newcommand{\be}{\begin{equation}}
\newcommand{\ee}{\end{equation}}
\newcommand{\e}{{\varepsilon}}
\newcommand{\cor}{\color{red}}
\theoremstyle{plain} 
\newtheorem{theorem}{Theorem}[section]
\newtheorem*{theorem*}{Theorem}
\newtheorem{lemma}[theorem]{Lemma}
\newtheorem*{lemma*}{Lemma}
\newtheorem{corollary}[theorem]{Corollary}
\newtheorem*{corollary*}{Corollary}
\newtheorem*{proposition*}{Proposition}
\newtheorem*{claim*}{Claim}
\newtheorem{definition}[theorem]{Definition}  
\newtheorem*{definition*}{Definition}         
\theoremstyle{remark}
\newtheorem*{example*}{Example}
\newtheorem*{remark*}{Remark}
\newtheorem*{remarks*}{Remarks}
 \numberwithin{equation}{section}
\def\@setthanks{\vspace{-\baselineskip}\def\thanks##1{\@par##1\@addpunct.}\thankses}
\title{Delocalization of Two-Dimensional Random Band Matrices}
\author{
    Sofiia Dubova\thanks{Harvard University, sdubova@math.harvard.edu} \and
    Kevin Yang\thanks{Harvard University, kevinyang@math.harvard.edu} \and
    Horng-Tzer Yau\thanks{Harvard University, htyau@math.harvard.edu} \and
  Jun Yin \thanks{UCLA, jyin@math.ucla.edu}
}
\date{\today}  
\begin{document}

\maketitle

\begin{abstract}
We study a random band matrix $H=(H_{xy})_{x,y}$ of dimension $N\times N$ with mean-zero complex Gaussian entries, where $x,y$ belong to the discrete torus $(\Z/\sqrt{N}\Z)^{2}$. The variance profile $\E|H_{xy}|^{2}=S_{xy}$ vanishes when the distance between $x,y$ is larger than some band-width parameter $W$ depending on $N$.

We show that if the band-width satisfies $W\geq N^{\mathfrak{c}}$ for some $\mathfrak{c}>0$, then in the large-$N$ limit, we have the following results. The first result is a local semicircle law in the bulk down to scales $N^{-1+\e}$. The second is delocalization of bulk eigenvectors. The third is a quantum unique ergodicity for bulk eigenvectors. The fourth is universality of local bulk eigenvalue statistics. The fifth is a quantum diffusion profile for the associated $T$ matrix.

{Our method is based on embedding $H$ inside a matrix Brownian motion $H_{t}$ as done in \cite{DY,YY_25} for band matrices on the one-dimensional torus. In this paper, the key additional ingredient in our analysis of $H_{t}$ is a new CLT-type estimate for polynomials in the entries of the resolvent of $H_{t}$.}

\end{abstract}

 \newpage

\tableofcontents

\section{Introduction}

The delocalization-localization transition for random operators has been a central question in probability theory and mathematical physics since the introduction of the Anderson model, a discrete random Schr\"{o}dinger operator. There has been a lot of work \cite{FroSpen_1985,Bourgain2005,Carmona1987,DingSmart2020,Damanik2002,Germinet2013,LiZhang2019} showing the existence of localized states, but the existence of delocalized states has been addressed in only one work \cite{yang2024Del} in high dimension, to our knowledge. 

A popular model \cite{PB_review,Spe} that has received a lot of attention and is conjectured to exhibit similar behavior to the random Schr\"{o}dinger operator is the $d$-dimensional \emph{random band matrix}. This is a random Hermitian matrix $H=(H_{xy})_{x,y}$ of dimension $N\times N$ whose entries are independent mean-zero complex random variables (up to the Hermitian constraint). The indices $x,y$ belong to the torus $(\Z/\sqrt[d]{N}\Z)^{d}$, and the variance profile $S_{xy}=\E|H_{xy}|^{2}$ vanishes if the distance between $x,y$ exceeds a band-width parameter $W$ depending on $N$.

In the case of dimension $d=1$, it was conjectured that \cite{PhysRevLett.64.1851, PhysRevLett.64.5, PhysRevLett.66.986} bulk eigenvector and eigenvalue statistics of $H$ match those of the \emph{Gaussian unitary ensemble} (GUE) if $W\gg N^{1/2}$. This was shown in \cite{YY_25}; see also \cite{BaoErd2015, erdHos2013local, EK_band1,ErdKno2011, HeMa2018,bourgade2017universality, bourgade2019random,bourgade2020random,yang2021random,DY}, which place stronger assumptions on the band width $W$. On the other hand, it is conjectured that if $W \ll N^{1/2}$, the eigenvectors of $H$ are localized, which differs from GUE eigenvector statistics. Current rigorous results show localization of bulk eigenvectors for $W \ll N^{1/4}$ \cite{https://doi.org/10.48550/arxiv.2206.05545, https://doi.org/10.48550/arxiv.2206.06439,Sch2009,PelSchShaSod}.

The focus of this paper is dimension $d=2$. In this case, it is conjectured \cite{PB_review} that bulk eigenvalue and eigenvector statistics of $H$ match those of the GUE when $W\gg\sqrt{\log N}$. The goal of this paper is a proof of this conjecture for dimension $d=2$ in the case of complex Gaussian entries and under the slightly stronger assumption $W\gg N^{\e}$ for any fixed $\e>0$. The precise model and main results will be stated in the next section. The method we use to do so builds on the approach of \cite{YY_25}. Let us explain this briefly.

The starting point in \cite{YY_25} comes from an idea of \cite{DY}. The idea is to embed $H$ inside a matrix Brownian motion $H_{t}$ with the correct variance profile and to study the evolution of ``loops" of resolvents of $H_{t}$. In particular, we let $G_{t}:=(H_{t}-z_{t})^{-1}$, where $z_{t}$ is an appropriately chosen time-dependent spectral parameter in the upper half-plane. The aforementioned ``loops" are objects of the form 
\begin{align*}
{\cal L}_{t,\boldsymbol{\sigma},\textbf{a}}:=\tr\left(\prod_{i=1}^{n}G_{t}(\sigma_{i})E_{a_{i}}\right),
\end{align*}
where $\boldsymbol{\sigma}=(\sigma_{1},\ldots,\sigma_{n})\in\{+,-\}^{n}$, where $\textbf{a}=(a_{1},\ldots,a_{n})\in(\Z/L\Z)^{2n}$, where $G_{t}(+)=G_{t}$ and $G_{t}(-)=G_{t}^{\dagger}$, and where $E_{a}$ certain ``elementary matrices". Of particular importance is the case $n=2$ and $\boldsymbol{\sigma}=(+,-)$, which recovers for us the so-called ``$T$-matrix" that has been a fundamental object in the study of random band matrices. We note that this ``flow method" was used also in \cite{10.1214/19-ECP278, Sooster2019} to prove a local semicircle law for generalized Wigner matrices.

In \cite{YY_25}, a detailed analysis of the \emph{loop hierarchy} $\{{\cal L}_{t,\boldsymbol{\sigma},\textbf{a}}\}_{n,\boldsymbol{\sigma},\textbf{a}}$ was performed to prove \emph{quantum diffusion}, i.e. a diffusive profile for the $T$-matrix, in dimension $d=1$, from which GUE bulk statistics were shown using ideas from \cite{bourgade2019random,bourgade2020random,yang2021random}. 

{In this paper, we use this method to do the same for $d=2$. In this case, the key new ingredient for the loop hierarchy analysis is a \emph{CLT-type estimate} for polynomials in the entries of $G_{t}$. This essentially states that $(G_{t})_{xy}$ and $(G_{t})_{x’y’}$ are almost independent if $(x,y)$ and $(x’,y’)$ are far from each other. Let us illustrate its utility with a simple example. Consider a sum of the following form with $\ell\gg1$ and $x$ fixed:
\begin{align*}
\sum_{|y|\leq \ell} (G_{t})_{xy}.
\end{align*}
In dimension $d$, the triangle inequality shows that the above scales as $O(\ell^{d})$ with respect to $\ell$. This is one of the main difficulties with analyzing the $d =2$ case compared to $d=1$. However, the new CLT-type estimates in this paper establish some square-root cancellation in the above sum and yield a sharper upper bound of $O(\ell \cdot \ell_{t})$ for $d=2$ (where $\ell_{t}$ is some dynamical length-scale). In our applications, we will take $\ell \gg \ell_{t}$, so this provides important power-saving.}

We conclude this introduction with a discussion of related results. The proof of the full delocalized phase, quantum diffusion, and universality of local eigenvalue statistics for dimension $d\geq 7$ was shown in \cite{yang2021delocalization, yang2022delocalization, Xu:2024aa}, which uses an expansion method for the resolvent of $H$. In dimension $d=3$, for special variance profiles of $H$, supersymmetric methods have been used \cite{DisPinSpe2002} to estimate the density of states for $H$. Similar methods have also been able to detect the transition between $W\gg N^{1/2}$ and $W\ll N^{1/2}$ at the level of two-point functions in dimension $d=1$ \cite{Shcherbina:2021wz}. See \cite{BaoErd2015, SchMT, Sch1, Sch2, Efe1997, Spe} for further discussion on the supersymmetric method.

\subsection*{Acknowledgements}
K.Y. was supported in part by NSF Grant No. DMS-2203075.

\section{The model and main results}\label{sec_model-results}
 \subsection{Band matrix model}\label{subsec_setting}

We will study the following two-dimensional band block matrix model. Fix positive integers $W,L>0$, and set $N:=W^{2}L^{2}$. Let $\Z_{L}:=\Z/L\Z\simeq\{1,\ldots,L\}$.

The matrix of interest is $H=(H_{xy})_{x,y\in \Z_{WL}^{2}}$, where $H_{xy}=\overline{H}_{xy}$, and up to this Hermitian constraint, the entries $H_{xy}\sim\mathcal{CN}(0,S_{xy})$ are independent centered complex Gaussian random variables of variance $\E|H_{xy}|^{2}=S_{xy}$, where $S=(S_{xy})_{x,y\in \Z_{WL}^{2}}$ is a matrix with the following block structure. We define
\begin{align*}
S^{(B)}_{ab}&=\frac{1}{5}\mathbf{1}(|a-b|_{L}\leq1),\quad(S_{W})_{ij}=W^{-2},\quad a,b\in\Z_{L}^{2}, \ i,j\in\{1,\ldots,W\}^{2}.
\end{align*}
Note that $S^{(B)}$ is $L^{2}\times L^{2}$, and that $S_{W}$ is $W^{2}\times W^{2}$. Above, $|a-b|_{L}$ is the periodic $L^{1}$ distance on $\Z_{L}^{2}$. We now define $S$ to be the following real-symmetric, stochastic matrix:
\begin{align*}
S=S^{(B)}\otimes S_{W}.
\end{align*}
Note that the entries of $H$ and $S$ are indexed by $\Z_{WL}^{2}$. 

We now introduce standard notation for the setting of this paper. We start by letting $\lambda_{1}\leq\ldots\leq\lambda_{N}$ be eigenvalues of $H$ with corresponding unit eigenvectors $\boldsymbol{\psi}_{1},\ldots,\boldsymbol{\psi}_{N}$, so that
$$H\boldsymbol{\psi}_{k}=\lambda_{k}\boldsymbol{\psi}_{k}.$$ Our analysis of these quantities is based on the resolvent $G(z):=(H-z)^{-1}$ of $H$ defined for any $z$ with $\im z>0$. 

It is well known that the empirical spectral measure of $H$ converges almost surely to the Wigner semicircle distribution as $N\to\infty$ if $W\to\infty$. The Stieltjes transform of the semicircle distribution is given by
\begin{align*}
m(z):=\int_{-2}^{2}\frac{1}{2\pi}\frac{\sqrt{(4-x^{2})_{+}}}{x-z}dx=\frac{-z+\sqrt{z^{2}-4}}{2}.
\end{align*}
Finally, we will use a standard notion of stochastic domination as used in \cite{Average_fluc}, defined below.
\begin{definition}\label{stoch_domination}
	{\rm{(i)}} Consider the following two families of non-negative random variables parameterized by a set $U^{(N)}$:
	\[\xi=\left(\xi^{(N)}(u):N\in\mathbb N, u\in U^{(N)}\right),\hskip 10pt \zeta=\left(\zeta^{(N)}(u):N\in\mathbb N, u\in U^{(N)}\right).\]
	We say that $\xi$ is stochastically dominated by $\zeta$ (uniformly in $u$) if for any fixed (small) $\tau>0$ and (large) $D>0$, we have the following for $N\geq N_{0}(\tau,D)$:
	\[\mathbb P\bigg(\bigcup_{u\in U^{(N)}}\left\{\xi^{(N)}(u)>N^\tau\zeta^{(N)}(u)\right\}\bigg)\le N^{-D}\]
	We will use the notation $\xi\prec\zeta$ or $\xi=\mathrm{O}_{\prec}(\zeta)$ to mean that $\xi$ is stochastically dominated by $\zeta$. If $\xi$ is a family of complex numbers, then $\xi\prec\zeta$ and $\xi=\mathrm{O}_{\prec}(\zeta)$ mean that $|\xi|\prec\zeta$. 
	
	\vspace{5pt}
	\noindent {\rm{(ii)}} As a convention, for deterministic $\xi,\zeta\geq0$, we write $\xi\prec\zeta$ if and only if $\xi\le N^\tau \zeta$ for any fixed $\tau>0$. 

 \vspace{5pt}
	\noindent {\rm{(iii)}} Let $A$ be a family of random matrices and $\zeta$ be a family of non-negative random variables. In this case, we use $A=\OO_\prec(\zeta)$ to mean that $\|A\|\prec \xi$, where $\|\cdot\|$ is the operator norm. 
	
	\vspace{5pt}
	\noindent {\rm{(iv)}} We say that an event $\Xi$ holds with high probability (w.h.p.) if for any $D>0$, we have $\mathbb P(\Xi)\ge 1- N^{-D}$ for large enough $N$. We say that an event $\Omega$ holds $w.h.p.$ in $\Xi$ if for any $D>0$, we have
	$\P( \Xi\setminus \Omega)\le N^{-D}$ for large enough $N$.
\end{definition}

\subsection{Main results}

Our first result states a strong delocalization estimate for bulk eigenvectors of $H$.

\begin{theorem}[Delocalization]\label{MR:decol}
Suppose that the following is satisfied for some fixed constant $ \mathfrak c>0$: 
\begin{equation}\label{Main_DEL_COND}
    W \ge N^{\mathfrak c}. 
\end{equation}
Then for any $\kappa, \tau, D>0$, there exists $N_0$ such that for all $N \geqslant N_0$ we have
$$
\mathbb{P}\left( \max_k\left\|\boldsymbol{\psi}_k\right\|_{\infty}^2 \cdot {\bf1}\big(\lambda_k\in[-2+\kappa,2-\kappa]\big)\leqslant N^{-1+\tau}  \right) \geqslant 1-N^{-D}
$$
\end{theorem}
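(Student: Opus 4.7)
The plan is to derive the desired eigenvector bound directly from the local semicircle law advertised as the first item in the list of main results. The key identity is the spectral decomposition
\begin{equation*}
\im G_{xx}(E+\ii\eta)=\sum_{k=1}^{N}\frac{\eta\,|\boldsymbol{\psi}_k(x)|^2}{(\lambda_k-E)^2+\eta^2},
\end{equation*}
valid for any $x\in\Z_{WL}^2$, $E\in\R$ and $\eta>0$. Retaining only the term of index $k$ yields the pointwise inequality
\begin{equation*}
|\boldsymbol{\psi}_k(x)|^2\le 2\eta\,\im G_{xx}(E+\ii\eta)\quad\text{whenever}\quad|\lambda_k-E|\le\eta.
\end{equation*}
Thus a uniform $\OO(1)$ bound on $\im G_{xx}$ at a spectral scale $\eta$ translates into an $\OO(\eta)$ bound on the square of every coordinate of any eigenvector whose eigenvalue sits within $\eta$ of $E$.

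Concretely, fix $\kappa,\tau,D>0$, choose $\e\in(0,\tau)$, and set $\eta:=N^{-1+\e}$. The local semicircle law asserts that, for each fixed $z=E+\ii\eta$ with $E\in[-2+\kappa,2-\kappa]$, one has $G_{xx}(z)-m(z)=\OO_\prec(\Psi)$ uniformly in $x\in\Z_{WL}^2$ for some negligible control parameter $\Psi$; in particular $\im G_{xx}(z)\prec 1$ since $\im m$ is bounded on the bulk. I would then discretize $[-2+\kappa,2-\kappa]$ by a grid $\{E_j\}$ of spacing $\eta$, which contains at most $O(N)$ points $z_j:=E_j+\ii\eta$. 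Because the definition of $\prec$ already absorbs polynomial union bounds, one obtains the simultaneous estimate
\begin{equation*}
\max_{j}\ \max_{x\in\Z_{WL}^2}\ \im G_{xx}(z_j)\prec 1
\end{equation*}
on an event of probability at least $1-N^{-D}$. For every eigenvector $\boldsymbol{\psi}_k$ with $\lambda_k\in[-2+\kappa,2-\kappa]$, pick the grid point $E_j$ with $|E_j-\lambda_k|\le\eta$; the spectral bound above gives $|\boldsymbol{\psi}_k(x)|^2\prec\eta=N^{-1+\e}$ for every $x$. Since $\e<\tau$, this implies $\|\boldsymbol{\psi}_k\|_\infty^2\le N^{-1+\tau}$ with probability at least $1-N^{-D}$ for $N\ge N_0(\kappa,\tau,D)$, which is exactly the statement of the theorem.

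The main obstacle, and indeed the bulk of the work in the paper, is the local semicircle law itself: establishing $G_{xx}(z)-m(z)=\oo_\prec(1)$ down to the optimal spectral scale $\eta=N^{-1+\e}$ in the two-dimensional band setting with $W\ge N^{\mathfrak c}$ is highly nontrivial and is the reason for the loop hierarchy analysis and the new CLT-type estimate described in the introduction. Granted that input, the passage from a sharp entrywise local law to pointwise eigenvector delocalization is entirely standard and follows the template already used for generalized Wigner matrices and for one-dimensional band matrices in \cite{DY,YY_25}.
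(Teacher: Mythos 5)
Your proposal is correct and follows exactly the standard derivation of delocalization from an entrywise local law (spectral decomposition of $\im G_{xx}$, a one-term lower bound, a grid over the bulk spectrum, and a union bound), which is the same route the paper takes — indeed the paper simply states that this proof is identical to the one in \cite{YY_25} and omits it. The only minor caveat is that the local law \eqref{G_bound} is stated for a fixed $z$, so the grid argument should be accompanied by the usual deterministic Lipschitz estimate $\|G(z)-G(z')\|_{\max}\le\eta^{-2}|z-z'|$ to pass from the grid to all $z$; this is entirely routine.
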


The proof of Theorem \ref{MR:decol} is based on estimates for the Green's function $G(z)$, which we state below. First, however, we need some more notation. We start with a ``diffusive" length-scale with respect to a parameter $z$ in the upper half plane:
\begin{align}
\ell:=\ell(z):=\min\left(\eta^{-1/2},L\right)+1, \quad\eta:=\im(z).\label{def_ellz}
\end{align}
We will also introduce the fundamental control parameter to be
\begin{align}
M_{\eta}:=W^{2}\ell(z)^{2}\eta,\quad\eta=\im(z).\label{def_meta}
\end{align}
Next, we introduce the following index set given any $a=(a(1),a(2))\in\Z_{L}^{2}$:
\begin{align*}
{\cal I}^{(2)}_{a}:={\cal I}_{a(1)}\times{\cal I}_{a(2)},\quad{\cal I}_{a(i)} := \left[ (a(i)-1)W + 1, \; a(i)W \right].
\end{align*}
\begin{theorem}[Local semicircle law]\label{MR:locSC} Suppose that the assumption of Theorem \ref{MR:decol} holds. For any fixed $\kappa, \tau,D>0$, and for any $z\in\C$ of the form 
$$z=E+i\eta,\quad\quad |E|\le  2-\kappa, \quad 1\ge \eta\ge N^{-1+\tau},
$$
there exists $N_0$ such that for all $N \geqslant N_0$ we have the following estimates. First, we have the local law:
\begin{equation}\label{G_bound}
   \mathbb P\left( \max_{x,y}  \left|\left(G(z)-m(z)\right)_{xy}\right|\le  \frac{W^\tau}{\sqrt{M_{\eta}}}\right)\geqslant 1-N^{-D},\quad \ell=\ell(z).
\end{equation}
Second, we have the partial tracial local law
\begin{equation}\label{G_bound_ave}
\mathbb{P}\left(\max_a\Big| W^{-2}\sum_{x\in {\cal I}^{(2)}_a}G_{xx}(z)-m(z) \Big| \le \frac{W^\tau}{M_{\eta}} \right)\geqslant 1-N^{-D},\quad \ell=\ell(z). 
\end{equation}

\end{theorem}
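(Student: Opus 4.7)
The plan is to prove Theorem \ref{MR:locSC} by following the flow-based strategy of \cite{DY,YY_25} adapted to the two-dimensional setting. First, I would embed $H$ inside a matrix Ornstein--Uhlenbeck process $H_{t}$ with target variance profile $S$, so that $H=H_{T}$ at some end-time $T$ of order one, while $H_{0}$ is a simple model (e.g.\ GUE with matched variance) for which the local law is trivial at the initial spectral scale $\eta_{0}\gg 1$. Simultaneously, I would evolve the spectral parameter $z_{t}$ so that $m(z_{t})$ is essentially preserved along the flow; this decouples the deterministic semicircular self-energy from the fluctuating correction to $G_{t}:=(H_{t}-z_{t})^{-1}$ and lets the bootstrap focus purely on the fluctuation.

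The analysis then centers on the loop hierarchy $\{\mathcal{L}_{t,\boldsymbol{\sigma},\mathbf{a}}\}$ described in the introduction. Applying It\^o's formula to each loop produces an evolution equation that couples the $n$-loop to longer loops and to Gaussian martingale terms. The $n=2$, $\boldsymbol{\sigma}=(+,-)$ case is the $T$-matrix, schematically $T_{t,ab}\propto\sum_{x\in\mathcal{I}^{(2)}_{a},\,y\in\mathcal{I}^{(2)}_{b}}(G_{t}G_{t}^{\dagger})_{xy}$, which should obey a quantum diffusion profile $T_{t,ab}\approx\Theta_{t,ab}\,\Im m(z_{t})$ for an explicit lattice diffusion kernel $\Theta_{t}$. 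Control of the $T$-matrix together with a Ward-type identity yields entrywise bounds on $|G_{t,xy}-m(z_{t})\delta_{xy}|$, hence \eqref{G_bound}, while control of the $n=1$ loop $\mathcal{L}_{t,+,\mathbf{a}}=\tr(G_{t}E_{a})$ gives \eqref{G_bound_ave} directly, with the improved $1/M_{\eta}$ decay relative to $1/\sqrt{M_{\eta}}$ tracing back to cancellations inside the quadratic variation of $\mathcal{L}_{t,+,\mathbf{a}}$.

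The proof proceeds by a continuity bootstrap in $t$, starting from the trivial regime of large $\eta_{t}$ and propagating control of $\|G_{t}-m(z_{t})\|_{\max}$ and of $T_{t}$ down to the target value of $\eta$. At each infinitesimal step, the It\^o equation for $\mathcal{L}_{t,\boldsymbol{\sigma},\mathbf{a}}$ generates quadratic-variation contributions of the schematic form $\sum_{z\in\mathcal{I}^{(2)}_{c}}G_{t,xz}\overline{G_{t,yz}}$ and single-index sums $\sum_{|y|\leq\ell}G_{t,xy}$ which, treated by the triangle inequality, scale like $\ell^{2}$ in dimension two and destroy the power counting that worked for $d=1$ in \cite{YY_25}. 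The new CLT-type estimate for polynomials in the entries of $G_{t}$ replaces these worst-case sums by a square-root cancellation bound of order $\ell\cdot\ell_{t}$, which provides the factor of $\ell_{t}/\ell\ll 1$ needed to close the hierarchy recursion via Gronwall.

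The main obstacle I expect is establishing this CLT-type estimate uniformly across the hierarchy, and in particular bookkeeping the two length scales $\ell_{t}$ and $\ell(z)$ so that the cancellation survives repeated substitution of loop bounds into one another. A secondary difficulty is closing the bootstrap at the endpoint $\eta\asymp N^{-1+\tau}$, where $M_{\eta}$ is as small as $N^{\tau}$; one must show that the stochastic-domination errors accumulated along the flow remain strictly negligible against $W^{\tau}/\sqrt{M_{\eta}}$ and $W^{\tau}/M_{\eta}$, which places sharp constraints on how much saving the CLT estimate must deliver at each step and on the permissible growth of the higher-$n$ loop bounds.
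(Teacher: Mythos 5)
Your proposal is correct and follows essentially the same route as the paper: embedding $H$ in a matrix stochastic flow, analyzing the loop hierarchy via It\^{o}'s formula, deriving the quantum diffusion profile for the $T$-matrix, converting to entrywise bounds via Ward-type identities, and using a CLT-type estimate for polynomials in $G_t$ to obtain the square-root cancellation needed to close the two-dimensional bootstrap. The only cosmetic difference is that the paper's flow has deterministic $H_0 = 0$ with variance accumulating forward in $t$ (so all loops equal their primitives ${\cal K}$ exactly at time zero), rather than an OU flow started from a GUE-type initial condition.
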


We now state our next two results. The first of these is {quantum unique ergodicity}, and the second is a quantum diffusion profile for the resolvent. 
\begin{theorem}[Generalized quantum unique ergodicity]\label{MR:QUE} 
Suppose the assumptions of Theorem \ref{MR:decol} hold and 
\begin{align}\label{tau} 
0< \tau <  \frac {  \mathfrak c } 2, 
\end{align} 
where $ \mathfrak c$ was defined in \eqref{Main_DEL_COND}. Next, let $E_a$ denote the following  block identity matrix: 
\begin{equation}\label{Def_matE}
(E_a)_{xy}=  \delta_{xy}\cdot W^{-2}\cdot\textbf{1}(x\in {\cal I}^{(2)}_a),\quad x,y\in \Z_{WL}^{2}.
\end{equation}
Then  for any $\kappa,D>0$,  
there exists   $N_0$ such that for all $N \geqslant N_0$, we have
\begin{equation}\label{Meq:QUE}
\max_{E: \,|E|<2-\kappa}\; 
\max_{ a\,\in\, \mathbb Z_L^{2}}\;\mathbb{P}\left(\max_{k,\ell \in {\cal J}_E} 
 \left|N (\boldsymbol{\psi}_k^*\left(E_a-N^{-1}\right) \boldsymbol{\psi}_{\ell})\right|^2 \ge N^{-\tau/6} \right) \le  N^{-\tau/6}.
\end{equation}
Here 
$${\cal J}_E:=\left\{k: |\lambda_{k}-E|\le N^{-1-\tau} {W^{2/3}}\right\}.
$$
Moreover, for  any  subset $A\subset \mathbb Z_L^{2}$,
\begin{equation}\label{Meq:QUE2}
\max_{E: \,|E|<2-\kappa}\; 
\max_{ A\,\subset\, \mathbb Z_L^{2}}\;\mathbb{P}\left(\max_{k\in {\cal J}_E} 
\left|\,\sum_{x\in {\cal I}^{(2)}_a}\sum_{a\,\in\, A}\left|\boldsymbol{\psi}_k(x)\right |^2 -\frac{|A|\cdot W^{2}}{N}\right| \geqslant \frac{|A|\cdot W^{2}}{N^{1+\tau/6}}  \right) \le N^{-\tau/6}
\end{equation}


\end{theorem}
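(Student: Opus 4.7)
The plan is to derive (\ref{Meq:QUE}) from a sharp two-loop estimate involving the resolvent $G(z)$ with insertions of $E_a - N^{-1}I$, and then obtain (\ref{Meq:QUE2}) by applying essentially the same argument with $E_A := \sum_{a \in A} E_a$ in place of $E_a$. The natural spectral parameter is $z = E + i\eta$ with $\eta := N^{-1-\tau}W^{2/3}$, so that the window ${\cal J}_E$ is precisely the set of eigenvalues within distance $\eta$ of $E$. Using the spectral decomposition $\im G(z) = \eta \sum_k \boldsymbol{\psi}_k\boldsymbol{\psi}_k^*/((\lambda_k - E)^2 + \eta^2)$, any two eigenvalues $\lambda_k, \lambda_\ell \in {\cal J}_E$ each contribute a weight of order $1/\eta$, yielding the reduction
\[
\max_{k,\ell \in {\cal J}_E} |N\boldsymbol{\psi}_k^*(E_a - N^{-1})\boldsymbol{\psi}_\ell|^2 \;\le\; C N^2 \eta^2 \cdot \tr\big((E_a - N^{-1})\,\im G(z)\,(E_a - N^{-1})\,\im G(z)\big).
\]
The remaining task is to bound the right-hand side.

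Writing $d$ for the diagonal of $E_a - N^{-1}I$, one has $\sum_x d_x = 0$ and $\sum_x d_x^2 = W^{-2} - N^{-1}$, and the trace above equals $\sum_{x,y} d_x d_y |(\im G)_{xy}|^2$. The diagonal $x = y$ contribution is controlled by Theorem \ref{MR:locSC}, giving a leading term bounded by $(\im m)^2 W^{-2}$ plus lower-order fluctuations; multiplied by $N^2\eta^2 = N^{-2\tau}W^{4/3}$ this is $\le C N^{-2\tau}W^{-2/3}$, comfortably below the target $N^{-\tau/6}$ under the hypothesis $\tau < \mathfrak c/2$. The delicate piece is the off-diagonal sum $\sum_{x \ne y} d_x d_y |G_{xy}|^2$. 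A naive bound via the entrywise local law $|G_{xy}|^2 \prec W^{2\tau}/M_\eta$ combined with $\sum_{x,y}|d_xd_y| \le 4$ produces a contribution of order $N^{-\tau}W^{2/3+2\tau}$ after multiplication by $N^2\eta^2$, which is too large under $W \ge N^{\mathfrak c}$ and $\tau < \mathfrak c/2$.

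To close this gap I would use the quantum diffusion profile for the $T$-matrix announced as the fifth main result of the paper: there is a translation-invariant deterministic kernel $\Theta_{xy}$ such that $|G_{xy}|^2 \approx \Theta_{xy}$. Because $d$ has zero mean, the deterministic bilinear form $\sum_{x,y} d_x d_y \Theta_{xy}$ is orthogonal to the flat Fourier mode of $\Theta$ and is therefore controlled by the diffusive decay of the remaining Fourier modes on spatial scales $\gtrsim W$, producing the needed power saving. The residual fluctuation $\sum_{x,y} d_x d_y (|G_{xy}|^2 - \Theta_{xy})$ is in turn controlled by the CLT-type estimate for polynomials in the resolvent entries --- the central new technical input of the paper --- which yields square-root cancellation for block sums of $G$ on scales well above the dynamical length-scale. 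This is precisely the improvement over the triangle inequality needed to beat the naive bound.

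Combining these steps gives the desired two-loop estimate with high probability, and (\ref{Meq:QUE}) follows by Markov's inequality. For (\ref{Meq:QUE2}), observe that $W^2\boldsymbol{\psi}_k^* E_a \boldsymbol{\psi}_k = \sum_{x\in {\cal I}^{(2)}_a}|\boldsymbol{\psi}_k(x)|^2$, so the quantity inside the absolute value equals $W^2\boldsymbol{\psi}_k^*(E_A - |A|/N)\boldsymbol{\psi}_k$, and the same argument applies verbatim with $d$ replaced by the diagonal of $E_A - |A|/N$, which still has zero mean and whose $\ell^2$ norm scales like $|A|W^{-2}$; this yields the required $|A|$ factor in the bound. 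I expect the chief obstacle to lie in the third paragraph: extracting the precise cancellation from the off-diagonal sum against the $T$-matrix profile, and accurately accounting for the interaction between the spatially extended background $-N^{-1}I$ in $d$ (whose mass is spread over all $N$ sites) and the long-range tail of the diffusion kernel $\Theta$. This is precisely the regime in which the new CLT-type estimate becomes essential.
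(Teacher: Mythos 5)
Your initial reduction — choosing $z = E + i\eta$ with $\eta = N^{-1-\tau}W^{2/3}$, passing to
\[
\max_{k,\ell\in\mathcal J_E}|N\boldsymbol\psi_k^*(E_a-N^{-1})\boldsymbol\psi_\ell|^2 \lesssim N^2\eta^2\,\tr\bigl(\im G\,(E_a-N^{-1})\,\im G\,(E_a-N^{-1})\bigr),
\]
and observing that the entrywise local law gives a wildly insufficient naive bound — is correct and matches the paper exactly. The observation that the test diagonal $d = \operatorname{diag}(E_a - N^{-1})$ has zero mean, so only non-constant modes of a diffusion kernel contribute, is also the right idea for controlling the deterministic main term; the paper executes it by writing $E_a - N^{-1} = L^{-2}\sum_b(E_a-E_b)$ and applying the derivative bounds on $\Theta^{(B)}$ from Lemma~\ref{lem_propTH} along a discrete path from $(a,b)$ to $(a',b')$.

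The genuine gap is in the third paragraph, and it is not merely a missing detail. You propose to control the fluctuation by invoking the CLT-type estimate directly on $\sum_{x,y}d_xd_y(|G_{xy}|^2 - \Theta_{xy})$. But a $\sqrt{L^2}$-type square-root cancellation over the $L^2$ blocks improves the naive $W^\tau/M_\eta^2$ error only by a factor of $L$, and after multiplying by $N^2\eta^2$ the resulting bound is of order $W^{1+\tau}/N^{1/2}$. This is small only when $W$ is not too close to $N^{1/2}$; it does not yield the full range $0 < \tau < \mathfrak c/2$ uniformly over $\mathfrak c\in(0,1/2]$. What the paper actually uses is the \emph{improved expectation bound} \eqref{Meq:QdS1}--\eqref{Meq:QdS2} (proved separately as Lemma~\ref{ML:exp}, Step~6): $|\mathbb E\tr GE_aG^\dagger E_b - W^{-2}(\ldots)_{ab}|\le W^\delta M_\eta^{-3}$, one full extra power of $M_\eta^{-1} = (N\eta)^{-1}$ over the high-probability estimate \eqref{Meq:QdW1}. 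Multiplied by $N^2\eta^2$ this gives $N^\tau W^{-2/3}$, and the threshold $\tau<\mathfrak c/2$ emerges exactly. Accordingly, the entire argument is organized around $\mathbb E$: the Markov step is applied to $\mathbb E\sum_{k,\ell}|N\boldsymbol\psi_k^*(E_a-N^{-1})\boldsymbol\psi_\ell|^2$, which requires the expectation bound rather than a high-probability one. The CLT-type input you identify is indeed essential, but it is consumed \emph{inside} the proof of Lemma~\ref{ML:exp}, not invoked directly at the level of the QUE argument. Without recognizing that the expectation-level estimate with the extra $M_\eta^{-1}$ is the precise tool, the proposal cannot close.

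Your treatment of \eqref{Meq:QUE2} by replacing $E_a$ with $|A|^{-1}\sum_{a\in A}E_a$ and rescaling is correct and mirrors the paper's.
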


\begin{theorem}[Quantum diffusion]\label{MR:QDiff}
Under the assumptions of Theorem \ref{MR:locSC}, we have (for $m:=m(z)$)
 \begin{align}\label{Meq:QdW1}
 &
 \mathbb P\left(\max_{a,b }   \left|\tr G E_a G^\dagger E_b- W^{-2}\left(\frac{|m|^2}{1-|m|^2 S^{(B)}} \right)_{ab}\right|\le\frac{W^\tau}{ M_{\eta}^{ 2}}\right)\ge 1-N^{-D},
 \\ \label{Meq:QdW2}
 &
 \mathbb P\left( \max_{a,b }   \left|\tr G E_a G  E_b- W^{-2}\left(\frac{m^2}{1-m^2 S^{(B)}} \right)_{ab}\right|\le\frac{W^\tau}{M_{\eta}^{ 2}}\right)\ge 1-N^{-D}.
\end{align}
We also have the following improved estimate at the level of expectation:
\begin{align}\label{Meq:QdS1}
 \max_{a,b }   \left|\mathbb E\tr G E_a G^\dagger E_b- W^{-{2}}\left(\frac{|m|^2}{1-|m|^2 S^{(B)}} \right)_{ab}\right|\le  M_{\eta}^{-3} \cdot W^\tau,   \\
 \label{Meq:QdS2}
 \max_{a,b }   \left|\mathbb E\tr G E_a G  E_b- W^{-{2}}\left(\frac{m^2}{1-m^2 S^{(B)}} \right)_{ab}\right|\le M_{\eta}^{-3} \cdot W^\tau   .
\end{align}
\end{theorem}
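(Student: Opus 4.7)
The approach I would follow is the flow-based loop hierarchy method from \cite{DY, YY_25}, adapted to $d = 2$ via the new CLT-type estimate advertised in the introduction. First, I would embed $H$ into a matrix Brownian motion $H_t$ with variance profile $S$ and introduce a time-dependent spectral parameter $z_t$ so that the resolvent $G_t := (H_t - z_t)^{-1}$ at the terminal time $T$ matches the distribution of $G(z)$; it then suffices to prove both estimates for $G_T$. The fundamental objects are the loops $\mathcal{L}_{t,\boldsymbol{\sigma},\mathbf{a}}$ defined in the introduction. An application of Ito's formula, together with the Gaussianity of the increments of $H_t$, yields a stochastic PDE hierarchy relating the $n$-loop to $(n \pm 1)$-loops, in which the drift term from the time derivative of $z_t$ is balanced against the Ito correction generated by the quadratic variation.

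The quantities of interest, $\tr G E_a G^\dagger E_b$ and $\tr G E_a G E_b$, are precisely the 2-loops with $\boldsymbol{\sigma} = (+,-)$ and $\boldsymbol{\sigma} = (+,+)$, respectively. Taking expectation in the Ito equation for the $(+,-)$ loop produces a linear Dyson-type self-consistent equation of the schematic form
\begin{align*}
\partial_t \E \bigl[ \tr G_t E_a G_t^\dagger E_b \bigr] = \text{source}_{ab}(t) + |m_t|^2 \sum_{c} S^{(B)}_{ac} \, \E \bigl[ \tr G_t E_c G_t^\dagger E_b \bigr] + (\text{higher-loop corrections}),
\end{align*}
with $m_t := m(z_t)$. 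A geometric resummation in the kernel $|m_t|^2 S^{(B)}$ (which is contractive thanks to the bulk assumption $|E| \le 2 - \kappa$ and $|m_t| < 1$) yields the leading profile $W^{-2}(|m|^2/(1 - |m|^2 S^{(B)}))_{ab}$. The analogous equation with two $+$'s gives the $(+,+)$ identity with $m^2$ in place of $|m|^2$.

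The main obstacle will be controlling the higher-loop corrections uniformly in $t$ up to the terminal time $T$. In the hierarchy, these corrections involve sums of the type $\sum_{|y-x| \le \ell} (G_t)_{xy}$ arising from expanding a block matrix $E_a$ over lattice sites. By the triangle inequality and the local law of Theorem \ref{MR:locSC}, such sums are of order $O(\ell^2)$ in $d = 2$, which is too large to close the bootstrap. The new CLT-type estimate --- which asserts that resolvent entries $(G_t)_{xy}$ and $(G_t)_{x'y'}$ are approximately independent when $(x,y)$ and $(x', y')$ are separated by more than the dynamical length-scale $\ell_t$ --- produces square-root cancellation and reduces the above to $O(\ell \ell_t)$. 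Iterating this improvement inside the hierarchy, the $n$-loop contributions to the 2-loop drift can be bounded by $O_\prec(M_\eta^{-n})$; combined with the source-term analysis, this leads to the error bound $O_\prec(M_\eta^{-3})$ in expectation, giving \eqref{Meq:QdS1}--\eqref{Meq:QdS2}.

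The full argument would proceed by bootstrapping along the time parameter $t$. Starting from $t = 0$ with $\im z_0$ taken large enough that all loops are trivially controlled by Ward identities and the trivial bound $\|G_0\| \le (\im z_0)^{-1}$, an inductive argument on $t$ propagates loop-size estimates up to sufficiently large order $n$ (chosen depending on $\tau, D$) through the loop hierarchy, with each step using the CLT-type estimate to absorb the $d=2$ combinatorial loss. At the terminal time this produces the expectation bounds \eqref{Meq:QdS1}--\eqref{Meq:QdS2}. The high-probability estimates \eqref{Meq:QdW1}--\eqref{Meq:QdW2} follow from a standard moment method applied to even-order loops (which are themselves controlled by the same hierarchy analysis) combined with Markov's inequality and a union bound over $a, b \in \Z_L^2$.
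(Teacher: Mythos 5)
Your overall strategy — embedding $H$ in the matrix flow, identifying $\tr G E_a G^\dagger E_b$ and $\tr G E_a G E_b$ with the $2$-loops $\mathcal{L}_{t,(+,-)}$ and $\mathcal{L}_{t,(+,+)}$, resumming against $S^{(B)}$ to produce the $W^{-2}(1 - \xi S^{(B)})^{-1}$ profile, and invoking the CLT-type square-root cancellation to tame the $d=2$ combinatorial loss — matches the paper's structure and suffices to obtain the high-probability bounds \eqref{Meq:QdW1}--\eqref{Meq:QdW2} at order $M_\eta^{-2}$. The gap is in the expectation bounds \eqref{Meq:QdS1}--\eqref{Meq:QdS2} at order $M_\eta^{-3}$: the reasoning ``combined with the source-term analysis, this leads to the error bound $O_\prec(M_\eta^{-3})$'' does not actually produce the extra factor of $M_\eta^{-1}$.

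Concretely, after taking expectation the martingale term vanishes, but the drift source $\mathcal{E}^{(\widetilde{G})}$ for the $2$-loop involves a $1$-loop fluctuation $\langle(G-m)E_b\rangle$ multiplied by a $3$-loop. Substituting only the fluctuation bound $\langle(G-m)E_b\rangle \prec M_\eta^{-1}$ and the $3$-loop bound $\prec M_\eta^{-2}$, and integrating the drift, gives $|\E\mathcal{L} - \mathcal{K}| \prec M_\eta^{-2}$, i.e.\ no improvement over \eqref{Meq:QdW1}. To reach $M_\eta^{-3}$, you need two ingredients that do not appear in your plan: (i) the \emph{improved one-loop expectation estimate} $|\E\langle(G-m)E_a\rangle| \prec M_\eta^{-2}$, which the paper derives separately (Lemma \ref{lemma:step6-1}) by a fixed-point argument through Gaussian integration by parts, and which lets you decouple $\E[\text{1-loop}\cdot\text{3-loop}] = \E[\text{1-loop}]\cdot\mathcal{K}_3 + \E[\text{1-loop}\cdot(\mathcal{L}_3-\mathcal{K}_3)]$ with each piece now $\prec M_\eta^{-4}$; and (ii) the sum-zero projection $\mathcal{Q}_t$ of Definition \ref{Def:QtPt}, needed because the raw propagator $\mathcal{U}_{u,t,\boldsymbol{\sigma}}$ carries an amplifying factor $(\eta_u/\eta_t)^n$ that would otherwise destroy the time-bootstrap; only after applying $\mathcal{Q}_t$ (and, for the mean-zero part, the operator $\mathbb{Q}$) do the sharpened $\max\to\max$ norms of Lemma \ref{lem:sum_decay} apply. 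Also note a minor imprecision: the geometric resummation you describe produces the deterministic primitive $\mathcal{K}$ (which is exactly what $W^{-2}(1-\xi S^{(B)})^{-1}$ is), not the error estimate — the two roles should be kept separate.
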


Theorems \ref{MR:decol} and \ref{MR:QUE} are simple consequences of Theorems \ref{MR:locSC} and \ref{MR:QDiff}. The proof of Theorem \ref{MR:decol} is identical to that in \cite{YY_25}, so we omit it. The proof of Theorem \ref{MR:QUE} is very similar to the argument in \cite{YY_25}, but since there are slight dependencies on the dimension, we give the details below.

\begin{proof}[Proof of Theorem \ref{MR:QUE}]  
We prove \eqref{Meq:QUE}; the proof of \eqref{Meq:QUE2} is the same after replacing $E_{a}$ by $|A|^{-1}\sum_{a\in A}E_{a}$ in the argument below. Choose  $  z=E+\eta i$ and $$\eta= N^{-1-\tau }{W^{2/3}}.$$
We first have
\begin{align}\label{ssfa2}
& \E\left(\sum_{k,\ell} \textbf{1}(\lambda_k,\lambda_\ell\in {\cal J}_E)\cdot \left|N(\boldsymbol{\psi}_{k}^*\left(E_a-N^{-1}\right) \boldsymbol{\psi}_{\ell})\right|^2\right) \nonumber \\
& 
\le C \eta^4 \E\left(\sum_{k,\ell  } \frac{\left|N(\boldsymbol{\psi}_{k}^*\left(E_a-N^{-1}\right) \boldsymbol{\psi}_{\ell})\right|^2}{\left|\lambda_k-z\right|^2\left|\lambda_{\ell}-z\right|^2}\right)\\\nonumber
& \le 
 C N^{2}\eta^2\E\tr \left(\operatorname{Im} G(z)\left(E_a-N^{-1}\right) \operatorname{Im} G(z)\left(E_a-N^{-1}\right)\right)
\end{align}
For the expectation of the last term above, we have
\begin{align}\label{que0} 
 &\mathbb{E}  \tr \left(\operatorname{Im} G(z)\left(E_a-N^{-1}\right) \operatorname{Im} G(z)\left(E_a-N^{-1}\right)\right)
  \\ 
=&\frac{1}{L^2}\sum_{b,b'}
\mathbb{E}  
 \tr \left(\operatorname{Im} G(z)\left(E_a-E_b\right) \operatorname{Im} G(z)\left(E_a-E_{b'}\right)\right)
\nonumber \\ \nonumber
\le C&\max_{a,b,a',b'}
\Big|\mathbb{E}\tr \left(\operatorname{Im} G(z) E_a  \operatorname{Im} G(z)E_b \right)
-\mathbb{E}\tr \left(\operatorname{Im} G(z) E_{a'}  \operatorname{Im} G(z)E_{b'}\right) \Big| 
\end{align}
Recall that $N=W^{2}L^{2}$. By \eqref{tau},    we have $ \eta^{-1/2} \ge L$, so that $\ell(z)=L$ by \eqref{def_ellz}. We can bound the last line using \eqref{Meq:QdS1}-\eqref{Meq:QdS2} in Theorem \ref{MR:QDiff}. Recall that $M_{\eta}:=W^{2}\ell^{2}\eta$, and note that $N=W^{2}\ell^{2}$. We ultimately get
 \begin{align}
 \;& \eta^2 \mathbb{E}  \tr \left(\operatorname{Im} G(z)\left(E_a-N^{-1}\right) \operatorname{Im} G(z)\left(E_a-N^{-1}\right)\right)
\nonumber \\
\le \;& \eta^2 M_{\eta}^{-3}\cdot W^{\delta} + \eta^2 W^{-2}\max_{a,b,a',b'}\;\max_{ \xi=|m|^2 \text{ or } \xi=m^2}
\left| \left(\frac{1}{1-\xi\cdot S^{(B)}}\right)_{ab}-\left(\frac{1}{1-\xi\cdot S^{(B)}}\right)_{a'b'}\right|
\nonumber 
 \end{align}
for any $\delta>0$ small. {Next, fix any $a,b,a',b'\in\Z_{L}^{2}$, and for $\xi=|m(z)|^{2},m(z)^{2}$ write
\begin{align*}
 \left|(1-\xi\cdot S^{(B)})^{-1}_{ab}-(1-\xi\cdot S^{(B)})^{-1}_{a'b'}\right| &= \left|(1-\xi \cdot S^{(B)})^{-1}_{0(a-b)}-(1-\xi\cdot S^{(B)})^{-1}_{0(a'-b')}\right|\\
 &\leq \left|(1-\xi \cdot S^{(B)})^{-1}_{00}-(1-\xi\cdot S^{(B)})^{-1}_{0(a-b)}\right|\\
 &+ \left|(1-\xi \cdot S^{(B)})^{-1}_{00}-(1-\xi\cdot S^{(B)})^{-1}_{0(a'-b')}\right|.
\end{align*}
Choose a path $0=c_{0}\mapsto c_{1}\mapsto\ldots\mapsto a-b$ in $\Z_{L}^{2}$ of length $O(L)$ such that for each step in this path, we have $1/(|c_{k}|_{L}+1)\leq C/(k+1)$ for $C=O(1)$ and all $k$. By the first-derivative bound in Lemma \ref{lem_propTH}, we have 
\begin{align*}
\max_{\xi=|m|^{2} \ \text{or} \ \xi=m^{2}}\left|(1-\xi \cdot S^{(B)})^{-1}_{00}-(1-\xi\cdot S^{(B)})^{-1}_{0(a-b)}\right|&\prec\sum_{k=1}^{O(L)}\Big(\frac{C}{k+1}+\eta^{-\frac12}L^{-2}\Big)\prec C'+\eta^{-\frac12}L^{-1}
\end{align*}
with $C'=O(1)$. (Note that $|1-\xi|\geq c\eta$ for some $c>0$ and either choice of $\xi=|m|^{2},m^{2}$, so the derivative bounds in Lemma \ref{lem_propTH} can be applied.) The same holds if replace $a-b$ by $a'-b'$. Thus, we have
\begin{align*}
 \max_{a,b,a',b'}\;\max_{ \xi=|m|^2 \text{ or } \xi=m^2}
\left| \left(\frac{1}{1-\xi\cdot S^{(B)}}\right)_{ab}-\left(\frac{1}{1-\xi\cdot S^{(B)}}\right)_{a'b'}\right|\prec 1+\eta^{-\frac12}L^{-1}.
\end{align*}
Recalling that $\eta^{-1/2}\geq L$, the dominant term on the right-hand side is $O(\eta^{-1/2}L^{-1})$. We deduce that
\begin{align*}
    \;& \eta^2 \mathbb{E}  \tr \left(\operatorname{Im} G(z)\left(E_a-N^{-1}\right) \operatorname{Im} G(z)\left(E_a-N^{-1}\right)\right)
 \\
 \le \;&   W^{\delta}W^{-6}L^{-6}\eta^{-1}+{\eta^{\frac32}W^{-2+\delta}}L^{-1}\\\nonumber
\le \;&    N^{-2} \big [ W^{\delta}{N^{2}W^{-6}L^{-6}\eta^{-1}}  +  {N^{2}W^{-2+\delta}\eta^{\frac32}L^{-1}}  \big ] \leq   W^{\delta} N^{-2} \big [ N^{-\tau/3-\delta}  + N^{-3\tau/2}  \big ] 
<   N^{-2}  N^{-\tau/3}.
\end{align*}
Above, we used $N=W^{2}L^{2}$ and $\eta=N^{-1-\tau}W^{2/3}$ and $W\geq N^{\mathfrak{c}}\geq N^{2\tau+\delta}$ to deduce the last two inequalities. Combining the above display with \eqref{ssfa2} and \eqref{que0} gives

\begin{align}\label{que2}
 \E\Big(\sum_{k,\ell} \textbf{1}(\lambda_{k},\lambda_{\ell}\in {\cal J}_E)\cdot \left|N(\boldsymbol{\psi}_{k}^*\left(E_a-N^{-1}\right) \boldsymbol{\psi}_{\ell})\right|^2\Big)
 \le N^{-\tau/3}.
\end{align}
The bound \eqref{Meq:QUE} now follows by \eqref{que2} and the Markov inequality.}
\end{proof}

 \subsection{Universality} 
For any $k=1,\ldots,N$, the $k$-point correlation function of $H$ is the following marginal, where $\rho^{(N)}_{H}(\alpha_{1},\ldots,\alpha_{N})$ denotes the joint density for the (unordered) eigenvalues of $H$:
\[
\rho_H^{(k)}\left(\alpha_1, \alpha_2, \ldots, \alpha_k\right):=\int_{\mathbb{R}^{N-k}} \rho_H^{(N)}\left(\alpha_1, \alpha_2, \ldots, \alpha_N\right) \mathrm{d} \alpha_{k+1} \cdots \mathrm{~d} \alpha_N.
\]

 \begin{theorem}[Bulk universality]\label{Thm: B_Univ}Suppose that the assumptions of Theorem \ref{MR:decol} are satisfied. Fix any integer $k\geq1$ that is independent of $N$. Fix any $|E| \leq 2-\kappa$ and test function $\mathcal{O}\in\mathcal{C}^{\infty}_{c}(\R^{k})$. We have
 \begin{equation}\label{eq:universality}
\lim _{N \rightarrow \infty} \int_{\mathbb{R}^k} \mathrm{~d} \boldsymbol{\alpha} \mathcal{O}(\boldsymbol{\alpha})\left\{\left(\rho_H^{(k)}-\rho_{\mathrm{GUE}}^{(k)}\right)\left(E+\frac{\boldsymbol{\alpha}}{N}\right)\right\}=0.
 \end{equation}  
 \end{theorem}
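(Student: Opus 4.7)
The plan is to deduce bulk universality from the results already established — the local semicircle law (Theorem \ref{MR:locSC}), the eigenvector delocalization (Theorem \ref{MR:decol}), the generalized quantum unique ergodicity (Theorem \ref{MR:QUE}), and the quantum diffusion (Theorem \ref{MR:QDiff}) — via the Dyson Brownian motion (DBM) strategy developed in \cite{bourgade2019random, bourgade2020random, yang2021random} and carried out for one-dimensional band matrices in \cite{YY_25}. Since $H$ has complex Gaussian entries, the matrix Ornstein--Uhlenbeck process associated with $H$ preserves its law, so no Green's function comparison step between $H$ and a Gaussian divisible ensemble is needed.

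The first step is to embed $H = H_0$ inside a matrix Ornstein--Uhlenbeck process $(H_t)_{t \geq 0}$ with $H_t \stackrel{d}{=} H$ for every $t \geq 0$, and to derive the system of stochastic differential equations satisfied by the eigenvalues $\lambda_k(t)$ of $H_t$. Because $H_t$ is not unitarily invariant, these equations contain eigenvector-dependent correction terms beyond the classical Dyson drift and diffusion, and these corrections are what QUE is used to control. In parallel, let $\mu_k(t)$ denote the eigenvalues of a reference classical DBM started from an independent GUE matrix, coupled synchronously (at the level of driving scalar Brownian motions) to $\lambda_k(t)$. The second step is to run the energy/coupling estimate of \cite{bourgade2020random, yang2021random, YY_25}: by controlling the eigenvector-dependent corrections via Theorem \ref{MR:QUE}, and the resolvent sums that arise via Theorem \ref{MR:QDiff}, one bounds a suitable weighted $L^2$ norm of $\sqrt{N}(\lambda_k(t_\ast) - \mu_k(t_\ast))$ and concludes $|\lambda_k(t_\ast) - \mu_k(t_\ast)| = o(1/N)$ for $k$ in the bulk, at a suitably chosen short time $t_\ast = t_\ast(\mathfrak c, \tau)$. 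Since $H_{t_\ast} \stackrel{d}{=} H$ and the $\mu_k(t_\ast)$ exhibit GUE local statistics (standard DBM relaxes on this short time scale), the identity (\ref{eq:universality}) follows.

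The main obstacle is the 2D adaptation of this energy/coupling estimate. In the 1D setting of \cite{YY_25}, the estimate exploits the 1D quantum diffusion profile to control weighted sums over eigenvector correlations with the appropriate power-saving. In 2D, the naive triangle-inequality bound on such sums loses an extra factor of the geometric scale $\ell$ relative to what one would need; however, the new CLT-type estimate discussed in the introduction provides the sharper $O(\ell \cdot \ell_t)$ bound on sums of resolvent entries, which is precisely the dimensional power-saving needed to close the energy estimate in 2D. A secondary technical point is that the QUE bound (\ref{Meq:QUE}) is stated with only polynomial failure probability $N^{-\tau/6}$; since the energy estimate is $L^2$-based, this is sufficient, but the uniformity of the QUE input along the OU flow has to be tracked. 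Combining the CLT-type cancellation with Theorems \ref{MR:QDiff} and \ref{MR:QUE} should yield the required $o(1/N)$ bound on the coupled eigenvalue differences and complete the proof.
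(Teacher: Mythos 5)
Your route—embedding $H$ in a \emph{stationary} band-variance OU flow (so $H_t \stackrel{d}{=} H$), deriving the eigenvector-corrected Dyson SDE, and coupling it to a classical DBM started from GUE via an $L^2$ energy estimate—is essentially the mean-field-reduction strategy of \cite{bourgade2020random,yang2021random}, and it differs genuinely from what the paper does. The paper instead uses the \emph{flat}-variance OU flow $d\mathbf{H}_t = -\tfrac12\mathbf{H}_t\,dt + N^{-1/2}dB_t$, which does \emph{not} preserve the law of $H$ but converges to GUE as $t\to\infty$; the proof then splits into (i) a short-time Green's function comparison between $\mathbf{H}_0$ and $\mathbf{H}_{t_*}$, reduced via Theorem 15.3 of \cite{erdHos2017dynamical} and Prop.\ 4.17 of \cite{Xu:2024aa} to the moment claim \eqref{417}, and then to bounding the quantities $L_{1,t},L_{2,t}$ in \eqref{EMCTE2}, controlled via QUE and the $|M_{y,\alpha}|$ analysis, and (ii) DBM relaxation from $\mathbf{H}_{t_*}$ to GUE, quoted from \cite{LANDON20191137}. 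A price the paper pays for the flat flow is that Theorems \ref{MR:decol}, \ref{MR:locSC}, \ref{MR:QUE}, \ref{MR:QDiff} must be re-verified for $\mathbf{H}_t$, $t\in(0,t_*]$, whose variance profile $e^{-t}S+N^{-1}(1-e^{-t})$ drifts away from $S$; the proof contains a substantial addendum re-running the loop hierarchy with the modified propagator $\Theta^{(B),\mathrm{new}}$. Your stationary flow sidesteps that entirely—a genuine structural advantage—yet you undersell it: your remark that "the uniformity of the QUE input along the OU flow has to be tracked" is inconsistent with your own setup, since $H_t\stackrel{d}{=} H$ makes tracking automatic. Two further small corrections. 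First, since $\mu_k$ starts from GUE and GUE is stationary under classical DBM, the law of $\mu_k(t_*)$ is GUE \emph{exactly}; the reason $t_*=N^{-1+\tau_*}$ suffices is not relaxation of $\mu_k$ but the homogenization/coupling estimate—which \emph{is} your Step 2, so the parenthetical is circular. Second, the CLT-type estimate is not invoked at this level of the argument: it lives inside the proofs of the black-box theorems (specifically case 3 of Lemma \ref{lem:sum_decay}), and the deduction of Theorem \ref{Thm: B_Univ}—whichever of the two routes one takes—uses those theorems only as finished products, so advertising the CLT estimate as the thing that "closes the energy estimate in 2D" conflates two levels of the proof.
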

 \begin{proof}[Proof of Theorem \ref{Thm: B_Univ}]
This argument largely follows the proof of Theorem 2.6 in \cite{YY_25}. In particular, it follows from the generalized QUE (Theorem \ref{MR:QUE}) and other minor details. Let us explain this below. 

The starting point is the following matrix OU process:
\begin{align*}
    \mathrm{d} {\mathbf{H}}_t=-\frac{1}{2} {\mathbf{H}}_t \mathrm{~d} t+\frac{1}{\sqrt{N}} \mathrm{~d} B_t.
\end{align*}
Above, the initial data is given by the band matrix in question, i.e. ${\mathbf{H}}_{0}=H$; we use bold notation in order to not conflict with a forthcoming time-dependent random band matrix. Also, $B_{t}$ is a standard Brownian motion in the space of complex Hermitian matrices.

Note that ${\mathbf{H}}_{\infty}:=\lim_{t\to\infty}{\mathbf{H}}_{t}$ has the law of a GUE random matrix. In particular, to complete the proof, it suffices to show the following two limits, in which $t_{*}=N^{-1+\tau_{*}}$ with a fixed (small) $\tau_{*}>0$:
\begin{align}
\lim _{N \rightarrow \infty} \int_{\mathbb{R}^k} \mathrm{~d} \boldsymbol{\alpha} \mathcal{O}(\boldsymbol{\alpha})\left\{\left(\rho_{{\mathbf{H}}_{t_*}}^{(k)}-\rho_{{\mathbf{H}}_\infty}^{(k)}\right)\left(E+\frac{\boldsymbol{\alpha}}{N}\right)\right\}&=0\label{1infyuniv}\\
\lim _{N \rightarrow \infty} \int_{\mathbb{R}^k} \mathrm{~d} \boldsymbol{\alpha} \mathcal{O}(\boldsymbol{\alpha})\left\{\left(\rho_{{\mathbf{H}}_{t_*}}^{(k)}-\rho_{{\mathbf{H}}_{0}}^{(k)}\right)\left(E+\frac{\boldsymbol{\alpha}}{N}\right)\right\}&=0.\label{univ-main}
\end{align}
The first limit \eqref{1infyuniv} follows by Theorem \ref{MR:locSC} (the local law for $H$) and Theorem 2.2 of \cite{LANDON20191137}. We spend the rest of this argument showing \eqref{univ-main}. Standard calculations for comparison of correlation functions (see Theorem 15.3 in \cite{erdHos2017dynamical} and Proposition 4.17 in \cite{Xu:2024aa}) reduce the proof of \eqref{univ-main} to showing the following claim.

\bigskip

\noindent \emph{Claim}: Fix a small $\delta_{0}>0$ and a positive integer $n$. Consider $z_{j}=E_{j}+i\eta_{j}$ for $j=1,\ldots,n$, where $|E_{j}-E|=O(N^{-1})$ and $N^{-1-\delta_{0}}\leq \eta_{j}\leq N^{-1+\delta_{0}}$ are deterministic. Under the assumptions of Theorem \ref{Thm: B_Univ}, there exist $c',C_{n}>0$ such that if $\tau_{*},\delta_{0}>0$ are small enough, then
\begin{align}\label{417}
\sup _{0 \leq t \leq t_*}\left|\mathbb{E} \prod_{i=1}^n \operatorname{Im} m_t(z_i)-\mathbb{E} \prod_{i=1}^n \operatorname{Im} m_{t_*}(z_i)\right| \leq N^{-c'+C_n \delta_0+\tau_*}
\end{align}
where $m_{t}(z_{i})=N^{-1}\tr({\mathbf{H}}_{t}-z_{i})^{-1}$ for any $t\geq0$. 

\bigskip

We now also claim that if $\tau_{*}>0$ is small enough, then Theorems \ref{MR:decol}, \ref{MR:locSC}, and \ref{MR:QUE} all hold for ${\mathbf{H}}_{t}$ for any fixed $t\in[0,t_{*}]$ and any $z=E+i\eta$ with $\eta\asymp N^{-1+\tau_{*}}$; we recall that $t_{*}=N^{-1+\tau_{*}}$. We will explain this at the end of this proof. 

Assuming that this is true, we can argue as in Step 3 in the proof of Theorem 2.6 in \cite{YY_25} to obtain the following estimate (which appears as (2.25) in \cite{YY_25}):
\begin{align}\nonumber
& \sup _{0 \leq t \leq t_*}\left|\mathbb{E} \prod_{i=1}^n \operatorname{Im} m_t(z_i)-\mathbb{E} \prod_{i=1}^n \operatorname{Im} m_{t_*}(z_i)\right|
\\\label{EMCTE2}
&\prec  N^{-1+\tau_*+C_n \delta_0}\cdot  \max_{0\le t\le t_*}\max_{u\ne v}\,\mathbb{E}\left[  L_{1, t}(z_u)  +  L_{2, t}(z_u, z_v)\right].
\end{align}
Above, $C_{n}=O(1)$ and $\delta_{0}>0$ is small. Also, we have used the notation below, where $S_{xy}^{\circ}:=S_{xy}-N^{-1}$ and $\mathbf{R}_{t}(z):=({\mathbf{H}}_{t}-z)^{-1}$:
\begin{align*}
L_{1, t}(z) & :=\sum_{\mathbf{G}_1, \mathbf{G}_2 \in\left\{{\mathbf{R}}_t, {\mathbf{R}}_t^*\right\}}\left|\frac{1}{N} \sum_{a, b}\left(\mathbf{G}_1^2(z)\right)_{a a} S_{a b}^{\circ}\left(\mathbf{G}_2(z)\right)_{b b}\right|, \\
L_{2, t}\left(z_1, z_2\right) & :=\sum_{\mathbf{G}_1, \mathbf{G}_2 \in\left\{{\mathbf{R}}_t, {\mathbf{R}}_t^*\right\}}\left|\frac{1}{N^2} \sum_{a, b}\left(\mathbf{G}_1^2\left(z_1\right)\right)_{a b} S_{a b}^{\circ}\left(\mathbf{G}_2^2\left(z_2\right)\right)_{b a}\right|.
\end{align*}
In particular, in order to prove \eqref{417}, it suffices to use \eqref{EMCTE2} and show the following two estimates for any $\mathbf{G}_{1}(z),\mathbf{G}_{2}(z)\in\{\mathbf{R}_{t},\mathbf{R}_{t}^{*}\}$, in which $C=O(1)$ and $\delta_{0}>0$ is small, and where $\mathfrak{c}>0$ is from \eqref{Main_DEL_COND}:
\begin{align}\label{jaklsdufowe}
  & \max_i\max_y\; \mathbb E\left| \sum_{ x}\left(\mathbf{G}_1^2(z_i)\right)_{x x} S_{x y}^{\circ}\left(\mathbf{G}_2(z_i)\right)_{yy}\right|\le N^{1-\mathfrak c/18+C\delta_0} \\\label{uywy7723r3rf}
 & \max_{i\ne j}\max_y\;\mathbb E\left|  \sum_{  x}\left(\mathbf{G}_1^2\left(z_i\right)\right)_{xy} S_{x y}^{\circ}\left(\mathbf{G}_2^2\left(z_j\right)\right)_{yx}\right| \prec N^{2-\mathfrak c/18+C\delta_0}.
\end{align}
The rest of this proof now proceeds with showing \eqref{jaklsdufowe}, showing \eqref{uywy7723r3rf}, and then showing that Theorems \ref{MR:decol}, \ref{MR:locSC}, and \ref{MR:QUE} all hold for ${\mathbf{H}}_{t}$ for any fixed $t\in[0,t_{*}]$ and any $z=E+i\eta$ with $\eta\geq N^{-1+\tau_{*}}$.
\subsubsection*{Proof of \eqref{jaklsdufowe}}
We cite Lemma 4.20 in \cite{Xu:2024aa} (see also (2.28) in \cite{YY_25}); by the eigen-decomposition for $G_{t}$, we have the following:
\begin{equation}\label{yw982823}
        \frac 1 N  \left|\sum_{x}\left(\mathbf{G}_1^2(z)\right)_{xx} S_{x y}^{\circ}\left(\mathbf{G}_2(z)\right)_{yy}\right|\prec N^{-3}\sum_{\alpha,\beta} |p_\alpha(z)|^2 \cdot |p_\beta(z)|\cdot \left| M_{y,\alpha} \right|.
\end{equation}
Above, $p_{\alpha}(z):=(\lambda_{\alpha}-z)^{-1}$, and for any $y,\alpha$, we have the following (in which $a_{0}$ is such that $y\in{\cal I}^{(2)}_{a_{0}}$):
\begin{align*}
     |M_{y,\alpha}|:=\left|N \sum_{x}|\boldsymbol{\psi}_\alpha(x)|^2 S^0_{xy}\right|=O(N) \left|\sum_{a: |a-a_0|_{L}\le 1}
     \left\langle \boldsymbol{\psi}_\alpha \left(E_a-N^{-1} I\right) \,\boldsymbol{\psi}^*_\alpha\right\rangle\right|.
\end{align*}
At this point, we can directly follow the proof of (2.26) in \cite{YY_25} (see Step 4 of the proof of Theorem 2.6 in \cite{YY_25}). Indeed, \eqref{yw982823} expresses the left-hand side of \eqref{jaklsdufowe} in terms of eigenvalues and eigenvectors, which we control using Theorems \ref{MR:decol}, \ref{MR:locSC}, and \ref{MR:QUE}. So, instead of providing the details verbatim, let us briefly describe this argument. 

Recall the fixed energy $E$ in the statement of Theorem \ref{Thm: B_Univ}. Let ${\cal B}$ be the bad event where there exists $\alpha$ with $|\lambda_{\alpha}-E|\leq N^{-1+\mathfrak{c}/6}$ and $|M_{y,\alpha}|\geq N^{-\mathfrak{c}/18}$. The probability of this bad event is $O(N^{-\mathfrak{c}/18})$ by \eqref{Meq:QUE} for $\tau=\mathfrak{c}/3$. If we restrict \eqref{yw982823} to this bad event ${\cal B}$, the expectation of this quantity is $O(N^{-\mathfrak{c}/18+C\delta_{0}})$, since the local law and delocalization (Theorems \ref{MR:decol} and \ref{MR:locSC}) imply that the right-hand side of \eqref{yw982823} is $\mathrm{O}_{\prec}(N^{C\delta_{0}})$; here, $C=O(1)$ and $\delta_{0}>0$ is small. On the good event ${\cal B}^{c}$, we can improve our estimate on the right-hand side of \eqref{yw982823} using $|M_{y,\alpha}|\leq N^{-\mathfrak{c}/18}$ for all $\alpha$ with $|\lambda_{\alpha}-E|\leq N^{-1+\mathfrak{c}/6}$ and using the local law in Theorem \ref{MR:locSC}. In particular, if we restrict \eqref{yw982823} to ${\cal B}^{c}$ and take expectation, the resulting quantity is $O(N^{-\mathfrak{c}/18+C\delta_{0}})$ as well. Ultimately, \eqref{jaklsdufowe} follows.
\subsubsection*{Proof of \eqref{uywy7723r3rf}}
We again use Lemma 4.20 in \cite{Xu:2024aa} to get the following:
\begin{align}
N^{-2} \left|  \sum_{  x}\left(\mathbf{G}_1^2\left(z_i\right)\right)_{xy} S_{x y}^{\circ}\left(\mathbf{G}_2^2\left(z_j\right)\right)_{yx}\right|
\prec N^{-4} \sum_{\alpha,\beta} |p_\alpha(z_i)|^2|p_\beta(z_j)|^2 \left|M_{y,\alpha,\beta}\right|,\label{yw982823-2}
\end{align}
where, if we again let $a_{0}$ be such that $y\in{\cal I}_{a_{0}}^{(2)}$, we have
\begin{align*}
|M_{y,\alpha,\beta}|:= N\left| \sum_{x}\boldsymbol{\psi}_\alpha(x) \overline{\boldsymbol{\psi}_{\beta}(x)}\cdot S^0_{xy}\right|= O(N)\left| \sum_{a: |a-a_0|_{L}\le 1}
     \left\langle \boldsymbol{\psi}_\alpha \left(E_a-N^{-1} I\right) \,\boldsymbol{\psi}^*_\beta\right\rangle\right|.
\end{align*}
At this point, we can again directly follow the proof of (2.27) in \cite{YY_25} (see Step 4 in the proof of Theorem 2.6 in \cite{YY_25}). In particular, we have reduced the estimation of the left-hand side of \eqref{uywy7723r3rf} to estimation of eigenvalues and eigenvectors, which we do using Theorems \ref{MR:decol}, \ref{MR:locSC}, and \ref{MR:QUE}. So, we do not reproduce the details here. (Roughly speaking, the additional factor of $|p_{\beta}(z_{j})|$ appearing in \eqref{yw982823-2} compared to \eqref{yw982823} is responsible for the extra factor of $N$ on the right-hand side of \eqref{uywy7723r3rf} compared to \eqref{jaklsdufowe}; this is by the local law in Theorem \ref{MR:locSC}).
\subsubsection*{Proof of Theorems \ref{MR:decol}, \ref{MR:locSC}, and \ref{MR:QUE} for ${\mathbf{H}}_{t}$ with $t= N^{-1+\tau_{*}}$ and $\mathrm{Im}(z)\asymp N^{-1+\tau_{*}}$}
First, we note that it is enough to prove Theorems \ref{MR:locSC} and \ref{MR:QDiff} for ${\mathbf{H}}_{t}$, since the proofs of Theorems \ref{MR:decol} and \ref{MR:QUE} follow from Theorems \ref{MR:locSC} and \ref{MR:QDiff}. Before we proceed, we note that the argument below relies on the structure of the proofs of Theorems \ref{MR:decol}, \ref{MR:locSC}, and \ref{MR:QUE} for the original band matrix $H$ itself. Thus, familiarity with at least the rest of Section \ref{sec_model-results} is required to following the argument below.

For any $t\in[0,N^{-1+\tau_{*}}]$, ${\mathbf{H}}_{t}$ is a band matrix with profile $(S_{t})_{xy}:=\E|({\mathbf{H}}_{t})_{xy}|^{2}=e^{-t}S_{xy}+N^{-1}(1-e^{-t})$. For the rest of this argument, it will be convenient to define $\zeta:=1-\exp(-N^{-1+\tau_{*}})$, since the variable $t$ will be used as another time parameter below. In the same spirit, we will also set $\tilde{\mathbf{H}}_{\zeta}:=\mathbf{H}_{t}$.

Since $\mathrm{Im}(z)=\eta\asymp N^{-1+\tau_{*}}$, by Lemma \ref{zztE}, we can find $t_{0}$ such that $1-t_{0}\asymp N^{-1+\tau_{*}}$ such that 
\begin{align*}
z=t_{0}^{-\frac12}z_{t_{0}}^{(E)} \quad\text{and}\quad (\tilde{\mathbf{H}}_{\zeta}-z)^{-1}\sim t_{0}^{\frac12}({\cal H}_{t_{0}}-z_{t_{0}}^{(E)})^{-1},
\end{align*}
where $z_{t}^{(E)}$ is from Definition \ref{def_flow}, and the $\sim$ notation above means identity in law. Moreover, the matrix ${\cal H}_{t}$ is the solution to the following matrix SDE with initial condition ${\cal H}_{0}=0$, whose law at time $t_{0}$ matches that of $t_{0}^{1/2}\tilde{\mathbf{H}}_{\zeta}$:
\begin{align*}
d{\cal H}_{t,xy}&=\begin{cases}\sqrt{S_{ij}}d{B}_{t,xy}&t\leq t_{1}:=(1-\zeta)t_{0}\\N^{-1/2}d{B}_{t,xy}&t\geq t_{1}\end{cases},
\end{align*}
We clarify that the bound $1-t_{0}\asymp N^{-1+\tau_{*}}$ follows from the fact that $\eta_{t}:=\mathrm{Im}(z_{t}^{(E)})\asymp 1-t$ by \eqref{eta}. 

Next, we use Lemmas \ref{ML:GLoop}, \ref{ML:GLoop_expec}, \ref{ML:GtLocal}; this gives \eqref{Eq:L-KGt}, \eqref{Eq:L-KGt2}, \eqref{Eq:Gdecay}, and \eqref{Gt_bound} at time $t_{1}$. We now claim that we can use Theorem \ref{lem:main_ind} to deduce that Lemmas \ref{ML:GLoop}, \ref{ML:GLoop_expec}, \ref{ML:GtLocal} hold at time $t_{0}$. Assuming this is true, as noted in Step 6 of the proof of Theorem \ref{lem:main_ind} (see Section \ref{subsection:step6} for details), these bounds provide Lemma \ref{ML:exp} at time $t_{0}$ as well. As explained after Lemma \ref{ML:exp}, this provides Theorems \ref{MR:locSC} and \ref{MR:QDiff}, which would complete the proof.

We now explain why we can use Theorem \ref{lem:main_ind} with $s,t$ therein equal to $t_{1},t_{0}$. Our method of proof for Theorem \ref{lem:main_ind} is based on the equations \eqref{eq:mainStoflow} and \eqref{pro_dyncalK}. In these equations, when we look at times $t>t_{1}$, we must replace $S^{(B)}$ with
\begin{align*}
\Big(S^{(B)}_{GUE}\Big)_{ab}:=L^{-2},\quad a,b\in\Z_{L}^{2}.
\end{align*}
Indeed, this is because the covariance matrix for $d{\cal H}_{t}$ for $t>t_{1}$ is given by $S^{(B)}_{GUE}\otimes S_{W}$ instead of $S^{(B)}\otimes S_{W}$. 

Our analysis of the first SDE \eqref{eq:mainStoflow} is based on estimates for the solution ${\cal K}$ to the second ODE \eqref{pro_dyncalK}. (It also requires the identity \eqref{WI_calL}, but this follows from the Ward identity for resolvents and does not need anything about the SDE \eqref{eq:mainStoflow} itself.) Our estimates for ${\cal K}$ are based on control over the following resolvent for $\xi=|m|^{2},m^{2}$:
\begin{align*}
\Theta^{(B),\text{new}}_{t\xi}:=\Big(1-t_{1}\xi\cdot S^{(B)}-(t-t_{1})\xi\cdot S^{(B)}_{GUE}\Big)^{-1},\quad t\in[t_{1},t_{0}].
\end{align*}
Let us make more precise what control we need on this and why it appears. Our analysis of the solution ${\cal K}$ to \eqref{pro_dyncalK} is based on an explicit representation of its solution; see \eqref{Kn2sol}, \eqref{Kn2sol2}, and \eqref{KKpi}. For times $t\leq t_{1}$, we use $\Theta^{(B)}$ from Definition \ref{def_Theta}, since the ${\cal K}$ equation \eqref{pro_dyncalK} uses $S^{(B)}$ for times $t\leq t_{1}$, and it uses $S^{(B)}_{GUE}$ for times $t\geq t_{1}$. Let us now explain what we require from $\Theta^{(B),\text{new}}$. In a nutshell, we require that $\Theta^{(B),\text{new}}_{t\xi}$ satisfies the estimates from Lemma \ref{lem_propTH}. More precisely:
\begin{itemize}
\item Note that $t_{1}\xi\cdot S^{(B)}-(t-t_{1})\xi\cdot S^{(B)}_{GUE}$ has the form $t\xi\tilde{S}$, where $\tilde{S}$ is a stochastic transition matrix. Since $|\xi|\leq1$ and $t<1$, the expansion below converges absolutely:
\begin{align*}
\Theta^{(B),\text{new}}_{t\xi}=\sum_{k=0}^{\infty}\Big(t\xi\cdot S^{(B)}+(t-t_{1})\xi\cdot S^{(B)}_{GUE}\Big)^{k}.
\end{align*}
Moreover, since $S^{(B)},S^{(B)}_{GUE}$ are self-adjoint matrices, so is $\Theta^{(B),\text{new}}_{\xi}$. This representation also implies 
\begin{align*}
\Big(\Theta^{(B),\text{new}}_{t\xi}\Big)_{ab}&=\Big(\Theta^{(B),\text{new}}_{t\xi}\Big)_{(a+s)(b+s)},\quad a,b,s\in\Z_{L}^{2}\\
[S^{(B)},\Theta^{(B),\text{new}}_{t\xi}]&=[S^{(B)}_{GUE},\Theta^{(B),\text{new}}_{t\xi}]=[\Theta^{(B),\text{new}}_{t'\xi},\Theta^{(B),\text{new}}_{t\xi}]=0,\quad t,t'\in[0,1)
\end{align*}
where the last line follows because $S^{(B)},S^{(B)}_{GUE}$ commute with each other.
\item First note that $\sum_{b}(1-t\xi\tilde{S})^{-1}_{ab}\leq(1-t)^{-1}=O(\eta_{t}^{-1})$ for any stochastic matrix $\tilde{S}$. If we use this and a standard resolvent expansion, we have 
\begin{align*}
\max_{a,b}|(\Theta^{(B),\text{new}}_{t\xi})_{ab}|&\leq \max_{a,b}|(\Theta^{(B)}_{t\xi})_{ab}|+O(N^{-1+\tau_{*}}\eta_{t}^{-2}L^{-2}),
\end{align*}
Now, for $t\geq t_{1}$, we have $\eta_{t}\asymp1-t\leq 1-t_{1}\asymp N^{-1+\tau_{*}}$. Thus, if $\tau_{*}$ is small enough, we have 
\begin{align*}
\ell_{t}:=\min(\eta_{t}^{-1/2},L)\asymp L,
\end{align*}
since $W\geq N^{\mathfrak{c}}$. In particular, if we use Lemma \ref{lem_propTH} and $\eta_{t}\geq \eta_{t_{0}}\asymp N^{-1+\tau_{*}}$, we deduce that
\begin{align*}
\max_{a,b}|(\Theta^{(B),\text{new}}_{t\xi})_{ab}|\prec \eta_{t}^{-1}\ell_{t}^{-2}.
\end{align*}
\item Similarly, one can use expansions and $\Theta^{(B)}$ estimates from Lemma \ref{lem_propTH} to obtain that \eqref{prop:BD1}-\eqref{prop:BD2} hold if we replace $\Theta^{(B)}$ by $\Theta^{(B),\text{new}}$ therein (for $t\geq t_{1}$ and $\eta_{t}\asymp N^{-1+\tau_{*}}$).
\end{itemize}
In particular, the results of Lemma \ref{lem_propTH} hold for $\Theta^{(B),\text{new}}$ and if we restrict to $\xi=t|m|^{2},tm^{2}$ therein with $t\geq t_{1}$. (This is except for the exponential pointwise decay in \eqref{prop:ThfadC}. However, as we mentioned above, we have $\ell_{t}\asymp L$ for $t\geq t_{1}$. In particular, the exponential pointwise decay is only used to tame factors like $\ell_{t}/\ell_{t'}$ for $t\geq t'$; see \eqref{lRB1}, for example. But for $t,t'\geq t_{1}$, these factors are equal to $1$ and thus harmless.) In particular, we can use Theorem \ref{lem:main_ind} with $s=t_{1}$ and $t=t_{0}$, and the proof is complete.
\end{proof}

\subsection{Stochastic flow and the loop hierarchy}
As in \cite{YY_25}, the key to the proofs of Theorems \ref{MR:locSC} and \ref{MR:QDiff} are estimates on a hierarchy of ``$G$-loops". Besides a dimension-dependent scaling, the structure of these loops and the hierarchy are the same as in \cite{YY_25}. Thus, much of the rest of Section \ref{sec_model-results} follows \cite{YY_25} almost identically. We start with the matrix Brownian motion
\begin{align*}
dH_{t,xy}=\sqrt{S_{xy}}dB_{t,xy}, \quad H_{0}=0, 
\end{align*}
where $B_{t,xy}$ are independent standard complex Brownian motions for all $x,y\in \Z_{WL}^{2}$ up to the constraint  $B_{yx} = \bar  B_{xy}$. Along this flow, we will consider a time-dependent spectral parameter $z_{t}$ as defined below.

\begin{definition}\label{def_flow}
For any $E \in \mathbb R$,  define $
m^{(E)}:=\lim_{\epsilon\to 0+}m_{sc}(E+i \epsilon)$, and consider the following objects:
\begin{align*}
  z^{(E)}_t&=E+(1-t) m^{(E)}  ,\quad 0\le t\le 1 \\
  G_{t}^{(E)}&:=(H_{t}-z^{(E)}_{t})^{-1}.
\end{align*}
Note that the imaginary part of $z_{t}^{(E)}$ is given by the following formula:
\begin{align}\label{eta}
\eta_t := \im  z^{(E)}_t =  (1-t)  \im m^{(E)}.
\end{align}
\end{definition}
 
By the It\^o formula, the resolvent $G_{t} := G_{t}^{(E)}$ solves the following matrix SDE:
\begin{align}\nonumber
dG_{t}=-G_{t}dH_{t}G_{t}+G_{t}\{\mathcal{S}[G_{t}]-m^{(E)}\}G_{t}
dt.
\end{align}
Above, the linear operator $\mathcal{S}:M_{N}(\C)\to M_{N}(\C)$ is defined as
\begin{align*}
\mathcal{S}[X]_{xy}:=\delta_{xy}\sum_{v=1}^{N}S_{xv}X_{vv}.
\end{align*}
The following will help us recover $G(z)$ from $G^{(E)}_{t}$; its proof is the same as Lemma 2.7 in \cite{YY_25}.
\begin{lemma}\label{zztE}
    Suppose that $z\in \mathbb C$ satisfies $0 <  \im z\le 1$ and $|\re z|\le  2-\kappa$ for some $\kappa>0$. Then there exists an $E\in\R$ such that $|E|\le 2-\kappa$ and $0\le  t<1$ and
 \begin{equation}\label{eq:zztE}
     z=t^{-1/2}\cdot z^{(E)}_t.
 \end{equation}
 Moreover, there  exists a fixed constant $c_\kappa>0$ such that 
  \begin{equation}\label{eq:zztE2}
       c_\kappa\le t\quad {\rm and}\quad  c_\kappa\im z\le \im z_t\le c_\kappa^{-1}\im z.
  \end{equation} 
Finally, if $z,E$ and $t$ satisfy the relation \eqref{eq:zztE}, then we have 
\begin{equation}\label{mtEmz}
  m_{sc}(z)=t^{1/2}\cdot m^{(E)} 
\end{equation}
as well as the distributional identity
   \begin{equation}\label{GtEGz}
    G(z) \sim t^{ 1/2}\cdot G_t^{(E)} 
   \end{equation}
\end{lemma}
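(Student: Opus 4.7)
The plan is to invert the relation $z = t^{-1/2} z^{(E)}_t$ by reformulating it in terms of Stieltjes transforms of the semicircle distribution. From the defining equation $(m^{(E)})^2 + E m^{(E)} + 1 = 0$ one reads off that $|m^{(E)}| = 1$ and $1/m^{(E)} = -E - m^{(E)}$. Substituting the ansatz $m_{sc}(z) = t^{1/2} m^{(E)}$ into the semicircle relation $z = -m_{sc}(z) - 1/m_{sc}(z)$ yields, after one line of algebra,
\begin{align*}
z \;=\; -t^{1/2} m^{(E)} - t^{-1/2}\bigl(-E - m^{(E)}\bigr) \;=\; t^{-1/2}\bigl(E + (1-t)\,m^{(E)}\bigr) \;=\; t^{-1/2} z^{(E)}_t.
\end{align*}
Thus \eqref{eq:zztE} and \eqref{mtEmz} are equivalent, and the task reduces to producing $t \in [0,1)$ and $E \in [-2,2]$ realizing $m_{sc}(z) = t^{1/2} m^{(E)}$.

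Given any $z$ with $0 < \im z \le 1$ and $|\re z|\le 2-\kappa$, taking imaginary parts in $z = -m - 1/m$ with $m := m_{sc}(z)$ gives the identity $\im m = \im z \cdot |m|^2/(1-|m|^2)$, which forces $0 < |m_{sc}(z)| < 1$. Since $E \mapsto m^{(E)} = (-E + i\sqrt{4-E^2})/2$ is a homeomorphism from $[-2,2]$ onto the closed upper unit semicircle, I would set
\begin{align*}
t \;:=\; |m_{sc}(z)|^2 \;\in\; (0,1), \qquad E \;\in\; (-2,2) \text{ uniquely determined by } m^{(E)} \;=\; m_{sc}(z)/|m_{sc}(z)|.
\end{align*}
By construction $m_{sc}(z) = t^{1/2} m^{(E)}$, which is \eqref{mtEmz}, and hence by the reformulation above also gives \eqref{eq:zztE}.

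For the quantitative bounds \eqref{eq:zztE2}, I would use compactness. On the compact region $\{z: 0\le \im z\le 1,\ |\re z|\le 2-\kappa\}$, the function $z \mapsto m_{sc}(z)$ extends continuously and is nowhere zero (since $|m^{(E)}|=1$ on $[-2,2]$), so $|m_{sc}(z)|$ is uniformly bounded below by some $c_\kappa > 0$, yielding $t \ge c_\kappa^2$. Taking imaginary parts of \eqref{eq:zztE} gives $\im z^{(E)}_t = t^{1/2}\, \im z$, so $\im z^{(E)}_t \asymp \im z$ with constants depending only on $\kappa$ is immediate from $t \in [c_\kappa^2, 1)$. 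A similar compactness argument, combined with the boundary identity $E = \re z$ at $\im z = 0$, gives $|E| \le 2 - c'_\kappa$ for some $c'_\kappa > 0$ depending only on $\kappa$.

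Finally, for the distributional identity \eqref{GtEGz}, I would invoke Brownian scaling: since $(H_t)$ is a matrix Brownian motion with covariance profile $S$, we have $H_t \overset{d}{=} t^{1/2} H_1 = t^{1/2} H$, so
\begin{align*}
G^{(E)}_t \;=\; (H_t - z^{(E)}_t)^{-1} \;\overset{d}{=}\; (t^{1/2} H - z^{(E)}_t)^{-1} \;=\; t^{-1/2}\bigl(H - t^{-1/2} z^{(E)}_t\bigr)^{-1} \;=\; t^{-1/2} G(z),
\end{align*}
using \eqref{eq:zztE} in the last equality. The proof is essentially algebra once the equivalence \eqref{eq:zztE}$\Leftrightarrow$\eqref{mtEmz} is observed; the main subtlety lies in the quantitative bounds, where I would use $\im z \le 1$ to keep $|m_{sc}(z)|$ (and hence $t$) bounded below, and the compactness of the bulk region to keep $|E|$ uniformly away from $\pm 2$.
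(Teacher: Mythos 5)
Your proof is essentially correct, and since the paper simply cites the proof of Lemma 2.7 in \cite{YY_25} (which is built around exactly the relation $m_{sc}(z)=t^{1/2}m^{(E)}$), your argument appears to follow the same route: observe that \eqref{eq:zztE} and \eqref{mtEmz} are algebraically equivalent via $z=-m-1/m$ and $1/m^{(E)}=-E-m^{(E)}$, then set $t:=|m_{sc}(z)|^{2}\in(0,1)$ and let $E$ be the unique preimage of $m_{sc}(z)/|m_{sc}(z)|$ under $E\mapsto m^{(E)}$; the identity $\im m_{sc}(z) = \im z\cdot|m_{sc}|^2/(1-|m_{sc}|^2)$ and the Brownian scaling $H_t\overset{d}{=}t^{1/2}H$ are both correct and used appropriately.

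The one point where your argument falls short of the stated claim is the bound $|E|\le 2-\kappa$. Your compactness argument only yields $|E|\le 2-c'_\kappa$ for \emph{some} constant $c'_\kappa>0$, which is not the same as the exact bound $|E|\le 2-\kappa$ asserted in \eqref{eq:zztE} (nothing forces $c'_\kappa\ge\kappa$). The clean fix is to take real parts of $z=t^{-1/2}z^{(E)}_t$ and use $\re m^{(E)}=-E/2$: this gives
\[
\re z \;=\; t^{-1/2}\Big(E+(1-t)\,\re m^{(E)}\Big) \;=\; t^{-1/2}\,E\,\frac{1+t}{2},
\]
so $E=\re z\cdot\frac{2\sqrt{t}}{1+t}$, and since $\frac{2\sqrt{t}}{1+t}\le 1$ by AM--GM we get $|E|\le|\re z|\le 2-\kappa$ directly. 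The rest of your quantitative bounds ($t\ge c_\kappa$ by the uniform lower bound on $|m_{sc}|$ on the compact bulk strip, and $\im z_t=t^{1/2}\im z\asymp_\kappa\im z$) are fine as written.
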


For the rest of the paper, we will consider only  $G_t^{(E)}$.  We now define the aforementioned $G$-loops. 
\begin{definition}\label{Def:G_loop}
 For any fixed $E\in\R$, consider the following resolvent as a function of $+,-$ signs:
 \begin{equation}\nonumber
  G^{(E)}_{t }(\alpha):=  G^{(E)}_{t,\alpha} =
   \begin{cases}
       (H_t-z_t)^{-1}, \quad \alpha=+\\
        (H_t-\bar z_t)^{-1}, \quad \alpha=-
   \end{cases}.
 \end{equation}
By definition, $G^{(E)}_{t,+} =\left(G^{(E)}_{t,-}\right)^\dagger$. 
For simplicity,  we sometimes write 
$G_t=G^{(E)}_{t,+}$. Now, fix indices
$$
\boldsymbol{\sigma}=(\sigma_1, \sigma_2, \cdots \sigma_n),\quad 
\boldsymbol{a}=(a_1, a_2, \cdots a_n)
,\quad \sigma_i \in \{+,-\}, \quad
a_i\in \mathbb Z_L^{2}, \quad 1\le i\le n.
$$
Next, recall $E_a$  defined in \eqref{Def_matE}. We define the corresponding ``$n$-$G$ loop" to be the following complex number: 
\begin{equation}\label{Eq:defGLoop}
    {\cal L}_{t, \boldsymbol{\sigma}, \textbf{a}}=\left\langle \prod_{i=1}^n G_t(\sigma_i) E_{a_i}\right\rangle,\quad 
\text{where} \quad \left\langle A\right\rangle=\tr A.
\end{equation}
We also define 
\begin{equation}\label{def_mtzk}
m (\sigma ):=m^{(E)}(\sigma ) = \begin{cases}
     m^{(E)}, & \sigma  =+ \\
     \overline{m^{(E)}} , & \sigma = -
\end{cases},
\end{equation}
and
\begin{equation}\label{Eq:defwtG}
 \widetilde{G}_t(\sigma_k) := G_t(\sigma_k) - m (\sigma_k) .
\end{equation}
\end{definition}

With these notations, we can rewrite the quantity 
$\tr G E_a G^\dagger  E_b$ in Theorem \ref{MR:QDiff} as follows:
$$
\tr G E_a G^\dagger  E_b=t \cdot \tr (G_{t,+})\cdot E_a\cdot (G_{t,-})\cdot  E_b=t\cdot {\cal L}_{t, (+,-),(a,b)}.
$$
We now define some operations on loops; these will be important for constructing a hierarchy of loops.

\begin{definition}\label{Def:oper_loop}
Take a loop of the following form:
\begin{equation}\label{taahCal}
   {\cal L}_{t, \boldsymbol{\sigma}, \textbf{a}} = \left\langle \prod_{i=1}^n G_t(\sigma_i) E_{a_i} \right\rangle, \quad \sigma_i \in \{+, -\}, \quad 1 \le i \le n 
\end{equation}
 
 \noindent 
 \emph{1.} 
 For $1 \le k \le n$, we define a ``cut-and-glue" operator ${\cal G}^{(a)}_{k}$ as follows. Its action on a loop ${\cal L}_{t,\boldsymbol{\sigma},\textbf{a}}$ is denoted by ${\cal G}^{(a)}_{k} \circ {\cal L}_{t, \boldsymbol{\sigma}, \textbf{a}}$, and it is defined to be the loop obtained by replacing $G_t(\sigma_k)$ by $G_{t}(\sigma_k) E_a G_{t}(\sigma_k)$.
 
  In words, ${\cal G}^{(a)}_{k}$ cuts the $k$-th edge $G_t(\sigma_k)$ of the loop ${\cal L}_{t,\boldsymbol{\sigma},\textbf{a}}$ and glues the two new ends with $E_a$. The new loop is a unit longer than the input loop. Since ${\cal G}^{(a)}_{k}$ turns a loop into another loop, it is an operator on the index space of $\boldsymbol{\sigma},\textbf{a}$. Thus, we sometimes write ${\cal L}_{t, \;  {\cal G}^{(a)}_{k} (\boldsymbol{\sigma}, \textbf{a})}:={\cal G}^{(a)}_{k} \circ {\cal L}_{t, \boldsymbol{\sigma}, \textbf{a}}$.
    
   \begin{figure}[ht]
        \centering
        \begin{tikzpicture}
 
    \coordinate (a1) at (0, 0);
    \coordinate (a2) at (2, 0);
    \coordinate (a3) at (2, -2);
    \coordinate (a4) at (0, -2);

    \draw[thick, blue] (a1) -- (a2) -- (a3) -- (a4) -- cycle;
    \fill[red] (a1) circle (2pt) node[above left] {$a_1$};
    \fill[red] (a2) circle (2pt) node[above right] {$a_2$};
    \fill[red] (a3) circle (2pt) node[below right] {$a_3$};
    \fill[red] (a4) circle (2pt) node[below left] {$a_4$};
    
    \node[above, blue] at (1, 0.1) {$G_2$};
    \node[right, blue] at (2.1, -1) {$G_3$};
    \node[below, blue] at (1, -2.1) {$G_4$};
    \node[left, blue] at (-0.1, -1) {$G_1$};

    \draw[->, thick] (3, -1) -- (5, -1);

    \coordinate (b1) at (7, 0);
    \coordinate (b2) at (9, 0);
    \coordinate (b3) at (9, -2);
    \coordinate (b4) at (7, -2);
    \coordinate (a) at (8, 1);  

    \draw[thick, blue] (b1) -- (a) -- (b2) -- (b3) -- (b4) -- cycle;
    \fill[red] (b1) circle (2pt) node[above left] {$a_1$};
    \fill[red] (b2) circle (2pt) node[above right] {$a_2$};
    \fill[red] (b3) circle (2pt) node[below right] {$a_3$};
    \fill[red] (b4) circle (2pt) node[below left] {$a_4$};
    \fill[red] (a) circle (2pt) node[above] {$a$};

    \node[above left, blue] at (7.5, 0.5) {$G_2$};
    \node[above right, blue] at (8.5, 0.5) {$G_2$};
    \node[right, blue] at (9.1, -1) {$G_3$};
    \node[below, blue] at (8, -2.1) {$G_4$};
    \node[left, blue] at (6.9, -1) {$G_1$};  

   \end{tikzpicture}     
   \caption{Illustration of 
${\cal G}^{(a)}_{k}$ for $n=4$. Taken from \cite{YY_25}.}
        \label{fig:op_gka}
 \end{figure}
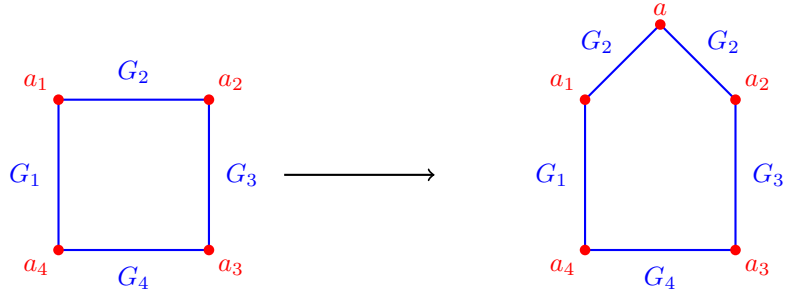

 \noindent 
 \emph{2.}  For $1 \le k < l \le n$, we define a ``left" cut-and-glue operator ${\cal G}^{(a), L}_{k, l}$. Let ${\cal G}^{(a), L}_{k, l} \circ {\cal L}_{t, \boldsymbol{\sigma}, \textbf{a}}$ be the following loop. Cut the $k$-th and $l$-th edges $G_t(\sigma_k)$ and $G_t(\sigma_l)$ (creating four end points and two ``chains"). Glue the two new ends of the chain that contains $E_{a_n}$ and insert a new $E_a$ at the gluing point.
    
    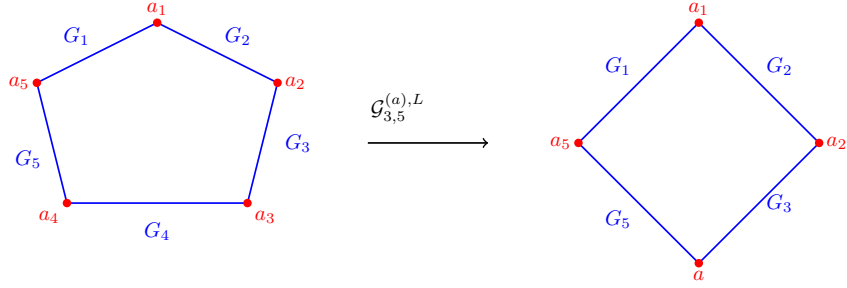
\begin{figure}[ht]
        \centering
         \scalebox{0.8}{
       \begin{tikzpicture}
        \coordinate (a1) at (0, 2);
        \coordinate (a2) at (2, 1);
        \coordinate (a3) at (1.5, -1);
        \coordinate (a4) at (-1.5, -1);
        \coordinate (a5) at (-2, 1);

        \draw[thick, blue] (a1) -- (a2) -- (a3) -- (a4) -- (a5) -- cycle;
        \fill[red] (a1) circle (2pt) node[above] {$a_1$};
        \fill[red] (a2) circle (2pt) node[right] {$a_2$};
        \fill[red] (a3) circle (2pt) node[below right] {$a_3$};
        \fill[red] (a4) circle (2pt) node[below left] {$a_4$};
        \fill[red] (a5) circle (2pt) node[left] {$a_5$};

        \node[above right, blue] at (1, 1.5) {$G_2$};
        \node[right, blue] at (2, 0) {$G_3$};
        \node[below, blue] at (0, -1.2) {$G_4$};
        \node[below left, blue] at (-1.8, 0) {$G_5$};
        \node[above left, blue] at (-1, 1.5) {$G_1$};

        \draw[->, thick] (3.5, 0) -- (5.5, 0);
        \node[above] at (4, 0.2) {$ {\cal G}^{(a),L}_{3,5}$};

        \coordinate (b1) at (9, 2);     
        \coordinate (b2) at (11, 0);     
        \coordinate (a) at (9, -2);     
        \coordinate (b5) at (7, 0);     

        \draw[thick, blue] (b1) -- (b2) -- (a) -- (b5) -- cycle;
        
        \fill[red] (b1) circle (2pt) node[above] {$a_1$};
        \fill[red] (b2) circle (2pt) node[right] {$a_2$};
        \fill[red] (a) circle (2pt) node[below] {$a$};
        \fill[red] (b5) circle (2pt) node[left] {$a_5$};

        \node[above right, blue] at (10, 1) {$G_2$};
        \node[right, blue] at (10, -1) {$G_3$};
        \node[below left, blue] at (8, -1) {$G_5$};
        \node[above left, blue] at (8, 1) {$G_1$};
    \end{tikzpicture}
    }
        \caption{Illustration of operator ${\cal G}^{(a), L}_{k, l}$. Taken from \cite{YY_25}.}
        \label{fig:op_gLeft}
    \end{figure}

  \noindent 
 \emph{3.}  For $1 \le k < l \le n$, we define a ``right" cut-and-glue operator ${\cal G}^{(a), R}_{k, l}$ the same way, but we instead glue the two new ends of the chain that does not contain $E_{a_n}$. The length of the new loop is $l - k + 1$.
     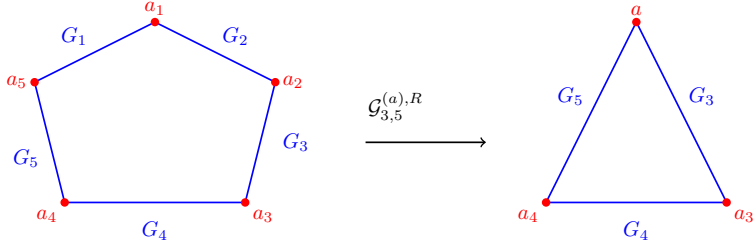
\begin{figure}[ht]
        \centering
       \scalebox{0.8}{  
    \begin{tikzpicture}
        \coordinate (a1) at (0, 2);
        \coordinate (a2) at (2, 1);
        \coordinate (a3) at (1.5, -1);
        \coordinate (a4) at (-1.5, -1);
        \coordinate (a5) at (-2, 1);

        \draw[thick, blue] (a1) -- (a2) -- (a3) -- (a4) -- (a5) -- cycle;
        \fill[red] (a1) circle (2pt) node[above] {$a_1$};
        \fill[red] (a2) circle (2pt) node[right] {$a_2$};
        \fill[red] (a3) circle (2pt) node[below right] {$a_3$};
        \fill[red] (a4) circle (2pt) node[below left] {$a_4$};
        \fill[red] (a5) circle (2pt) node[left] {$a_5$};

        \node[above right, blue] at (1, 1.5) {$G_2$};
        \node[right, blue] at (2, 0) {$G_3$};
        \node[below, blue] at (0, -1.2) {$G_4$};
        \node[below left, blue] at (-1.8, 0) {$G_5$};
        \node[above left, blue] at (-1, 1.5) {$G_1$};

        \draw[->, thick] (3.5, 0) -- (5.5, 0);
        \node[above] at (4, 0.2) {${\cal G}^{(a),R}_{3,5}$};

        \coordinate (a) at (8, 2);      
        \coordinate (a3) at (9.5, -1);  
        \coordinate (a4) at (6.5, -1);  

        \draw[thick, blue] (a) -- (a3) -- (a4) -- cycle;

        \fill[red] (a) circle (2pt) node[above] {$a$};
        \fill[red] (a3) circle (2pt) node[below right] {$a_3$};
        \fill[red] (a4) circle (2pt) node[below left] {$a_4$};

        \node[above right, blue] at (8.75, 0.5) {$G_3$};
        \node[below, blue] at (8, -1.2) {$G_4$};
        \node[above left, blue] at (7.25, 0.5) {$G_5$};

    \end{tikzpicture}
    }  
        \label{fig:op_gRight}
         \caption{Illustration of operator ${\cal G}^{(a), R}_{k, l}$. Taken from \cite{YY_25}.}
    \end{figure}  
\end{definition}

 The following result gives the evolution for a loop along the flow of $G^{(E)}_{t}$. It is derived via the It\^o formula. Throughout this paper, we let $\textbf{a}=(a_{1},\ldots,a_{n})$ be a vector of indices in $\Z_{L}^{2}$, and we let $a\in\Z_{L}^{2}$ be a single index that is separate from $\textbf{a}$. Finally, for convenience, we will use the notation $\partial_{(x,y)}:=\partial_{H_t(x,y)}$.

\begin{lemma}[The loop hierarchy] \label{lem:SE_basic}
We have the following ``loop hierarchy" SDE:
\begin{align}\label{eq:mainStoflow}
    d\mathcal{L}_{t, \boldsymbol{\sigma}, \textbf{a}} =& 
    \mathcal{E}^{(M)}_{t, \boldsymbol{\sigma}, \textbf{a}} 
    +
    \mathcal{E}^{(\widetilde{G})}_{t, \boldsymbol{\sigma}, \textbf{a}} 
    +
   W^{2}\cdot \sum_{1 \le k < l \le n} \sum_{a, b} \left( \mathcal{G}^{(a), L}_{k, l} \circ \mathcal{L}_{t, \boldsymbol{\sigma}, \textbf{a}} \right) S^{(B)}_{ab} \left( \mathcal{G}^{(b), R}_{k, l} \circ \mathcal{L}_{t, \boldsymbol{\sigma}, \textbf{a}} \right) dt. 
\end{align}
 Above, we used the notation 
  \begin{align} \label{def_Edif}
\mathcal{E}^{(M)}_{t, \boldsymbol{\sigma}, \textbf{a}} :  = & \sum_{\alpha=(x,y)} 
  \left( \partial_{  \alpha} \; {\cal L}_{t, \boldsymbol{\sigma}, \textbf{a}}  \right)
 \cdot \left(S _\alpha\right)^{1/2} \cdot
  \mathrm{d} \left({B}_t\right)_{\alpha} \\\label{def_EwtG}
  \mathcal{E}^{(\widetilde{G})}_{t, \boldsymbol{\sigma}, \textbf{a}}: = &   W^{2}\cdot \sum_{1 \le k \le n} \sum_{a, b} \; 
 \left \langle \widetilde{G}_t(\sigma_k) E_a \right\rangle
  \cdot S^{(B)}_{ab} \cdot
  \left( {\cal G}^{(b)}_{k} \circ {\cal L}_{t, \boldsymbol{\sigma}, \textbf{a}} \right) dt ,\quad \quad \widetilde{G}_t = G_t - m.
\end{align}
  \end{lemma}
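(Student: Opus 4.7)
The plan is to apply Itô's formula directly to the trace defining $\mathcal{L}_{t,\boldsymbol{\sigma},\textbf{a}}$. The starting point is the resolvent SDE
\begin{equation*}
dG_t(\sigma_k) = -G_t(\sigma_k)\,dH_t\,G_t(\sigma_k) + G_t(\sigma_k)\bigl(\mathcal{S}[G_t(\sigma_k)]-m(\sigma_k)\bigr)G_t(\sigma_k)\,dt,
\end{equation*}
valid for $\sigma_k\in\{+,-\}$, which combines the matrix Itô correction with the contribution $-m^{(E)}\,dt$ (resp.\ $-\overline{m^{(E)}}\,dt$) from the time-dependent spectral parameter $z_t$. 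Itô's formula for the trace of a cyclic product then gives
\begin{equation*}
d\mathcal{L}_{t,\boldsymbol{\sigma},\textbf{a}} = \sum_{k=1}^n\bigl\langle\cdots dG_t(\sigma_k)\cdots\bigr\rangle + \sum_{1\le k<l\le n}\bigl\langle\cdots dG_t(\sigma_k)\cdots dG_t(\sigma_l)\cdots\bigr\rangle_{\text{It\^o}}.
\end{equation*}

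For the martingale component, the insertion $-G_t(\sigma_k)\,dH_t\,G_t(\sigma_k)$ in position $k$, summed over $k$ and rewritten with $dH_{t,(x,y)}=\sqrt{S_{xy}}\,dB_{t,(x,y)}$, produces the coefficient $\partial_{(x,y)}\mathcal{L}_{t,\boldsymbol{\sigma},\textbf{a}}$ in front of each $\sqrt{S_{xy}}\,dB_{t,(x,y)}$, giving $\mathcal{E}^{(M)}_{t,\boldsymbol{\sigma},\textbf{a}}$. For the first-order drift, I would use the block structure $S=S^{(B)}\otimes S_W$, together with $\sum_vS_{xv}=1$, $\langle E_b\rangle=1$, and $\sum_{v\in\mathcal{I}^{(2)}_b}G_{vv}=W^2\langle GE_b\rangle$, to obtain
\begin{equation*}
\bigl(\mathcal{S}[G_t(\sigma_k)]-m(\sigma_k)\bigr)_{xx} = \sum_b S^{(B)}_{a(x),b}\,\langle\widetilde G_t(\sigma_k)E_b\rangle,\qquad x\in\mathcal{I}^{(2)}_{a(x)}.
\end{equation*}
Rewriting this diagonal matrix as $\sum_a W^2\bigl(\sum_b S^{(B)}_{ab}\langle\widetilde G_t(\sigma_k)E_b\rangle\bigr)E_a$ and inserting it between the two copies of $G_t(\sigma_k)$ produces $W^2\sum_{a,b}S^{(B)}_{ab}\langle\widetilde G_t(\sigma_k)E_b\rangle(\mathcal{G}^{(a)}_k\circ\mathcal{L}_{t,\boldsymbol{\sigma},\textbf{a}})$; summing over $k$ and using symmetry of $S^{(B)}$ yields $\mathcal{E}^{(\widetilde G)}_{t,\boldsymbol{\sigma},\textbf{a}}$.

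The cross term for $k<l$ arises from It\^o-contracting the two $dH_t$ factors at positions $k$ and $l$, using the Hermitian covariance $\mathbb{E}[dH_{xy}dH_{x'y'}] = S_{xy}\delta_{xy'}\delta_{yx'}\,dt$. A direct trace computation shows that this contraction factorizes the original $n$-loop into a product of two shorter loops: the chain from $G_t(\sigma_k)$ through $E_{a_k},\ldots,E_{a_{l-1}}$ to $G_t(\sigma_l)$ closed by $E_b$ (which does not contain $E_{a_n}$, hence equals $\mathcal{G}^{(b),R}_{k,l}\circ\mathcal{L}_{t,\boldsymbol{\sigma},\textbf{a}}$) and the complementary chain closed by $E_a$ (which does contain $E_{a_n}$, hence equals $\mathcal{G}^{(a),L}_{k,l}\circ\mathcal{L}_{t,\boldsymbol{\sigma},\textbf{a}}$). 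Substituting $S_{xy}=S^{(B)}_{a(x)a(y)}W^{-2}$ and converting each of the two block sums $\sum_{v\in\mathcal{I}^{(2)}_{\cdot}}[\cdot]_{vv}$ into a block trace contributes one factor of $W^2$, so the net prefactor $W^{-2}\cdot W^2\cdot W^2=W^2$ reproduces the final term in \eqref{eq:mainStoflow}.

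The principal bookkeeping obstacle is tracking which of the two factors produced by the It\^o contraction contains the marker edge $E_{a_n}$, so that the $L$ versus $R$ labels on the cut-and-glue operators are assigned correctly; beyond this and the $W$-powers accrued from the block structure of $S_W$, the derivation is a mechanical application of It\^o's formula.
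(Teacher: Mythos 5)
Your proposal is correct and follows the intended derivation: the paper states only that Lemma \ref{lem:SE_basic} "is derived via the It\^o formula" (mirroring the corresponding lemma in \cite{YY_25}), and your decomposition into the martingale insertion, the single-edge drift (using the identity $(\mathcal{S}[G_t(\sigma_k)]-m(\sigma_k))_{xx}=\sum_b S^{(B)}_{a(x),b}\langle\widetilde G_t(\sigma_k)E_b\rangle$), and the two-edge It\^o contraction that factorizes the trace into the two sub-loops carrying the $L$ and $R$ labels reproduces exactly that computation, including the correct accounting of the $W^{-2}\cdot W^2\cdot W^2=W^2$ prefactor from the block structure.
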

  
The factor of $W^{2}$ in ${\cal E}^{(\widetilde{G})}$ comes from the normalization that $E_{a}$ has a factor of $W^{-2}$. In order to solve this hierarchy of SDEs, we introduce the notion of a primitive loop ${\cal K}$.

\begin{definition}\label{Def_Ktza}
For 
$$
\boldsymbol{\sigma}=(\sigma_1, \sigma_2, \cdots \sigma_n),\quad 
\boldsymbol{a}=(a_1, a_2, \cdots a_n)
,\quad \sigma_i \in \{+,-\}, \quad
a_i\in \mathbb Z_L^{2}, \quad 1\le i\le n
$$
and  $m(\sigma)$ defined in \eqref{def_mtzk}, we define  ${\cal K}_{t, \boldsymbol{\sigma}, \textbf{a}}$ to be the unique solution to the equation 
 \begin{align}\label{pro_dyncalK}
       \frac{d}{dt}\,{\cal K}_{t, \boldsymbol{\sigma}, \textbf{a}} 
       =  
       W^{2}\cdot  \sum_{1 \le k < l \le n} \sum_{a, b} \; \left( {\cal G}^{(a), L}_{k, l} \circ {\cal K}_{t, \boldsymbol{\sigma}, \textbf{a}} \right) S^{(B)}_{ab} \left( {\cal G}^{(b), R}_{k, l} \circ {\cal K}_{t, \boldsymbol{\sigma}, \textbf{a}} \right)  
    \end{align}
with initial data
$$
    {\cal K}_{0, \boldsymbol{\sigma}, \textbf{a}} =  {W^{-2(n+1)}} \cdot \prod_{k=1}^n m (\sigma_k) \cdot \textbf{1}(a_1 = a_2 = \cdots = a_n).
$$
Here, we define the ${\cal G}$ operator acting on ${\cal K}$ in the same way as it acts on ${\cal L}$,  i.e., ,  
  \be\label{calGonIND}
     {\cal G}^{(a),\,L }_{k,l}  \circ {\cal K}_{t, \boldsymbol{\sigma}, \textbf{a}} = {\cal K}_{t, \; {\cal G}^{(a),\,L }_{k,l}  (\boldsymbol{\sigma}, \textbf{a})}  \quad \forall t, k, \boldsymbol{\sigma}, \textbf{a},   
       \ee
and similarly for  ${\cal G}^{(b),\, R }_{k,l}$. 
For the special case $n=1$, we define 
$
{\cal K}_{t,\,+,\, a}=m$ and  ${\cal K}_{t,\,-,\, a}=\overline m$ for $0\le t\le 1$. 
\end{definition}

Note that the lengths of ${\cal G}^{(a), L}_{k, l} \circ {\cal K}_{t, \boldsymbol{\sigma}, \textbf{a}}$ and ${\cal G}^{(a), R}_{k, l} \circ {\cal K}_{t, \boldsymbol{\sigma}, \textbf{a}}$ are no greater than the  length of ${\cal K}_{t, \boldsymbol{\sigma}, \textbf{a}}$. Thus, 
the system of equations for $\cal K$ can be solved inductively in the length of ${\cal K}$; we illustrate this in the next section.

\subsection{The propagator \texorpdfstring{$\Theta_\xi^{(B)}$}{Theta} and an estimate on \texorpdfstring{${\cal K}$}{ K}}
 
The key object that we need to analyze ${\cal K}$ is the following propagator.
 
 \begin{definition}\label{def_Theta}
We define the propagator $\Theta^{(B)}_{\xi}$ to be the following $L^{2}\times L^{2}$ matrix:
\begin{equation}\label{def_Thxi}
    \Theta^{(B)}_\xi := \frac{1}{1 - \xi \cdot S^{(B)}}, \quad \xi \in \mathbb{C}, \quad |\xi| < 1.
\end{equation}
(The superscript $B$ indicates a block-level matrix, i.e. one whose indices are parameterized by $\Z_{L}^{2}$.)
\end{definition}

For convenience, we often omit the superscript $(B)$ . (We remind the readers that $\Theta$  was often  used to denote the \(N \times N\) matrix  \((1 - \xi S)^{-1}\) in the literature.   In this paper,  $\Theta_\xi= \Theta_\xi^{(B)}$.) We note that
\begin{equation}\label{deri_Thxi}
    \partial_\xi \Theta^{(B)}_\xi = \Theta^{(B)}_\xi \cdot S^{(B)} \cdot \Theta^{(B)}_\xi.
\end{equation}

The following  three special  cases are often used in this paper (note that $|m|=|m^{(E)}|=1$ in our setting):
\[
\Theta^{(B)}_{t m^2}, \quad \Theta^{(B)}_{t \bar{m}^2}, \quad \text{and} \quad \Theta^{(B)}_{t |m|^2} = \Theta^{(B)}_{t}.
\]

We now collect the following important properties of $\Theta^{(B)}$. 
\begin{lemma}\label{lem_propTH}
Suppose that  $\xi\in \mathbb C$, $\im \,\xi>0$ and $|\xi|< 1$.  Define
$$
 \hat{\ell}(\xi) := \min\left((|1-\xi|)^{-1/2},\; L\right),\quad \xi\in \mathbb C 
$$
Then  $\Theta^{(B)}_\xi$ has the following properties:
\begin{enumerate}
    \item Symmetry: $ (\Theta^{(B)}_{\xi})_{ab}= (\Theta^{(B)}_{\xi})_{ba}$.
    \item Translation invariance: $
    (\Theta^{(B)}_{\xi})_{ab}= (\Theta^{(B)}_{\xi})_{a+s, b+s}$ for any $s \in \mathbb{Z}_L^2$.
    \item Commutativity: $\left[S^{(B)}, \Theta^{(B)}_{\xi}\right]=\left[\Theta^{(B)}_{\xi\,'}, \Theta^{(B)}_{\xi}\right]=0$ for all $\xi,\xi'$.
    \item For any $ 0 \le \xi < 1$, we have the random walk representation $\Theta_\xi^{(B)}=\sum_{k=0}^{\infty}\;  \xi^k \cdot (S^{(B)})^k$.
     \item Exponential decay at length scale $\hat \ell_\xi $: 
     \begin{equation}\label{prop:ThfadC}     (\Theta^{(B)}_{\xi})_{ab}\prec \frac{e^{-c\cdot|a-b|_{L}\big/ \hat{\ell}(\xi)}}{|1-\xi|\cdot [\hat \ell (\xi)]^{2}}.  
     \end{equation}     
     \item Derivative bounds: for any $s \in \mathbb{Z}_L^2$, we have 
     \begin{equation}\label{prop:BD1}     
    \left| (\Theta^{(B)}_{\xi})_{ab}-(\Theta^{(B)}_{\xi})_{a(b+s)}\right|\prec \frac{|s|_L}{{|a-b|_{L}+1}}{+\frac{|s|_{L}}{[\hat{\ell}(\xi)]^{2}|1-\xi|^{1/2}}}   
     \end{equation}
     \begin{equation}\label{prop:BD2}     
     \left|2 (\Theta^{(B)}_{\xi})_{ab}-(\Theta^{(B)}_{\xi})_{a(b+s)}-(\Theta^{(B)}_{\xi})_{a(b-s)}\right|\prec  \frac{|s|_L^2}{|a-b|_L^{2}+1 }+{\frac{|s|_{L}^{2}}{[\hat{\ell}(\xi)]^{2}}} . 
     \end{equation}

\end{enumerate}
\end{lemma}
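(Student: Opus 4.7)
I would separate the seven claims into two groups: the algebraic items 1--4, which follow by direct expansion, and the analytic items 5--7, which I would prove by Fourier analysis on $\Z_L^2$ combined with a Combes--Thomas argument.

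For items 1--4: since $S^{(B)}$ is symmetric and stochastic, $\|S^{(B)}\|_{\mathrm{op}}=1$, and the hypothesis $|\xi|<1$ makes the Neumann series $\Theta^{(B)}_\xi=\sum_{k\ge 0}\xi^k(S^{(B)})^k$ converge absolutely in operator norm, which is exactly item~4. Each power $(S^{(B)})^k$ inherits symmetry and translation invariance from $S^{(B)}$, so $\Theta^{(B)}_\xi$ does as well (items~1--2). Commutativity (item~3) is immediate because $\Theta^{(B)}_\xi$ and $\Theta^{(B)}_{\xi'}$ are both (entire) functions of the single operator $S^{(B)}$.

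For items 5--7 I would diagonalize $S^{(B)}$ in the Fourier basis on $\Z_L^2$. Writing $\chi_k(a)=L^{-1}e^{2\pi i k\cdot a/L}$ for $k\in\Z_L^2$, the eigenvalues are
\[
\mu_k=\tfrac{1}{5}\bigl(1+2\cos(2\pi k_1/L)+2\cos(2\pi k_2/L)\bigr),\qquad 1-\mu_k\asymp (|k|_L/L)^2,
\]
where $|k|_L$ denotes the torus distance to $0$. This yields the representation
\[
(\Theta^{(B)}_\xi)_{ab}=\frac{1}{L^2}\sum_{k\in\Z_L^2}\frac{e^{2\pi i k\cdot(a-b)/L}}{1-\xi\mu_k},\qquad |1-\xi\mu_k|\asymp|1-\xi|+(|k|_L/L)^2.
\]
Item~5 then follows by a Combes--Thomas argument: conjugating $\Theta^{(B)}_\xi$ by the diagonal weight $\exp(c\nu\cdot a/\hat\ell(\xi))$ for a unit vector $\nu\in\R^2$ corresponds to the imaginary shift $k\mapsto k+icL\nu/(2\pi\hat\ell(\xi))$ in momentum space. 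Provided $c$ is a sufficiently small absolute constant, this shift perturbs $1-\xi\mu_k$ by at most a constant multiplicative factor for every $k$, so the conjugated operator has $\ell^\infty\!\to\!\ell^\infty$ norm controlled by $L^{-2}\sum_k|1-\xi\mu_k|^{-1}$; separating the $k=0$ mode (contributing $\asymp 1/(L^2|1-\xi|)$) and bounding the rest by the standard two-dimensional Green-function estimate yields the $1/(|1-\xi|\,\hat\ell(\xi)^2)$ prefactor.

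For items~6--7 I would insert the symbols $1-e^{-2\pi i k\cdot s/L}$, respectively $2-e^{2\pi i k\cdot s/L}-e^{-2\pi i k\cdot s/L}$, into the Fourier representation. These symbols gain factors $|s|_L|k|_L/L$ and $(|s|_L|k|_L/L)^2$ at low momenta, killing the short-wavelength contribution; at high momenta they are bounded, and I would integrate by parts against the oscillating kernel $e^{2\pi i k\cdot(a-b)/L}$ to extract the decay $1/(|a-b|_L+1)$ or $1/(|a-b|_L^2+1)$. Splitting the momentum sum at the scale $|k|_L\sim L/\max(|a-b|_L,\hat\ell)$, the near-field regime produces the $|s|_L/(|a-b|_L+1)$ (resp.\ $|s|_L^2/(|a-b|_L^2+1)$) term, while the far-field tail, controlled by the exponential decay of item~5, produces the additive $|s|_L/(\hat\ell^2|1-\xi|^{1/2})$ (resp.\ $|s|_L^2/\hat\ell^2$) contribution.

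\textbf{Main obstacle.} The genuinely two-dimensional difficulty is that the propagator is only logarithmically bounded at short distances, so naive termwise Fourier estimates generate spurious $\log L$ factors at the crossover $|k|_L\sim L/\hat\ell$. These logs are absorbed by the $\prec$ notation, but the momentum splitting and summation-by-parts in items~6--7 must be carried out with some care to produce the clean polynomial-in-$|a-b|_L$ bounds stated in the lemma. The one-dimensional prototype appears in \cite{YY_25}, and I expect the present argument to be the direct 2D analogue of it.
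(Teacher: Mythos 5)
Your treatment of items 1--4 coincides with the paper's (both go through the Neumann series and symmetry of $S^{(B)}$). For the analytic items 5--6, however, you take a genuinely different route from the paper. The paper works entirely in physical space: it writes $[(S^{(B)})^k]_{ab}$ as the transition kernel $p_k$ of a lazy simple random walk on $\Z^2$, proves the Gaussian heat-kernel bound $p_k(\alpha,\beta)\le C(1+k)^{-1}e^{-|\alpha-\beta|^2/Ck}$ (on-diagonal via Stirling, off-diagonal via Azuma--Hoeffding, then combined with the semigroup property), obtains analogous bounds for $\nabla_\sigma p_k$ and $\nabla_{\sigma_1}\nabla_{\sigma_2}p_k$ by a discrete summation-by-parts, and then sums the geometric series $\sum_k \xi^k p_k$ by an integral comparison split at $s=L^2$. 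You instead diagonalize in Fourier and run a Combes--Thomas conjugation for the exponential decay, then insert finite-difference symbols in momentum space for the gradient bounds. Both routes are standard; the paper's handles the torus wrap-around transparently (by summing over $\beta\sim b$ in $\Z^2$), whereas your Combes--Thomas weight must be taken piecewise-linear and periodic, which needs a line or two of extra care.

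One technical point that should be stated more carefully: the two-sided claim $|1-\xi\mu_k|\asymp|1-\xi|+(|k|_L/L)^2$ is \emph{not} correct for all $\xi$ with $\im\xi>0$, $|\xi|<1$. The upper bound is the triangle inequality, but the lower bound can fail when $\re\xi<0$ (so that $\arg(1-\xi)$ and $\arg\xi$ are nearly opposite) and $1-\mu_k$ is comparable to $|1-\xi|$. What one actually has, and what suffices, is the uniform lower bound $|1-\xi\mu_k|\ge 1-|\xi|$ together with the two-sided comparison $|1-\xi\mu_k|\asymp|1-\xi|+(1-\mu_k)$ valid in the regime $|1-\xi|\ll 1$ (equivalently $|\xi|$ close to $1$), which is the only regime where the diffusive scale $\hat\ell(\xi)\gg1$ is nontrivial. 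For $|\xi|$ bounded away from $1$ one has $\hat\ell(\xi)=O(1)$ and the estimates \eqref{prop:ThfadC}--\eqref{prop:BD2} become trivial. You should split into these two regimes before invoking the $\asymp$; with that correction the Fourier argument goes through as you outline, and the crossover logs are indeed absorbed by the $\prec$ notation exactly as you anticipate.
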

{The proof of Lemma \ref{lem_propTH} is based on the random walk representation and pointwise estimates for $(S^{(B)})^{k}$ that agree with the continuum Gaussian heat kernel estimates. A proof is given in Section \ref{sec_TH}; the argument is long and elementary, so we give an intuitive explanation of Lemma \ref{lem_propTH} below.

The first four properties in Lemma \ref{lem_propTH} are standard. For the exponential decay \eqref{prop:ThfadC} for the discrete Laplacian on $\Z^{2}$ (instead of $S^{(B)}-\mathrm{Id}$) can be found in Theorem 1.3 of \cite{MS22} (see also the discussion following it therein). The difference between the full space $\Z^{2}$ and the torus $\Z_{L}^{2}$ is that if $\hat{\ell}(\xi)=L$, so that $|1-\xi|^{-1/2}\geq L$, then the periodic boundary conditions on $\Z_{L}^{2}$ automatically introduces a space-cutoff of scale $L$ (before the cutoff at scale $|1-\xi|^{-1/2}$ that is imposed by the resolvent). One can also check that \eqref{prop:ThfadC} implies 
\begin{align*}
\sum_{b\in\Z_{L}^{2}}(\Theta^{(B)}_{\xi})_{ab}\prec|1-\xi|^{-1},
\end{align*}
which can be checked using the random walk representation as well. 

The gradient bounds \eqref{prop:BD1}-\eqref{prop:BD2} are standard, in that the Green's function for the Laplacian on $\R^{2}$ is $\log|x|$. Thus, the first terms on the right-hand side of \eqref{prop:BD1}-\eqref{prop:BD2} agree with its derivatives. (The support-length of the gradients is also of order $\hat{\ell}(\xi)$, but we will not need this fact.) The additional corrections come from the periodic boundary of $\Z_{L}^{2}$. The scaling of the last term in \eqref{prop:BD2} agrees with the heuristic that $\Theta^{(B)}$ is the resolvent for the diffusion of a generator, so its second-derivative should have $L^{1}$ norm of size $\mathrm{O}_{\prec}(1)$. The last term in \eqref{prop:BD1} therefore interpolates between \eqref{prop:ThfadC} and the last term in \eqref{prop:BD2}.}

Let us now illustrate ${\cal K}_{t,\boldsymbol{\sigma},(a_{1},a_{2})}$ for $n=2$. In this case, the defining primitive equation  \eqref{pro_dyncalK} is 
\begin{align}
    \frac{d}{dt}\, \mathcal{K}_{t, \boldsymbol{\sigma}, (a_1, a_2)} = W^{2} \cdot \sum_{a, b} \mathcal{K}_{t, \boldsymbol{\sigma}, (a_1, a)} \cdot S^{(B)}_{ab} \cdot \mathcal{K}_{t, \boldsymbol{\sigma}, (b, a_2)}
\end{align}
Throughout this paper, we will often use the following convenient notation: 
\begin{equation}\label{eq_defm_i}
    m_i = m(\sigma_i).
\end{equation}
Using the propagator  $\Theta^{(B)}$ in \eqref{def_Thxi} and, \eqref{deri_Thxi}, a short calculation yields
\begin{equation}\label{Kn2sol}
    \mathcal{K}_{t, \boldsymbol{\sigma}, \textbf{a}} = W^{-2} m_1 m_2 \left(\Theta^{(B)}_{t \cdot m_1 m_2}\right)_{a_1 a_2}, \quad \textbf{a} = (a_1, a_2)
\end{equation}
In particular, if we consider the two cases $\boldsymbol{\sigma}=(+,-)$ and $\boldsymbol{\sigma}=(+,+)$, then
\begin{equation}\label{Kn2sol2}
{\cal K}_{t,\boldsymbol{\sigma},(a,b)}
=   W^{-2} \cdot \begin{cases}
     |m |^2\left[\left(1- {  t }|m |^2 S^{(B)}\right)^{-1}\right]_{ab} &, \quad \boldsymbol{\sigma}=(+,-)\\
  m^2\left[\left(1- {  t } m^2 S^{(B)}\right)^{-1}\right]_{ab}  &, \quad \boldsymbol{\sigma}=(+,+)    
\end{cases}
\end{equation}

In the case where $n=3$, we have the following formula (which comes from Example 2.16 in \cite{YY_25} and comes directly from the primitive equation, hence it has no dimension dependence):
$$
{\cal K}_{t, \boldsymbol{\sigma}, \textbf{a}} = \sum_{b_1 b_2 b_3} \left(\Theta^{(B)}_{t \cdot m_1 m_2}\right)_{a_1 b_1} \left(\Theta^{(B)}_{t \cdot m_2 m_3}\right)_{a_2 b_2} \left(\Theta^{(B)}_{t \cdot m_3 m_1}\right)_{a_3 b_3} \cdot {\cal K}_{0, \boldsymbol{\sigma}, \textbf{b}}, \quad \textbf{b} = (b_1, b_2, b_3).
$$

\begin{lemma}\label{ML:Kbound}
Recall $\hat \ell $ in \eqref{prop:ThfadC}. We have
\begin{equation}\label{eq:bcal_k}
  {\cal K}_{t, \boldsymbol{\sigma}, \textbf{a}}  \prec M_{t}^{-n+1},\quad  \ell_t :  =\hat\ell(t)= \min\left((|1-t|)^{-1/2},\; L\right), \quad M_{t}:=W^{2}\ell_{t}^{2}\eta_{t}.     
\end{equation}
\end{lemma}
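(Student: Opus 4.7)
I would prove the bound by strong induction on the loop length $n$. For $n=1$ the claim is immediate since by definition $|{\cal K}_{t,\pm,a}| = |m^{(E)}| = 1 = M_t^0$, using $|m^{(E)}|=1$ in the bulk. For the inductive step, my approach is to first solve the primitive equation \eqref{pro_dyncalK} explicitly by iterating Duhamel's principle, and then to bound the resulting representation using the pointwise and row-sum estimates on $\Theta^{(B)}$ from Lemma \ref{lem_propTH}.

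The first task is deriving an explicit formula for ${\cal K}_{t,\boldsymbol{\sigma},\textbf{a}}$ generalizing the $n=2$ solution \eqref{Kn2sol} and the $n=3$ formula displayed in the text. The right-hand side of \eqref{pro_dyncalK} couples ${\cal K}_n$ with ${\cal K}_{n_L}$ and ${\cal K}_{n_R}$, where $n_L = n-l+k+1$, $n_R = l-k+1$, and $n_L + n_R = n+2$. For non-adjacent cuts ($l > k+1$) both $n_L, n_R < n$, so the inductive hypothesis applies directly. For adjacent cuts ($l = k+1$), one side is of length $n$ (same as ${\cal K}_n$) while the other is of length $2$, which by \eqref{Kn2sol2} equals $W^{-2}m_1m_2\,\Theta^{(B)}_{tm_1m_2}$. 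Treating the adjacent-cut terms as a linear operator on ${\cal K}_n$ and solving via the corresponding integrating factor (which is itself a propagator $\Theta^{(B)}$, by the $n=2$ formula) produces a representation of ${\cal K}_{t,\boldsymbol{\sigma},\textbf{a}}$ as a sum of tree-like diagrams whose edges carry propagators $\Theta^{(B)}_{t m_i m_j}$, whose leaves are the external indices $a_1,\dots,a_n$, and whose overall prefactor from the initial data is $W^{-2(n+1)}\prod_i m_i$.

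With this representation in hand, the bound reduces to pointwise and summed estimation of each diagram using Lemma \ref{lem_propTH}, which provides $(\Theta^{(B)}_\xi)_{ab} \prec (|1-\xi|\,\hat\ell(\xi)^2)^{-1}$ together with the row-sum bound $\sum_b (\Theta^{(B)}_\xi)_{ab} \prec |1-\xi|^{-1}$. In the bulk, a \emph{critical} propagator ($\sigma_i\neq\sigma_{i+1}$, so $\xi = t|m|^2 = t$) has $|1-\xi|\asymp\eta_t$ and $\hat\ell\asymp\ell_t$, while a \emph{non-critical} propagator ($\sigma_i = \sigma_{i+1}$, so $\xi = t m^2$ or $t\bar m^2$) has $|1-\xi| \asymp 1$ and $\hat\ell = O(1)$, making it sharply localized. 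Combining pointwise estimates and $L^1$ row-sum bounds appropriately across the diagram (using pointwise on the $\Theta$'s sharing an index and $L^1$ on those summed over internal indices), together with the $W^{-2(n+1)}$ normalization, yields the claimed $M_t^{-(n-1)} = (W^2 \eta_t \ell_t^2)^{-(n-1)}$.

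The main obstacle is a careful combinatorial control of the diagram expansion produced by iterating Duhamel. One must track how many critical propagators (carrying the factor $(\eta_t \ell_t^2)^{-1}$) appear in each diagram and how many free internal sums are available, so that the product of pointwise/sum estimates combines to exactly $M_t^{-(n-1)}$. The worst case is alternating $\boldsymbol{\sigma}$ with $n$ even, in which every propagator is critical; here the cruder ``star'' estimate alone would fall short by a factor $W^{-4}\eta_t^{-1}$, and the correct bound requires using the additional propagator structure generated by the non-adjacent-cut terms in the Duhamel expansion to recover this loss. Executing this bookkeeping carefully, while exploiting the commutativity and translation invariance of the $\Theta^{(B)}$'s from Lemma \ref{lem_propTH}, is the main technical burden.
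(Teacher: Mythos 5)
Your framework---solving the primitive ODE \eqref{pro_dyncalK} by iterating Duhamel into a diagrammatic representation with $\Theta^{(B)}$-propagator edges, then estimating diagrams with Lemma \ref{lem_propTH}---is essentially the paper's strategy (implemented there through the decomposition \eqref{KKpi} with sub-diagrams $\Sigma^{(\emptyset)}$, following \cite{YY_25}). You also correctly single out the alternating $\boldsymbol{\sigma}$ case as the one where a direct pointwise/$L^1$ count fails. However, the repair you propose---``additional propagator structure generated by the non-adjacent-cut terms''---does not work, because the loss of a factor $\eta_t^{-1}$ cannot be recovered by \emph{any} term-by-term size estimate of the Duhamel diagrams; it requires an exact cancellation that your bookkeeping strategy cannot see.

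Concretely, for alternating $\boldsymbol{\sigma}$ (so every external propagator is critical) there is exactly one free internal block sum, costing $\eta_t^{-1}$, and the remaining $n-1$ propagators are bounded pointwise by $(\ell_t^2\eta_t)^{-1}$; this overshoots the target $(\ell_t^2\eta_t)^{-(n-1)}$ by $\eta_t^{-1}$, and that deficit is intrinsic to the naive bound. What saves the paper is the \emph{sum-zero property} \eqref{SZjadljsk}, namely $\sum_{d_2,\ldots,d_n}\Sigma^{(\emptyset)}(t,\boldsymbol{\sigma}^{(alt)},\textbf{d})=O(\eta_t)$, together with the reflection symmetry $g(\textbf{s})=g(-\textbf{s})$ of the sub-diagram in the relative coordinates $s_i=d_i-d_1$. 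One then Taylor-decomposes each critical propagator $f(a_i,s_i)=(\Theta^{(B)}_{t|m|^2})_{a_i,d_1+s_i}$ as $f_0+f_1+f_2$, with $f_0$ constant in $s$, $f_1$ antisymmetric, and $f_2$ second order, and uses \eqref{prop:BD1}--\eqref{prop:BD2}. The all-$f_0$ term pairs with the sum-zero identity to recover exactly one $\eta_t$; the single-$f_1$ terms vanish identically by symmetry of $g$ against antisymmetry of $f_1$; the remaining terms carry either one $f_2$ or two $f_1$'s and gain enough spatial decay. This cancellation---an algebraic identity on the alternating sub-diagram, not a refinement of propagator counting---is the ingredient missing from your proposal, and without it the alternating case cannot be closed.
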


Finally, we note that for any fixed \(|E| < 2 - \kappa\), we have $\im z_t = \im z_t^{(E)} = \im m^{(E)} \cdot (1 - t) \sim (1 - t)$. Thus, \(\ell(z)\) from \eqref{def_ellz} and \(\ell_t\) from \eqref{eq:bcal_k} are the same order, in that $\ell(z_t) \sim \ell_t = \hat{\ell}(t)$. Futhermore, for \(z\), \(t\), and \(E\) satisfying \eqref{eq:zztE} and \eqref{eq:zztE2}, we have $\im z\sim z^{(E)}_t$ and $\ell(z) \sim \ell(z^{(E)}_t) \sim \ell_t$. Since these terms are the same order, in the following, we will use only \(\ell_t\).

\subsection{Estimates on loops}

Our main estimates for the  $G$-loop  are given below. (Recall $\prec$ in Definition \ref{stoch_domination}, and recall $\ell_{t},M_{t}$ in \eqref{eq:bcal_k}.) 

\begin{lemma}\label{ML:GLoop}
For any $\kappa, \tau>0$ and  
$E \in[-2+\kappa, 2-\kappa], \, 0\le t\le 1-N^{-1+\tau}$,  
we have 
\begin{equation}\label{Eq:L-KGt}
 \max_{\boldsymbol{\sigma}, \textbf{a}}\left|{\cal L}_{t, \boldsymbol{\sigma}, \textbf{a}}-{\cal K}_{t, \boldsymbol{\sigma}, \textbf{a}}\right|\prec M_{t}^{-n},\quad \eta_t:=\im z_t,  
\end{equation}
 \begin{equation}\label{Eq:L-KGt2}
\max_{\boldsymbol{\sigma}, \textbf{a}}\left|{\cal L}_{t, \boldsymbol{\sigma}, \textbf{a}} \right|\prec M_{t}^{-n+1}. 
\end{equation}
\end{lemma}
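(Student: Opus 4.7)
The plan is to run a continuity/bootstrap argument along the flow $t \mapsto G^{(E)}_t$, using the loop-hierarchy SDE \eqref{eq:mainStoflow} together with the primitive ODE \eqref{pro_dyncalK}. I would prove \eqref{Eq:L-KGt} and \eqref{Eq:L-KGt2} simultaneously, by induction on the loop length $n$ up to a large but fixed cutoff $D$, with a stopping-time argument in $t$. Concretely, define $T_\ast$ to be the first time at which any of the target bounds, inflated by a small $N^{\tau_0}$ safety factor, fails; the goal is to show $T_\ast \geq 1 - N^{-1+\tau}$ with high probability. The initial condition $H_0 = 0$ gives ${\cal L}_{0,\boldsymbol{\sigma},\textbf{a}} = {\cal K}_{0,\boldsymbol{\sigma},\textbf{a}}$, so both bounds hold trivially at $t = 0$.

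Subtracting \eqref{pro_dyncalK} from \eqref{eq:mainStoflow} and writing ${\cal L} = {\cal K} + ({\cal L} - {\cal K})$ inside the quadratic cut-and-glue term yields an SDE for the difference with three pieces: the martingale $\mathcal{E}^{(M)}$, the self-energy-type term $\mathcal{E}^{(\widetilde G)}$, which is linear in $\widetilde G_t = G_t - m$, and a bilinear-in-$({\cal L} - {\cal K})$ drift that splits further into a piece linear in ${\cal L}-{\cal K}$ with coefficient ${\cal K}$, effectively a known $\Theta^{(B)}$-style propagator, and a quadratic remainder. Duhamel's formula for the linear part reduces the analysis to an integral against a propagator built from $\Theta^{(B)}_{t \xi}$, and the decay and derivative estimates of Lemma \ref{lem_propTH} convert $\ell_t$-scale propagation into the desired $M_t^{-n}$ scaling.

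The main quantitative inputs are then Burkholder--Davis--Gundy for the martingale piece, whose quadratic variation, under the bootstrap hypothesis \eqref{Eq:L-KGt2}, is $\prec M_t^{-2n}$ up to an integrable-in-$t$ weight, and a sharp estimate for $\mathcal{E}^{(\widetilde G)}$. It is in the latter that the new CLT-type estimate for polynomials in the entries of $G_t$, advertised in the introduction, plays its essential role: a naive triangle-inequality bound for sums of the form $\sum_{|y| \leq \ell_t} (G_t)_{xy}$ gives $O(\ell_t^2)$, too weak in $d = 2$ to close the bootstrap at the scale $M_t^{-n}$, while the square-root cancellation from the CLT input sharpens this to the scaling required. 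The partial tracial local law \eqref{G_bound_ave} for averaged diagonal $\widetilde G$ entries, which improves $W^{-2}/\sqrt{M_{\eta_t}}$ to the genuinely averaged $W^{-2}/M_{\eta_t}$, is used here, and it should be provable jointly with \eqref{Eq:L-KGt} and \eqref{Eq:L-KGt2} via the same bootstrap.

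The hardest step, and the reason the argument is genuinely two-dimensional rather than a direct transcription of \cite{YY_25}, is that the linearized quadratic drift contributes to the Duhamel integral at exactly the same order as the target $M_t^{-n}$: without an extra $M_t^{-1}$ gain one merely recovers the bootstrap hypothesis, not an improvement. The mechanism for closing the loop is that the CLT estimate furnishes precisely this gain at the borderline places where a naive counting would stall, after which a Gr\"onwall-type argument on $[0, T_\ast]$, combined with a union bound over the discrete parameters $(\boldsymbol{\sigma}, \textbf{a}, n \leq D)$ and a grid of times in $[0, 1-N^{-1+\tau}]$, forces $T_\ast$ to equal its right endpoint with high probability. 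The induction on $n$ closes because the forcing terms at level $n$ only involve loops of lengths at most $n+1$, and any length-$(n+1)$ piece is always accompanied by a factor of $\widetilde G$ whose smallness absorbs the extra cost.
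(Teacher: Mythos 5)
Your high-level architecture, a Duhamel/Gr\"onwall analysis of the difference hierarchy for ${\cal L}-{\cal K}$ combined with a continuity/stopping-time argument and induction on the loop length, is broadly the right family of ideas, and you correctly identify the CLT-type estimate as the genuinely new ingredient relative to the $d=1$ work. But the proposal is missing two essential mechanisms without which the scheme cannot close, and it misreads the role of the time-splitting.

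First and most seriously, you never mention the sum-zero operator ${\cal Q}_t$. The linearized drift is handled by an explicit propagator ${\cal U}_{s,t,\boldsymbol{\sigma}}$, and its raw $\max\to\max$ norm produces a factor $(\eta_s/\eta_t)^n$ (Lemma \ref{lem:sum_Ndecay}), which is not $O(1)$ over a macroscopic time window. The paper's fix, already crucial in $d=1$, is to decompose ${\cal L}-{\cal K}$ via ${\cal Q}_t$ and exploit the sum-zero property of ${\cal Q}_t\circ(\cdot)$ to improve the ${\cal U}$ norm by a factor $\ell_t/\ell_s$ (case 2 of Lemma \ref{lem:sum_decay}). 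In $d=2$ that gain is a single power of $\ell_t/\ell_s$ where $(\ell_t/\ell_s)^2$ is needed, and closing the remaining gap is where the new inputs enter. The paper gets the second factor through a \emph{further} decomposition $\mathbb{Q}=1-\E$: the fluctuating part ${\cal Q}_t\circ\mathbb{Q}\circ(\cdot)$ is controlled by the CLT-type estimate (case 3 of Lemma \ref{lem:sum_decay}, via Lemma \ref{clt-lemma}), while the deterministic part ${\cal Q}_t\circ\E\circ(\cdot)$ is controlled by a symmetry argument (case 4). Your proposal attributes all of the dimension-2 gain to the CLT estimate, but the expectation piece cannot be handled that way and needs the symmetry input; without both, the $\max\to\max$ propagator bound is insufficient and the bootstrap stalls. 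A related omission: for the martingale term one needs the \emph{double} sum-zero property (case 5) on the $\mathcal{E}\otimes\mathcal{E}$ tensor, not just BDG.

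Second, the time argument is not a single continuous bootstrap with one Gr\"onwall over $[0,1-N^{-1+\tau}]$, and it cannot be. The propagation estimates carry multiplicative factors $(\eta_s/\eta_t)^{O(1)}$ and $(\ell_t/\ell_s)^{O(n)}$ that accumulate; the paper instead proves a single-step propagation (Theorem \ref{lem:main_ind}) valid only when $M_s^{-1}\leq((1-t)/(1-s))^{30}$, and then iterates it over $O(1)$ dyadic-in-$\log$ time scales $1-s_k=W^{-k\tau'}$. Within each step the proof is not one Gr\"onwall but a careful cascade (rough a priori bounds, then $2$-loop decay, then sharp loop bounds, then sharp ${\cal L}-{\cal K}$ bounds, then sharp decay), because the intermediate quantities feed into one another in a structured way. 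Finally, your claim that the partial tracial local law \eqref{G_bound_ave} is proved jointly in the same bootstrap is backwards: in the paper it is a corollary of \eqref{Eq:L-KGt} at $n=1$, not an independent input.
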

\begin{lemma}[$2$-loop estimate]\label{ML:GLoop_expec}
With the notations and assumptions of the previous lemma, if $\boldsymbol{\sigma}=(+,-)$ and $ \textbf{a}=(a_1, a_2)$, then
we have
\begin{equation}\label{Eq:Gdecay}
 \left| {\cal L}_{t, \boldsymbol{\sigma}, \textbf{a}}-{\cal K}_{t, \boldsymbol{\sigma}, \textbf{a}}\right|\prec M_{t}^{-2}\exp \left(-\frac{|a_1-a_2|_{L}^{1/2} }{\ell_t^{1/2}}\right)+W^{-D}. 
\end{equation}
\end{lemma}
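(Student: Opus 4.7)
The plan is to set up a differential equation for the difference $\hat{{\cal L}}_{t,(a_1,a_2)} := {\cal L}_{t,(+,-),(a_1,a_2)} - {\cal K}_{t,(+,-),(a_1,a_2)}$ and solve it via Duhamel's principle with the propagator $\Theta^{(B)}_{t|m|^2}$, transferring the exponential pointwise decay of that propagator (Lemma \ref{lem_propTH}) onto $\hat{{\cal L}}$. Concretely, I would subtract the primitive ODE \eqref{pro_dyncalK} from the loop-hierarchy SDE \eqref{eq:mainStoflow} in the case $n=2$, $\boldsymbol{\sigma}=(+,-)$, and observe that the cut-and-glue quadratic term decomposes into a linear-in-$\hat{{\cal L}}$ piece (from cross terms with ${\cal K}$) and a bilinear-in-$\hat{{\cal L}}$ remainder. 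Using the explicit formula ${\cal K}_{t,(+,-),(a,b)}=W^{-2}|m|^2(\Theta^{(B)}_{t|m|^2})_{ab}$ from \eqref{Kn2sol2} together with \eqref{deri_Thxi}, the linear piece is exactly the infinitesimal generator of the propagator $\Theta^{(B)}_{t|m|^2}$ acting on one index, so the equation takes a convenient linear-plus-forcing form whose driving terms are $d{\cal E}^{(M)}+d{\cal E}^{(\widetilde G)}$ together with the bilinear remainder.

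Applying Duhamel's formula with the propagator $\Theta^{(B)}_{t|m|^2}$ produces an integral representation in which the forcing is convolved against the pair of factors $(\Theta^{(B)}_{(t-s)|m|^2})_{a_1 b_1}(\Theta^{(B)}_{(t-s)|m|^2})_{a_2 b_2}$. The exponential pointwise bound \eqref{prop:ThfadC} is then the engine for spatial decay. The forcing terms are controlled by combining Lemma \ref{ML:GLoop} (which, after computing the It\^o bracket of ${\cal E}^{(M)}$ and expanding ${\cal E}^{(\widetilde G)}$, supplies uniform bounds on the longer loops that appear) with the CLT-type estimates advertised in the introduction; the latter provide square-root cancellation in spatial sums of resolvent entries and are the ingredient that upgrades the uniform loop bounds into estimates with usable decay in space.

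The main obstacle is that the bare forcing estimates do not themselves decay in $|a_1-a_2|_L$, so extracting the spatial decay of $\hat{{\cal L}}$ from $\Theta^{(B)}$ requires a multi-scale decomposition of the Duhamel sums: I would split the intermediate sites $(b_1,b_2)$ into a near-region of some radius $d$ around $\{a_1,a_2\}$ (on which the forcing is absorbed via Lemma \ref{ML:GLoop} and the CLT decoupling) and a far-region (on which the $\exp(-c|a-b|_L/\ell_t)$ decay of $\Theta^{(B)}$ dominates). Balancing the polynomial growth of the near-region contribution at scale $d$ against a propagator factor $\exp(-c(|a_1-a_2|_L-2d)/\ell_t)$ from the far-region, and optimizing over $d$, produces precisely the stretched exponent $1/2$ in $\exp(-|a_1-a_2|_L^{1/2}/\ell_t^{1/2})$. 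The additive $W^{-D}$ in \eqref{Eq:Gdecay} absorbs the exceptional-event correction implicit in the $\prec$-notation, and also covers the trivial regime in which $|a_1-a_2|_L$ is comparable to the diameter $\sim L$ of the torus, where the exponential factor is already of order one. I expect the most delicate point to be the handling of the $\widetilde G$-driven forcing ${\cal E}^{(\widetilde G)}$ at long distances, which is where the two-dimensional (logarithmic) random-walk obstruction shows up and where the CLT-type estimates become indispensable.
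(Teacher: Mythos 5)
Your reading of the structure is partly right: the paper does indeed write the hierarchy for ${\cal L}-{\cal K}$ with $n=2$, extract a linear-in-$({\cal L}-{\cal K})$ piece from the cross terms with ${\cal K}$, and integrate against a propagator (${\cal U}_{s,t,\boldsymbol{\sigma}}$, which is built from ratios $\tfrac{1-s\xi S^{(B)}}{1-t\xi S^{(B)}}$, not $\Theta^{(B)}_{(t-s)|m|^2}$, because the generator $\varTheta_{t,\boldsymbol{\sigma}}$ is time-dependent). But the central claim of your proposal, that the exponent $1/2$ in $\exp(-|a_1-a_2|_L^{1/2}/\ell_t^{1/2})$ emerges from a near/far split of the Duhamel sum and an optimization over a cutoff $d$, does not work. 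If the near-region contribution grows polynomially in $d$ and the far-region carries $\exp(-c(|a_1-a_2|_L-2d)/\ell_t)$, then optimizing over $d$ just shifts the exponential rate by an $O(\ell_t\log)$ amount; it does not turn a simple exponential into a stretched one with exponent $1/2$. The stretched exponential in the paper is an ansatz, built into the tail function ${\cal T}_{u,D}(\ell):=M_u^{-2}\exp(-(\ell/\ell_u)_+^{1/2})+W^{-D}$ of \eqref{def_WTuD}, and the exponent $1/2$ is forced by a different consideration: stability under the convolution that arises from the quadratic forcing ${\cal E}^{(({\cal L}-{\cal K})\times({\cal L}-{\cal K}))}$, namely $\sum_x {\cal T}_{u,D}(|a_1-x|){\cal T}_{u,D}(|a_2-x|)\prec M_u^{-2}\ell_u^2{\cal T}_{u,D}(|a_1-a_2|)$, which rests on $\int_0^a \exp(-\sqrt{a-x}-\sqrt{x}+\sqrt a)\,dx\le C$. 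A plain exponential does not have this self-reproducing property, and that is why the ansatz has exponent $1/2$ rather than $1$. See Lemma \ref{lem_dec_calE} and the inequalities preceding \eqref{mmxiaoxi}.

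Two further structural points. First, the proof is not a single Duhamel shot from $t=0$ to $t$: the paper propagates \eqref{Eq:Gdecay+IND} from $s$ to $t$ only under the short-time constraint \eqref{con_st_ind} $M_s^{-1}\le((1-t)/(1-s))^{30}$, via a stopping-time argument (Step 2, with a temporary loss factor $(\eta_s/\eta_u)^4$, removed in Step 5), and then iterates Theorem \ref{lem:main_ind} over $O(1)$ many time slices. A one-shot integration would not close because the a priori loop bounds degrade by powers of $\ell_t/\ell_s$ and must be regenerated at each slice. Second, the CLT-type estimates (Lemma \ref{clt-lemma} and case 3 of Lemma \ref{lem:sum_decay}) are not what controls ${\cal E}^{(\widetilde G)}$ at long distances; that term is handled in Step 2 using Lemma \ref{lem_GbEXP}, the a priori loop bound \eqref{lRB1}, and the tail ansatz. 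The CLT input enters in Step 4 through the mean-zero operator $\mathbb Q=1-\mathbb E$ acting on the alternating-sign hierarchy to obtain the sharp $M_t^{-n}$ bounds, which Step 5 then feeds back into the short-distance regime of \eqref{Eq:Gdecay_flow}.
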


\begin{lemma}[Local law for $G_t$]\label{ML:GtLocal}
   With the notations and assumptions of the previous lemma, we have 
\begin{equation}\label{Gt_bound}
 \|G^{(E)}_{t,+}-m^{(E)}\|_{\max}\prec M_{t}^{-1/2}.
   \end{equation} 
\end{lemma}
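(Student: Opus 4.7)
My plan is to deduce the entrywise local law from the already established loop estimates (Lemmas \ref{ML:GLoop} and \ref{ML:GLoop_expec}) via the classical Schur complement route, combined with Gaussian concentration for linear and quadratic forms and the CLT-type estimate for polynomials in the entries of $G_t$ that is the new ingredient of this paper.

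For the off-diagonal entries $G_{t,xy}$ with $x\neq y$, I would begin from the Schur identity
\[
G_{t,xy} \;=\; -\,G_{t,xx}\sum_{v\neq x} H_{t,xv}\, G^{(x)}_{t,vy},
\]
where $G^{(x)}_t := (H^{(x)}_t-z_t)^{-1}$ denotes the minor resolvent obtained by removing row and column $x$. Conditional on $H^{(x)}_t$ the right-hand sum is a centered complex Gaussian with conditional variance $\sum_v S_{xv}|G^{(x)}_{t,vy}|^2$, so Gaussian concentration produces
\[
\Bigl|\sum_{v\neq x} H_{t,xv}\, G^{(x)}_{t,vy}\Bigr|^2 \;\prec\; \sum_v S_{xv}\bigl|G^{(x)}_{t,vy}\bigr|^2.
\]
A standard minor-perturbation identity allows one to replace $G^{(x)}$ by $G$ at negligible cost. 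The block average of this remaining $T$-like quantity, namely $\mathcal{L}_{t,(+,-),(a,b)} = W^{-4}\sum_{x\in\mathcal{I}^{(2)}_a,y\in\mathcal{I}^{(2)}_b}|G_{t,xy}|^2$, is already controlled by the $n=2$ case of Lemma \ref{ML:GLoop} together with Lemma \ref{ML:Kbound}, giving $\mathcal{L}_{t,(+,-),(a,b)}\prec W^{-2}(\Theta^{(B)}_t)_{ab}\prec M_t^{-1}$. I would then use the CLT-type estimate for polynomials in $G_t$ to upgrade this block-averaged control to the pointwise-in-$y$ bound $\sum_v S_{xv}|G_{t,vy}|^2\prec M_t^{-1}$. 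Combined with $|G_{t,xx}|\prec 1$ (established next), this gives $|G_{t,xy}|\prec M_t^{-1/2}$.

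For the diagonal entries I would use the complementary Schur identity
\[
G_{t,xx}^{-1} \;=\; -z_t - H_{t,xx} - \sum_{u,v\neq x} H_{t,xu}\, G^{(x)}_{t,uv}\, H_{t,vx},
\]
and decompose the quadratic form into its $H$-conditional mean and fluctuation. The mean $\sum_{u\neq x} S_{xu} G^{(x)}_{t,uu}$ is, by the Kronecker structure $S = S^{(B)}\otimes S_W$ and a minor-perturbation identity, a weighted average of the partial traces $\mathcal{L}_{t,(+),a} = W^{-2}\sum_{x\in\mathcal{I}^{(2)}_a} G_{t,xx}$, and therefore equals $m^{(E)} + O_\prec(M_t^{-1})$ by the $n=1$ case of Lemma \ref{ML:GLoop}. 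The fluctuation is handled by Hanson--Wright for Gaussian quadratic forms, whose variance $\sum_{u,v} S_{xu}S_{xv}|G^{(x)}_{t,uv}|^2$ is again $\prec M_t^{-1}$ via the 2-loop estimate (a second application of the CLT-type pointwise upgrade is needed). Using the semicircle relation $(m^{(E)})^{-1} = -z_t - m^{(E)}$ and inverting yields $|G_{t,xx} - m^{(E)}| \prec M_t^{-1/2}$, which in particular gives $|G_{t,xx}| \prec 1$ and closes the off-diagonal step.

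The main obstacle is precisely the pointwise-in-$y$ upgrade of $\sum_v S_{xv}|G_{t,vy}|^2$ from its block average (and analogously for the Hanson--Wright variance in the diagonal step). In $d=2$ the propagator $\Theta^{(B)}_t$ decays only logarithmically inside its length scale, so a crude triangle inequality on sums of resolvent entries overshoots by the very two-dimensional factor identified in the introduction. The CLT-type estimate for polynomials in the entries of $G_t$, which provides approximate independence and hence square-root cancellation across well-separated index pairs, is exactly the new tool that makes this upgrade possible; without it, the Schur route would deliver only $M_t^{-1/2}$ multiplied by an extra polynomial factor in $W$.
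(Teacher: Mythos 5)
Your proposal takes a genuinely different route from the paper, and it contains a misattribution serious enough that the plan as written does not close.

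The paper's own proof is much shorter and does not use a Schur complement at all. It first establishes $\max_{a,b}\mathcal{L}_{u,(+,-),(a,b)}\prec M_u^{-1}$ from the $\mathcal{L}-\mathcal{K}$ decay and the $\mathcal{K}$ bound, and then invokes Lemma \ref{lem_GbEXP} (specifically \eqref{GijGEX} and \eqref{GiiGEX}), which bounds $\max_{x\in\mathcal{I}_a^{(2)}}\max_{y\in\mathcal{I}_b^{(2)}}|(G_t)_{xy}|^2$ and $\max_x|(G_t)_{xx}-m|^2$ directly by block-averaged $2$-loops. That lemma is a purely deterministic statement: it follows from resolvent identities for $G_t=(H_t-z_t)^{-1}$ (it is imported unchanged from Lemma 4.2 of \cite{YY_25}) and does not use the structure of $S$, $\Theta^{(B)}$, or any probabilistic concentration. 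The same is true of the companion $n$-chain estimate, Lemma \ref{lem_GbEXP_n2}.

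This is exactly where your plan has a gap. Both your off-diagonal step (the pointwise-in-$y$ bound on $\sum_v S_{xv}|G_{t,vy}|^2$) and your Hanson--Wright variance step require going from a quantity that Lemma \ref{ML:GLoop} controls only after averaging over $y\in\mathcal{I}_b^{(2)}$ to a quantity evaluated at a single $y$. You attribute this upgrade to ``the CLT-type estimate for polynomials in $G_t$.'' But that estimate, Lemma \ref{clt-lemma}, controls $\sum_b Z_{ab}(Y_{s,b}-\mathbb{E}Y_{s,b})$ for a \emph{deterministic} weight matrix $Z$ — it gives square-root cancellation in a block-indexed sum, which is exactly what is needed in the loop-hierarchy analysis of Section \ref{subsection-sum-zero} (see case 3 of Lemma \ref{lem:sum_decay}). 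It does not convert an average over a block of indices into a maximum over that block; a CLT-type statement simply cannot do that, since the average being small says nothing about the size of the worst entry unless one already has a handle on it. The mechanism that actually performs the block-average-to-pointwise upgrade in this paper is deterministic: $\sum_v S_{xv}|G_{t,vy}|^2=W^{-2}\sum_{a'}S^{(B)}_{a_x a'}\cdot W^2(G_t^\dagger E_{a'}G_t)_{yy}$, and the diagonal chain entry $(G_t^\dagger E_{a'}G_t)_{yy}$ is controlled for each fixed $y$ by the $n$-chain estimate \eqref{res_XICdi}, whose only input is the block-averaged $2$-loop bound. So the ingredient you reach for is available, but it is Lemma \ref{lem_GbEXP_n2}, not the CLT-type estimate — and your claim that without the CLT estimate ``the Schur route would deliver only $M_t^{-1/2}$ multiplied by an extra polynomial factor in $W$'' is therefore also not correct.

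If you replace your invocation of the CLT-type estimate with Lemmas \ref{lem_GbEXP} and \ref{lem_GbEXP_n2}, the Schur route should in principle go through, though it is considerably longer and requires additional care with the minor resolvent $G^{(x)}_t$ inside the time-flow setting. The paper's version bypasses all of that: once $\max_{a,b}\mathcal{L}_{t,(+,-),(a,b)}\prec M_t^{-1}$ is in hand, \eqref{GijGEX}--\eqref{GiiGEX} give $\|G_t-m\|_{\max}^2\prec M_t^{-1}$ in one line, using only the weak a priori bound \eqref{Gtmwc} to verify the event $\Omega(t,c)$.
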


\begin{lemma}[Improved expectation bound on $2$-loop]\label{ML:exp}
Adopt the notations and assumptions of the previous lemma. Fix any $\boldsymbol{\sigma}\in\{+,-\}^{2}$ and $\textbf{a}=(a_{1},a_{2})\in\Z_{L}^{2}\times\Z_{L}^{2}$. We have 
\begin{align}
|\E{\cal L}_{t,\boldsymbol{\sigma},\textbf{a}}-{\cal K}_{t,\boldsymbol{\sigma},\textbf{a}}|\prec M_{t}^{-3}.\label{eq:step6main}
\end{align}
\end{lemma}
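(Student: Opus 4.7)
My plan is to work with the integrated form of the loop hierarchy SDE from Lemma \ref{lem:SE_basic} for $n=2$, take expectation (which kills the martingale contribution ${\cal E}^{(M)}$ by It\^o's isometry), and linearize the quadratic driving term around ${\cal K}$. Writing ${\cal L}_s = {\cal K}_s + ({\cal L}_s - {\cal K}_s)$ in \eqref{eq:mainStoflow} and subtracting the ODE \eqref{pro_dyncalK} for ${\cal K}$, I obtain
\begin{align*}
\frac{d}{dt}\E({\cal L}-{\cal K})_{t,\boldsymbol{\sigma},\textbf{a}}
&= \E{\cal E}^{(\widetilde G)}_{t,\boldsymbol{\sigma},\textbf{a}}
+ W^2\sum_{a,b}\Big({\cal K}_{t,\boldsymbol{\sigma},(a_1,a)}S^{(B)}_{ab}\,\E({\cal L}-{\cal K})_{t,\boldsymbol{\sigma},(b,a_2)} + \text{sym.}\Big) \\
&\quad + W^2\sum_{a,b}\E\big[({\cal L}-{\cal K})_{t,\boldsymbol{\sigma},(a_1,a)}S^{(B)}_{ab}({\cal L}-{\cal K})_{t,\boldsymbol{\sigma},(b,a_2)}\big].
\end{align*}
Using the explicit form \eqref{Kn2sol} of ${\cal K}$, the linear-in-unknown piece generates (via Duhamel) an effective propagator that is essentially $\Theta^{(B)}_{(t-s)m_1 m_2}$, letting me express $\E({\cal L}-{\cal K})_t$ as an integral of this propagator against the two source terms above. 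All bounds on the resulting integral are controlled by the properties of $\Theta^{(B)}$ in Lemma \ref{lem_propTH}.

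For the first source $\E{\cal E}^{(\widetilde G)}$, the crucial observation is that each factor $\langle \widetilde{G}_s(\sigma_k)E_a\rangle$ is a partial trace, so by the tracial local law \eqref{G_bound_ave} it is $\prec M_s^{-1}$ rather than the pointwise $M_s^{-1/2}$ from \eqref{Gt_bound}. Combined with the $3$-loop bound ${\cal G}^{(b)}_k\circ{\cal L}_{s,\boldsymbol{\sigma},\textbf{a}}\prec M_s^{-2}$ from \eqref{Eq:L-KGt2} and the $\ell^1$-summability of $\Theta^{(B)}_{sm_1m_2}$ (which supplies a factor $\eta_s^{-1}$ absorbed by the $ds$-integration), this source contributes $\prec M_t^{-3}$, as required.

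The main obstacle is the quadratic covariance term. A naive use of the pointwise bound $|{\cal L}-{\cal K}|\prec M_s^{-2}$ from \eqref{Eq:L-KGt} yields only $\prec M_t^{-2}$ after Duhamel, which is insufficient. The improvement requires the new CLT-type estimate advertised in the introduction: since $(G_s)_{xy}$ and $(G_s)_{x'y'}$ are approximately independent when the pairs $(x,y)$ and $(x',y')$ are widely separated, the same holds for the two $({\cal L}-{\cal K})$ factors when the index clusters $(a_1,a)$ and $(b,a_2)$ split, and this decorrelation produces square-root-type cancellation in the sum against $S^{(B)}_{ab}$. I would additionally use the sharper $2$-loop decay \eqref{Eq:Gdecay} to confine $({\cal L}-{\cal K})_{s,\boldsymbol{\sigma},(a_1,a)}$ to a diffusive window of radius $\ell_s$ around $a_1$, so that the $W^2\sum_{a,b}$ combinatorics effectively contributes only a factor of order $\ell_s^2 W^2$ that is absorbed into $M_s$. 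Combining both source bounds and closing via Gronwall against $\Theta^{(B)}_{(t-s)m_1 m_2}$ gives the desired $|\E({\cal L}-{\cal K})_t|\prec M_t^{-3}$. The hardest step will be the precise implementation of the CLT-type decorrelation for the quadratic source, as this is the genuinely new two-dimensional ingredient compared to the one-dimensional analysis of \cite{YY_25}.
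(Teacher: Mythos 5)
Your high-level structure --- take expectation of the integrated $n=2$ hierarchy, kill the martingale, linearize around $\mathcal{K}$, close by Duhamel --- matches the paper's, but there are three concrete gaps, and your diagnosis of where the difficulty lies is misdirected.

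First, the crucial missing ingredient is the sum-zero operator $\mathcal{Q}_t$ from Definition \ref{Def:QtPt} and the integrated equation \eqref{int_K-L+Q2}. Without it, the Duhamel propagator $\mathcal{U}_{u,t,\boldsymbol{\sigma}}$ only satisfies the generic bound $\|\mathcal{U}\|_{\max\to\max}\prec(\eta_u/\eta_t)^n$ from Lemma \ref{lem:sum_Ndecay}, and the $du$-integration is then far too lossy: with $\ell_u=\eta_u^{-1/2}$ one picks up an uncontrolled $\eta_t^{-2}$ relative to the target $M_t^{-3}$. The needed operator norm $(\ell_u^2\eta_u/\ell_t^2\eta_t)^n$ is case 4 of Lemma \ref{lem:sum_decay}, which requires the source tensors to be sum-zero \emph{and} symmetric after applying $\mathcal{Q}_u$. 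This step also generates two additional source terms --- the commutator $[\mathcal{Q}_u,\varTheta_{u,\boldsymbol{\sigma}}]$ and the $\mathcal{P}\circ(\mathcal{L}-\mathcal{K})\cdot\dot{\vartheta}_u$ piece --- that must be controlled; your scheme never produces them. (As a side remark, your Duhamel kernel $\Theta^{(B)}_{(t-s)m_1m_2}$ is a single block matrix, whereas $\mathcal{U}_{s,t,\boldsymbol{\sigma}}$ for $n=2$ is a tensor product of two resolvent ratios acting on both indices.)

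Second, your bound on the $\mathcal{E}^{(\widetilde{G})}$ source uses only the tracial local law $\langle\widetilde{G}E_a\rangle\prec M_s^{-1}$, which gives $\mathbb{E}\,\mathcal{E}^{(\widetilde{G})}\prec\eta_u^{-1}M_u^{-2}$ --- one power of $M_u$ short. The paper instead proves and uses Lemma \ref{lemma:step6-1}, the \emph{improved expectation} bound $|\mathbb{E}\langle(G_u-m)E_a\rangle|\prec M_u^{-2}$, which is derived from a self-consistent equation (Gaussian integration by parts) and is a genuine gain of $M_u^{-1}$ over the high-probability estimate. This same lemma (together with the Ward identity, Lemma \ref{lem_WI_K}) is needed to control the commutator and $\dot{\vartheta}$ terms and the $\mathcal{P}$-piece at time $t$.

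Third, your proposed use of CLT-type decorrelation on the quadratic source is both unnecessary and inapplicable. Unnecessary: once one uses the case-4 $\mathcal{U}$ estimate, the naive pointwise bound $|\mathcal{L}-\mathcal{K}|\prec M_u^{-2}$ together with fast decay (Lemma \ref{lem_decayLoop}) already gives $\mathbb{E}\,\mathcal{E}^{((\mathcal{L}-\mathcal{K})\times(\mathcal{L}-\mathcal{K}))}\prec W^2\ell_u^2 M_u^{-4}=\eta_u^{-1}M_u^{-3}$, which integrates to $M_t^{-3}$; there is no deficit to make up. Inapplicable: the CLT estimate (Lemma \ref{clt-lemma}, entering via case 3 of Lemma \ref{lem:sum_decay}) controls \emph{fluctuations} $\sum_b Z_{ab}(Y_{s,b}-\mathbb{E}Y_{s,b})$ and is used in Step 4 of Theorem \ref{lem:main_ind} for the mean-zero part $(1-\mathbb{E})\circ\mathcal{Q}_u\circ(\mathcal{L}-\mathcal{K})$; here everything is already inside an expectation, and the gain comes from symmetry (case 4), not stochastic cancellation.
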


\medskip 

\begin{proof}[Proof of Theorems \ref{MR:locSC} and \ref{MR:QDiff}]
This is the same argument as the proof of Theorems 2.3 and 2.5 in \cite{YY_25}. For each $z$ in Theorems \ref{MR:locSC} and \ref{MR:QDiff}, Lemma \ref{zztE} gives $E$ and $t$ satisfying \eqref{eq:zztE}, \eqref{eq:zztE2}, and
$$
(1-t) \sim \im z_t \sim \im z \geq N^{-1+\tau}, \quad \eta_t \sim \eta, \quad \ell_t \sim \ell(z).
$$
Combining \eqref{mtEmz} and \eqref{GtEGz} gives
\begin{equation}\label{Gmt1/2Gm}
G(z) - m_{sc}(z) = t^{1/2} \left(G_t^{(E)} - m^{(E)}\right).
\end{equation}
Thus, \eqref{G_bound} follows from the estimate \eqref{Gt_bound} on $G_t - m$. Similarly, the partial tracial local law \eqref{G_bound_ave} follows from \eqref{Eq:L-KGt} for $n=1$ and the definition ${\cal K}_{t,+,a}=m^{(E)}$ from Definition \ref{Def_Ktza}. Next, by Definition \ref{Def:G_loop},
\begin{equation}\label{Gmt1/2Gm2}
\tr GE_aGE_b = t \cdot {\cal L}_{t, (+,+),(a,b)}, \quad \tr GE_aG^\dagger E_b = t \cdot {\cal L}_{t, (+,-),(a,b)}.
\end{equation}
By using the formula \eqref{Kn2sol} and recalling from \eqref{mtEmz} that $m_{sc}(z)=t^{1/2}m^{(E)}$, we get
$$
t \cdot {\cal K}_{t, (+,+),(a,b)} = W^{-2} m^2_{sc}(z) \cdot \frac{1}{1 - m^2_{sc}(z) \cdot S^{(B)}}, \quad t \cdot {\cal K}_{t, (+,-),(a,b)} = W^{-2} |m_{sc}(z)|^2 \cdot \frac{1}{1 - |m_{sc}(z)|^2 \cdot S^{(B)}}.
$$
Therefore, \eqref{Meq:QdW1} and \eqref{Meq:QdW2} in Theorem \ref{MR:QDiff} follow from \eqref{Eq:L-KGt} in the case $n=2$. Finally, \eqref{Meq:QdS1}-\eqref{Meq:QdS2} follow from Lemma \ref{ML:exp}.
 \end{proof}

\bigskip

\subsection{Strategy of the proofs of main lemmas}

It turns out that Lemma \ref{ML:exp} will follow from Lemmas \ref{ML:GLoop}, \ref{ML:GLoop_expec}, and \ref{ML:GtLocal}. Thus, let us outline the proofs of Lemmas \ref{ML:GLoop}, \ref{ML:GLoop_expec} and \ref{ML:GtLocal}. The general strategy is the same as in Section 2 of \cite{YY_25}.
By Definitions \ref{Def:G_loop} and \ref{Def_Ktza}, we have
$$
G_{0}(+)=m\cdot I_{N\times N}, \quad {\cal L}_{0, \boldsymbol{\sigma},\textbf{a}}= {\cal K}_{0, \boldsymbol{\sigma},\textbf{a}},\quad \forall \; \boldsymbol{\sigma},\textbf{a}, 
$$
where $I_{N\times N}$ is the identity matrix. By construction, we know that:
\begin{align}\label{ini)bigasya}
    \hbox{    Lemmas \ref{ML:GLoop}, \ref{ML:GLoop_expec} and \ref{ML:GtLocal} hold at  $t=0$ with no error.}
\end{align}
The following result propagates the desired estimates forwards in time.

\begin{theorem}\label{lem:main_ind} Assume for some fixed $E$: $|E|\le 2-\kappa$ and $s\in [0,1]$ that Lemmas \ref{ML:GLoop}, \ref{ML:GLoop_expec}  and \ref{ML:GtLocal} hold at time $s$. That is, assume that 
for $1 \le n\in \mathbb N$ and large $D>0$, we have
\begin{align}\label{Eq:L-KGt+IND}
 \max_{\boldsymbol{\sigma}, \textbf{a}}\left|{\cal L}_{s, \boldsymbol{\sigma}, \textbf{a}}-{\cal K}_{s, \boldsymbol{\sigma}, \textbf{a}}\right|&\prec M_{s}^{-n}  
\\\label{Eq:Gdecay+IND}
 \left| {\cal L}_{s, \boldsymbol{\sigma}, \textbf{a}}-{\cal K}_{s, \boldsymbol{\sigma}, \textbf{a}}\right|&\prec M_{s}^{-2}\exp \left(- \frac{ |a_1-a_2|_{L}^{1/2} }{\ell_s^{1/2}}\right)+W^{-D} , \quad \boldsymbol{\sigma}=(+,-)
\\\label{Gt_bound+IND}
 \|G^{(E)}_{s ,+}-m^{(E)}\|_{\max} & \prec M_{s}^{-1/2}.
\end{align}
 Then for any $t>s$ satisfying 
\begin{equation}\label{con_st_ind}
    M_{s}^{-1}\le \left(\frac{1-t}{1-s}\right)^{30}
\end{equation}
we have 
\eqref{Eq:L-KGt+IND}, \eqref{Eq:Gdecay+IND} and \eqref{Gt_bound+IND} with $s$  replaced by $t$.   
\end{theorem}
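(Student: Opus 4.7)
The approach will be a stochastic continuity argument in the style of \cite{YY_25}. I would first set up a bootstrap by defining a stopping time $t_\star \in [s,1)$ as the infimum of times at which any of \eqref{Eq:L-KGt+IND}, \eqref{Eq:Gdecay+IND}, or \eqref{Gt_bound+IND} fails at a slightly weakened level (with an extra factor $W^{\tau'}$ for some small $\tau'>0$ above the built-in $\prec$ threshold). The goal is to show that $t_\star$ exceeds any $t$ satisfying \eqref{con_st_ind} with high probability, and then self-improve the weakened bounds back to the stated ones at time $t$. To run this, I would subtract the primitive ODE \eqref{pro_dyncalK} from the loop hierarchy SDE \eqref{eq:mainStoflow} and linearize the quadratic ``tree'' term by writing, schematically,
\begin{align*}
(\mathcal{G}^L \circ \mathcal{L})\, S^{(B)}\, (\mathcal{G}^R \circ \mathcal{L}) - (\mathcal{G}^L \circ \mathcal{K})\, S^{(B)}\, (\mathcal{G}^R \circ \mathcal{K}) = (\mathcal{G}^L \circ (\mathcal{L}-\mathcal{K}))\, S^{(B)}\, (\mathcal{G}^R \circ \mathcal{L}) + (\mathcal{G}^L \circ \mathcal{K})\, S^{(B)}\, (\mathcal{G}^R \circ (\mathcal{L}-\mathcal{K})),
\end{align*}
which is linear in the unknown $\mathcal{L}-\mathcal{K}$ with coefficients controlled by $\mathcal{K}$ (explicit via $\Theta^{(B)}$) and by $\mathcal{L}$ (close to $\mathcal{K}$ under the bootstrap).

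The resulting linearized equation can then be solved via a Duhamel formula. Using the explicit form of $\mathcal{K}$ from \eqref{Kn2sol}--\eqref{Kn2sol2} together with the propagator estimates of Lemma \ref{lem_propTH}, I would integrate the forcing terms $\mathcal{E}^{(M)}_{t,\boldsymbol{\sigma},\textbf{a}}$, $\mathcal{E}^{(\widetilde{G})}_{t,\boldsymbol{\sigma},\textbf{a}}$, and the remaining quadratic piece against an appropriate composition of $\Theta^{(B)}_{u \cdot m_i m_{i+1}}$ factors in $u \in [s,t]$ in order to reproduce the scaling $M_t^{-n}$ in \eqref{Eq:L-KGt}. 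The additional exponential decay $\exp(-|a_1-a_2|_L^{1/2}/\ell_t^{1/2})$ in \eqref{Eq:Gdecay} for $n=2$ would come from the pointwise decay \eqref{prop:ThfadC} of $\Theta^{(B)}$ at scale $\ell_t$ combined with a standard interpolation in the exponent against the iterated-Duhamel picture. For the martingale term $\mathcal{E}^{(M)}_{t,\boldsymbol{\sigma},\textbf{a}}$, I would apply Burkholder--Davis--Gundy and expand the quadratic variation $\sum_\alpha S_\alpha |\partial_\alpha \mathcal{L}_{t,\boldsymbol{\sigma},\textbf{a}}|^2$ into a sum of $(2n)$-type loops evaluated at repeated indices, which are controlled by the bootstrap hypothesis applied at one-higher loop length, losing only a polynomial factor in $W^{\tau'}$.

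The main obstacle, and where the key new input of the paper enters, is the $\mathcal{E}^{(\widetilde{G})}$ term in dimension $d=2$. The entrywise local law \eqref{Gt_bound+IND} only yields $|\langle \widetilde{G}_t E_b\rangle| \prec M_t^{-1/2}$, and a naive triangle inequality in the block sum $\sum_b \langle \widetilde{G}_t E_b\rangle S^{(B)}_{ab}$ appearing inside $\mathcal{E}^{(\widetilde{G})}$ loses a factor of $\ell_t$ that is precisely what is needed to close the induction. This is exactly what the new CLT-type estimate for polynomials in the entries of $G_t$ described in the introduction remedies: it gives square-root cancellation among distant blocks of $\widetilde{G}_t$ (as illustrated by the toy sum $\sum_{|y|\le\ell}(G_t)_{xy}$ in the introduction), supplying the missing $\ell_t^{-1}$ power. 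Finally, the entrywise local law \eqref{Gt_bound+IND} at time $t$ would be upgraded from its weakened bootstrap form via a Ward identity and fluctuation-averaging argument combined with the improved $2$-loop bound \eqref{Eq:Gdecay+IND} at time $t$; the off-diagonal decay in \eqref{Eq:Gdecay+IND} is what converts a tracial $2$-loop control into an entrywise statement on $\widetilde{G}_t$. The exponent $30$ in the condition \eqref{con_st_ind} then simply reflects the aggregate polynomial slack consumed across these self-improvement steps.
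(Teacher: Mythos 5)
Your broad outline---bootstrap via a stopping time, linearizing the loop hierarchy around $\mathcal{K}$, Duhamel with the propagator $\mathcal{U}_{s,t,\boldsymbol{\sigma}}$, BDG for the martingale, and an acknowledgment that some new cancellation must be extracted in $d=2$---matches the skeleton of the paper's five-step proof in Section \ref{Sec:Stoflo}. But there are three substantive gaps that would prevent the argument from closing.

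First, you never invoke the sum-zero operator $\mathcal{Q}_t$ of Definition \ref{Def:QtPt}. This is already essential in \cite{YY_25} for $d=1$: it is what improves the propagator bound from the lossy $\left(\ell_t/\ell_s\right)^{2}$ of \eqref{sum_res_1} down to $\ell_t/\ell_s$ (case 2 of Lemma \ref{lem:sum_decay}) when $\boldsymbol{\sigma}$ is the alternating sign vector. Without $\mathcal{Q}_t$ the power count in your Duhamel step is already off by two powers of $\ell_t/\ell_s$, and no amount of CLT-type cancellation in the forcing terms alone can repair that, because the loss sits in the propagator $\mathcal{U}$ acting on the forcing, not in the forcing itself.

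Second, you misplace the CLT-type estimate. You claim it gives square-root cancellation in the block sum $\sum_b \langle \widetilde G_t E_b\rangle S^{(B)}_{ab}$ inside $\mathcal{E}^{(\widetilde G)}$. But $S^{(B)}$ has range $1$, so for fixed $a$ that sum has $O(1)$ terms; there is nothing to cancel. Your quoted bound $\langle \widetilde G_t E_b\rangle \prec M_t^{-1/2}$ is also wrong: by \eqref{GavLGEX} the tracial object obeys the sharper $\prec M_t^{-1}$, and $M_t^{-1/2}$ is the entrywise bound \eqref{Gt_bound+IND}. In the paper's proof, the CLT-type estimate (Lemma \ref{clt-lemma}) enters through case 3 of Lemma \ref{lem:sum_decay}: it is an improved $\|\mathcal{U}_{u,t,\boldsymbol{\sigma}}\|_{\max\to\max}$ bound for tensors that are simultaneously fast-decaying, sum-zero, and \emph{mean-zero} (in the sense $\mathbb{Q}=1-\mathbb{E}$). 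The mean-zero operator $\mathbb{Q}$ and the decomposition of the $\mathcal{Q}_t$-applied hierarchy into \eqref{int_K-L+QQ} and \eqref{int_K-L+QE} is exactly the new structural move in $d=2$ that you do not anticipate.

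Third, you omit the companion symmetry argument (case 4 of Lemma \ref{lem:sum_decay}) that handles the $\mathbb{E}$-part \eqref{int_K-L+QE}, using the reflection invariance $\mathcal{A}_{a,\,a+s_2,\,\ldots}= \mathcal{A}_{a,\,a-s_2,\,\ldots}$ of the expectation of sum-zero loop errors. The CLT estimate applies only to the centered fluctuation; without the symmetry estimate, the $\mathbb{E}$-part retains the residual $\ell_t/\ell_s$ loss and the recursion in $n$ does not close. The pair of enhancements (mean-zero via CLT, symmetry for the expectation) are both needed precisely because $d=2$ forces you to save two powers of $\ell_t/\ell_s$ rather than one.
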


\begin{proof}[Proof of Lemmas \ref{ML:GLoop}, \ref{ML:GLoop_expec},  and \ref{ML:GtLocal}]
For any $\tau$ and  $t \le 1-N^{-1+\tau}$,  choose $\tau'>0$ and $n_0 \in \mathbb N$ such that $M_{t}^{-1}\le  W^{- 30 \tau'}$ and $(1-t) = W^{-n_0 \tau'}$. We also choose a sequence of time-scales given by $1-s_{k} = W^{-k \tau'}$ for $ k \le n_0$, where $n_{0}$ is chosen so that $ s_{n_0}=t$. Since  $M_{s}$ is decreasing in  $s\in [0,1]$, we have 
   $$
M_{s_{k+1}}^{-1}\le M_{t}^{-1}\le W^{- 30\tau'}
\le \left(\frac{1-s_{k}}{1-s_{k+1}}\right)^{30}
$$
for all $k$ such that  $k+1\le n_0$. We can now apply Theorem \ref{lem:main_ind} for $s=s_{k}$ for each $k$ inductively. (We note that $n_{0}=O(1)$, so the number of implicit $W^{\tau}$ factors we pick up from the meaning of $\prec$ is still $W^{\tau'}$ for $\tau'>0$ small.) This gives all the estimates in Lemmas \ref{ML:GLoop}, \ref{ML:GLoop_expec},  and \ref{ML:GtLocal} except for \eqref{Eq:L-KGt2}, which follows from \eqref{Eq:L-KGt} combined with the deterministic ${\cal K}$ estimate from Lemma \ref{ML:Kbound}.
\end{proof}

Theorem \ref{lem:main_ind} and Lemma \ref{ML:exp} will be proved in six steps. Their proofs are given in Section \ref{Sec:Stoflo}. The first five of these steps are meant to show Theorem \ref{lem:main_ind}. So, throughout the first five of these steps, we assume that the conditions of Theorem \ref{lem:main_ind} are satisfied. Each step builds on the conclusions of the preceding steps.

On the other hand, the last of these steps is separate in the sense that it is not part of the propagation idea used in the proofs of Lemmas \ref{ML:GLoop}, \ref{ML:GLoop_expec},  and \ref{ML:GtLocal} above.

\medskip 
\noindent 
\textbf{Step 1}   (A priori loop bounds): We will first show the following loop estimate:
 \begin{equation}\label{lRB1}
   {\cal L}_{u,\boldsymbol{\sigma}, \textbf{a}}\prec (\ell_u/\ell_s)^{2(n-1)}\cdot 
   M_{u}^{-n+1},\quad  s\le u\le t.
\end{equation}
We will also show the following weaker version of \eqref{Gt_bound+IND}:
\begin{equation}\label{Gtmwc}
    \|G_u-m\|_{\max}\prec  M_{u}^{-1/4},\quad s\le u\le t .
\end{equation}

 \bigskip
\noindent 
\textbf{Step 2}  (A priori $2$-loop decay): 
 Next, we show the following local law for $u\in [s,t]$: 
 \begin{equation}\label{Gt_bound_flow}
     \|G_u-m\|_{\max}\prec  M_{u}^{-1/2},\quad s\le u\le t.
\end{equation} 
Hence \eqref{Gt_bound+IND} in Theorem \ref{lem:main_ind} will follow.  
In addition,  for any  
 $\boldsymbol{\sigma}=(+,-)$,   $s\le u\le t$, and $D>0$, we show   
 \begin{equation}\label{Eq:Gdecay_w}
\left| {\cal L}_{u, \boldsymbol{\sigma}, \textbf{a}}-{\cal K}_{u, \boldsymbol{\sigma}, \textbf{a}}\right| \prec \left(\eta_s/\eta_u\right)^4\cdot M_{u}^{-2}\exp \left(- \frac{|a_1-a_2|_{L}^{1/2}}{\ell_u^{1/2}}\right)+W^{-D} . \quad 
\end{equation}
   \bigskip
 \noindent 
\textbf{Step 3}   (Sharp  loop  bounds): 
Next, we show the following \emph{sharp} estimate on loops:
\begin{equation}\label{Eq:LGxb}
\max_{\boldsymbol{\sigma}, \textbf{a}}\left| {\cal L}_{u, \boldsymbol{\sigma}, \textbf{a}} \right|
\prec 
M_{u}^{-n+1} ,\quad \quad s\le u\le t, \quad\forall\, n\in \mathbb N.
\end{equation}

 \bigskip
 \noindent 
\textbf{Step 4}  (A sharp  $\cal L - \cal K$  bound): 
We show a sharp bound on ${\cal L}_{t}-{\cal K}_{t}$, which gives \eqref{Eq:L-KGt+IND} in Theorem  \ref{lem:main_ind}:
\begin{equation}\label{Eq:L-KGt-flow}
 \max_{\boldsymbol{\sigma}, \textbf{a}}\left|{\cal L}_{u, \boldsymbol{\sigma}, \textbf{a}}-{\cal K}_{u, \boldsymbol{\sigma}, \textbf{a}}\right|\prec M_{u}^{-n},\quad \quad s\le u\le t, \quad \forall\, n\in \mathbb N.
\end{equation}

 \bigskip
 \noindent 
\textbf{Step 5}  (A sharp decay bound): For  $ \boldsymbol{\sigma}=(+,-)$, we show the following, which gives \eqref{Eq:Gdecay+IND} in Theorem  \ref{lem:main_ind}:
 \begin{equation}\label{Eq:Gdecay_flow}
\left| {\cal L}_{u, \boldsymbol{\sigma}, \textbf{a}}-{\cal K}_{u, \boldsymbol{\sigma}, \textbf{a}}\right| \prec   M_{u}^{-2}\exp \left(- \frac{|a_1-a_2|_{L}^{1/2}}{\ell_u^{1/2}}\right)+W^{-D}. 
\end{equation}

 \bigskip
 \noindent 
\textbf{Step 6}  (Showing Lemma \ref{ML:exp}): As we showed earlier, Theorem \ref{lem:main_ind} implies Lemmas \ref{ML:GLoop}, \ref{ML:GLoop_expec}, and \ref{ML:GtLocal}. We then use these results to prove Lemma \ref{ML:exp}.

\subsection{Notations}
Here we summarize  global notations used in this paper. 
\begin{itemize}
    \item $W$ is band width, and $L^{2}$ is the number of blocks, so that $N=W^{2}\cdot L^{2}$
    \item ${\cal I}^{(2)}_a$ is the $a$-th block, $[x]$ is the block containing $x$, and $E_a$ is the following matrix supported on ${\cal I}^{(2)}_a$: 
    $$
    x\in {\cal I}^{(2)}_{[x]}, \quad (E_a)_{xy}=\delta_{xy}\cdot W^{-2}\cdot {\bf 1}(x\in {\cal I}^{(2)}_a)$$
    \item The periodic distance on $\mathbb{Z}_L$ is denoted by $|\cdot|_L$ and the periodic $L^1$ distance on $\mathbb{Z}_L^2$ is defined for any $a = (a_1, a_2)\in\mathbb{Z}_L^2$ by
    $$
    |a| = |a|_L = |a_1|_L + |a_2|_L.
    $$
    
    \item The matrices $S\in \mathbb R^{N\times N}$, $S^{(B)}\in \mathbb R^{L\times L}$, $S_W\in \mathbb R^{W\times W}$ are all related to the variances of entries of the random matrix $H$, and $S= S^{(B)}\otimes S_{W}$.
\item In the stochastic flow, we typically omit the superscript $(E)$ for simplicity, so that
\[
z_t = z_t^{(E)}, \quad G_t := (\sqrt{t} H - z_t)^{-1}, \quad m = \lim_{\varepsilon \searrow 0} m_{sc}(E + i \varepsilon).
\]
Additionally, we use $G_t$ to mean $(H_t - z_t)^{-1}$, where $H_t$ has the same distribution as $\sqrt{t} H$. The context will make it clear which interpretation of $G_t$ is being used.
\item The length-scale $\ell_t$ is defined in \eqref{eq:bcal_k} as $\ell_t:=\hat \ell(t)=\min(|1-t|^{-1/2}, L)\sim\ell(z_{t})$.

    \item The notation $\cal L$ denotes $G$-loops, and $\cal K$ denotes its deterministic approximation (the ``primitive'),  and $\cal C$ denotes a ``$G$-chain" (see Definition \ref{def_Gchain}). 

    \item The matrix $\Theta_\xi = \Theta^{(B)}_{\xi}$ is given in Definition \ref{def_Theta}.

    \item The quantities $\Xi^{({\cal L})}$, $\Xi^{({\cal L-K})}$, $\Xi^{({\cal C})}$ are loops that are normalized to have heuristic size of order $1$:
    
    \begin{align}
\Xi^{({\cal L})}_{t, m}&:= \max_{\boldsymbol{\sigma}, \textbf{a}} 
\left|{\cal L}_{t, \boldsymbol{\sigma}, \textbf{a}} \right|\cdot M_{t}^{m-1} \cdot {\bf 1}(\sigma \in \{+,-\}^m),
\nonumber \\
\Xi^{({\cal L-K})}_{t, m}&:= \max_{\boldsymbol{\sigma}, \textbf{a}}
\left|{ ({\cal L-K})}_{t, \boldsymbol{\sigma}, \textbf{a}} \right|\cdot M_{t}^{m }\cdot {\bf 1}(\sigma \in \{+,-\}^m) ,
\nonumber \\
\Xi^{({{\cal C},\;diag })}_{t, m} &:= \max_{\boldsymbol{\sigma}, \textbf{a}} \max_{x} \left|\left({\cal C}^{(m)}_{t, \boldsymbol{\sigma}, \textbf{a}}\right)_{xx}\right| \cdot M_{t}^{m-1} \cdot {\bf 1}(\sigma \in \{+,-\}^m), \nonumber \\
\Xi^{({{\cal C},\;off})}_{t, m} &:= \max_{\boldsymbol{\sigma}, \textbf{a}} \max_{x\neq y}
\left|\left({\cal C}^{(m)}_{t, \boldsymbol{\sigma}, \textbf{a}}\right)_{xy}\right| \cdot M_{t}^{m-1/2} \cdot {\bf 1}(\sigma \in \{+,-\}^m).
\end{align}
\item The operators $\varTheta_{t,\boldsymbol{\sigma}}$ and ${\cal U}_{s, t,\boldsymbol{\sigma}}$ in Definition \ref{DefTHUST} appear in an ``integrated loop hierarchy" (Lemma \ref{Sol_CalL}). 
\item  The \(\cal G\) operators, such as \({\cal G}^{(a)}_k\), \({\cal G}^{(a),\,L}_{k,l}\), and \({\cal G}^{(a),\,R}_{k,l}\), are in Definition \ref{Def:oper_loop}. These represent the cutting and gluing of loops in the loop hierarchy.

\item  The \(\cal E\) terms represent lower-order terms in the (integrated) loop hierarchy \eqref{eq:mainStoflow} and \eqref{int_K-LcalE}. Specifically, \({\cal E}^{(M)}\) and \({\cal E}^{(\widetilde{G})}\) are defined in \eqref{def_Edif} and \eqref{def_EwtG}, respectively. The term \(\mathcal{E}^{((\mathcal{L}-\mathcal{K}) \times (\mathcal{L}-\mathcal{K}))}\) is defined in \eqref{def_ELKLK}. Additionally, \(\left( \mathcal{E} \otimes \mathcal{E} \right)\) and \(\left( \mathcal{E} \otimes \mathcal{E} \right)^{(k)}\) are defined in Definition \ref{def:CALE}.

\item  The quantity \({\cal T}^{(\cal L-\cal K)}_{t}\) is defined in \eqref{def_WTu}; it captures tail behavior of ${\cal L}-{\cal K}$. The quantity \({\cal T}_{t,D}\) represents a deterministic rough tail function, as defined in \eqref{def_WTuD}. 
\item The terms \({\cal J}_{u,D}\) and \(\cal J^*\) are introduced in \eqref{def_Ju} and \eqref{shoellJJ}, respectively, to describe the ratio between \({\cal T}^{(\cal L-\cal K)}_{t}\) and \({\cal T}_{t,D}\).

\item We often use an augmented length-scale $\ell_t^*=(\log W)^{3/2}\ell_t$. At this scale, $\Theta_t$ is exponentially small in $W$, but ${\cal T}_t$ is not.

\end{itemize}

\section{Basic properties of \texorpdfstring{$\cal K$}{K}}\label{Sec:CalK}

In this section, we record the exact properties about ${\cal K}$ that will be used in the analysis of the loop hierarchy. The results in this section are essentially taken from Section 3 of \cite{YY_25} with some slight modifications. 

We start with an estimate for a ``pure loop".
\begin{lemma}\label{lem_pureloop} Suppose that $\boldsymbol{\sigma}=(+,+\cdots +)$. We have 
\begin{align}\label{res_pureKes}\left|{\cal K}_{t, \boldsymbol{\sigma}, \textbf{a}}\right|\le C_n \exp\left(-c_n\max_{ij}|a_i-a_j|_{L}\right).
\end{align} 
\end{lemma}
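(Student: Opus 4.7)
The plan is to prove Lemma \ref{lem_pureloop} by induction on $n$, exploiting the fact that for $\boldsymbol{\sigma} = (+, \ldots, +)$, every propagator appearing in the solution of the primitive equation \eqref{pro_dyncalK} carries the common spectral parameter $\xi = tm^2$. The central analytic input is the uniform lower bound $|1 - tm^2| \geq c_\kappa > 0$ for all $t \in [0,1]$ and all $|E| \leq 2 - \kappa$. Since $|m| = 1$, one may write $m = e^{i\theta}$ with $\sin\theta = \im m = \tfrac{1}{2}\sqrt{4-E^2} \geq c_\kappa' > 0$. Then $|1 - tm^2|^2 = 1 - 2t\cos 2\theta + t^2$ is a convex quadratic in $t$ with minimum value $\sin^2 2\theta$ attained at $t = \cos 2\theta$. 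A brief case split on the sign of $\cos 2\theta$ (i.e.\ on whether $|E| \lessgtr \sqrt{2}$) yields the lower bound. Consequently $\hat\ell(tm^2) = O_\kappa(1)$, and Lemma \ref{lem_propTH} furnishes the pointwise estimate $|(\Theta^{(B)}_{tm^2})_{ab}| \leq C_\kappa \exp(-c_\kappa |a - b|_L)$, uniformly for $t \in [0,1]$.

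I would then run the induction. The base cases $n = 1$ (where ${\cal K}_{t, +, a} = m$) and $n = 2$ (where the explicit formula \eqref{Kn2sol} and the decay of $\Theta^{(B)}_{tm^2}$ give the bound directly) are immediate. For the inductive step, use the primitive ODE \eqref{pro_dyncalK}: its right-hand side is a sum over $1 \le k < l \le n$ of products of pure sub-loops of lengths $n_L := n - l + k + 1$ and $n_R := l - k + 1$, and the cut-and-glue operators preserve the all-plus signature, so each sub-loop is again pure. Separate the sum into (a) terms with $n_L, n_R < n$ and (b) self-referential terms: either $n_L = n$ (which forces $l = k+1$ and $n_R = 2$) or $n_R = n$ (which forces $k = 1,\, l = n,\, n_L = 2$). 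In case (a), the inductive hypothesis combined with the short-range kernel $W^2 S^{(B)}$ and the convolution bound $\sum_b e^{-c|a_i - b|_L - c|a_j - b|_L} \leq C e^{-c|a_i - a_j|_L / 2}$ gives an exponentially-decaying forcing in $D := \max_{ij}|a_i - a_j|_L$. In case (b), the unknown length-$n$ factor appears linearly, with coefficient equal to $W^2 S^{(B)}$ convolved with the explicit length-$2$ loop $W^{-2} m^2 \Theta^{(B)}_{tm^2}$; by the first paragraph this is an $O_\kappa(1)$ kernel with exponential decay at rate $c_\kappa$ independent of $L, W$.

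The resulting equation for ${\cal K}_{t, n, \cdot}$ is a linear inhomogeneous ODE on the (finite-dimensional) space of functions on $(\Z_L^2)^n$, with bounded time-dependent coefficients and exponentially decaying forcing and initial data. I would solve it in the weighted norm $\|f\|_\alpha := \sup_{\textbf{a}} e^{\alpha D(\textbf{a})} |f(\textbf{a})|$, for a small $\alpha = \alpha_n > 0$ chosen as a fixed fraction of $c_\kappa$. Checking that the coefficient operator has $\|\cdot\|_\alpha$-operator norm uniformly bounded in $t \in [0,1]$ (using $c_\kappa > \alpha$), that the forcing from (a) belongs to the same space, and that the initial condition $W^{-2(n+1)} m^n \mathbf{1}(a_1 = \cdots = a_n)$ (supported on $D = 0$) has bounded $\|\cdot\|_\alpha$-norm, an application of Gronwall's inequality yields $\|{\cal K}_{t, n, \cdot}\|_\alpha \leq C_n$ for all $t \in [0, 1]$, closing the induction.

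The main obstacle is the self-referential structure in case (b): at length $n$ the equation cannot be solved by pure Duhamel from smaller loops, and a genuine linear ODE must be handled. The decisive point is that the length-$2$ coefficient ${\cal K}_{t, 2, \cdot} = W^{-2} m^2 \Theta^{(B)}_{tm^2}$, when multiplied by the $W^2$ prefactor in the primitive equation, produces a coefficient kernel of $O_\kappa(1)$ size with exponential decay at a rate independent of $L$ and $W$. This is precisely what makes the weighted-norm bound uniform in $t$, and it encodes the ``massive'' (exponentially decaying) character of pure loops, in sharp contrast to the diffusive behaviour of generic mixed-sign loops.
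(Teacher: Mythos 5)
Your approach is genuinely different from the paper's. The paper's proof cites an explicit combinatorial representation from \cite{YY_25}: it writes ${\cal K}_{t,\boldsymbol{\sigma},\textbf{a}}$ as $\prod m(\sigma_i)\cdot W^{-2(n-1)}$ times a finite sum of terms $\Gamma_{\textbf{a}}$, each of which is a sum over $\textbf{b}\in(\Z_L^2)^n$ of explicit products of $\Theta^{(B)}_{tm^2}$ kernels; the lemma then follows by summing the pointwise exponential bounds on these kernels. You instead work directly from the primitive ODE \eqref{pro_dyncalK} via induction and a weighted-norm Gronwall argument. Your identification of the key analytic input — the uniform lower bound $|1-tm^2|\geq c_\kappa>0$, hence $\hat\ell(tm^2)=O_\kappa(1)$, hence $O(1)$-range exponential decay of $\Theta^{(B)}_{tm^2}$ — is correct and is exactly what the paper also relies on. Your structural decomposition of the right-hand side of \eqref{pro_dyncalK} into forcing terms (a) with $n_L,n_R<n$ and linear self-referential terms (b) (which occur precisely for $l=k+1$ or $(k,l)=(1,n)$) is also correct, and the weighted-norm operator estimate for case (b) is sound provided the decay rate $\alpha$ is taken small relative to $c_\kappa$.

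However, there is a genuine gap: your inductive hypothesis $|{\cal K}_{t,m,\cdot}|\leq C_m e^{-c_m D}$ discards the $W^{-2(m-1)}$ prefactor that ${\cal K}$ actually carries (cf.\ \eqref{Kn2sol} and \eqref{KKpi}), and as a consequence the case-(a) forcing does not have $O(1)$ weighted norm. The forcing has the explicit $W^2$ prefactor from \eqref{pro_dyncalK}, and under your IH the two sub-loop factors contribute only $O(e^{-cD(\textbf{a}_L)})\cdot O(e^{-cD(\textbf{a}_R)})$, so $\|F_t\|_\alpha = O(W^2)$ rather than $O(1)$. Gronwall then yields $\|{\cal K}_{t,n}\|_\alpha = O(W^2)$ at $n=4$ (the first $n$ with nonempty case (a)), not the claimed $O(1)$, and the $W^2$ losses would compound at each subsequent step. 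Note that you implicitly \emph{do} track the $W$ powers in case (b) — you observe that the $W^{-2}$ inside the length-$2$ loop cancels the $W^2$ prefactor to give an $O(1)$ coefficient kernel — but you do not carry the same bookkeeping through case (a), where the cancellation $W^2\cdot W^{-2(n_L-1)}\cdot W^{-2(n_R-1)}=W^{-2(n-1)}$ (using $n_L+n_R=n+2$) is what makes the sizes match. The fix is to strengthen the inductive hypothesis to $|{\cal K}_{t,m,\cdot}|\leq C_m W^{-2(m-1)}e^{-c_m D}$ (equivalently, run the Gronwall argument in the norm $W^{2(n-1)}\|\cdot\|_\alpha$); then the forcing has size $O(W^{-2(n-1)}e^{-c'D})$, the initial condition has size $O(W^{-2(n+1)})$, and the induction closes with the stated ($W$-independent) constants. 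With that modification your argument becomes a correct and essentially self-contained alternative to citing \cite{YY_25}'s closed-form representation.
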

\begin{proof}
We follow the proof of Corollary 3.5 in \cite{YY_25}. First, we use equations (3.5) and (3.6) in \cite{YY_25}; this gives 
\begin{align*}
{\cal K}_{t,\boldsymbol{\sigma}, \textbf{a}}=\prod_{i=1}^{n}m(\sigma_{i})\cdot W^{-2(n-1)}\sum_{\Gamma_{\textbf{a}}}\Gamma_{\textbf{a}}(t,\boldsymbol{\sigma}).
\end{align*}
The sum $\Gamma_{\textbf{a}}$ has a number of terms depending only on $n$, and each $\Gamma_{\textbf{a}}$ has the form
\begin{align*}
\Gamma_{\textbf{a}}(t,\boldsymbol{\sigma})&:=\sum_{\textbf{b}\in(\Z_{L}^{2})^{n}}\Gamma^{(\textbf{b)}}_{\textbf{a}}(t,\boldsymbol{\sigma}),\\
|\Gamma^{(\textbf{b})}_{\textbf{a}}(t,\boldsymbol{\sigma})|&\leq C_{n}\exp\left(-c_{n}\max_{i,j}|b_{i}-b_{j}|_{L}-c_{n}\max_{i,j}|a_{i}-b_{j}|_{L}\right).
\end{align*}
(Each $\Gamma^{(\textbf{b})}_{\textbf{a}}(t,\boldsymbol{\sigma})$ term is a product of $\Theta_{tm^{2}}$ matrices. The second line above, which is shown in the proof of Corollary 3.5 in \cite{YY_25}, relies only on the exponential decay of $\Theta_{tm^{2}}$, which follows from Lemma \ref{lem_propTH} and the bound $|1-tm^{2}|\geq c$ for some $c>0$.) It now suffices to combine the previous two displays.
\end{proof}
We now give a generalization of the Ward identity for both ${\cal L}$ and ${\cal K}$. This is Lemma 3.6 in \cite{YY_25}.

\begin{lemma}\label{lem_WI_K} Fix a loop ${\cal L}_{t, \boldsymbol{\sigma}, \textbf{a}}$ with 
$\sigma_1=+$ and $\sigma_n=-$. We have
\begin{equation}\label{WI_calL}
  \sum_{a_n}{\cal L}_{t, \boldsymbol{\sigma}, \textbf{a}}=
\frac{1}{2W^{2}\eta_t}\left({\cal L}_{t, \; \boldsymbol{\sigma}+,  \; \textbf{a}/a_n}- {\cal L}_{t, \; \boldsymbol{\sigma}-,  \; \textbf{a}/a_n}\right)  
\end{equation}
\begin{equation}\label{WI_calK}
  \sum_{a_n}{\cal K}_{t, \boldsymbol{\sigma}, \textbf{a}}=
\frac{1}{2W^{2}\eta_t}\left({\cal K}_{t, \; \boldsymbol{\sigma}+,  \; \textbf{a}/a_n}
- {\cal K}_{t, \; \boldsymbol{\sigma}-,  \; \textbf{a}/a_n}\right)  
\end{equation}
where 
\begin{itemize}
\item  $\boldsymbol{\sigma}\pm$ is the length $n-1$ sign vector obtained by removing $\sigma_n$ from $\boldsymbol{\sigma}$ and replacing $\sigma_1$ with $\pm$, i.e., 
    $$
\boldsymbol{\sigma}\pm=(\pm, \sigma_2, \sigma_3,\cdots \sigma_{ n-1}).$$
\item  $\textbf{a}/a_n$ is obtained by removing $a_n$ from $\textbf{a}$:
 $$
\textbf{a}/a_n=(a_1, a_2,a_3,\cdots, a_{n-1})
$$
\end{itemize}
\end{lemma}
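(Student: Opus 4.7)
The plan for \eqref{WI_calL} is a direct algebraic manipulation based on the standard resolvent Ward identity. By cyclic invariance of the trace, the loop can be rewritten as
$$\mathcal{L}_{t,\boldsymbol{\sigma},\mathbf{a}} = \mathrm{tr}\bigl(G_t^\dagger E_{a_n} G_t\,E_{a_1} G_t(\sigma_2) E_{a_2}\cdots G_t(\sigma_{n-1}) E_{a_{n-1}}\bigr),$$
using $\sigma_1 = +$ and $\sigma_n = -$. Definition \eqref{Def_matE} and the fact that $\{\mathcal{I}^{(2)}_a\}_a$ partitions $\mathbb{Z}_{WL}^2$ give $\sum_{a_n} E_{a_n} = W^{-2} I_N$, so summing the loop over $a_n$ collapses the middle factor to $W^{-2}\, G_t^\dagger G_t$. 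The resolvent identity
$$G_t - G_t^\dagger = (z_t - \bar z_t)\, G_t^\dagger G_t = 2i\eta_t\, G_t^\dagger G_t$$
then lets me write $G_t^\dagger G_t = (2i\eta_t)^{-1}(G_t - G_t^\dagger)$, and splitting the resulting trace into its two pieces yields loops of length $n-1$ with first sign $+$ and $-$ respectively, matching $\mathcal{L}_{t,\boldsymbol{\sigma}+,\mathbf{a}/a_n}$ and $\mathcal{L}_{t,\boldsymbol{\sigma}-,\mathbf{a}/a_n}$ from Definition \ref{Def:G_loop}.

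The plan for \eqref{WI_calK} is to establish the deterministic shadow of the same identity. The key input is that $S^{(B)}$ is stochastic, so by the random-walk representation (property (4) in Lemma \ref{lem_propTH}) one has the row sum $\sum_b (\Theta^{(B)}_\xi)_{ab} = (1-\xi)^{-1}$. I will proceed by induction on $n$. For $n = 2$, formula \eqref{Kn2sol2} gives $\sum_{a_2} \mathcal{K}_{t,(+,-),(a_1,a_2)} = W^{-2}|m|^2(1-t|m|^2)^{-1}$; combined with $|m| = 1$, $\eta_t = (1-t)\,\mathrm{Im}\,m$, and $m - \bar m = 2i\,\mathrm{Im}\,m$, this exactly matches the required difference $\mathcal{K}_{t,+,a_1} - \mathcal{K}_{t,-,a_1}$ divided by $2i\eta_t W^2$. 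For the inductive step, I will use the explicit tree representation of $\mathcal{K}$ obtained from iterating the primitive equation \eqref{pro_dyncalK}: summing a leaf index $a_n$ turns the corresponding $\Theta^{(B)}$-row into a factor $(1 - t\, m_{n-1} m(\sigma_n))^{-1}$, which is precisely the factor generated by differencing the two sides of \eqref{WI_calK}.

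The proof is essentially algebraic and mirrors Lemma 3.6 of \cite{YY_25}. The only mildly delicate point is the bookkeeping in the inductive step for \eqref{WI_calK}, namely tracking how $\sum_{a_n}$ interacts with the cut-and-glue operators $\mathcal{G}^{(a), L}_{k,l}$ and $\mathcal{G}^{(a), R}_{k,l}$ inside the tree expansion of $\mathcal{K}$. A cleaner alternative is to observe that both sides of \eqref{WI_calK} satisfy the same linear ODE in $t$ derived from \eqref{pro_dyncalK} and agree at $t = 0$, where both reduce to elementary products of $m(\sigma_k)$ under the constraint $a_1 = \cdots = a_{n-1}$; this reduces the claim to an initial-value verification. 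Neither route presents a real obstacle.
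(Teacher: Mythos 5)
Your proposal is correct and follows essentially the same route as the paper, which simply defers to Lemma 3.6 of \cite{YY_25}: the Ward identity gives \eqref{WI_calL}, while \eqref{WI_calK} is checked by the explicit formulas for $n=2,3$ and by the primitive equation \eqref{pro_dyncalK} for $n\geq4$ --- the latter being exactly the ODE-comparison alternative you describe. One caveat: your own computation correctly produces the prefactor $(2iW^2\eta_t)^{-1}$, so the formula as stated --- and the paper's parenthetical $GG^\dagger=\frac{1}{2\eta}(G-G^\dagger)$ --- is missing a factor of $i$, which you should flag rather than silently absorb when claiming the $n=2$ base case ``exactly matches.''
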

\begin{proof}
As in the proof of Lemma 3.6 in \cite{YY_25}, \eqref{WI_calL} follows by the usual Ward identity for Green's functions, i.e. $GG^{\dagger}=\frac{1}{2\eta}(G-G^{\dagger})$. The proof of \eqref{WI_calK} in the case $n=2,3$ is an algebraic computation using the explicit formulas for ${\cal K}$. For $n\geq4$, it uses only \eqref{pro_dyncalK}. Both are given in the proof of Lemma 3.6 in \cite{YY_25} with no dimension dependence (i.e. no dependence on estimates of $\Theta^{(B)}$), so we omit them here.
\end{proof}

The proof of Corollary 3.7 in \cite{YY_25} now gives the following as a consequence of the Ward identity \eqref{WI_calK}.

\begin{corollary}\label{lem_sumAinK}
    Under the assumptions of Lemma \ref{lem_WI_K}, we have 
\begin{align}\label{sumallAinK}
      \sum_{a_2}\cdots\sum_{a_{n-1}}\sum_{a_n}{\cal K}_{t, \boldsymbol{\sigma}, \textbf{a}}=O([W^{2}\eta_t]^{-n+1})
\end{align}
 \end{corollary}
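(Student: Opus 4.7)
My plan is to prove Corollary \ref{lem_sumAinK} by induction on the loop length $n$, iteratively applying the Ward identity \eqref{WI_calK} to peel off one summed index at a time, and invoking Lemma \ref{lem_pureloop} to handle the degenerate case where the reduced sign vector collapses to a single sign during the induction.

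For the base case $n=2$, I would use the explicit formula \eqref{Kn2sol2}. Since $S^{(B)}$ is doubly stochastic we have $\sum_b (\Theta^{(B)}_\xi)_{ab} = (1-\xi)^{-1}$; combined with $|m|=1$ and $1-t \asymp \eta_t$ in the bulk (because $|E|\le 2-\kappa$ keeps $\im m^{(E)}$ bounded below), this yields
\[
\sum_{a_2}{\cal K}_{t,(+,-),(a_1,a_2)} = W^{-2}|m|^2(1-t|m|^2)^{-1} = O((W^2\eta_t)^{-1}),
\]
matching the target.

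For the inductive step, the strengthened hypothesis I would carry is that for any loop of length $k\ge 2$ with both $+$ and $-$ signs, the sum over all but one of its indices is $O((W^2\eta_t)^{-(k-1)})$. Applying \eqref{WI_calK} to sum over $a_n$ introduces the prefactor $(2W^2\eta_t)^{-1}$ and reduces the task to bounding $\sum_{a_2,\ldots,a_{n-1}} {\cal K}_{t,\boldsymbol{\sigma}\pm,\textbf{a}/a_n}$, where the reduced sign vector $\boldsymbol{\sigma}\pm = (\pm,\sigma_2,\ldots,\sigma_{n-1})$ has length $n-1$. If $\boldsymbol{\sigma}\pm$ still contains both signs, the inductive hypothesis gives $O((W^2\eta_t)^{-(n-2)})$. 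If instead $\boldsymbol{\sigma}\pm$ is a pure sign vector (all $+$ or all $-$), Lemma \ref{lem_pureloop} gives $|{\cal K}_{t,\boldsymbol{\sigma}\pm,\textbf{a}/a_n}| \le C_n \exp(-c_n\max_{ij}|a_i-a_j|_L)$, and summing the exponentially decaying right-hand side over $a_2,\ldots,a_{n-1}$ yields $O(1)$, which is strictly better than the inductive target. In either case, multiplying by the Ward prefactor produces the claimed bound $O([W^2\eta_t]^{-(n-1)})$, closing the induction.

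The only delicate bookkeeping is that, in the mixed-sign subcase, the Ward identity \eqref{WI_calK} must actually apply to the reduced loop. Two symmetries make this routine: translation invariance of ${\cal K}$ in $\textbf{a}$ (inherited from the translation invariance of $\Theta^{(B)}$ in Lemma \ref{lem_propTH}) lets me shift the ``fixed'' index arbitrarily, while cyclic invariance of ${\cal K}$ (inherited from the cyclic structure of \eqref{pro_dyncalK} and its initial condition) lets me relabel positions of the sign vector. A mixed-sign cyclic binary vector has an even, hence at least two, number of sign changes, so at least one sign change avoids the fixed index; cyclic rotation then places it at the final two positions so that \eqref{WI_calK} applies verbatim. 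No dimension-dependent estimate enters the argument.
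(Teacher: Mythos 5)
Your overall strategy --- inducting on $n$ via the Ward identity \eqref{WI_calK} and handling the pure-sign reduction separately --- is the right one, but the pure-sign step has a genuine gap. Lemma \ref{lem_pureloop} \emph{as stated} gives $|{\cal K}_{t,\boldsymbol{\sigma}',\textbf{a}'}| \le C_k\exp(-c_k\max_{ij}|a_i'-a_j'|_L)$ with no $W$-dependence, so the sum over the remaining free indices of a length-$(n-1)$ pure loop is only $O(1)$; after the Ward prefactor $(2W^2\eta_t)^{-1}$ you conclude $O((W^2\eta_t)^{-1})$, which you assert is dominated by the target $O((W^2\eta_t)^{-n+1})$. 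That requires $W^2\eta_t = O(1)$, which is false in general: for $t$ bounded away from $1$ (for instance $t=0$), $\eta_t \asymp \Im m^{(E)} \asymp 1$, so $W^2\eta_t \asymp W^2$ and $(W^2\eta_t)^{-1}$ exceeds $(W^2\eta_t)^{-n+1}$ by $W^{2(n-2)}$ when $n\ge3$. The pure case genuinely arises (e.g.\ $\boldsymbol{\sigma}=(+,+,\dots,+,-)$ reduces to $(+,\dots,+)$ after one application of Ward), so this is not vacuous.

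The missing ingredient is the prefactor $W^{-2(k-1)}$ that a length-$k$ pure ${\cal K}$ carries: it is explicit in the decomposition \eqref{KKpi} and is used inside the proof of Lemma \ref{lem_pureloop}, just dropped from that lemma's statement. With it, the pure contribution at level $k=n-1$ sums to $O(W^{-2(n-2)})$, and the Ward prefactor then gives $O(W^{-2(n-1)}\eta_t^{-1}) = \eta_t^{n-2}\cdot O((W^2\eta_t)^{-n+1})$, which is $O((W^2\eta_t)^{-n+1})$ since $\eta_t\le\Im m^{(E)}\le 1$; with this, the induction closes. A smaller concern: to place the Ward identity at a sign change avoiding the fixed index you need, besides cyclic invariance of ${\cal K}$, the conjugation identity ${\cal K}_{t,\bar{\boldsymbol{\sigma}},\textbf{a}}=\overline{{\cal K}_{t,\boldsymbol{\sigma},\textbf{a}}}$, because the available sign change may run $+\to-$ rather than the $-\to+$ that the convention $\sigma_1=+,\,\sigma_n=-$ in \eqref{WI_calK} demands (take $\boldsymbol{\sigma}'=(-,+,\dots,+)$ with $a_1$ fixed). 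Neither symmetry is stated in the paper, and \eqref{pro_dyncalK} is not manifestly cyclic because the $L$/$R$ cut-and-glue operators single out the chain containing $E_{a_n}$. Both symmetries do hold and can be read off \eqref{KKpi}, but your invocation of ``the cyclic structure of \eqref{pro_dyncalK}'' skips a verification the paper does not perform.
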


We now conclude with an estimate on ${\cal K}$. To state it, we need to recall a decomposition of the primitive ${\cal K}$ in terms of simpler objects. First, fix a length $n$ for the ${\cal K}$ loop. We have the decomposition
\begin{equation}\label{KKpi}
  {\cal K}_{t, \boldsymbol{\sigma}, \textbf{a}} =W^{-2(n-1)}\cdot m_{\boldsymbol{\sigma}}\cdot \sum_{\pi}{\cal K}_{t, \boldsymbol{\sigma}, \textbf{a}} ^{(\pi)},\quad m_{\boldsymbol{\sigma}}=\prod_i m_i. 
\end{equation}
where the notation is explained as follows. First, $\pi$ a collection of sets of the form $\{i,j\}$ with $i,j\in\Z_{n}$. (The exact constraints on $\pi$ are given in Definition 3.9 in \cite{YY_25}. We do not record them here because these constraints will not be important for us to explicitly use; in \cite{YY_25}, they are used to prove certain estimates that depend only on Lemma \ref{lem_propTH} for $|1-\xi|,\hat{\ell}(\xi)>c$ for some $c>0$, which we can therefore borrow directly.) The summands are given as follows:
\begin{align}
{\cal K}^{(\pi)}(t,\boldsymbol{\sigma},\textbf{a}):=\sum_{\textbf{d}\in(\Z_{L}^{2})^{d}}\sum_{\textbf{c}\in(\Z_{L}^{2})^{2(M-2)}}\left(\prod_{i=1}^{n}\left(\Theta^{(B)}_{tm_{i}m_{i+1}}\right)_{a_{i}d_{i}}\right)\cdot\prod_{k=1}^{M-1}\left(\Theta^{(B)}_{t|m|^{2}}-1\right)_{c_{2k-1}c_{2k}}\cdot\prod_{k=1}^{M}\Sigma^{(\emptyset)}(t,\boldsymbol{\sigma}^{(k)},\textbf{d}^{(k)}).
\end{align}
Above, $M=O(1)$ is a number determined only by $n,\pi$. The $\boldsymbol{\sigma}^{(k)}$ and $\textbf{d}^{(k)}$ are determined by $\boldsymbol{\sigma},\textbf{d},\pi,k$. However, the only properties that we will need for $\Sigma^{(\emptyset)}(t,\boldsymbol{\sigma}^{(k)},\textbf{d}^{(k)})$ that we will need are the following facts, whose proofs use only $\Theta^{(B)}$ bounds of the form in Lemma \ref{lem_propTH} for $|1-\xi|,\hat{\ell}(\xi)>c$ for some $c>0$ (and can therefore be directly cited).
\begin{itemize}
\item Suppose $n\geq4$ is even, and define the alternating sign vector $\boldsymbol{\sigma}^{(alt)}=(\sigma^{(alt)}_{1},\ldots,\sigma^{(alt)}_{n})$ via
\begin{equation}\label{assum_SZ_SinMole}
1\le k\le n, \quad \quad {\sigma}^{(alt)}_k =\begin{cases}
    +, \quad k\in 2\mathbb Z-1
    \\\nonumber
    -, \quad k\in 2\mathbb Z 
\end{cases}\; \;,\quad t\in[0,1]  ,  
\end{equation}
We have the following \emph{sum zero property}, which follows from the proof of Lemma 3.10 in \cite{YY_25}:
\begin{equation}\label{SZjadljsk}
   \sum_{d_2,\,\cdots,\,d_n}\left(\Sigma^{(\emptyset)}(t, \boldsymbol{\sigma}^{(alt)}, \textbf{d})\right)=O(\eta_t) 
\end{equation}
\item We have the short-range exponential bound below (which comes from (3.43) in \cite{YY_25}):
\begin{align}\label{res_SIGempdd}
  \Sigma^{(\emptyset)}(t, \boldsymbol{\sigma}, \textbf{d})\le C_n\exp\left(-c_n \max_{ij}|d_i-d_j|_{L}\right)   .
\end{align}
(This comes from exponential decay of $\Theta^{(B)}$ in Lemma \ref{lem_propTH}). However, in principle, all we need is that the sum of $|\Sigma^{(\emptyset)}(t,\boldsymbol{\sigma},\textbf{d})|$ over $d_{2},\ldots,d_{n}$ is $O(1)$, which follows from $\sum_{b}(\Theta^{(B)}_{\xi})_{ab}=(1-\xi)^{-1}$.) 
\item Fix $d_{1}$, and write $s_{i}=d_{i}-d_{1}$ for $i=2,\ldots,n$, and $g(\textbf{s}):=\Sigma^{(\emptyset)}(t,\boldsymbol{\sigma},\textbf{d})$. We have $g(\textbf{s})=g(-\textbf{s})$; see the proof of Lemma 3.11 in \cite{YY_25}.
\end{itemize}

The following is analogous to Lemma 3.11 in \cite{YY_25}. We explain the differences in the proofs of the result below and of Lemma 3.11 in \cite{YY_25}, namely the dimension-dependent parts.

\begin{lemma}\label{ML:Kbound+pi}
Recall $M_{t}:=W^{2}\ell_{t}^{2}\eta_{t}$. We have the estimate
\begin{equation}\label{eq:bcal_k_2}
  {\cal K}_{t, \boldsymbol{\sigma}, \textbf{a}} \prec M_{t}^{-n+1}.    
\end{equation}
\end{lemma}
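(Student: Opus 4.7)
The plan is to follow the proof of Lemma 3.11 in \cite{YY_25} and focus on the changes due to the higher dimension. Applying the decomposition \eqref{KKpi} and noting that the sum over $\pi$ has only $O(1)$ terms, the bound $\mathcal{K}_{t,\boldsymbol{\sigma},\textbf{a}} \prec M_t^{-(n-1)}$ reduces to showing that for each $\pi$,
\begin{equation*}
|\mathcal{K}^{(\pi)}_{t,\boldsymbol{\sigma},\textbf{a}}| \prec \ell_t^{-2(n-1)}\eta_t^{-(n-1)},
\end{equation*}
which when multiplied by the prefactor $W^{-2(n-1)}$ in \eqref{KKpi} produces exactly $M_t^{-(n-1)} = W^{-2(n-1)}\ell_t^{-2(n-1)}\eta_t^{-(n-1)}$.

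The key inputs are (i) the propagator estimates from Lemma \ref{lem_propTH}, namely $|(\Theta^{(B)}_\xi)_{ab}| \prec \eta_t^{-1}\ell_t^{-2}$ pointwise and the corresponding row-sum bound $\sum_b |(\Theta^{(B)}_\xi)_{ab}| \prec \eta_t^{-1}$ (with the binding case being $\xi=t|m|^2$, for which $|1-\xi|\asymp\eta_t$ and $\hat\ell(\xi)\sim\ell_t$; the other choices $\xi=tm^2,t\bar m^2$ give uniformly $O(1)$ bounds, which are strictly better); and (ii) the short-range property \eqref{res_SIGempdd} of $\Sigma^{(\emptyset)}$, which allows each of the $M=O(1)$ molecules to be collapsed to a single representative internal vertex at $O(1)$ cost after summing over its remaining $d$-indices.

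Once each molecule is collapsed, $\mathcal{K}^{(\pi)}$ reduces to a product of $n$ outer propagators $\Theta^{(B)}_{tm_im_{i+1}}$ linking each external $a_i$ to an internal representative, together with $M-1$ inner propagators $\Theta^{(B)}_{t|m|^2}-1$ connecting the representatives. Summing over the remaining indices using an appropriate mix of pointwise bounds (where an index is still pinned by some external $a_i$) and row-sum bounds (for free internal indices), exactly as in \cite{YY_25}, yields the claimed estimate. The main obstacle is the combinatorial bookkeeping that keeps track of which propagator receives which bound, so that the final count produces exactly the $\ell_t^{-2(n-1)}\eta_t^{-(n-1)}$ scaling rather than a weaker estimate. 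This bookkeeping is carried out in detail in \cite{YY_25}; the only difference in the present setting is that each pointwise application of Lemma \ref{lem_propTH} now costs $\ell_t^{-2}$ rather than the $\ell_t^{-1}$ appearing in the $d=1$ analysis, and the prefactor from \eqref{KKpi} is $W^{-2(n-1)}$ instead of $W^{-(n-1)}$, so that all factors of $W$ and $\ell_t$ combine to give $M_t = W^2\ell_t^2\eta_t$ instead of $W\ell_t\eta_t$.
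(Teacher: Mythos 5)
There is a genuine gap. Your reduction via \eqref{KKpi} and your use of the short‑range property \eqref{res_SIGempdd} to collapse each molecule are correct, and you are right that the pointwise propagator estimate $|(\Theta^{(B)}_{t|m|^2})_{ab}|\prec \eta_t^{-1}\ell_t^{-2}$ and the row‑sum bound $\prec\eta_t^{-1}$ are the basic inputs. But your description of the key step — ``an appropriate mix of pointwise bounds and row‑sum bounds'' — does not close the argument when $\boldsymbol{\sigma}$ is the alternating sign vector $\boldsymbol{\sigma}^{(alt)}$. In that case all $n$ outer propagators are $\Theta^{(B)}_{t|m|^2}$: taking one free index $d_1$, applying a row‑sum bound to one propagator costs $\eta_t^{-1}$, and pointwise bounds on the remaining $n-1$ propagators cost $(\ell_t^2\eta_t)^{-(n-1)}$. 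With the $\Sigma^{(\emptyset)}$ decay confining the other $d_i$ near $d_1$, the total is $\eta_t^{-n}\ell_t^{-2(n-1)}$, which misses the target $(\ell_t^2\eta_t)^{-(n-1)}=\eta_t^{-(n-1)}\ell_t^{-2(n-1)}$ by a full factor of $\eta_t^{-1}$. The proof of \eqref{eq:bcal_k_2} in the paper must therefore split into a non‑alternating case (where one factor $\Theta^{(B)}_{tm^2}$ has $O(1)$ entries, trivially saving $(\ell_t^2\eta_t)^{-1}$) and the alternating case, and for the latter it uses the sum‑zero property \eqref{SZjadljsk} of $\Sigma^{(\emptyset)}$, the symmetry $g(\mathbf{s})=g(-\mathbf{s})$, and a discrete Taylor expansion of each propagator $f=f_0+f_1+f_2$ whose leading $f_0$-only term is annihilated by sum‑zero. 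None of this appears in your sketch.

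Your closing claim that ``the only difference in the present setting is that each pointwise application of Lemma \ref{lem_propTH} now costs $\ell_t^{-2}$ rather than $\ell_t^{-1}$'' is also too optimistic. The derivative pieces $f_1,f_2$ of the Taylor expansion have a structurally different form in $d=2$ than in $d=1$, reflecting the change of the Laplacian Green's function from $|x|$ to $\log|x|$: the paper needs $f_1\prec (|a_i-d_1|_L+1)^{-1}+\ell_t^{-2}\eta_t^{-1/2}$ and $f_2\prec(|a_i-d_1|_L^2+1)^{-1}+\ell_t^{-2}$, which are genuinely new estimates coming from \eqref{prop:BD1}--\eqref{prop:BD2}. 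It is only after establishing these two‑dimensional derivative bounds that the bookkeeping from \cite{YY_25} can be re‑run, and the convergence of the resulting sums in $d_1$ (giving the final $(\min_k|a_k-d_1|_L^2+1)^{-1}$ tail in \eqref{spwow3}) is precisely what keeps the $\Theta^{(B)}_{t|m|^2}$ sum over $d_1$ from costing more than $\eta_t^{-1}\cdot\eta_t$. You cannot simply swap $\ell_t\to\ell_t^2$ everywhere.
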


\begin{proof}
In the case $n=2$, this follows by \eqref{Kn2sol}, so we focus on $n\geq3$ for the rest of this argument. Moreover, we can assume that $\boldsymbol{\sigma}$ does not consist of just one sign, i.e. it is not pure, since the estimate at hand follows by Lemma \ref{lem_pureloop} in this case. By \eqref{KKpi}, it suffices to show
\begin{equation}\label{eq:bcal_k_pi}
  {\cal K}^{(\pi)}_{t, \boldsymbol{\sigma}, \textbf{a}} \prec (\ell_t^{2}\eta_t)^{-n+1} .    
\end{equation}
We first assume that $\pi=\emptyset$. Assume without loss of generality (after re-indexing) that $\sigma_{1}\neq\sigma_{2}$. We have 
\begin{align*}
{\cal K}^{(\emptyset)}_{t, \boldsymbol{\sigma}, \textbf{a}}
=\;&
 \sum_{\textbf{d}}\left(\Sigma^{(\emptyset)}(t, \boldsymbol{\sigma}, \textbf{d})\right)
\cdot 
\prod_{i=1}^n \left(\Theta^{(B)}_{tm_im_{i+1}}\right)_{a_i, d_i}
\\\nonumber
=\;&\sum_{d_1}\left(\Theta^{(B)}_{t|m|^2}\right)_{a_1, d_1}
\cdot \sum_{d_2,\,\cdots,\, d_n}\left(\Sigma^{(\emptyset)}(t, \boldsymbol{\sigma}, \textbf{d})\right)
\cdot 
\prod_{i=2}^n \left(\Theta^{(B)}_{tm_im_{i+1}}\right)_{a_i, d_i}  
\end{align*}
We claim that to show \eqref{eq:bcal_k_pi} for $\pi=\emptyset$, it suffices to show
\begin{align}\label{spwow3}
\sum_{d_2, \cdots, d_n}\left(\Sigma^{(\emptyset)}(t, \boldsymbol{\sigma}, \mathbf{d})\right) \cdot \prod_{i=2}^n\left(\Theta_{t m_i m_{i+1}}^{(B)}\right)_{a_i, d_i} \prec (\ell^2_t \eta_t)^{-n+2} \cdot\left(\min _{2 \leq k \leq n}\left|a_k-d_1\right|_{L}^2+1\right)^{-1}+(\ell_{t}^{2}\eta_{t})^{-n+1}\eta_{t},
\end{align}
Indeed, we can bound entries of $\Theta^{(B)}_{t|m|^{2}}$ by $\ell_{t}^{-2}\eta_{t}^{-1}$ using Lemma \ref{lem_propTH}, so that
\begin{align*}
&[\ell^2_t \eta_t]^{-n+2}\sum_{d_1}\left(\Theta^{(B)}_{t|m|^2}\right)_{a_1, d_1} \cdot\left(\min _{2 \leq k \leq n}\left|a_k-d_1\right|_{L}^2+1\right)^{-1}\\
&=[\ell^2_t \eta_t]^{-n+1}\sum_{d_{1}}\left(\min _{2 \leq k \leq n}\left|a_k-d_1\right|_{L}^2+1\right)^{-1}\prec [\ell^2_t \eta_t]^{-n+1}.
\end{align*}
On the other hand, the column and row sums of $\Theta^{(B)}_{t|m|^{2}}$ are $O(\eta_{t}^{-1})$ by Lemma \ref{lem_propTH}, so
\begin{align*}
[\ell_{t}^{2}\eta_{t}]^{-n+1}\eta_{t}^{-1}\sum_{d_{1}}\Big(\Theta^{(B)}_{t|m|^{2}}\Big)_{a_{1}d_{1}}\prec [\ell_{t}^{2}\eta_{t}]^{-n+1}.
\end{align*}
To prove \eqref{spwow3}, we follow the proof of Lemma 3.11 in \cite{YY_25} and first assume that $\sigma_{j}=\sigma_{j+1}$ for some $j>1$. (Here, $n+1$ means $1$.) By Lemma \ref{lem_propTH}, we have 
\begin{align*}
    (\Theta^{(B)}_{tm_{j}m_{j+1}})_{a_{j}d_{j}}&=(\Theta^{(B)}_{tm^{2}})_{a_{j}d_{j}}\prec1,\\
    \prod_{i\neq1,j}(\Theta^{(B)}_{tm_{i}m_{i+1}})_{a_{i}d_{i}}&\prec[\ell_{t}^{2}\eta_{t}]^{-n+2}.
\end{align*}
By \eqref{res_SIGempdd}, the display \eqref{spwow3} then follows (with actually an exponential decay instead of polynomial decay).

We are left with the case $\boldsymbol{\sigma}=\boldsymbol{\sigma}^{(alt)}$. Following the proof of Lemma 3.11 in \cite{YY_25}, we write
\begin{align}
\sum_{d_{2},\ldots,d_{n}}\Sigma^{(\emptyset)}(t,\boldsymbol{\sigma},\textbf{d})\cdot\prod_{i=2}^{n}\left(\Theta^{(B)}_{tm_{i}m_{i+1}}\right)_{a_{i}d_{i}}&=\sum_{s_{2},\ldots,s_{n}}g(\textbf{s})\prod_{i=2}^{n}f(a_{i},s_{i})\\
&=\sum_{s_{2},\ldots,s_{n}}g(\textbf{s})\prod_{i=2}^{n}\sum_{\xi_{i}=0,1,2}f_{\xi_{i}}(a_{i},s_{i}),
\end{align}
where for any $d_{1},\textbf{a}$, we introduced the notation
\begin{align*}
\textbf{s}&=(s_{2},\ldots,s_{n}):=(d_{2}-d_{1},\ldots,d_{n}-d_{1})\\
g(\textbf{s})&:=\Sigma^{(\emptyset)}(t,\boldsymbol{\sigma},\textbf{d}),\\
f(a_{i},s_{i})&:=\left(\Theta^{(B)}_{t|m|^{2}}\right)_{a_{i},(d_{1}+s_{i})},\\
f_{0}(a_{i},s_{i})&:=f(a_{i},0)\\
f_{1}(a_{i},s_{i})&:=\frac{f(a_{i},s_{i})-f(a_{i},-s_{i})}{2}\\
f_{2}(a_{i},s_{i})&:=\frac12 f(a_{i},s_{i})+\frac12 f(a_{i},-s_{i})-f(a_{i},0).
\end{align*}
By Lemma \ref{lem_propTH}, we have the regularity bounds
$$\begin{aligned} & f_0\left(a_i, s_i\right)\prec \ell^{-2}_t \,\eta_t^{-1} \\ & f_1\left(a_i, s_i\right) \prec \frac{1}{\left|a_i-d_1\right|_{L}+1}+\frac{1}{\ell_{t}^{2}\eta_{t}^{1/2}} \\ & f_2\left(a_i, s_i\right)\prec \frac{1}{\left|a_i-d_1\right|_{L}^{2}+1}+\frac{1}{\ell_{t}^{2}}.\end{aligned}$$

We now follow the same reasoning in the proof of Lemma 3.11 in \cite{YY_25} to deduce \eqref{spwow3}, so \eqref{eq:bcal_k_pi} follows for $\pi=\emptyset$. The proof for general $\pi$ follows by an inductive argument given at the end of the proof of Lemma 3.11 in \cite{YY_25}.
\end{proof}

\section{\texorpdfstring{$G$}{G}-chains and their estimates}
Our analysis of the loop hierarchy in Lemma \ref{lem:SE_basic} will require estimates on the following ``chains".

\begin{definition}[$G$-chains]\label{def_Gchain}
Fix signs and indices as follows: 
\[
\boldsymbol{\sigma} = (\sigma_1, \sigma_2, \ldots, \sigma_n), \quad \textbf{a} = (a_1, a_2, \ldots, a_{n-1}), \quad\text{where } \sigma_k \in \{+, -\}, \quad a_k \in \mathbb{Z}_L^{2}.
\]
We define the associated $G$-chain to be the following $N\times N$ matrix:
\begin{equation}\label{defC=GEG}
    {\cal C}_{t, \boldsymbol{\sigma}, \textbf{a}} = G_t(\sigma_1) E_{a_1} G_t(\sigma_2) E_{a_2} \cdots E_{a_{n-1}} G_t(\sigma_n).
\end{equation}
We say that $({\cal C}_{t,\boldsymbol{\sigma},\textbf{a}})_{xy}$ is an off-diagonal term if $x\neq y$; we say that it is an on-diagonal term if $x=y$.
\end{definition}

Since $G$-chain estimates will be important for $G$-loop estimates, we need a mechanism that controls $G$-chains in terms of $G$-loops in order to close our analysis. For this, we present two results, whose proofs are identical to the proofs of Lemmas 4.2 and 4.3 in \cite{YY_25}, since these arguments depend only on resolvent identities for $G_{t}=(H_{t}-z_{t})^{-1}$ (and thus do not use the structure of $S$ or $\Theta^{(B)}$ in any way).

We start with $1$-chains, i.e. entries of $G$.

\begin{lemma}[$1$-chain estimate]\label{lem_GbEXP}  
Recall   $z_t$ and  $G_t$  from  Definitions \ref{def_flow} and \ref{Def:G_loop}. Fix any $|E|<2-\kappa$ and $0\leq t<1$. For any $c>0$, consider the event
\begin{equation}\label{def_asGMc}
    \Omega(t, c) := \big\{\|G_t - m\|_{\max} \leq W^{-c} \big\}.
\end{equation}
Then we have the following estimate for $G_{t}$ entries in terms of loops of length $2$:
\begin{equation}\label{GijGEX}
    {\bf 1}_{\Omega(t, c)} \cdot \max_{x \in {\cal I}^{(2)}_a} \max_{y \in {\cal I}^{(2)}_b} |(G_t)_{xy}|^{\,2} \prec 
    \sum_{a':|a'-a|_{L}\leq1} \; \sum_{b':|b'-b|_{L}\leq1} {\cal L}_{t, (+,-), (a', b')}
    + W^{-2} \cdot {\bf 1}(|a - b|_{L} \leq 1),
\end{equation}
\begin{equation}\label{GiiGEX}
    {\bf 1}_{\Omega(t, c)} \cdot \max_{x} \left|(G_t)_{xx} - m\right|^{\,2} \prec \max_{a, b} {\cal L}_{t, (+,-), (a, b)}.
\end{equation}
Moreover, suppose that the following is true for some $c > 0$:
\begin{equation}\label{asGMc} 
    \|G_t - m\|_{\max} \prec W^{-c}.
\end{equation}
Then \eqref{GijGEX} and \eqref{GiiGEX} hold without the indicator ${\bf 1}_{\Omega(t, c)}$, and we also have
\begin{equation}\label{GavLGEX}
    \max_{a} \left| \left\langle \left(G_t - m\right) E_a \right\rangle \right| \prec \max_{a, b} {\cal L}_{t, (+,-), (a, b)}.
\end{equation} 
\end{lemma}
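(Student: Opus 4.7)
The plan is to prove Lemma \ref{lem_GbEXP} using two standard tools: the Schur complement / resolvent identity for minors of $H_t$, combined with a Gaussian large-deviation bound for linear and quadratic forms in the independent centered Gaussian entries of $H_t$. Applied iteratively, these reduce the pointwise squared entries of $G_t$ on the left-hand sides of \eqref{GijGEX}--\eqref{GiiGEX} to averaged sums of $|G_t|^2$-entries over $O(1)$-neighborhoods of the blocks $a=[x]$ and $b=[y]$; such averaged sums then match the $2$-loops ${\cal L}_{t,(+,-),(a',b')}$ on the right-hand sides. The only feature of the variance profile that enters is $S_{xv}\asymp W^{-2}\cdot\mathbf{1}(|[x]-[v]|_L\le 1)$.

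For the off-diagonal estimate \eqref{GijGEX} I would fix $x\neq y$ with $a=[x]$, $b=[y]$, and start from the Schur identity
\[
(G_t)_{xy}\;=\;-(G_t)_{xx}\sum_{v\neq x}(H_t)_{xv}\,(G_t^{(x)})_{vy},
\]
where $G_t^{(x)}$ denotes the resolvent of the minor of $H_t$ with row/column $x$ removed. On $\Omega(t,c)$, $|(G_t)_{xx}|\le |m|+W^{-c}=O(1)$, and conditioning on $G_t^{(x)}$, subgaussian concentration yields $\bigl|\sum_{v}(H_t)_{xv}(G_t^{(x)})_{vy}\bigr|^{2}\prec \sum_v S_{xv}|(G_t^{(x)})_{vy}|^2$. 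I would then replace $G_t^{(x)}$ by $G_t$ using $(G_t^{(x)})_{vy}=(G_t)_{vy}-(G_t)_{vx}(G_t)_{xy}/(G_t)_{xx}$ at a lower-order cost. This gives $|(G_t)_{xy}|^2 \prec W^{-2}\sum_{v:|[v]-a|_L\le 1}|(G_t)_{vy}|^2$, which I would iterate by applying the same Schur identity on the $y$-index, producing a second factor of $W^{-2}$ and a neighborhood average in $[w]$ near $b$. The resulting double sum $W^{-4}\sum_{v,w}|(G_t)_{vw}|^2$ coincides with $\sum_{a',b'}{\cal L}_{t,(+,-),(a',b')}$; the additive correction $W^{-2}\mathbf{1}(|a-b|_L\le 1)$ absorbs the excluded self-terms ($v=x$ or $w=y$) from the Schur expansions, which are only relevant when $a,b$ are adjacent.

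For the diagonal estimate \eqref{GiiGEX} I would use the full Schur formula $(G_t)_{xx}^{-1}=(H_t)_{xx}-z_t-\sum_{v,w\neq x}(H_t)_{xv}(G_t^{(x)})_{vw}(H_t)_{wx}$, subtract the self-consistent identity $m^{-1}=-z_t-m$, and expand on $\Omega(t,c)$:
\[
|(G_t)_{xx}-m|\;\prec\; |(H_t)_{xx}|+|Z_x|+\Big|\sum_{v\neq x} S_{xv}\bigl((G_t^{(x)})_{vv}-m\bigr)\Big|,
\]
where $Z_x:=\sum_{v,w}(H_t)_{xv}(G_t^{(x)})_{vw}(H_t)_{wx}-\sum_v S_{xv}(G_t^{(x)})_{vv}$ is the centered quadratic form. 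The Gaussian Hanson--Wright bound gives $|Z_x|^2\prec \sum_{v,w}S_{xv}S_{xw}|(G_t^{(x)})_{vw}|^2\asymp W^{-4}\sum_{v,w\text{ near }x}|(G_t)_{vw}|^2\le C\max_{a,b}{\cal L}_{t,(+,-),(a,b)}$, and the other two summands are of the same order or smaller, yielding \eqref{GiiGEX}. The averaged bound \eqref{GavLGEX} requires a stronger gain: averaging $(G_t)_{xx}-m$ over $x\in{\cal I}^{(2)}_a$ produces cancellations among the leading $Z_x$-type fluctuations, and a fluctuation-averaging argument in the spirit of Erd\H{o}s--Knowles--Yau--Yin yields the sharper bound $|\langle(G_t-m)E_a\rangle|\prec \max_{a,b}{\cal L}_{t,(+,-),(a,b)}$, rather than its square root.

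The principal obstacle is the averaged estimate \eqref{GavLGEX}: removing the square root requires a genuinely second-order Schur expansion performed simultaneously over an entire block ${\cal I}^{(2)}_a$, together with exploitation of the independence of Gaussian rows of $H_t$ across the block to extract cancellations at the correct scale $W^{-2}$. The block normalization $E_a=W^{-2}\mathbf{1}_{{\cal I}^{(2)}_a}$ means that $O(W^2)$ summands with $O(W^{-2})$-scale variances must be balanced carefully. All remaining steps are standard resolvent manipulations whose only $d=2$-specific input is the form of $S$.
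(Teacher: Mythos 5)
Your proposal matches the route the paper delegates to Lemmas 4.2--4.3 of \cite{YY_25}: iterated Schur-complement expansion in each of the two indices, Gaussian large-deviation / Hanson--Wright estimates for the resulting linear and quadratic forms, and a fluctuation-averaging argument for \eqref{GavLGEX}, with the model entering only through the block scaling $S_{xv}\asymp W^{-2}\mathbf{1}(|[x]-[v]|_L\le 1)$. One small slip: the additive $W^{-2}\,\mathbf{1}(|a-b|_L\le 1)$ in \eqref{GijGEX} arises from the $v=y$ term of the first Schur sum $\sum_v S_{xv}|(G_t^{(x)})_{vy}|^2$, where the $O(1)$ diagonal entry $(G_t^{(x)})_{yy}\approx m$ cannot be Schur-expanded a second time, rather than from the terms $v=x$ or $w=y$, which are excluded from the respective Schur sums by construction and never need to be absorbed.
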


We will now go from $1$-chains to general chains. Recall the notation $M_{t}=W^{2}\ell_{t}^{2}\eta_{t}$, and define
\begin{align}\label{def:XiL}
    \Xi^{({\cal L})}_{t, m} &:= \max_{\boldsymbol{\sigma}, \textbf{a}} \left|{\cal L}_{t, \boldsymbol{\sigma}, \textbf{a}} \right|\cdot M_{t}^{m-1} \cdot {\bf 1}(\boldsymbol{\sigma} \in \{+, -\}^m),
\end{align}
and (where $diag$ represents the diagonal terms, i.e., $i = j$, and $off$ represents off-diagonal terms, i.e. $i\neq j$)
\begin{align}\label{XiCDOD}
    \Xi^{({{\cal C},\;diag})}_{t, m} &:= \max_{\boldsymbol{\sigma}, \textbf{a}} \max_x \left|\left({\cal C}^{(m)}_{t, \boldsymbol{\sigma}, \textbf{a}}\right)_{xx} \right|\cdot M_{t}^{m-1} \cdot {\bf 1}(\boldsymbol{\sigma} \in \{+, -\}^m), \\
    \Xi^{({{\cal C},\;off})}_{t, m} &:= \max_{\boldsymbol{\sigma}, \textbf{a}} \max_{x \neq y} \left|\left({\cal C}^{(m)}_{t, \boldsymbol{\sigma}, \textbf{a}}\right)_{xy} \right| \cdot M_{t}^{m-1/2} \cdot {\bf 1}(\boldsymbol{\sigma} \in \{+, -\}^m).
\end{align}

\begin{lemma}[$n$-chain estimate]\label{lem_GbEXP_n2}  
Suppose that the assumptions of Lemma \ref{lem_GbEXP} and \eqref{asGMc}  hold, and that for some $n \ge  1$, we have the following a priori estimate:
\[
\Xi^{({\cal L})}_{t, m} \prec f(m, \alpha, \beta) := 1 + \alpha^{m-1} \beta^{-1}, \quad 2\le m \leq 2n.
\]
Above, $\alpha$ and $\beta$ are deterministic parameters that depending only $N,E,t$ and satisfy
\[
\alpha \geq 1, \quad \beta \geq 1, \quad \alpha^2 \leq M_{t}^{1/4}.
\]
Then, we have the following estimate for normalized $G$-chains:
\begin{align}\label{res_XICoff}
    \Xi^{({{\cal C},\;off})}_{t, n}  & \prec \big( f(2n, \alpha, \beta) \big)^{1/2},\\
    \Xi^{({{\cal C},\;diag})}_{t,\; 2n} & \prec f(2n, \alpha, \beta). \label{res_XICdi}
\end{align}
\end{lemma}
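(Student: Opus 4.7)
The plan is to follow the inductive argument of Lemma 4.3 in \cite{YY_25}, which (as the authors note in the paragraph introducing the lemma) depends only on resolvent identities for $G_{t}$ and therefore carries over to the two-dimensional setting with no structural change. The scheme is a joint strong induction on $n$, producing \eqref{res_XICoff} and \eqref{res_XICdi} together at each stage. The inputs at stage $n$ are the loop hypothesis $\Xi^{(\mathcal{L})}_{t,m} \prec f(m,\alpha,\beta)$ at all lengths $m \leq 2n$, the Ward identity \eqref{WI_calL}, the pointwise local law \eqref{asGMc}, and Cauchy--Schwarz applied to chain factorizations.

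The base case $n=1$ is essentially Lemma \ref{lem_GbEXP}. The off-diagonal bound for $G$ follows from \eqref{GijGEX} combined with the $m=2$ loop hypothesis. For the $2$-chain diagonal, I would write
\begin{equation*}
(G(\sigma_{1})\,E_{a_{1}}\,G(\sigma_{2}))_{xx} \;=\; W^{-2}\sum_{u\in\mathcal{I}^{(2)}_{a_{1}}} G(\sigma_{1})_{xu}\,G(\sigma_{2})_{ux},
\end{equation*}
separate the on-site term $u=x$ (controlled by \eqref{asGMc} together with the inequality $M_{t}\leq W^{2}$), and control the remaining terms by the off-diagonal $1$-chain estimate just proved.

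For the inductive step, \eqref{res_XICoff} at level $n$ is obtained from \eqref{res_XICdi} at level $2n$ by a pointwise Cauchy--Schwarz identity that relates $|(\mathcal{C}^{(n)})_{xy}|^{2}$ to a diagonal entry of the PSD $2n$-chain $\mathcal{C}^{(n)} E_{[y]} (\mathcal{C}^{(n)})^{\dagger}$; this mechanism is the source of the square root $\sqrt{f(2n,\alpha,\beta)}$ appearing in the bound. To establish \eqref{res_XICdi} for a generic $2n$-chain, I would split the chain at its middle as $\mathcal{C}^{(2n)} = \mathcal{C}^{(n)}_{1} E_{a} \mathcal{C}^{(n)}_{2}$, dominate $(\mathcal{C}^{(2n)})_{xx}$ by the geometric mean of two PSD chain diagonals of the form $(\mathcal{C}^{(n)}_{i} E_{a} (\mathcal{C}^{(n)}_{i})^{\dagger})_{xx}$, and then convert the block averages of these PSD diagonals into $2n$-loops via \eqref{WI_calL} and the loop hypothesis. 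Passage from the block-averaged estimate back to a pointwise one is carried out by isolating the on-site $u=x$ contribution in each block sum (controlled by \eqref{asGMc}) and using the inductive off-diagonal $n$-chain bound on the off-site remainder.

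The main technical obstacle is closing this bootstrap: \eqref{res_XICoff} and \eqref{res_XICdi} feed into each other's inductive step, and the naive block-sum-to-pointwise reduction accumulates combinatorial factors at each stage. The tightness hypothesis $\alpha^{2} \leq M_{t}^{1/4}$ is precisely what makes the bootstrap close, since it bounds the inductive ratio $f(2n,\alpha,\beta)/f(2n-2,\alpha,\beta) \sim \alpha^{2}$ by a small enough power of $M_{t}$ to be absorbed by the $W^{-\tau}$ slack inherent in the $\prec$ notation at each pass through the induction.
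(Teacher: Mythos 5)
The paper does not give a proof of this lemma: it is stated, together with Lemma \ref{lem_GbEXP}, as following verbatim from Lemmas 4.2 and 4.3 in \cite{YY_25}, precisely because the argument rests only on resolvent identities for $G_{t}$ and not on $S$ or $\Theta^{(B)}$. Your plan to reconstruct the argument by inducting on $n$, using the Cauchy--Schwarz chain factorization to go from diagonal $2n$-chain bounds to off-diagonal $n$-chain bounds, the Ward identity \eqref{WI_calL} to convert block-averaged chain diagonals into $G$-loops, and an on-site/off-site splitting together with \eqref{asGMc} to recover pointwise control from averaged control, is indeed structurally faithful to \cite{YY_25}, so the high-level approach agrees with the paper.

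However, your explanation of why the bootstrap closes is wrong on a concrete point. You claim that the hypothesis $\alpha^{2}\leq M_{t}^{1/4}$ bounds the inductive ratio $f(2n,\alpha,\beta)/f(2n-2,\alpha,\beta)\sim\alpha^{2}$ ``by a small enough power of $M_{t}$ to be absorbed by the $W^{-\tau}$ slack inherent in the $\prec$ notation.'' This cannot be right: $M_{t}^{1/4}$ is of order up to $W^{1/2}$ (indeed $M_{t}=W^{2}\ell_{t}^{2}\eta_{t}\leq W^{2}$ so $M_{t}^{1/4}\leq W^{1/2}$, and this bound is saturated for moderate $\eta_{t}$), and the stochastic-domination budget only tolerates factors $W^{\tau}$ for arbitrarily small fixed $\tau$. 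A polynomially large factor like $M_{t}^{1/4}$ is simply not absorbable in the $\prec$ sense. Moreover, if the inductive step genuinely picked up a factor of order $M_{t}^{1/4}$ per pass, the final bound at stage $n$ would be $M_{t}^{(n-1)/4}$ worse than $f(2,\alpha,\beta)$, which does not recover $f(2n,\alpha,\beta)$. The constraint $\alpha^{2}\leq M_{t}^{1/4}$ must therefore be doing something structurally different (for instance, ensuring that the family $f(m,\alpha,\beta)$ remains comparable to an honest factorization $f(2n_{1})f(2n_{2})\lesssim f(2n)$ and that the off-diagonal conclusion $\Xi^{(\mathcal{C},\mathrm{off})}_{t,n}\prec f(2n)^{1/2}$ stays consistent with the crude pointwise control coming from \eqref{asGMc}); it is not a factor to be ``absorbed.''

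Related to this, the step you flag as the ``main technical obstacle''---upgrading the block-averaged chain diagonal (which is the loop, of size $M_{t}^{-2n+1}f(2n)$) to a pointwise diagonal bound of the same order---is left unresolved in your sketch. The naive positivity argument $(\mathcal{C}^{(n)}E_{a}(\mathcal{C}^{(n)})^{\dagger})_{xx}\leq W^{2}\cdot(\text{block average})$ loses a factor of $W^{2}$, and a naive pointwise Cauchy--Schwarz split of the off-site sum $W^{-2}\sum_{u\neq x}|(\mathcal{C}^{(n)})_{xu}|^{2}$ into a product of two shorter PSD chain diagonals gives $M_{t}^{-2n+2}f(2n_{1})f(2n_{2})$, which is off by a full factor of $M_{t}^{-1}$ from the target. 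Your proposal simply reassigns this missing factor of $M_{t}^{-1}$ to the hypothesis $\alpha^{2}\leq M_{t}^{1/4}$, which (as above) does not account for it. The actual closure requires a finer resolvent expansion (e.g.\ splitting the outer resolvents as $G=m+\widetilde G$ and exploiting the smallness of $\|\widetilde G\|_{\max}$ from \eqref{asGMc}, and/or a more careful use of the Ward identity to gain the extra $M_{t}^{-1}$), which your sketch omits.
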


\section{Analysis of loop hierarchy}\label{Sec:Stoflo}

In this section, we prove Theorem \ref{lem:main_ind}. The argument largely follows that in Section 5 of \cite{YY_25}, except for Section \ref{subsection-sum-zero}, which requires the main new inputs of this paper. However, even for the estimates before Section \ref{subsection-sum-zero}, there are a lot of technical power-countings that are dimension-dependent. (For example, the $E_{a}$ matrices in \eqref{Def_matE} have a factor of $W^{2}$ with exponent matching the dimension. Also, the ball of radius $\ell$ in $\Z_{L}^{2}$ has size $O(\ell^{2})$. These changes are reflected in the definition of $M_{t}$ from \eqref{eq:bcal_k}, and it is important to keep track of them carefully.) For this reason, unless the argument is identical to the corresponding argument in \cite{YY_25} at the level of power-counting, we will provide details for proofs.

\subsection{Proof of Theorem \ref{lem:main_ind}: Step 1}

\begin{proof}
We recall that the goal of this first step is to prove \eqref{lRB1} and \eqref{Gtmwc}. First, we use \eqref{con_st_ind} and the definitions \(\ell_t = \ell(z_t)\) and \(\eta_t = \im z_t\). This gives
$$
1 - u \gg N^{-1}, \quad \eta_u \gg N^{-1}, \quad u \ge t.
$$
Given that \(\partial_z (H - z)^{-1} = (H - z)^{-2}\) and that the entries of \(H_t\) are jointly Gaussian, we deduce that for any \(C > 0\), there exists \(C'\) such that the following is true outside of events whose probabilities are exponentially small in $N$:
\begin{equation}\label{Gopboundu}
    \max_{u \ge N^{-1}} \max_{|u - u'| \leq N^{-C'}} \| G_u - G_{u'} \|_{\max} \leq N^{-C}.
\end{equation}
Hence, by a standard net argument, it suffices to prove \eqref{lRB1} and \eqref{Gtmwc} for \(u = t\). This continuity argument will be used repeatedly in this paper. 

\paragraph{Case 1: \(s < t \leq 1/2\).} 
We have $\|G_{t}\|_{op}\leq1/\eta_{t}=O(1)$. We also have \(\|E_a\|_{op} \leq W^{-2}\) for all \(a \in \mathbb{Z}_L^{2}\). Thus,  
\begin{equation}\label{Lboundfor1/2}
    {\cal L}_{t, \boldsymbol{\sigma}, \textbf{a}} = \left\langle \prod_{i=1}^n G_t(\sigma_i) E_{a_i} \right\rangle = O(W^{-2(n-1)}), \quad t \leq 1/2.
\end{equation}
Since $M_{t}=W^{2}\ell_{t}^{2}\eta_{t}=O(W^{2})$ for $t\leq1/2$, the bound \eqref{Lboundfor1/2} implies \eqref{lRB1}. Next, we combine \eqref{Lboundfor1/2} with \eqref{GijGEX} and \eqref{GiiGEX}, and we note that $\ell_{t},\eta_{t}$ are order $1$ for $t\leq1/2$. This gives
$$
{\bf 1}\left(\|G_t - m\|_{\max} \leq W^{-1/10}\right) \cdot \|G_t - m\|_{\max} \prec W^{-1}, \quad t \leq 1/2.
$$
By \eqref{Gopboundu} and the previous display, we deduce that $\|G_{u}-m\|_{\max}\prec W^{-1}$ implies $\|G_{u'}-m\|_{\max}\prec W^{-1}$ for any $|u'-u|\leq N^{-C'}$ if $C'>0$ is large enough. Thus, as soon as we can find a single time $u_{0}\in[s,t]$, where $t\leq1/2$, such that $\|G_{u_{0}}-m\|_{\max}\prec W^{-1}$, we would deduce this bound for all $u\in[s,t]$. By \eqref{Gt_bound+IND}, we can choose $u_{0}=s$, and thus \eqref{Gtmwc} holds, i.e.
\begin{equation}\label{Gtmt12}
    \|G_u - m\|_{\max} \prec W^{-1}, \quad u \leq 1/2.
\end{equation}

\paragraph{Case 2: \(t > s \geq 1/2\).} 
By \eqref{eq:bcal_k} from Lemma \ref{ML:Kbound}, i.e. we have ${\cal K}_{s, \boldsymbol{\sigma}, \textbf{a}} \prec M_{s}^{-n+1}$, where we recall $M_{s}=W^{2}\ell_{s}^{2}\eta_{s}$. If we combine this with the assumption \eqref{Eq:L-KGt+IND}, then we get that for any fixed \(n\) and \(s\), 
\begin{equation}\label{Lboundfors}
    {\cal L}_{s, \boldsymbol{\sigma}, \textbf{a}}\prec M_{s}^{-n+1}.
\end{equation}
We now record the following estimate, whose proof follows from the same argument as in Section 6 of \cite{YY_25}. {An outline of the proof is given in Section \ref{sec:Inh_LE}.}

\begin{lemma}\label{lem_ConArg}
Suppose that \(c < t_1 \leq t_2 \leq 1\) for some constant \(c > 0\). Assume that for any fixed \(n\geq1\), the following bounds hold at time \(t_1\): 
\begin{equation}\label{55}
    \max_{\boldsymbol{\sigma}, \textbf{a}} {\cal L}_{t_1, \boldsymbol{\sigma}, \textbf{a}} \prec M_{t_{1}}^{-n+1},\quad M_{t}:=W^{2}\ell_{t}^{2}\eta_{t}.
\end{equation}
Next, we let $\Omega$ denote the following event:
\begin{equation}\label{usuayzoo}
    \Omega := \left\{\|G_{t_2}\|_{\max} \leq 2\right\}.
\end{equation}
Then, for any fixed \(n \geq 1\), we have
\begin{equation}\label{res_lo_bo_eta}
    {\bf 1}_\Omega \cdot \max_{\boldsymbol{\sigma}, \textbf{a}} {\cal L}_{t_2, \boldsymbol{\sigma}, \textbf{a}} \prec (W^{2}\ell_{t_{1}}^{2}\eta_{t_{2}})^{-n+1} = \left( \frac{\ell_{t_{2}}}{\ell_{t_{1}}} \right)^{2(n-1)} \cdot M_{t_{2}}^{-n+1}.
\end{equation}
\end{lemma}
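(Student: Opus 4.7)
The plan is to integrate the loop hierarchy SDE from Lemma \ref{lem:SE_basic} from $t_{1}$ to $t_{2}$ while propagating the pointwise bound $\|G_{u}\|_{\max}\le 3$ uniformly over $[t_{1},t_{2}]$. On the event $\Omega$ from \eqref{usuayzoo}, this is achieved by combining the assumption $\|G_{t_{2}}\|_{\max}\le 2$ with the short-time continuity estimate \eqref{Gopboundu} and a standard net argument in $u$. Formally, we work up to the stopping time $\tau := \inf\{u\in [t_{1},t_{2}]:\|G_{u}\|_{\max}>3\}\wedge t_{2}$, which equals $t_{2}$ with overwhelming probability on $\Omega$, so it suffices to bound ${\cal L}_{u,\boldsymbol{\sigma},\textbf{a}}$ for $u\in [t_{1},\tau]$.

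Next, introduce the normalized loop
\[
\Psi_{n}(u):= \max_{\boldsymbol{\sigma},\textbf{a}:\,|\boldsymbol{\sigma}|=n}\,|{\cal L}_{u,\boldsymbol{\sigma},\textbf{a}}|\cdot (W^{2}\ell_{t_{1}}^{2}\eta_{u})^{\,n-1},\qquad u\in [t_{1},\tau].
\]
A direct algebraic check shows that \eqref{res_lo_bo_eta} is equivalent to $\Psi_{n}(t_{2})\prec 1$, while the hypothesis \eqref{55} combined with Lemma \ref{ML:Kbound} gives $\Psi_{n}(t_{1})\prec 1$ for every fixed $n$. The plan is to proceed by induction on $n$: assuming $\Psi_{m}(u)\prec 1$ for all $m<n$ and $u\in [t_{1},\tau]$, we will derive the same bound for $\Psi_{n}$ via a Gronwall-type argument on the stopped interval.

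Integrating \eqref{eq:mainStoflow} from $t_{1}$ to $u$, the martingale term ${\cal E}^{(M)}$ is handled by Burkholder--Davis--Gundy applied to its quadratic variation, using the pathwise bound $\|G_{u}\|_{\max}\le 3$; the self-energy ${\cal E}^{(\widetilde G)}$ is similarly subleading via $\|G_{u}-m\|_{\max}\le 4$ and $W^{2}\sum_{b}S^{(B)}_{ab}=1$. The principal contribution comes from the cut-and-glue quadratic, which couples loops of lengths $n_{L},n_{R}\ge 2$ with $n_{L}+n_{R}=n+2$. When $\min(n_{L},n_{R})\ge 3$ the inductive hypothesis yields $\Psi_{n_{L}}(s)\Psi_{n_{R}}(s)\prec 1$, and the resulting integrand is controlled by $\int_{t_{1}}^{t_{2}} ds/\eta_{s}\asymp \log(\eta_{t_{1}}/\eta_{t_{2}})\asymp 2\log(\ell_{t_{2}}/\ell_{t_{1}})$. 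When $\min(n_{L},n_{R})=2$, the remaining factor has length $n$, producing a self-coupling of $\Psi_{n}$; this is closed by a linear Gronwall, whose exponentiation of the logarithmic integrand produces precisely the polynomial loss $(\ell_{t_{2}}/\ell_{t_{1}})^{2(n-1)}$ absorbed into the formulation \eqref{res_lo_bo_eta}.

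The principal obstacle is exactly this self-coupling in the $\min(n_{L},n_{R})=2$ case, which rules out straightforward induction on $n$ at the level of ${\cal L}$ and forces the Gronwall bootstrap on $\Psi_{n}$ with the $\ell_{t_{1}}$-frozen normalization. A secondary technical subtlety is the careful treatment of the sum $\sum_{a,b}S^{(B)}_{ab}$ arising in the cut-and-glue term: a crude $L^{\infty}$ bound on the two factors would give a spurious $L^{2}$ factor from $\sum_{a,b}S^{(B)}_{ab}=L^{2}$, so one must exploit the short-range support $|a-b|_{L}\le 1$ of $S^{(B)}_{ab}$ together with the propagator estimates from Lemma \ref{lem_propTH} (and, where needed, Ward-identity-type summability for $(+,-)$ configurations inherited from Lemma \ref{lem_WI_K}) so that the Gronwall constant in the self-coupling matches the target exponent $2(n-1)$ without overshoot.
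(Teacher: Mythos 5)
Your proposal takes a genuinely different route from the paper, but it has a fatal gap that prevents it from closing.

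\textbf{The paper's actual route.} The paper does \emph{not} integrate the loop hierarchy SDE at all. Lemma \ref{lem_ConArg} is proved by a static, essentially deterministic resolvent comparison: one first uses Cauchy--Schwarz (via \eqref{adummzps} and \eqref{sym_calL}) to reduce to symmetric loops of even length $2m$, then compares the resolvent $G_{t_2}=(H_{t_2}-z_{t_2})^{-1}$ with $\widetilde G_{t_1}=(H_{t_2}-\widetilde z_{t_1})^{-1}$ (same matrix, a nearby spectral parameter) via the first-order resolvent identity $G=\widetilde G+(z-\widetilde z)G\widetilde G$, and closes the gap with a Ward identity and a small linear-algebra trace inequality. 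This produces the recursion \eqref{LWLIMZ}, which couples the $2m$-loop at $t_2$ to a $(2m-1)$-loop at $t_2$, and one inducts in $m$. No martingale, no Gronwall, no intermediate times are ever touched: only the a priori loop bound at $t_1$ and the single pointwise bound $\|G_{t_2}\|_{\max}\le 2$ on $\Omega$ are used.

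\textbf{Why the proposed SDE/Gronwall route does not close.} The fatal issue is the ${\cal E}^{(\widetilde G)}$ term. Under only the rough bound $\|G_u-m\|_{\max}\le 4$, the average $\langle(G_u-m)E_a\rangle$ is merely $O(1)$, not small. Your claimed smallness hinges on ``$W^{2}\sum_b S^{(B)}_{ab}=1$'', which is false: $\sum_b S^{(B)}_{ab}=1$ since $S^{(B)}$ is stochastic, so $W^2\sum_b S^{(B)}_{ab}=W^2$. The free index in \eqref{def_EwtG} is summed over all $a\in\mathbb Z_L^2$ with $b$ tied to $a$ by $S^{(B)}_{ab}$, so after bounding $\langle\widetilde G_u E_a\rangle=O(1)$ termwise you get a factor $W^2\cdot L^2$ multiplying an $(n+1)$-loop; the only way to beat this is to exploit spatial decay of ${\cal G}^{(b)}_k\circ{\cal L}_u$ in $b$ at scale $\ell_u$, but that decay is precisely part of what the lemma chain is trying to establish and is not available as an input here. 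The martingale estimate has the same problem: BDG reduces it to the $(2n+2)$-loop quadratic variation ${\cal E}\otimes{\cal E}$, and controlling that at intermediate times is not available from the hypotheses of the lemma (which only constrain ${\cal L}$ at $t_1$ and $\|G\|_{\max}$ at $t_2$). Finally, your claim that the Gronwall exponent comes out to exactly $2(n-1)$ is asserted without computation; with only max-norm inputs the self-coupling constant is not bounded tightly enough to conclude this. In short, the SDE route requires a priori control at every $u\in[t_1,t_2]$ which is exactly what this lemma is designed to avoid needing; the deterministic resolvent-comparison argument of the paper sidesteps the whole issue by working entirely at time $t_2$.
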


By \eqref{Lboundfor1/2} and \eqref{Lboundfors}, the a priori bound \eqref{55} for \({\cal L}_s\) holds. Thus, we can use Lemma \ref{lem_ConArg} to obtain
\begin{equation}\label{jsajufua}
    {\bf 1}\left(\|G_{u}\|_{\max} \leq 2\right) \cdot \max_{\boldsymbol{\sigma}, \textbf{a}} {\cal L}_{u, \boldsymbol{\sigma}, \textbf{a}} \prec \left( \frac{\ell_u}{\ell_s} \right)^{2(n-1)} \cdot M_{u}^{-n+1}, \quad s \leq u \leq t.
\end{equation}
Now, we use \eqref{jsajufua} for $n=2$ with Lemma \ref{lem_GbEXP} gives us
$$
{\bf 1}\left(\|G_u - m\|_{\max} \leq M_{u}^{-1/6}\right) \cdot \|G_u - m\|_{\max} \prec \frac{\ell_u}{\ell_s} \cdot M_{u}^{-1/2}, \quad s \leq u \leq t.
$$
By \eqref{con_st_ind}, we know $\ell_{u}\ell_{s}^{-1}\leq M_{s}^{1/60}$. Combining this with the previous display, we deduce that for any $D>0$, we have the following for $N$ large enough:
\begin{equation}
    \mathbb{P}\left(M_{u}^{-1/4} \leq \|G_u - m\|_{\max} \leq M_{u}^{-1/6}\right) \leq N^{-D}, \quad s \leq u \leq t.
\end{equation}
Another standard continuity argument (e.g. \eqref{Gopboundu}) allows us to insert either a maximum or minimum over $u\in[s,t]$ in the event above. 

Now, by assumption, we have the initial data bound $\|G_s - m\|_{\max} \prec M_{s}^{-1/2}$. Together with the previous display and continuity of $G_{u}$ in $u$, we deduce $\|G_t - m\|_{\max} \prec M_{t}^{-1/4}$. Combining this bound with \eqref{jsajufua} gives \eqref{lRB1} and \eqref{Gtmwc} for $t>s\geq1/2$.

\paragraph{Case 3: \(t > 1/2 > s\).} From Case 1, we know that \eqref{Lboundfor1/2} and \eqref{Gtmt12} hold for \(t = 1/2\). The proof of Case 2 uses only these two conditions. Thus, the current case follows as a consequence of Case 2 with \(s = 1/2\).
\end{proof}

\subsection{Dynamics of  \texorpdfstring{$\cal L-\cal K$}{L-K}}

 Recall the loop hierarchy from Lemma \ref{lem:SE_basic}. We now introduce the notation
$$
\mathcal{L}_{t,\;  \mathcal{G}^{(a), \,L}_{k, \,l}\left(\boldsymbol{\sigma},\, \textbf{a}\right)} := \left( \mathcal{G}^{(a), L}_{k, l} \circ \mathcal{L}_{t, \boldsymbol{\sigma}, \textbf{a}} \right), \quad  
\mathcal{L}_{t,\;  \mathcal{G}^{(b), \,R}_{k, \,l}\left(\boldsymbol{\sigma},\, \textbf{a}\right)} := \left( \mathcal{G}^{(b), R}_{k, l} \circ \mathcal{L}_{t, \boldsymbol{\sigma}, \textbf{a}} \right),
$$
which allows us write the hierarchy in a more convenient form that we describe below. By (5.15) in \cite{YY_25}, we have the following equation, which uses notation to be clarified afterwards:
\begin{align}\label{eq_L-Keee}
    d(\mathcal{L} - \mathcal{K})_{t, \boldsymbol{\sigma}, \textbf{a}} = &
    \Big[\mathcal{K} \sim (\mathcal{L} - \mathcal{K})\Big]^{l_\mathcal{K} = 2}_{t, \boldsymbol{\sigma}, \textbf{a}}dt + \sum_{l_\mathcal{K} > 2} \Big[\mathcal{K} \sim (\mathcal{L} - \mathcal{K})\Big]^{l_\mathcal{K}}_{t, \boldsymbol{\sigma}, \textbf{a}}dt + \mathcal{E}^{((\mathcal{L} - \mathcal{K}) \times (\mathcal{L} - \mathcal{K}))}_{t, \boldsymbol{\sigma}, \textbf{a}} +
    \mathcal{E}^{(M)}_{t, \boldsymbol{\sigma}, \textbf{a}} +
    \mathcal{E}^{(\widetilde{G})}_{t, \boldsymbol{\sigma}, \textbf{a}}.
\end{align} 
Above, the last two terms are introduced in Lemma \ref{lem:SE_basic}. The first two term are defined by the formula
\begin{align}\label{DefKsimLK}
&\; W^{2}\cdot \sum_{1 \leq k < l \leq n} \sum_{a, b}  
   \Big( 
   (\mathcal{L} - \mathcal{K})_{t,\;  \mathcal{G}^{(a), \,L}_{k, \,l}\left(\boldsymbol{\sigma},\, \textbf{a}\right)} 
   \cdot S^{(B)}_{ab}
   \cdot 
   \mathcal{K}_{t,\;  \mathcal{G}^{(b), \,R}_{k, \,l}\left(\boldsymbol{\sigma},\, \textbf{a}\right)} \cdot \textbf{1}(\text{length of $\mathcal{K}$ equals } l_\mathcal{K})   \nonumber \\
   & + \left(\mathcal{K} \iff \mathcal{L-K} \right) \Big) 
    := \sum_{l_{\cal K}=2}^n\Big[\mathcal{K} \sim (\mathcal{L} - \mathcal{K})\Big]^{l_\mathcal{K}}_{t, \boldsymbol{\sigma}; \textbf{a}},
\end{align}
here, \( \mathcal{K} \iff \mathcal{L-K} \) denotes the terms obtained by swapping \(\mathcal{K}\) and \(\mathcal{L-K}\). Next, we introduce
\begin{equation}\label{def_ELKLK}
    \mathcal{E}^{((\mathcal{L} - \mathcal{K}) \times (\mathcal{L} - \mathcal{K}))}_{t, \boldsymbol{\sigma}, \textbf{a}} :=
    W^{2}\cdot\sum_{1 \leq k < l \leq n} \sum_{a, b} (\mathcal{L} - \mathcal{K})_{t,\;  \mathcal{G}^{(a), \,L}_{k, \,l}\left(\boldsymbol{\sigma},\, \textbf{a}\right)} \cdot  S^{(B)}_{ab} \cdot (\mathcal{L} - \mathcal{K})_{t,\;  \mathcal{G}^{(b), \,R}_{k, \,l}\left(\boldsymbol{\sigma},\, \textbf{a}\right)} \, dt.
\end{equation}

We will turn \eqref{eq_L-Keee} into an integral equation. First, we introduce the main objects in this integral equation construction.

\begin{definition}\label{DefTHUST}
Define the linear map ${\varTheta}_{t, \boldsymbol{\sigma}}$ via the following action on a tensor ${\cal A}: \mathbb Z^n_L\to \mathbb C$: 
\begin{align}
    \left({{\varTheta}}_{t, \boldsymbol{\sigma}} \circ \mathcal{A}\right)_{\textbf{a}} & = \sum_{i=1}^n \sum_{b_i} \left(\frac{m_i m_{i+1}  }{1 - t m_i m_{i+1} S^{(B)}}\right)_{a_i b_i} \cdot \mathcal{A}_{\textbf{a}^{(i)}},\quad m_i=m(\sigma_i) \nonumber \\
    \quad \textbf{a}^{(i)}&  = (a_1, \ldots, a_{i-1}, b_i, a_{i+1}, \ldots, a_n).
\end{align}  
We also define the semigroup operator $\mathcal{U}_{s,t,\boldsymbol{\sigma}}$ associated to $\Theta_{t,\boldsymbol{\sigma}}$ as follows
    \begin{align}\label{def_Ustz}
        \left(\mathcal{U}_{s, t, \boldsymbol{\sigma}} \circ \mathcal{A}\right)_{\textbf{a}} = \sum_{b_1, \ldots, b_n} \prod_{i=1}^n \left(\frac{1 - s \cdot m_i m_{i+1} S^{(B)}}{1 - t \cdot m_i m_{i+1} S^{(B)}}\right)_{a_i b_i} \cdot \mathcal{A}_{\textbf{b}}, \quad \textbf{b} = (b_1, \ldots, b_n), \quad m_i=m(\sigma_i)
    \end{align}
\end{definition}
Note that for any $\xi \in \mathbb{C}$, we have the following identity:
 \begin{align}\label{def_Ustz_2}
\left(\frac{1 - s \xi S^{(B)}}{1 - t \xi S^{(B)}}\right) = I - (s - t) \xi \cdot \Theta^{(B)}_{t \xi} \cdot S^{(B)}.
\end{align}
By the explicit formula \eqref{Kn2sol} for ${\cal K}$ of rank $2$, the first term on the right-hand side of \eqref{eq_L-Keee} can be rewritten as $({\varTheta}_{t, \boldsymbol{\sigma}} \circ (\mathcal{L} - \mathcal{K}))_{\textbf{a}} = [\mathcal{K} \sim (\mathcal{L} - \mathcal{K})]^{l_\mathcal{K} = 2}_{t, \boldsymbol{\sigma}, \textbf{a}}$. Using this and the Duhamel principle, we have the following.

\begin{lemma}[Integrated loop hierarchy] 
    \label{Sol_CalL}
For any $t\geq0$, we have the SDE
\begin{align}\label{LK_SDE}
d({\cal L}-{\cal K})_{t,\boldsymbol{\sigma},\textbf{a}}=(\varTheta_{t,\boldsymbol{\sigma}}\circ({\cal L}-{\cal K})_{t,\boldsymbol{\sigma},\cdot})_{\textbf{a}}dt+\sum_{l_\mathcal{K} > 2} \Big[\mathcal{K} \sim (\mathcal{L} - \mathcal{K})\Big]^{l_\mathcal{K}}_{t, \boldsymbol{\sigma}, \textbf{a}}dt + \mathcal{E}^{((\mathcal{L} - \mathcal{K}) \times (\mathcal{L} - \mathcal{K}))}_{t, \boldsymbol{\sigma}, \textbf{a}} +
    \mathcal{E}^{(M)}_{t, \boldsymbol{\sigma}, \textbf{a}} +
    \mathcal{E}^{(\widetilde{G})}_{t, \boldsymbol{\sigma}, \textbf{a}}.
\end{align}
For any $s<t$, we have the following ``integrated" loop hierarchy:
\begin{align}\label{int_K-LcalE}
    (\mathcal{L} - \mathcal{K})_{t, \boldsymbol{\sigma}, \textbf{a}} \;=\; \; &
    \left(\mathcal{U}_{s, t, \boldsymbol{\sigma}} \circ (\mathcal{L} - \mathcal{K})_{s, \boldsymbol{\sigma}}\right)_{\textbf{a}} \nonumber \\
    &+ \sum_{l_\mathcal{K} > 2} \int_{s}^t \left(\mathcal{U}_{u, t, \boldsymbol{\sigma}} \circ \Big[\mathcal{K} \sim (\mathcal{L} - \mathcal{K})\Big]^{l_\mathcal{K}}_{u, \boldsymbol{\sigma}}\right)_{\textbf{a}} du \nonumber \\
    &+ \int_{s}^t \left(\mathcal{U}_{u, t, \boldsymbol{\sigma}} \circ \mathcal{E}^{((\mathcal{L} - \mathcal{K}) \times (\mathcal{L} - \mathcal{K}))}_{u, \boldsymbol{\sigma}}\right)_{\textbf{a}} du  \nonumber \\
    &+ \int_{s}^t \left(\mathcal{U}_{u, t, \boldsymbol{\sigma}} \circ \mathcal{E}^{(\widetilde{G})}_{u, \boldsymbol{\sigma}}\right)_{\textbf{a}} du + \int_{s}^t \left(\mathcal{U}_{u, t, \boldsymbol{\sigma}} \circ \mathcal{E}^{(M)}_{u, \boldsymbol{\sigma}}\right)_{\textbf{a}}  
\end{align}
Furthermore, let $T\geq s$ be a stopping time with respect to the matrix Brownian motion $H_t$ and set $\tau:= T\wedge t$. We have the stopped integrated loop hierarchy below for any $t\geq s$:
\begin{align}\label{int_K-L_ST}
   (\mathcal{L} - \mathcal{K})_{\tau , \boldsymbol{\sigma}, \textbf{a}} \;=\; \; &
    \left(\mathcal{U}_{s, \tau, \boldsymbol{\sigma}} \circ (\mathcal{L} - \mathcal{K})_{s, \boldsymbol{\sigma}}\right)_{\textbf{a}}  \nonumber \\
    &+ \sum_{l_\mathcal{K} > 2} \int_{s}^\tau \left(\mathcal{U}_{u, \tau, \boldsymbol{\sigma}} \circ \Big[\mathcal{K} \sim (\mathcal{L} - \mathcal{K})\Big]^{l_\mathcal{K}}_{u, \boldsymbol{\sigma}}\right)_{\textbf{a}} du  \nonumber \\
    &+ \int_{s}^\tau \left(\mathcal{U}_{u, \tau, \boldsymbol{\sigma}} \circ \mathcal{E}^{((\mathcal{L} - \mathcal{K}) \times (\mathcal{L} - \mathcal{K}))}_{u, \boldsymbol{\sigma}}\right)_{\textbf{a}} du  \nonumber \\
    &+ \int_{s}^\tau \left(\mathcal{U}_{u, \tau, \boldsymbol{\sigma}} \circ \mathcal{E}^{(\widetilde{G})}_{u, \boldsymbol{\sigma}}\right)_{\textbf{a}} du + \int_{s}^\tau \left(\mathcal{U}_{u, \tau, \boldsymbol{\sigma}} \circ \mathcal{E}^{(M)}_{u, \boldsymbol{\sigma}}\right)_{\textbf{a}}.
\end{align}

 
\end{lemma}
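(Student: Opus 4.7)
The three displayed identities \eqref{LK_SDE}, \eqref{int_K-LcalE}, \eqref{int_K-L_ST} correspond to three increasingly refined statements: the drift/SDE form, its Duhamel solution, and its stopped analogue. My approach is to treat them in this order, with most of the real work in the middle step.

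For \eqref{LK_SDE}, the only substantive content beyond rewriting \eqref{eq_L-Keee} is the identification of the $l_{\mathcal{K}}=2$ drift with the action of $\varTheta_{t,\boldsymbol{\sigma}}$, which is flagged just before the lemma. To verify it, I would expand \eqref{DefKsimLK} restricted to $l_{\mathcal{K}}=2$. A length-$2$ primitive arises either for adjacent cuts ($l=k+1$, with $\mathcal{K}$ on the $R$-side of the swap) or for the wrap-around cut ($k=1$, $l=n$, with $\mathcal{K}$ on the $L$-side). In each case I substitute the explicit formula \eqref{Kn2sol}: the prefactor $W^{2}$ in \eqref{DefKsimLK} cancels the $W^{-2}$ in \eqref{Kn2sol}, and summing over the new gluing vertex against $S^{(B)}_{ab}$ (together with commutativity of $S^{(B)}$ and $\Theta^{(B)}$ via Lemma \ref{lem_propTH}) collapses the double sum to a single action on slot $i$ of $(\mathcal{L}-\mathcal{K})$, with the $n$ cuts $k=1,\dots,n$ producing exactly the $i$-sum in Definition \ref{DefTHUST}. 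All other terms in \eqref{eq_L-Keee} are retained verbatim.

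The heart of the proof is \eqref{int_K-LcalE}, which I would obtain by variation of constants. Write \eqref{LK_SDE} as $dX_t = \varTheta_{t,\boldsymbol{\sigma}}\circ X_t\,dt + F_t\,dt + d\mathcal{E}^{(M)}_t$, with $X_t=(\mathcal{L}-\mathcal{K})_{t,\boldsymbol{\sigma},\cdot}$ and $F_t$ collecting the $l_\mathcal{K}>2$ term together with $\mathcal{E}^{((\mathcal{L}-\mathcal{K})\times(\mathcal{L}-\mathcal{K}))}$ and $\mathcal{E}^{(\widetilde G)}$. The key technical input is that $\mathcal{U}_{s,t,\boldsymbol{\sigma}}$ from \eqref{def_Ustz} is the forward propagator for $\varTheta_{t,\boldsymbol{\sigma}}$. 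This is checked by writing $\mathcal{U}_{s,t,\boldsymbol{\sigma}}$ as a product of slot-wise factors $R_i(s,t):=\bigl(1-sm_im_{i+1}S^{(B)}\bigr)\bigl(1-tm_im_{i+1}S^{(B)}\bigr)^{-1}$, verifying $\mathcal{U}_{s,s,\boldsymbol{\sigma}}=\mathrm{Id}$ and the telescoping semigroup law $\mathcal{U}_{u_1,u_2,\boldsymbol{\sigma}}\circ\mathcal{U}_{s,u_1,\boldsymbol{\sigma}}=\mathcal{U}_{s,u_2,\boldsymbol{\sigma}}$ (both of which reduce to linear algebra once the commutativity in item 3 of Lemma \ref{lem_propTH} is invoked), and differentiating $R_i(s,t)$ in $t$ using $\partial_t\Theta^{(B)}_{t\xi}=\xi\,\Theta^{(B)}_{t\xi}S^{(B)}\Theta^{(B)}_{t\xi}$ to obtain the matching forward-evolution identity. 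Then applying It\^o's formula to the process $u\mapsto\mathcal{U}_{u,t,\boldsymbol{\sigma}}\circ X_u$ on $[s,t]$ (at fixed $t$) and integrating yields \eqref{int_K-LcalE}, with the three kinds of source terms carried along unchanged.

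Finally \eqref{int_K-L_ST} follows from \eqref{int_K-LcalE} by replacing $t$ with $\tau=T\wedge t$ and invoking optional stopping for the martingale term: the integrand $\mathcal{U}_{u,\tau,\boldsymbol{\sigma}}\circ(\cdot)$ is adapted and has integrable quadratic variation on compact time-intervals (a fact that uses only the a priori resolvent/loop bounds already established in Step~1 of Section~\ref{Sec:Stoflo}), all non-martingale terms are pathwise continuous in the upper limit, and the semigroup law lets us rewrite $\mathcal{U}_{u,t,\boldsymbol{\sigma}}=\mathcal{U}_{\tau,t,\boldsymbol{\sigma}}\circ\mathcal{U}_{u,\tau,\boldsymbol{\sigma}}$ so that the stopped identity matches \eqref{int_K-L_ST} exactly. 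I expect the main obstacle to be the verification of the forward-evolution identity $\partial_t\mathcal{U}_{s,t,\boldsymbol{\sigma}}=\varTheta_{t,\boldsymbol{\sigma}}\circ\mathcal{U}_{s,t,\boldsymbol{\sigma}}$: although purely algebraic, the careful bookkeeping of the factors $m_im_{i+1}$, $S^{(B)}$ and $\Theta^{(B)}_{tm_im_{i+1}}$ across the $n$ slots, and the reconciliation of the explicit form \eqref{def_Ustz_2} with the kernel of $\varTheta_{t,\boldsymbol{\sigma}}$ in Definition~\ref{DefTHUST}, requires some care.
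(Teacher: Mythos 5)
Your proposal follows the paper's own (very terse) route exactly: identify the $l_{\mathcal{K}}=2$ drift with $\varTheta_{t,\boldsymbol{\sigma}}$ via the explicit rank-$2$ formula \eqref{Kn2sol}, integrate by Duhamel with $\mathcal{U}_{s,t,\boldsymbol{\sigma}}$ as the two-parameter propagator, and pass to the stopped identity by continuity in the deterministic upper-time parameter — your version just fills in the bookkeeping the paper leaves implicit. One caution worth flagging when you execute the forward-evolution check: differentiating $R_i(s,t)$ via $\partial_t\Theta^{(B)}_{t\xi}=\xi\,\Theta^{(B)}_{t\xi}S^{(B)}\Theta^{(B)}_{t\xi}$ produces the slot-wise generator $m_im_{i+1}\,S^{(B)}\Theta^{(B)}_{tm_im_{i+1}}$, and the same $S^{(B)}$ factor appears in the $l_{\mathcal{K}}=2$ contraction after substituting \eqref{Kn2sol} and summing $b$ against $S^{(B)}_{ab}$; Definition \ref{DefTHUST} as printed omits that $S^{(B)}$, so reconcile the two with the $\mathcal{U}$-consistent reading (kernel $m_im_{i+1}S^{(B)}\Theta^{(B)}_{tm_im_{i+1}}$) rather than taking the displayed formula literally.
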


The last term in \eqref{int_K-L_ST} has the following meaning. Since ${\cal U}$ is deterministic, the object obtained by replacing ${\cal U}_{u,\tau,\boldsymbol{\sigma}}$ by ${\cal U}_{u,t,\boldsymbol{\sigma}}$ is well-defined, and this object is continuous in $t$, so we can evaluate it at time $t=\tau$. Of course, one must be careful in using It\^{o} calculus to estimate it.

We now introduce notation that is necessary to compute the quadratic variation of the martingale.

\begin{definition}[Definition of $\mathcal{E}\otimes  \mathcal{E}$]\label{def:CALE} 
Denote 
  \begin{align}\label{defEOTE}
 \left( \mathcal{E}\otimes  \mathcal{E} \right)_{t, \,\boldsymbol{\sigma}, \,\textbf{a}, \,\textbf{a}'} \; := \; &
 \sum_{k=1}^n\left( \mathcal{E}\otimes  \mathcal{E} \right)^{(k)}_{t, \,\boldsymbol{\sigma}, \,\textbf{a}, \,\textbf{a}'}
 \nonumber \\
 \left( \mathcal{E}\otimes  \mathcal{E} \right)^{(k)}_{t, \,\boldsymbol{\sigma}, \,\textbf{a}, \,\textbf{a}'} :
\; = \; & W^{2} \sum_{b,b'} S^{(B)}_{b,b'}{\cal L}_{t, \boldsymbol{\sigma}^{(k ) },\textbf{a}^{(k )}},\quad \boldsymbol{\sigma}^{(k ) }\in \{+,-\}^{2n+2}, 
\end{align}
where ${\cal L}_{t, \boldsymbol{\sigma}^{(k ) },\textbf{a}^{(k )}}$ is obtained by cutting the $k$-th edge of ${\cal L}_{t,\boldsymbol{\sigma},\textbf{a}}$ and then attaching itself (with indices $\textbf{a}$) to its complex conjugate loop (with indices $\textbf{a}'$) into a bigger loop that is labeled by indices $b$ and $b'$, so that
\begin{align}\label{def_diffakn_k}
   & \textbf{a}^{(k )}=( a_k,a_{k+1},\cdots a_n, a_1,\cdots a_{k-1}, b', a'_{k-1}\cdots a_1', a_n'\cdots a'_{k},b)
    \nonumber \\
  &  \boldsymbol{\sigma}^{(k )}=( \, \sigma_k, \, \sigma_{k+1},\cdots \sigma_n,  \, \sigma_1,\cdots \sigma_{k}, \,  \overline\sigma_{k},  \cdots \overline\sigma_1 , \,  \overline\sigma_n \cdots \overline\sigma_{k })
\end{align}
The symbol $\otimes$ is used to emphasize the symmetric structure of $\mathcal{E} \otimes \mathcal{E}$; it is not a tensor product.

\begin{figure}[ht]
\centering
\begin{minipage}{0.45\textwidth} 
    \centering
    \begin{tikzpicture}[scale=1.0]

    \node[draw, circle, inner sep=1pt, fill=black] (A1) at (-2, 2) {};
    \node at (-2.3, 2) {$a_1$}; 

    \node[draw, circle, inner sep=1pt, fill=black] (A2) at (0, 3) {};
    \node at (0 , 3.3) {$a_2$}; 

    \node[draw, circle, inner sep=1pt, fill=black] (A3) at (2, 2) {};
    \node at (2.3, 2 ) {$a_3$}; 

    \node[draw, circle, inner sep=1pt, fill=black] (B) at (-0.5, 1) {};
    \node at (-0.9, 1) {$b$}; 

    \node[draw, circle, inner sep=1pt, fill=black] (B1) at (0.5, 1) {};
    \node at (0.9, 1) {$b'$}; 

    \node[draw, circle, inner sep=1pt, fill=black] (A1b) at (-2, 0) {};
    \node at (-2.3, -0) {$a'_1$}; 

    \node[draw, circle, inner sep=1pt, fill=black] (A2b) at (0, -1) {};
    \node at (0, -1.3) {$a'_2$}; 

    \node[draw, circle, inner sep=1pt, fill=black] (A3b) at (2, 0) {};
    \node at (2.3, -0) {$a'_3$}; 

    \draw (A1) -- (A2) node[midway, above left] {\small $G_2$}; 
    \draw (A2) -- (A3) node[midway, above right] {\small $G_3$}; 
    \draw (A1) -- (B) node[midway, below left, xshift=-2pt, yshift= 5pt] {\small $G_1$}; 
    \draw (A3) -- (B1) node[midway, below right, xshift=2pt, yshift= 5pt] {\small $G_1$}; 

    \draw (B) -- (A1b) node[midway, above left,  xshift=-2pt, yshift=-5pt] {\small $\overline{G_1}$}; 
    \draw (A1b) -- (A2b) node[midway, below left, yshift=-2pt] {\small $\overline{G_2}$}; 
    \draw (A2b) -- (A3b) node[midway, below right, yshift=-2pt] {\small $\overline{G_3}$}; 
    \draw (B1) -- (A3b) node[midway, above right, xshift=2pt, yshift=-5pt] {\small $\overline{G_1}$}; 

    \draw[decorate, decoration={snake, amplitude=0.4mm, segment length=2mm}, thick] (B) -- (B1)
        node[midway, above] {$S^{(B)}$};

    \end{tikzpicture}
   
\end{minipage}%
\hfill
\begin{minipage}{0.45\textwidth} 
    \raggedright
    $$k=1 $$\\ 
    $$\textbf{a}^{(1 )}=( a_1, a_2, a_3, b', a_3', a_2', a_1',b);$$
    \\  
     $$\boldsymbol{\sigma}^{(1 )}=(\;\sigma_1, \sigma_2, \sigma_3,\sigma_1, \overline{\sigma_1},\overline{\sigma_3}, \overline{\sigma_2},\overline{\sigma_1}, ).$$
\end{minipage}
 \caption{Example of the $8-G$ loop in $\mathcal{E}\otimes  \mathcal{E}$: for $\sigma\in \{+,-\}^3$. Taken from \cite{YY_25}.}
    \label{fig:8shapedgraph}
\end{figure}

\end{definition}

 We now estimate the martingale term in \eqref{int_K-L_ST} via Lemma 5.5 in \cite{YY_25}, which we record below.

\begin{lemma}\label{lem:DIfREP}
For any stopping time $T$ with respect to the filtration generated by $H_t$ we have
 \begin{equation}\label{alu9_STime}
   \mathbb{E} \left [    \int_{s}^\tau\left(\mathcal{U}_{u, t, \boldsymbol{\sigma}} \circ \mathcal{E}^{(M)}_{u, \boldsymbol{\sigma}}\right)_{\textbf{a}} \right ]^{2p} 
   \le C_{n,p} \; 
   \mathbb{E} \left(\int_{s}^\tau 
   \left(\left(
   \mathcal{U}_{u, t,  \boldsymbol{\sigma}}
   \otimes 
   \mathcal{U}_{u, t,  \overline{\boldsymbol{\sigma}}}
   \right) \;\circ  \;
   \left( \mathcal{E} \otimes  \mathcal{E} \right)
   _{u, \,\boldsymbol{\sigma}  }
   \right)_{{\textbf a}, {\textbf a}}du 
   \right)^{p}
  \end{equation} 
  where $\tau:=t\wedge T$, where  $\overline{\boldsymbol{\sigma}}$ is the conjugate of ${\boldsymbol{\sigma}}$, and where
  $$
  \left[\left(
   \mathcal{U}_{u, t,  \boldsymbol{\sigma}}
   \otimes 
   \mathcal{U}_{u, t,  \overline{\boldsymbol{\sigma}}}
   \right)\circ \cal A\right]_{\textbf{a},\textbf{a}'}=
   \sum_{\textbf{b},\;\textbf{b}'} \; \prod_{i=1}^n \left(\frac{1 - u \cdot m_i m_{i+1} S^{(B)}}{1 - t \cdot m_i m_{i+1} S^{(B)}}\right)_{a_i b_i}
   \;\cdot \;\prod_{i=1}^n \left(\frac{1 - u \cdot \overline{m_i m_{i+1}} S^{(B)}}{1 - t \cdot \overline{m_i m_{i+1}} S^{(B)}}\right)_{a'_i b'_i} \cdot \mathcal{A}_{\textbf{b},\textbf{b}'}.
  $$
\end{lemma}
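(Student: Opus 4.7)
The plan is to apply the Burkholder-Davis-Gundy (BDG) inequality directly to the continuous martingale
\begin{align*}
M_\tau := \int_s^\tau \bigl(\mathcal{U}_{u,t,\boldsymbol{\sigma}} \circ \mathcal{E}^{(M)}_{u,\boldsymbol{\sigma}}\bigr)_{\textbf{a}}.
\end{align*}
By the definition \eqref{def_Edif} of $\mathcal{E}^{(M)}$ and the fact that $\mathcal{U}_{u,t,\boldsymbol{\sigma}}$ is a deterministic linear operator on tensors indexed by $\textbf{a}$, I can pull it inside the stochastic integral so that $M_\tau$ has the form $\int_s^\tau \sum_\alpha Y_{u,\alpha}\, d(B_u)_\alpha$ with $Y_{u,\alpha}=(S_\alpha)^{1/2}\bigl(\mathcal{U}_{u,t,\boldsymbol{\sigma}} \circ \partial_\alpha \mathcal{L}_{u,\boldsymbol{\sigma}}\bigr)_{\textbf{a}}$. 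The BDG inequality then yields $\mathbb{E}|M_\tau|^{2p}\leq C_p\,\mathbb{E}\langle M\rangle_\tau^p$, and since $\{(B_u)_\alpha\}_{\alpha}$ are independent complex Brownian motions up to the Hermitian constraint $B_{xy}=\bar{B}_{yx}$, the bracket equals (up to constants absorbed into $C_p$)
\begin{align*}
\langle M\rangle_\tau = \int_s^\tau \sum_\alpha S_\alpha\,\bigl|\bigl(\mathcal{U}_{u,t,\boldsymbol{\sigma}} \circ \partial_\alpha \mathcal{L}_{u,\boldsymbol{\sigma}}\bigr)_{\textbf{a}}\bigr|^2 du.
\end{align*}

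The remaining task is to identify this integrand with $\bigl((\mathcal{U}_{u,t,\boldsymbol{\sigma}}\otimes \mathcal{U}_{u,t,\overline{\boldsymbol{\sigma}}})\circ (\mathcal{E}\otimes\mathcal{E})_{u,\boldsymbol{\sigma}}\bigr)_{\textbf{a},\textbf{a}}$. Writing $|Z|^2=Z\overline Z$ and using the elementary identity $\overline{\mathcal{U}_{u,t,\boldsymbol{\sigma}}\circ \mathcal{A}} = \mathcal{U}_{u,t,\overline{\boldsymbol{\sigma}}}\circ \overline{\mathcal{A}}$ (which holds since the only complex objects in the definition of $\mathcal{U}$ are the factors $m_i m_{i+1}$, whose conjugates correspond to the sign flip $\sigma_i\mapsto \bar\sigma_i$), I can factor both deterministic operators out. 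This reduces the matching to the pointwise identity
\begin{align*}
\sum_{\alpha=(x,y)} S_\alpha\,\bigl(\partial_\alpha \mathcal{L}_{u,\boldsymbol{\sigma},\textbf{b}}\bigr)\overline{\bigl(\partial_\alpha \mathcal{L}_{u,\boldsymbol{\sigma},\textbf{b}'}\bigr)} = (\mathcal{E}\otimes\mathcal{E})_{u,\boldsymbol{\sigma},\textbf{b},\textbf{b}'},
\end{align*}
which can then be evaluated at $\textbf{b}=\textbf{b}'=\textbf{a}$ after the action of $\mathcal{U}\otimes\mathcal{U}$.

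To verify the display above, I differentiate $\mathcal{L}_{u,\boldsymbol{\sigma},\textbf{b}}=\langle \prod_i G(\sigma_i) E_{b_i}\rangle$ using the resolvent rule $\partial_{H(x,y)}G = -G\,e_{xy}\,G$. This produces a sum of $n$ ``cut'' terms indexed by $k\in\{1,\ldots,n\}$ where the $k$-th edge $G(\sigma_k)$ is split into $G(\sigma_k)\,e_{xy}\,G(\sigma_k)$. Multiplying by the conjugate derivative of $\mathcal{L}_{u,\boldsymbol{\sigma},\textbf{b}'}$, summing over $(x,y)$ weighted by $S_{xy}=W^{-2}S^{(B)}_{[x][y]}$, and grouping $x\in\mathcal{I}^{(2)}_{b}$, $y\in\mathcal{I}^{(2)}_{b'}$ yields two factors of $W^{2}$: one creates the normalized insertions $E_{b}, E_{b'}$ at the gluing points (absorbing the $W^{-2}$ prefactors in the definition \eqref{Def_matE}), while the other survives as the $W^{2}$ prefactor in \eqref{defEOTE}. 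For each $k$, the diagonal pairing of the $k$-th cut in the two copies glues the two traces into the single $(2n+2)$-loop $\mathcal{L}_{u,\boldsymbol{\sigma}^{(k)},\textbf{a}^{(k)}}$ from \eqref{def_diffakn_k}, reproducing $(\mathcal{E}\otimes\mathcal{E})^{(k)}$. The off-diagonal pairings $k\neq k'$ are handled by the same topological argument as in the proof of Lemma 5.5 in \cite{YY_25}: only pairings that close up into a single loop survive after tracing, and they collapse to the diagonal sum $\sum_k (\mathcal{E}\otimes\mathcal{E})^{(k)}$.

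The main obstacle, as in Lemma 5.5 of \cite{YY_25}, is the careful bookkeeping of the $W$-powers (which here differs from the $d=1$ setting because $S=S^{(B)}\otimes S_W$ now carries a $W^{-2}$ factor reflecting the $d=2$ block size), together with the Hermitian pairing of $\alpha=(x,y)$ with $\bar\alpha=(y,x)$ in computing the quadratic variation. Both of these affect only overall constants that are absorbed into $C_{n,p}$, so the dimension-dependent modifications do not alter the structural argument of \cite{YY_25}, and I expect the proof to follow that one with only notational changes.
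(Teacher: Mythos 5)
Your proposal takes essentially the same approach as the paper, which simply cites Lemma 5.5 of \cite{YY_25} and implicitly claims its proof carries over to $d=2$ with the obvious power-counting changes. The structure you lay out — BDG inequality, compute the quadratic variation via the covariation of the complex Hermitian Brownian motion, commute the deterministic operator $\mathcal{U}_{u,t,\boldsymbol{\sigma}}$ out of the bracket using $\overline{\mathcal{U}_{u,t,\boldsymbol{\sigma}}\circ\mathcal{A}}=\mathcal{U}_{u,t,\overline{\boldsymbol{\sigma}}}\circ\overline{\mathcal{A}}$, and then identify the surviving bilinear form with $(\mathcal{E}\otimes\mathcal{E})$ — is correct and is precisely the content of the cited lemma.

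One point of caution: your description of the cross-terms $k\neq k'$ as being eliminated by a ``topological argument'' (pairings that don't close into a single loop vanish) is misleading. When one expands $\partial_\alpha\mathcal{L}_{\textbf{b}}=-\sum_k(\text{chain}^{(k)})_{yx}$ and its conjugate, \emph{every} pairing $(k,k')$ glues the two open chains into a single $(2n+2)$-loop after contracting against $S_\alpha=W^{-2}S^{(B)}_{[x][y]}$, so no cross-term vanishes by topology. The diagonal terms $k=k'$ reproduce $(\mathcal{E}\otimes\mathcal{E})^{(k)}$ exactly (matching the index pattern in \eqref{def_diffakn_k}), but the off-diagonal terms give genuinely different loops. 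The correct mechanism is to apply Cauchy–Schwarz \emph{before} expanding the square, i.e.\ $\bigl|\sum_k Y_\alpha^{(k)}\bigr|^2\le n\sum_k\bigl|Y_\alpha^{(k)}\bigr|^2$ with $Y_\alpha^{(k)}=(\mathcal{U}_{u,t,\boldsymbol{\sigma}}\circ\text{chain}^{(k)})_{\textbf{a}}$, which eliminates the cross-terms at the cost of a factor of $n$ absorbed into $C_{n,p}$, and then each diagonal term matches $\bigl((\mathcal{U}_{u,t,\boldsymbol{\sigma}}\otimes\mathcal{U}_{u,t,\overline{\boldsymbol{\sigma}}})\circ(\mathcal{E}\otimes\mathcal{E})^{(k)}\bigr)_{\textbf{a},\textbf{a}}$ identically. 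This is a minor bookkeeping issue rather than a structural gap, and the rest of the argument — including the treatment of the Hermitian pairing $B_{yx}=\overline{B}_{xy}$ in the quadratic variation and the dimension-dependent $W$-power accounting — is handled correctly.
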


\subsection{Proof of Theorem \ref{lem:main_ind}, Step 2}
In this subsection, we will fix $\boldsymbol{\sigma}=(+,-)$ and thus drop it from the notation.  We prove \eqref{Eq:Gdecay_w} first;  \eqref{Gt_bound_flow} will be proved at the end of this  section. 
We start by defining the following exponential decay profiles:
\begin{equation}\label{def_WTu}
      {\cal T}^{(\cal L-K)}_u (\ell):= \max_{a,\,b\; : \; |a-b|\ge \ell} \left| ({\cal L-K})_{u,  \boldsymbol{\sigma},  ( a, b)}\right|,\quad  \boldsymbol{\sigma}=(+,-),
\end{equation}
\begin{equation}\label{def_WTuD}
{\cal T} _{u, D} (\ell) := M_{u}^{-2}
\exp \left(- \left( \ell /\ell_u \right)_+^{1/2} \right)+W^{-D}.
\end{equation} 
Both  ${\cal T}^{(\cal L-K)}_u$ and ${\cal T} _{u, D}$ are non-decreasing in the parameter $\ell$, so for any $0\leq\ell_{1}\leq\ell_{2}$, we have 
 ${\cal T}^{(\cal L-K)}_u (\ell_1)\ge {\cal T}^{(\cal L-K)}_u (\ell_2)$ and ${\cal T} _{u,D} (\ell_1)\ge{\cal T} _{u,D}(\ell_2)$. We will denote the ratio between \eqref{def_WTu} and \eqref{def_WTuD} by  
\begin{equation}\label{def_Ju}
     {\cal J} _{u,D} (\ell):=   \left({\cal T}^{(\cal L-K)}_u (\ell) \Big/{\cal T}_{u,D} (\ell)\right)+1
\end{equation} 
Our goal is to control $ {\cal J} _{u,D} (\ell)$ for any $u\in [s,t]$ and any large $D>0$. In particular, we aim to show 
 \begin{align}\label{shoellJJ}
    {\cal J}^* _{u,D} :=\max_{\ell}   {\cal J} _{u,D} (\ell)\le (\eta_s/\eta_u)^4 
  \end{align}
In this subsection, we will often use the following slightly enlarged length-scale:
$$
  \ell^{*}_t := (\log W)^{3/2}\cdot \ell_t .
$$
In this scale $\ell_t^*$,  $\Theta_t$ is exponentially small, but ${\cal T}_t$ is not. 
More precisely, we have the following lemma, which is Lemma 5.6 in \cite{YY_25}. (The proof is identical to that of Lemma 5.6 in \cite{YY_25}, so we omit it. In any case, the first and third estimates below are elementary, and the last follows by decay of ${\cal K}$, which follows from the explicit formula \eqref{Kn2sol}, decay of $\Theta$ from \eqref{prop:ThfadC}, and decay of ${\cal L}-{\cal K}$.)
\begin{lemma}\label{lem_dec_calE_0}
For any fixed $D,\delta>0$, we have
 \begin{align}
 \label{Kell*}
   |b -a|_{L}\ge \delta \cdot \ell_t^* &\implies 
   \left({\Theta}_t\right)_{ab } \le W^{-D} ,\quad    \left({\Theta}_s^{-1}{\Theta}_t\right)_{ab } \le W^{-D}
   \\
  |b -a|_{L}\ge \delta \cdot \ell_t^* 
 & \implies  {\cal L}_{t, (-,+),(a, b)} \prec {\cal J}^*_{t,D}\cdot{\cal T}_{t,D}(|a-b|_{L}) . \label{auiwii}
\end{align}
Moreover, for any $C=O(1)$, we have \begin{equation}\label{Tell*}
       { {\cal T}_{u,D} \left(\ell-C\cdot \ell_u^*\right)}\;\prec \;  {\cal T}_{u,D} \left(\ell \right).
\end{equation} 

\end{lemma}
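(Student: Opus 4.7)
The plan is to follow the blueprint of the corresponding Lemma 5.6 in \cite{YY_25}. Each of the three claims reduces to a direct application of Lemma \ref{lem_propTH}, the explicit length-two formula \eqref{Kn2sol2}, or elementary bookkeeping with the definitions \eqref{def_WTu}--\eqref{def_Ju}. No new technical input is required; the arguments are essentially routine.

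For \eqref{Kell*}, I would apply the pointwise exponential decay \eqref{prop:ThfadC} with $\xi = t|m|^2$, so that $\hat{\ell}(\xi) = \ell_t$. For $|a-b|_L \geq \delta \ell_t^* = \delta(\log W)^{3/2}\ell_t$, the factor $\exp(-c|a-b|_L/\ell_t) \leq \exp(-c\delta(\log W)^{3/2})$ beats any $W^{-D}$. For $(\Theta_s^{-1}\Theta_t)_{ab}$ with $a \neq b$, I would use the identity \eqref{def_Ustz_2} specialized to $\xi = |m|^2 = 1$ to write
\[
\Theta_s^{-1}\Theta_t - I = -(s-t)\,\Theta_t S^{(B)}.
\]
Since $S^{(B)}$ has range one, the convolution $(\Theta_t S^{(B)})_{ab} = \sum_{c:\,|c-b|_L\leq1}(\Theta_t)_{ac}S^{(B)}_{cb}$ inherits the exponential decay of $\Theta_t$ at scale $\ell_t$ (the shift by at most one does not affect the bound), so it too is at most $W^{-D}$.

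For \eqref{Tell*}, I would set $x := (\ell/\ell_u)^{1/2}$ and $y := ((\ell - C\ell_u^*)_+/\ell_u)^{1/2}$. Then $x^2 - y^2 \leq C(\log W)^{3/2}$, so $x - y \leq C(\log W)^{3/2}/(x+y)$. In the regime where the exponential term dominates the $W^{-D}$ piece of ${\cal T}_{u,D}(\ell)$ we must have $x \leq D\log W$. If in addition $x \geq (\log W)^{3/4}$ then $x-y \leq C(\log W)^{3/4}$, yielding a multiplicative loss of $\exp(O((\log W)^{3/4})) \leq W^\tau$ for any fixed $\tau>0$, which is absorbed into $\prec$. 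If $x < (\log W)^{3/4}$ then $\exp(-x)$ differs from $\exp(-y)$ by at most a sub-polynomial factor, with the same conclusion. In the complementary regime where $W^{-D}$ already dominates ${\cal T}_{u,D}(\ell)$, the bound is immediate since ${\cal T}_{u,D}(\ell-C\ell_u^*) \leq M_u^{-2}+W^{-D}$, and a standard choice of $D$ at this stage of the paper permits absorbing $M_u^{-2}$ into $W^{-D'}$ with a relabeled $D'$.

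Finally, for the main bound \eqref{auiwii}, I would decompose ${\cal L}_{t,(-,+),(a,b)} = {\cal K}_{t,(-,+),(a,b)} + ({\cal L}-{\cal K})_{t,(-,+),(a,b)}$. By \eqref{Kn2sol2} (using $|m|=1$, so $m_1 m_2 = |m|^2 = 1$ for $\boldsymbol{\sigma}=(-,+)$), the primitive term equals $W^{-2}(\Theta^{(B)}_t)_{ab}$, which by \eqref{Kell*} is bounded by $W^{-D}$ on the regime $|a-b|_L \geq \delta\ell_t^*$ and is therefore absorbed into the $W^{-D}$ summand of ${\cal T}_{t,D}$. The remainder is controlled directly by the definitions:
\[
|({\cal L}-{\cal K})_{t,(-,+),(a,b)}| \leq {\cal T}^{({\cal L}-{\cal K})}_t(|a-b|_L) \leq {\cal J}^*_{t,D}\cdot {\cal T}_{t,D}(|a-b|_L),
\]
where the first inequality is \eqref{def_WTu} and the second is \eqref{def_Ju}. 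Summing the two contributions gives \eqref{auiwii}. The only step requiring any care is \eqref{Tell*}: one must verify that the $(\log W)^{3/2}$ prefactor built into $\ell_u^*$ is precisely tuned against the square-root exponent in \eqref{def_WTuD} so that the shift by $C\ell_u^*$ produces only a sub-polynomial multiplicative loss.
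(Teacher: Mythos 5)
Your proposal is correct and follows essentially the same approach as the paper's (which defers the details to Lemma~5.6 in \cite{YY_25} but sketches exactly your strategy: \eqref{Kell*} from the exponential decay \eqref{prop:ThfadC} and the identity \eqref{def_Ustz_2}, \eqref{Tell*} by elementary manipulation of the exponent, and \eqref{auiwii} by splitting ${\cal L}={\cal K}+({\cal L}-{\cal K})$, killing ${\cal K}=W^{-2}(\Theta_t)_{ab}$ via the decay just proved, and invoking the definitions \eqref{def_WTu}--\eqref{def_Ju} for the remainder). One small remark: in the final paragraph of your argument for \eqref{Tell*}, the claim that $M_u^{-2}$ can be absorbed into $W^{-D'}$ is not justified in general (since $M_u$ need not be that large); however, this paragraph is superfluous --- the uniform bound $e^{x-y}\prec 1$ that you already established handles both regimes at once, since it gives $M_u^{-2}e^{-y}\le W^{\tau}M_u^{-2}e^{-x}\le W^{\tau}{\cal T}_{u,D}(\ell)$ unconditionally.
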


Our proof of \eqref{Eq:Gdecay_w} relies on the loop hierarchy \eqref{int_K-L_ST} for $n=2$. 
We begin by bounding  terms in \eqref{int_K-L_ST}.  

\begin{lemma}\label{lem_dec_calE}  Suppose  the assumptions of Theorem  \ref{lem:main_ind} and the conclusion  of Step 1, i.e., \eqref{lRB1} and \eqref{Gtmwc}, 
hold. Assume that the following are satisfied, in which $\textbf{a}=(a_{1},a_{2})$ and $\textbf{a}'=(a_{1}',a_{2}')$ and $\boldsymbol{\sigma}=(+,-)$:
\begin{align} 
s\le u\le t, \quad  D\ge 10, \quad 
\max_i |a_i-a_i'|_{L}\le \ell_t ^*. \label{57}
\end{align}
Then we have
\begin{align}\label{res_deccalE_lk}
       {\cal E}^{( (L-K)\times(L-K) )}
       _{u, \boldsymbol{\sigma},\textbf{a}} \Big/\; 
  {\cal T}_{t,D}(|a_1-a_2|_{L}) \quad 
 \;  &\; \prec \; 
   \left(\eta_u\right)^{-1} 
   \cdot  
   M_{u}^{-1}
    \cdot 
 \left({\cal J}^{*}_{u,D} \right)^{2}
 \\
      \mathcal{E}^{( \widetilde{G} )}_{u, \boldsymbol{\sigma},\textbf{a}} 
    \;\Big/\; 
 {\cal T}_{t,D}(|a_1-a_2|_{L})
 \;  &\; \prec \; 
 \left(\eta_u\right)^{-1} 
  \cdot \left(\ell_u/\ell_s\right)^{4}\cdot{\bf 1}(|a_1-a_2|_{L}\le \ell_u^*)
    \nonumber  \\
       \;  &\; + \;  \left(\eta_u\right)^{-1} 
  \cdot
       M_{u}^{-1/3}  \cdot 
       \left({\cal J}^{*}_{u,D} \right)^{3}
 \label{res_deccalE_wG}\\
\label{res_deccalE_dif}
    \left(
   \mathcal{E}\otimes  \mathcal{E} \right)
   _{u, \,\boldsymbol{\sigma},  \,{\textbf a} ,\, {\textbf a}'
     }
     \;\Big/\; 
 \left( {\cal T}_{t,D}(|a_1-a_2|_{L})\right)^2
 \;  &\; \prec \; 
  \left(\eta_u\right)^{-1} 
  \cdot \left(\ell_u/\ell_s\right)^{10}\cdot{\bf 1}(|a_1-a_2|_{L}\le 4\ell_t^*)
     \nonumber \\
       \;  &\; + \;  \left(\eta_t\right)^{-1} 
  \cdot
       M_{u}^{-1/2}  \cdot 
    \left({\cal J}^{*}_{u,D} \right)^{3}
\end{align} 
  \end{lemma}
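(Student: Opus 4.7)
The plan is to bound each of the three error terms using (i) the a priori loop bound \eqref{lRB1} and rough local law \eqref{Gtmwc} from Step 1; (ii) the tail bound
$|(\mathcal{L}-\mathcal{K})_{u,(+,-),(a,b)}| \prec {\cal J}^{*}_{u,D}\cdot{\cal T}_{u,D}(|a-b|_L)$,
which is essentially the definition of ${\cal T}^{(\cal L-K)}_u$ and ${\cal J}^{*}_{u,D}$ extended to all $a,b$ via Lemma \ref{lem_dec_calE_0}; and (iii) the Ward identity \eqref{WI_calL} together with exponential decay of the primitive $\mathcal{K}_{u,\boldsymbol\sigma,\cdot}$ for same-sign neighbour pairs (for which $|1-u\,m_i m_{i+1}|$ is bounded away from $0$). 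I will treat the ``near'' regime $|a_1-a_2|_L \lesssim \ell_u^*$ and the ``far'' regime separately, since the target profile ${\cal T}_{t,D}(|a_1-a_2|_L)$ offers no decay in the near regime. For \eqref{res_deccalE_lk}, only $(k,l) = (1,2)$ survives in \eqref{def_ELKLK} for $n=2$, and both cut-and-glues yield $(+,-)$-$2$-loops. Applying the tail bound to both factors reduces the estimate to the two-dimensional convolution
\begin{align*}
\sum_{a}{\cal T}_{u,D}(|a-x|_L)\,{\cal T}_{u,D}(|a-y|_L)\;\prec\;\ell_u^{2}\,M_u^{-2}\,{\cal T}_{u,D}(|x-y|_L),
\end{align*}
obtained by splitting into ``closer to $x$'' and ``closer to $y$'' regions whose volumes are of order $\ell_u^{2}$. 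Combined with the identity $W^{2}\ell_u^{2} = M_u \eta_u^{-1}$ and the monotonicity ${\cal T}_{u,D} \le {\cal T}_{t,D}$ (for $u\le t$), this produces the claimed $\eta_u^{-1} M_u^{-1}({\cal J}^{*}_{u,D})^2$.

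For \eqref{res_deccalE_wG}, the bound $|\langle\widetilde{G}_u(\sigma_k)E_a\rangle| \prec M_u^{-1}$ follows from \eqref{GavLGEX} together with a priori control on $(+,-)$-$2$-loops. The inserted $3$-loop ${\cal G}^{(b)}_k\circ\mathcal{L}_{u,\boldsymbol\sigma,\textbf{a}}$ has signature $(+,+,-)$ or $(+,-,-)$; by cyclic relabelling the index $b$ can be placed in the last position, so the Ward identity \eqref{WI_calL} rewrites $\sum_b$ of this $3$-loop as a difference of $(+,\pm)$-$2$-loops divided by $2W^{2}\eta_u$. The $(+,-)$-part then carries the required $|a_1-a_2|_L$ decay through the ${\cal J}^{*}_{u,D}$ tail bound, while the $(+,+)$ or $(-,-)$ part decays exponentially via $\mathcal{K}$. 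Writing $T_b := \sum_a\langle\widetilde{G}_u(\sigma_k)E_a\rangle S^{(B)}_{ab}$ and decomposing as $\overline{T}\sum_b(\cdots) + \sum_b(T_b - \overline{T})(\cdots)$ separates the mean part (Ward-reducible as above) from a fluctuation part that is controlled using a priori $3$-loop estimates. The near regime is then handled by a crude a priori bound directly.

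For \eqref{res_deccalE_dif}, the definition \eqref{defEOTE} produces $(2n+2)=6$-loops coupled through $S^{(B)}_{bb'}$. In the near regime, the a priori bound $|\mathcal{L}_{u,\boldsymbol\sigma^{(k)},\textbf{a}^{(k)}}| \prec (\ell_u/\ell_s)^{10} M_u^{-5}$ from \eqref{lRB1} combined with $\sum_{b,b'}S^{(B)}_{bb'} \asymp L^{2}$ yields the first term of the RHS after dividing by ${\cal T}_{t,D}^{2}(|a_1-a_2|_L) \gtrsim M_t^{-4}$. For the far regime, I will decompose each $(+,-)$-$2$-subloop of the $6$-loop as $\mathcal{K} + (\mathcal{L}-\mathcal{K})$, extracting the decay in $|a_1-a_2|_L$ from the $(\mathcal{L}-\mathcal{K})$ factors via the tail bound and from same-sign subloops via exponential $\mathcal{K}$-decay; a further convolution over $b,b'$ produces the ${\cal J}^{*}$-dependent second term. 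The main obstacle is precisely this far-regime decay in \eqref{res_deccalE_wG} and \eqref{res_deccalE_dif}: at Step 2 only $(+,-)$-$2$-loop tail bounds are available, forcing reduction of the $3$- and $6$-loops to $2$-loops via Ward identity or $\mathcal{K}/\mathcal{L}$-splitting. Careful tracking of the dimension-$2$ volume factor $\ell_u^2$ arising in convolutions of ${\cal T}_{u,D}$ profiles, and of the $W^{\pm 2}$ prefactors from the $W^{-2}$ normalization of $E_a$, will be essential to obtain the stated powers of $M_u$ and $\eta_u^{-1}$.
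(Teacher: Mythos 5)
Your treatment of \eqref{res_deccalE_lk} matches the paper: restrict the sum to $|b_1-b_2|_L\le 1$ via $S^{(B)}$, apply the $({\cal L}-{\cal K})$ tail bound to both factors, and use the $d=2$ convolution estimate $\sum_{a}{\cal T}_{u,D}(|a-x|_L){\cal T}_{u,D}(|a-y|_L)\prec\ell_u^{2}M_u^{-2}{\cal T}_{u,D}(|x-y|_L)$.

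However, the Ward-identity step you propose for the far-regime of \eqref{res_deccalE_wG} does not work. The operator ${\cal G}^{(b)}_k$ replaces $G_t(\sigma_k)$ by $G_t(\sigma_k)E_bG_t(\sigma_k)$, so the inserted matrix $E_b$ is always sandwiched between two \emph{same-sign} resolvents. Concretely, for $\boldsymbol{\sigma}=(+,-)$ and $k=1$ the $3$-loop is $\langle G(+)E_bG(+)E_{a_1}G(-)E_{a_2}\rangle$, and for $k=2$ it is $\langle G(+)E_{a_1}G(-)E_bG(-)E_{a_2}\rangle$. Lemma \ref{lem_WI_K} requires the summed block index to sit between $G(-)$ (left) and $G(+)$ (right); no cyclic shift puts $b$ in that position here, since both neighbours of $E_b$ carry the same sign. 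So $\sum_b$ of these $3$-loops does \emph{not} reduce to a difference of $2$-loops divided by $2W^2\eta_u$, and the downstream claim that ``the $(+,-)$-part carries the required decay'' has no starting point. The paper instead handles the far regime entry-wise: Schwarz turns the $3$-loop into products of $G$-entries and a shorter loop, and each $|G_{xy}|^2$ is bounded by a $(+,-)$-$2$-loop via \eqref{GijGEX}, at which point the ${\cal J}^*_{u,D}$ tail bound supplies the spatial decay.

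Your plan for \eqref{res_deccalE_dif} has a similar problem. The phrase ``decompose each $(+,-)$-$2$-subloop of the $6$-loop as $\mathcal{K}+(\mathcal{L}-\mathcal{K})$'' is not a well-defined operation: the $6$-loop is a single trace and does not factor into a product of $2$-loops, so the ${\cal L}={\cal K}+({\cal L}-{\cal K})$ decomposition cannot be applied piece-by-piece. What the paper actually does in the far regime is bound the $6$-loop ${\cal L}^{(1)}$ pointwise by a product of four $G$-entries and one $G^\dagger E_b G$ chain entry (see \eqref{L6G6X}), then control the $G$-entries by the $2$-loop tail estimate via \eqref{GijGEX} and the chain entry by Lemma \ref{lem_GbEXP_n2} with the a priori loop bound \eqref{lRB1}. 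Without this entry-level reduction, the two spatial-decay factors needed to divide by $({\cal T}_{t,D}(|a_1-a_2|_L))^2$ are not produced, and the second term of \eqref{res_deccalE_dif} cannot be obtained.
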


Assuming  Lemma \ref{lem_dec_calE}, we now  prove  \eqref{Eq:Gdecay_w}. We will use extensively for $\cal U$ from Section \ref{ks}.

\begin{proof}[Proof of \eqref{Eq:Gdecay_w}]
By  assumption \eqref{Eq:Gdecay+IND} on $({ \cal L-\cal K})_s$, the operator norm bound on $\cal U$ in Lemma \ref{lem:sum_Ndecay},  and the  tail estimate  \eqref{neiwuj}, 
we can  bound the first term on the right side  of \eqref{int_K-L_ST} by 
\begin{equation} \label{res_deccalE_0}
\left(\mathcal{U}_{s, t, \boldsymbol{\sigma}} \circ (\mathcal{L} - \mathcal{K})_{s, \boldsymbol{\sigma}}\right)_{\textbf{a}}\;\Big/\; 
  {\cal T}_{t, D}(|a_1-a_2|_{L}) \quad 
  \prec \;   (\ell_t/\ell_s)^{4}\cdot {\bf 1}(|a_1-a_2|_{L}\le \ell_t^*) + 1,
\end{equation}
where the last term of order one comes from applying \eqref{neiwuj}. The factor $\left(\eta_u / \eta_t\right)^2$  from applying Lemma \ref{lem:sum_Ndecay} becomes 
$ (\ell_t/\ell_s)^{4}$ due to the prefactors in $ {\cal T}_{s, D}$ and  ${\cal T}_{t, D}$.
For  any $\textbf{a}$ fixed and any function $f$,  we decompose $f  = f (\textbf{b}) {\bf 1} ( \| \textbf{b}-\textbf{a}\|\le \ell_t^*)  + f_2   $. 
From the decay of ${\cal U}_{u,t}$,  we know that $( {\cal U}_{u,t, \boldsymbol{\sigma}} \circ f_2 )_\textbf{a}$ is exponentially small.  
Hence we only have to bound $  {\cal U}_{u,t, \boldsymbol{\sigma}} \circ f_1$, for which we apply  Lemma \ref{lem:sum_Ndecay}. Ultimately, for the third  term of \eqref{int_K-L_ST}, we have the following estimate, in which $\|\textbf{b}-\textbf{a}\|:=\max_{i}|b_{i}-a_{i}|_{L}$:
$$
 \Big({\cal U}_{u,t, \boldsymbol{\sigma}}\circ  {\cal E}^{( (L-K)\times(L-K) )}_{u, \boldsymbol{\sigma}}\Big)_\textbf{a}
 \prec (\eta_u/\eta_t)^2 \max_{\| \textbf{b}-\textbf{a}\|\le \ell_t^*}{\cal E}^{( (L-K)\times(L-K) )}_{u, \boldsymbol{\sigma}, \textbf{b}}+W^{-D}.
$$
Because we restrict to $\|\textbf{b}-\textbf{a}\|\leq\ell_{t}^{*}$, \eqref{Tell*} implies $ {\cal T}_{t, D}(|b_1-b_2|_{L})\prec  {\cal T}_{t, D}(|a_1-a_2|_{L})$. Thus, to control the right-hand side of the previous estimate, we can use the estimate \eqref{res_deccalE_lk} and closeness of $s,t$ in \eqref{con_st_ind} to get 
\begin{align} 
\label{res_deccalE_1}
     \Big({\cal U}_{u,t, \boldsymbol{\sigma}}\circ  {\cal E}^{( (L-K)\times(L-K) )}_{u, \boldsymbol{\sigma}}\Big)_\textbf{a}\;\Big/\; 
 {\cal T}_{t, D}(|a_1-a_2|_{L}) \quad 
  & \prec \; 
  \frac{1}{\eta_u}\left(\eta_u / \eta_t\right)^2 
   \cdot  
   M_{u}^{-1 }
    \cdot 
\left({\cal J}^*_{u,D}\right)^2
\end{align} 
Similarly,   the estimate \eqref{res_deccalE_wG} on ${\cal E}^{(\widetilde{G} )}$ implies   
\begin{align}  \label{res_deccalE_2}
     \Big(\mathcal{U}_{u,t, \boldsymbol{\sigma}} \circ  \mathcal{E}^{( \widetilde{G} )}_{u, \boldsymbol{\sigma}}\Big)_{\mathbf{a}} 
    \;\Big/\; 
{\cal T}_{t, D}(|a_1-a_2|_{L})
 &\prec \;  {\bf1}(|a_1-a_2|_{L}\le 3\ell_t^*)\cdot  \frac{1} {\eta_u} \cdot \left(\eta_u / \eta_t\right)^2\cdot \left(\ell_u / \ell_s\right)^2 
 \nonumber \\
  &+\;    \frac{1}{\eta_u} 
  \cdot\left(\eta_u / \eta_t\right)^4 \cdot
      M_{u}^{-1/3}  \cdot \left({\cal J}^{*}_{u,D} \right)^{3 }, 
       \end{align} 
and  the  estimate  \eqref{res_deccalE_dif} on $\cal E\otimes\cal E$  implies  
\begin{align} \label{res_deccalE_3}
  \left(\left(
   \mathcal{U}_{u, t,  \boldsymbol{\sigma}}
   \otimes 
   \mathcal{U}_{u, t,  \overline{\boldsymbol{\sigma}}}
   \right) \;\circ  \;
   \left( \mathcal{E} \otimes  \mathcal{E} \right)
   _{u, \,\boldsymbol{\sigma}  }
   \right)_{{\textbf a}, {\textbf a}}
     \;\Big/\; 
 \left({\cal T}_{t, D}(|a_1-a_2|_{L})\right)^2
 &\prec \;  {\bf1}(|a_1-a_2|_{L}\le 6\ell_t^*)\cdot  \frac{1} {\eta_u} \cdot \left(\eta_u / \eta_t\right)^4\cdot \left(\ell_u / \ell_s\right)^{10} 
 \nonumber \\
 &+ \;  \frac{1}{\eta_u} 
  \cdot
       M_{u}^{-1/3}  \cdot \left({\cal J}^{*}_{u,D} \right)^{3 }.
\end{align}
In the last inequality, we absorbed the factor $\left(\eta_u / \eta_t\right)^4$ in the change of the exponent for $M_{u}=W^{2} \eta_u \ell_u^{2}$ from $-1/2$ to $-1/3$. We now insert  these bounds  into the stopped equation \eqref{int_K-L_ST} with $\tau:=T\wedge t$, where
\begin{align}
T:=\min \{u\geq s: {\cal J}^*_{u,D} \ge \left(\eta_s / \eta_t\right)^4\}
\end{align}
(If we divide each estimate in Lemma \ref{lem_dec_calE} by its right-hand side, the resulting estimates all hold with probability $1-O(N^{-D})$ for any $D>0$ simultaneously for all $u\in[s,t]$. This follows by another continuity argument.) With this stopping time, the quadratic variation of the martingale in \eqref{int_K-L_ST} is bounded as follows:
\begin{align}\label{51}
& \int_s^\tau  \left( \left(\mathcal{U}_{u,t,  \boldsymbol{\sigma}}
   \otimes 
   \mathcal{U}_{u, t,  \overline{\boldsymbol{\sigma}}}
   \right) \;\circ  \;
   \left( \mathcal{E}\otimes  \mathcal{E} \right)
   _{u, \,\boldsymbol{\sigma},\, \overline{\boldsymbol{\sigma}} }
   \right)_{{\textbf a}, {\textbf a}}  d u
 \Big/{\cal T}_{t, D}(|a_1-a_2|_{L})^2  \nonumber \\
 \prec  &   \int_s^\tau   d u \Big \{ 
 {\bf1}(|a_1-a_2|_{L}\le 6\ell_t^*)\cdot  \frac{1} {\eta_u} \cdot \left(\eta_u / \eta_t\right)^4\cdot \left(\ell_u / \ell_s\right)^{10}
+ \;  \frac{1}{\eta_u} 
  \cdot
       M_{u}^{-1/3}  \cdot \left({\cal J}^{*}_{u,D} \right)^{3 }  \Big \}   \nonumber \\
\le  &    \big [ (\eta_s/\eta_t)^{4}  \cdot {\bf 1}(|a_1-a_2|_{L}\le 6\ell_t^*)+1 \big ]
 \end{align}
Now, by Lemma \ref{lem:DIfREP} and the previous display, we deduce
\begin{align*}
\int_{s}^{\tau}\Big({\cal U}_{u,t,\boldsymbol{\sigma}}\circ{\cal E}^{(M)}_{u,\boldsymbol{\sigma}}\Big)_{\textbf{a}}\Big/{\cal T}_{t, D}(|a_1-a_2|_{L})\prec (\eta_{s}/\eta_{t})^{2}\mathbf{1}(|a_{1}-a_{2}|_{L}\leq6\ell_{t}^{*}]+1). 
\end{align*}
Because this bound holds at the level of stochastic domination, a union bound and a net argument lets us replace $t$ on the left-hand side by a supremum over $t'\in[s,t]$. In particular, we can evaluate it at $t'=\tau$. (We emphasize that the stochastic domination nature of the previous estimate is crucial for this argument.) Combining this  bound with  \eqref{res_deccalE_0}, \eqref{res_deccalE_1},   \eqref{res_deccalE_2}  and  \eqref{int_K-L_ST}, we have 
$$
\left({\cal L}-{\cal K}\right)_{\tau, \textbf{a}}\Big/{\cal T}_{t, D}(|a_1-a_2|_{L}) \prec   \big [ (\eta_s/\eta_t)^{2}  \cdot {\bf 1}(|a_1-a_2|_{L}\le 6\ell_t^*)+1 \big ], 
$$
where we used ${\cal J}^*_{s,D} \prec 1$ from \eqref{Eq:L-KGt+IND} and \eqref{Eq:Gdecay+IND}. By another continuity argument, the above estimate holds for all $t$ satisfying the constraint \eqref{con_st_ind} simultaneously with probability $1-O(N^{-D})$ for any $D>0$. In particular, it holds even at the random time $t=\tau$, which implies
\begin{align}\label{52}
 {\cal J}^*_{\tau,D}\prec     (\eta_s/\eta_t)^{2}  
\end{align}
 Hence 
${\mathbb P} (T\le t )\leq N^{-D}$ for any large $D>0$, and \eqref{Eq:Gdecay_w} follows.    
\end{proof}

 We note that the previous argument gives
\begin{align}\label{53}
\left({\cal L}-{\cal K}\right)_{t, \textbf{a}}\Big/{\cal T}_{t, D}(|a_1-a_2|_{L}) \prec   \big [ (\eta_s/\eta_t)^{2}  \cdot {\bf 1}(|a_1-a_2|_{L}\le 6\ell_t^*)+1 \big ].
\end{align}

 \begin{proof}[Proof of Lemma \ref{lem_dec_calE}]
 This argument is essentially the same as the proof of Lemma 5.7 in \cite{YY_25}, but we give the proof here because it is fairly long and technical, and because there are delicate power-countings that are dimension-dependent (as discussed at the beginning of this section).

\emph{Proof of \eqref{res_deccalE_lk}.}  
Recall the monotonicity property $u\leq t\Rightarrow\ell_{u}\leq \ell_{t}$, which itself implies $u\leq t\Rightarrow{\cal T}_{u,D}\leq{\cal T}_{t,D}$. By definition, we have
\begin{align} {\cal E}^{( ({\cal L-K})\times({\cal L-K}) )}_{u,  \boldsymbol{\sigma},  \textbf{a} }
= &W^{2}\sum_{b_1,b_2}({\cal L-K})_{u,
\boldsymbol{\sigma}, 
( a_1, b_1)
}
S^{(B)}_{b_1,b_2}
({\cal L-K})_{u,  \boldsymbol{\sigma},  ( b_2, a_2)}
\end{align}
By definition of $S^{(B)}$,  we can restrict the sum to $|b_1-b_2|_{L}\le 1$. Next, we use the elementary inequalities: 
$$
\int_{0 }^a\exp\left(-\sqrt{(a-x)}-\sqrt x+ \sqrt a\right)dx\le C\approx 6.12
\quad \text{ and } \quad 
\int_{0 }^\infty\exp\left({-\sqrt x}\right)dx=2.
$$
This gives $\sum_{x}{\cal T}_{u,D}(|a_{1}-x|_{L}){\cal T}_{u,D}(|a_{2}-x|_{L})\prec M_{u}^{-2}\ell_{u}^{2}{\cal T}_{u,D}(|a_{1}-a_{2}|_{L})$, where the factor of $M_{u}^{-2}$ comes from counting $M_{u}$ factors on each side of this identity using \eqref{def_WTuD}, and the factor of $\ell_{u}^{2}$ comes from re-scaling in $\ell$ of $\ell_{u}$ in \eqref{def_WTuD}. Therefore, we have the following, which implies \eqref{res_deccalE_lk}:
\begin{align}\nonumber
{\cal E}^{( ({\cal L-K})\times({\cal L-K}) )}_{u,  \boldsymbol{\sigma},  \textbf{a} }
\; \prec \;& W^{2} ({\cal J}_{u,D}^*) ^2\cdot 
\sum_{x }  {\cal T}_{u, \,D}(|a_1-x|_{L}) \cdot {\cal T}_{u, \,D}(|a_2-x|_{L})   
\\\label{mmxiaoxi}
\; \prec \;&
({\cal J}_{u,D}^*) ^2 \cdot 
 \eta_u^{-1}\cdot M_{u}^{-1}\cdot  {\cal T}_{u,D }(|a_1-a_2|_{L}) .
\end{align}

\medskip
\noindent 
\emph{Proof of   \eqref{res_deccalE_wG}}.    By definition, we  have 
\begin{align}
 \max_{\textbf{a}}{\cal E}^{(\widetilde G)}_{u,  \boldsymbol{\sigma},  \textbf{a} }
 \prec W^{2} \sum_{b_1,b_2} \langle \widetilde G_u E_{b_1}\rangle\cdot S^{(B)}_{b_1,b_2}\cdot  {\cal L}_{u, (-,+,+),(a_1, b_2, a_2)}+c.c.,\quad \widetilde{G}=G-m
 \end{align}
Recall $\Xi^{({\cal L})}_{t,m}:=\max_{\boldsymbol{\sigma},\textbf{a}}|{\cal L}_{t,\boldsymbol{\sigma},\textbf{a}}|\cdot M_{t}^{m-1}\cdot\mathbf{1}(\boldsymbol{\sigma}\in\{+,-\}^{m})$. By \eqref{GavLGEX}, \eqref{def_WTu}, \eqref{def_WTuD}, \eqref{def_Ju}, and \eqref{shoellJJ}, we have 
$$
\langle \widetilde G_u E_{b_1}\rangle\prec \Xi^{(\cal L)}_{u,2}\cdot M_{u}^{-1}\prec  M_{u}^{-1}+ {\cal J}^*_{u,D}\cdot M_{u}^{-2}.
$$
Therefore 
 \begin{align}\label{jiizziy}
\max_{\textbf{a}}{\cal E}^{(\widetilde G)}_{u,  \boldsymbol{\sigma},  \textbf{a} } \prec \frac{1}{ \ell_u^{2}\eta_u} \cdot \sum_{b }   \left|{\cal L}_{u, (-,+,+),(a_1, b , a_2)}\right|(1+{\cal J}^*_{u,D}\cdot M_{u}^{-1})
\end{align}
Now, let us first restrict to the case  $|a_1-a_2|_{L}\le \ell_u^*$. Define 
$$
\ell_u^{**}:= \ell_u\cdot (\log W)^3=\ell^*_u\cdot (\log W)^{3/2}
$$
Note that  ${\cal T}_{u,D}(\ell_u^{**})\prec W^{-D}$. Now, consider $b$ in \eqref{jiizziy} such that $|b-a_1|_{L}\le  \ell_u^{**}$; there are $\prec(\ell_{u}^{**})^{2}\prec\ell_{u}^{2}$ many such indices since we are in dimension $2$. For this restricted summation, we use \eqref{lRB1} to bound the loops in \eqref{jiizziy}. This gives
\begin{align}\label{alkkj}
 \sum_{b }{\bf 1}\big(|b-a_1|_{L}\le  \ell_u^{** }\big)\cdot  \left|{\cal L}_{u, (-,+,+),(a_1, b , a_2)}\right|\prec (\ell_u/\ell_s)^{4}\cdot M_{u}^{-2} \cdot \ell_u^{2}.
\end{align}
We are left to handle indices $|b-a_1|_{L}\ge \ell_u^{**}$. For this, we use a simpler estimate 
 $$
 \left|{\cal L}_{u, (-,+,+),(a_1, b , a_2)}\right|\prec \max_{x_1\in {\cal I}_{a_1}^{(2)},y\in {\cal I}_{b}^{(2)}}|G_{x_1,y}|.
$$
Next, we use \eqref{GijGEX} to estimate the right-hand side of the previous display:
$$
\max_{x_1\in {\cal I}_{a_1}^{(2)},y\in {\cal I}_{b}^{(2)}}|G_{x_1,y}|\prec \sum_{a_1', b'}\left({\cal L}_{u,(+,-),{(a_1',b')}}\right)^{1/2}{\bf1}\left(|a'_1-a_1|_{L}\le 1, |b-b'|_{L}\le 1\right).
$$
Since $|b-a_1|\ge \ell_u^{** }$, by  \eqref{auiwii} and ${\cal T}_{u,D}(|a_1-b|_{L})\prec W^{-D}$, we have  
$$
{\cal L}_{u,(+,-),{(a_1',b')}} \prec {\cal J}^*_{u,D}\cdot W^{-D}.
$$
Therefore, the contribution from indices $b$ in \eqref{jiizziy} satisfying $|b-a_1|\ge  \ell_u^{** }$ part is negligible, i.e.
\begin{align}\label{alkkj2}
 \sum_{b }{\bf 1}\big(|b-a_1|\ge  \ell_u^{** }\big)\cdot  \left|{\cal L}_{u, (-,+,+),(a_1, b , a_2)}\right|\prec  {\cal J}^*_{u,D}\cdot W^{-D}. 
\end{align}
We will not track the contribution of the last term; it does not affect the argument given below. Now, since $|a_{1}-a_{2}|_{L}\leq\ell_{u}^{\ast}$, we know that ${\cal T}_{u,D}(|a_{1}-a_{2}|_{L})$ is bounded away from $0$; this follows by \eqref{Tell*}. Combining this with \eqref{alkkj} and  \eqref{jiizziy} gives the following, which implies the desired estimate \eqref{res_deccalE_wG} for $|a_1-a_2|_{L}\le \ell_u^*$:
\begin{align}\label{82jjdopasj}
 |a_1-a_2|\le \ell_u^*\implies  \mathcal{E}_{u, \boldsymbol{\sigma}, \mathbf{a}}^{(\widetilde{G})} \Big/{\cal T}_{u,D}(|a_1-a_2|_{L})\prec 
 (\eta_u)^{-1} \cdot \left(\ell_u / \ell_s\right)^4\cdot   \left(1+  (\mathcal{J}_{u, D}^*)^2  M_{u}^{-1 }\right).
 \end{align}

We now prove the desired estimate \eqref{res_deccalE_wG} under the assumption that $|a_1-a_2|\ge \ell_u^*$. For this, we consider the following two cases:
 $$
 (1): \min_i|a_i-b|\le \ell_u^*/2,\quad \quad 
 (2): \min_i|a_i-b|\ge \ell_u^*/2.
 $$
In the first case,  we assume without loss of generality that $|a_1-b|_{L}\le \ell_u^*/2$. Applying the Schwarz inequality to  the two $G$ edges  in ${\cal L}_{u, (-,+,+),(a_1, b , a_2)}$ connecting the block $a_2$ gives
$$
G_{x_1x_2}G_{x_2y}G_{yx_1}\prec |G_{x_1x_2}|^2|G_{yx_1}|+
|G_{ x_2 y}|^2|G_{yx_1}|.
$$
Therefore, we have
\begin{align}\label{kskjw}
  {\cal L}_{u, (-,+,+),(a_1, b , a_2)}
  \;\prec\; &\;  W^{-2} \Big ( {\cal L}_{u, (-,+),(a_1, a_2)} \ \max_{x_1\in {\cal I}_{a_1}}\sum_{y\in {\cal I}_{b}}|G_{x_1y}|+ {\cal L}_{u, (-,+),(a_2, b)}\ \max_{y\in {\cal I}_{b}}\sum_{x_1\in {\cal I}_{a_1}}|G_{x_1y}| \Big )  .
\end{align}
(The additional factor of $W^{-2}$ follows because we are going from a $3$-loop to a $2$-loop by bounding a $G$-factor; we must also account for a $E_{\cdot}$ matrix, which carries a factor of $W^{-2}$.) If we use \eqref{GijGEX},  the $\cal K$ bound and $\sqrt { 1+c } \le 1+c $, \eqref{def_WTu}, \eqref{def_WTuD}, \eqref{def_Ju}, and \eqref{shoellJJ},  we have
$$
{\bf 1}(x_1\ne x_2)|G_{x_1x_2}|\prec M_{u}^{-1/2}(1+{\cal J}^*_{u,D}M_{u}^{-1}).
$$
Thus, for any $a,b$, we have 
\begin{align}\label{jaysw2002}
  W^{-2}  \max_{y\in{\cal I}_b^{(2)}}\sum_{x\in{\cal I}_a^{(2)}}|G_{xy}|\prec M_{u}^{-1/2}(1+{\cal J}^*_{u,D}M_{u}^{-1})
\end{align}
Plugging this bound  into \eqref{kskjw} gives 
\begin{align}
  {\cal L}_{u, (-,+,+),(a_1, b , a_2)}
     \;\prec\; &\;\left( {\cal L}_{u, (-,+),(a_1, a_2)}   + {\cal L}_{u, (-,+),(a_2, b)}  \right)  \cdot M_{u}^{-1/2}(1+{\cal J}^*_{u,D}M_{u}^{-1})
\end{align}
 Since $|a_2-b|\ge \ell_u^*/2$ and $|a_1-a_2|\ge  \ell_u^*$,  we can estimate  $ {\cal L}_{u, (-,+),(a_1, a_2)}   + {\cal L}_{u, (-,+),(a_2, b)}$ 
with \eqref{auiwii} to have  
$$
 {\cal L}_{u, (-,+,+),(a_1, b , a_2)} \;\prec\; {\cal J}^*_{u,D}\cdot \left(  {\cal T}_{u,D}(|a_1-a_2|)+{\cal T}_{u,D}(|b-a_2|) \right)  \cdot M_{u}^{-1/2}(1+{\cal J}^*_{u,D}M_{u}^{-1})
 $$
Since $|b-a_2|_{L}\ge |a_1-a_2|_{L}-|a_1-b|_{L}\ge |a_1-a_2|_{L}-\ell_u^*/2$, we can use \eqref{Tell*} to deduce
$${\cal T}_{u,D}(|b-a_2|) \prec  {\cal T}_{u,D}(|a_1-a_2|)
$$
Combining these bounds along with bound ${\cal J}^{*}_{u,D}\geq1$ gives the following for $|a_1-a_2|_{L}\ge \ell_u^*$:
\begin{align}
    \label{lwjufw}
    \sum_{b } ^{\text{Case } 1} \left|{\cal L}_{u, (-,+,+),(a_1, b , a_2)}\right|\prec \left({\cal J}^*_{u,D}\right)^2\cdot{\cal T}_{u,D}(|a_1-a_2|_{L}) \cdot \ell_u^{2} M_{u}^{-1/2} .
\end{align}
We now consider case (2), defined below \eqref{82jjdopasj}. We bound  ${\cal L}_{u, (-,+,+),(a_1, b , a_2)}$ as follows
\begin{align}\label{LGGGXXX}
{\cal L}_{u, (-,+,+),(a_1, b , a_2)}
\prec \max_{x_1,x_2, y} \left|G_{x_1y}\right| \left|G_{yx_2}\right|\left|G_{x_1x_2}\right|{\bf 1}(x_1\in {\cal I}_{a_1}^{(2)}){\bf 1}(x_2\in {\cal I}_{a_2}^{(2)}){\bf 1}(y\in {\cal I}_{b}^{(2)})
\end{align}
We note that since $a_1$, $a_2$ and $b$ are all different,  $x_1$, $x_2$ and $y$ must be all different as well.

Next we use \eqref{GijGEX} to bound each $G$ entry above, and then we apply \eqref{auiwii}. Next, recall that the distance between any two of $a_{1},a_{2},b$ is at least $\ell^{*}_{u}/2$. Thus, because of the indicators on the right-hand side of \eqref{LGGGXXX}. we deduce that $|x_{1}-y|_{L}\geq |a_{1}-b|_{L}-c\ell_{u}^{\ast}$. Doing this for the other pairs of indices above, namely $(y,x_{2})$ and $(x_{1},x_{2})$, we ultimately have
 \begin{align}\label{GGTLJ}
 \left|G_{yx_2}\right|\cdot \left|G_{x_1x_2}\right| \cdot
\left|G_{x_1y}\right|
\;\prec \; & ({\cal J}_{u,D}^*)^{3/2} \Big( {\cal T}_{u,D}(|a_1-a_2|_{L}) {\cal T}_{u,D}(|a_1-b|_{L}){\cal T}_{u,D}(|a_2-b|_{L})\Big)^{1/2}
 \end{align}
By combining \eqref{LGGGXXX} and \eqref{GGTLJ}, we arrive at
\begin{align}
    \sum_{b}^{ \text {Case } 2 }{\cal L}_{u, (-,+,+),(a_1, b , a_2)} 
 \;\prec \;  &\; ({\cal J}_{u,D}^*)^{3/2} \Big( {\cal T}_{u,D}(|a_1-a_2|_{L})\Big)^{1/2} \sum_b \Big({\cal T}_{u,D}(|a_1-b|_{L}){\cal T}_{u,D}(|a_2-b|_{L})\Big)^{1/2}
 \nonumber \\
  \;\prec \; &\;({\cal J}_{u,D}^*)^{3/2}  {\cal T}_{u,D}(|a_1-a_2|_{L}) \cdot  \ell_u^{2}\cdot  M_{u}^{-1}
\end{align}
where, to deduce the second line, we use
$$
\int_{0 }^a\exp\left(-\sqrt{(a-x)}/2-\sqrt x/2+ \sqrt a/2\right)dx\le C .
$$
We now combine this with \eqref{lwjufw}. We deduce that if $|a_1-a_2|_{L}\ge \ell_u^*$ then
\begin{align}
    \sum_{b} {\cal L}_{u, (-,+,+),(a_1, b , a_2)} 
  \;\prec \; &\; ({\cal J}_{u,D}^*)^{3/2}  {\cal T}_{u,D}(|a_1-a_2|_{L}) \cdot  \ell_u^{2}\cdot  M_{u}^{-1/2}
\end{align}
Plugging this into \eqref{jiizziy} and combining it with  \eqref{82jjdopasj} completes the proof of  \eqref{res_deccalE_wG}.

 \medskip
\emph{Proof of \eqref{res_deccalE_dif}. } By definition \eqref{defEOTE},  $(\mathcal{E} \otimes \mathcal{E})$ is the sum of  $(\mathcal{E} \otimes \mathcal{E})^{(k)}$, which can be written in terms of $
{\cal L}^{(k )}:= {\cal L}_{u, \boldsymbol{\sigma}^{(k ) },\textbf{a}^{(k )}} $. Here
$ \boldsymbol{\sigma}^{(1 )}=( +,-,+,-,+,-)$, $\textbf{a}^{(1 )}=( a_1,a_2, b', a_2',a_1',b)$
and $\boldsymbol{\sigma}^{( 2)}= $ $(-,+,-,+,-,+)$, $\textbf{a}^{( 2)}=( a_2,a_1, b', a_1',a_2',b)$. By symmetry, we only need to prove \eqref{res_deccalE_dif} for $(\mathcal{E} \otimes \mathcal{E})^{(1)}$. Note that $|b-b'|_{L} = 1$ in this case, so we can treat $b=b'$ for all practical purposes in the following argument. 

\noindent 
{\bf Case 1: $|a_1-a_2|_{L}\le 4\ell_t^*$ } 
We split the sum $\sum_{b,b'}$ into the following two regions: 
$$
|b-a_1|_{L}\le \ell_u^{** }:=(\log W)^3 \ell_u \quad\text{and}\quad |b-a_1|_{L}\ge \ell_u^{** } 
$$
Note that ${\cal T}_{u,D}(\ell_u^{** }) $ is exponentially small in $W$. This, and arguments similar to those used in \eqref{alkkj2}, gives
$$
\sum_{b,b'} {\bf 1}\left(|b-a_1|_{L}\ge \ell_u^{** }\right){\cal L}^{(1)}\prec {\cal J}^*_{u,D}\cdot W^{-3}.
$$
For $|b-a_1|_{L}\le  \ell_u^{** } $, we use \eqref{lRB1} for $n=6$. This gives
\begin{align}\label{alkkj3}
\sum_{b,b'}{\bf 1}\big(|b-a_1|_{L}\le  \ell_u^{** }\big) {\cal L}^{(1)}  \prec (\ell_u/\ell_s)^{10}\cdot M_{u}^{-5} \cdot \ell_u^{2}   .
\end{align}
If $|a_{1}-a_{2}|_{L}\leq 4\ell_{t}^{*}$, then ${\cal T}_{u,D}(|a_{1}-a_{2}|_{L})\prec1$ by \eqref{Tell*}. By this and the last two displays, we get \eqref{res_deccalE_dif}.
 \medskip

 \noindent 
{\bf Case 2: $|a_1-a_2|_{L}\ge 4\ell_t^*$ .} 
Recall  the assumption  $|a'_i-a_i|_{L}\le \ell_t^*$ \eqref{57} and the fact  that we can treat $b=b'$  in the following proof.  
We again split the sum $\sum_{b,b'}$ into two regions: 
$$
(1): |b -a_1|_{L}\le |b-a_2|_{L},\quad (2): |b-a_1|_{L}\ge |b-a_2|_{L}.
$$
By symmetry, it suffices to only consider case (1). Similar to \eqref{LGGGXXX}, we  bound ${\cal L}^{(1)}$ by the product of four $G$'s and  $G^\dagger E_b G$ as follows (recall that ${\cal L}^{(1)}$ is a six-$G$ loop, hence six factors of $G$):
\begin{align}\label{L6G6X}
{\cal L}^{(1)}\le \max^*_{x_1,x_2, y',x_1,x_2'} 
\left|G_{x_1x_2}\right|
\left|G_{x_2y'}\right|
\left|G_{y'x_2'}\right|
\left|G_{ x_2'x_1'}\right|
\left|\left(G^\dagger E_b G\right)_{ x_1x_1'}\right|
\left({\bf 1}_{x_1\ne x_1'} +W^{-1}{\bf 1}_{x_1= x_1'}\right).
\end{align}
The maximum is over all indices satisfying the condition
$$
 {\bf 1}(x_1\in {\cal I}_{a_1}^{(2)}){\bf 1}(x_2\in {\cal I}_{a_2}^{(2)}){\bf 1}(y'\in {\cal I}_{b'}^{(2)}){\bf 1}(x'_1\in {\cal I}_{a'_1}^{(2)}){\bf 1}(x'_2\in {\cal I}_{a'_2}^{(2)})=1.
$$
We now claim that 
\begin{align} \label{GGTLJ4G}
|G_{y'x_2}|\cdot |G_{y'x_2'}| 
\cdot 
|G_{x_1x_2}|\cdot |G_{x_1'x_2'}|
\prec 
\left({\cal J}^*_{u,D}\right)^2\cdot 
{\cal T} _{t,D}(|a_1-a_2|_{L})\cdot {\cal T} _{t,D}( |b-a_2|_{L})    
\end{align}
Let us prove this claim. Since we assumed $|a_1-a_2|_{L}\ge 4\ell_t^*$, we have, similar to \eqref{GGTLJ}, that 
\begin{align} 
 |G_{x_1x_2}|^2\le  & {\cal J}^*_{u,D}\cdot {\cal T}_{u,D}(|a_1-a_2|_{L}), \\
 |G_{x_1'x_2'}|^2\le & {\cal J}^*_{u,D}\cdot {\cal T}_{u,D}\left( |a_1'-a_2'|_{L}\right)  \prec {\cal J}^*_{u,D}\cdot {\cal T}_{t,D}\left( |a_1-a_2|_{L}\right), 
\end{align} 
where  we used ${\cal T}_u\le {\cal T}_t$, the assumption $|a_i'-a_i|_{L}\le \ell_t^*$ and \eqref{Tell*} to derive the second inequality.  Thus, 
$$
|G_{x_1x_2}|\cdot |G_{x_1'x_2'}|\prec   {\cal J}^*_{u,D}\cdot {\cal T}_{t,D}\left( |a_1-a_2|_{L}\right).
$$ 
For edges connected to $b'$, we similarly have
$$
|G_{y'x_2}|\cdot |G_{y'x_2'}| \prec 
{\cal J}^*_{u,D}\cdot  {\cal T}^{1/2}_{u,D}\left( |b-a_2|_{L} \right)\cdot   {\cal T}^{1/2}_{u,D}\left(  |b-a_2'|_{L} \right).
$$
Using  ${\cal T}_u\le {\cal T}_t$ and $|a_2-a_2'|_{L}\le \ell_t^*$, we have 
$$
|G_{y'x_2}|\cdot |G_{y'x_2'}|\le {\cal J}^*_{u,D}\cdot  {\cal T} _{t,D}\left(  |b-a_2|_{L} \right).
$$
By combining  these  bounds, we get \eqref{GGTLJ4G}. We now use Lemma \ref{lem_GbEXP_n2} with $\alpha=(\ell_u/\ell_s)^{2}$ and $\beta=1$.  
Doing so with the $\cal L$ estimates \eqref{lRB1} that we proved in Step 1 gives
$$
\left|\left(G^\dagger E_b G\right)_{ x_1x_1'}\right|
\left({\bf 1}_{x_1\ne x_1'} +W^{-2}{\bf 1}_{x_1= x_1'}\right) \prec (\ell_u/\ell_s)^{6}M_{u}^{-3/2}.
$$
We will now consider the following two cases:
\begin{align}
(1a): \;&\; |a_1-b|_{L}\le \ell_u^*,\quad \text{or}\quad |a_1'-b|_{L}\le \ell_u^*\nonumber \\
(1b):   \;&\;  |a_1-b|_{L}\ge \ell_u^*,\quad and\quad |a_1'-b|_{L}\ge \ell_u^*
\end{align} 
Consider the case (1a), so  $|a_2-b|_{L}\ge |a_1-a_2|_{L}-\ell_t^*$. We have ${\cal T} _{t,D}\left(  |b-a_2|_{L} \right) \prec 
{\cal T} _{t,D}\left(  |a_1-a_2|_{L} \right)$ by \eqref{Tell*}. 
Combining  these bounds  with \eqref{GGTLJ4G}, we have the following in which we bound ${\cal L}^{(1)}$ pointwise and sum over the $O(\ell_{u}^{\ast})^{2}$-many indices $b$ in case (1a):
\begin{align}
  \sum_{b,b'}^{(1a)} {\cal L}^{(1)}
 \prec &(\ell_u/\ell_s)^{6} \cdot \ell_u^{2} \cdot M_{u}^{-3/2}
\cdot\left({\cal J}^*_{u,D}\right)^2  \cdot {\cal T}_{t,D}(|a_1-a_2|_{L}) ^2.
\end{align}
We now consider the case (1b). Here, we have the estimate
$$
(G^\dagger E_bG)_{x_1x_1'}\le \max_{y\in {\cal I}_b^{(2)}}|G_{x_1y}||G_{x'_1y}|.
$$
Since  $ |a_1-b|_{L}$ and $|a_1'-b|_{L} \ge \ell_u^*$ and $|a_{1}-a_{1}'|_{L}\leq\ell_{t}^{*}$,  we have, similar to \eqref{GGTLJ}, 
$$
(G^\dagger E_bG)_{x_1x_1'}
\prec {\cal J}^*_{u,D} \cdot  {\cal T}_{t,D}(|b-a_1|_{L}) .
$$
Combining this with \eqref{GGTLJ4G} gives the following, in which the second line is bounded as in the proof of \eqref{mmxiaoxi}:
\begin{align}
  \sum_{b,b'}^{(1b)} {\cal L}^{(1)}
 \prec\; & \left( {\cal J}^*_{u,D}\right)^3 \cdot  {\cal T}_{t,D}(|a_1-a_2|_{L})\cdot \sum_b {\cal T}_{t,D}(|b-a_1|_{L}) {\cal T}_{t,D}(|b-a_2|_{L})
 \nonumber \\
\prec \; &  \left( {\cal J}^*_{u,D}\right)^3 \cdot \ell_t^{2} \cdot M_{t}^{-2} 
\cdot \left({\cal T}_{t,D}(|a_1-a_2|_{L})\right)^{2}.
 \end{align}
The bounds for cases (1a) and (1b) give \eqref{res_deccalE_dif} and thus complete the proof of Lemma  \ref{lem_dec_calE}. 
\end{proof}

\noindent 
\begin{proof}[Proof of \eqref{Gt_bound_flow}] Combining  the $\cal L-\cal K$ estimate  \eqref{Eq:Gdecay_w} and the $\cal K$ bound in Lemma \ref{ML:Kbound} gives
\begin{equation}
\label{lk2safyas}
 \max_{a, b}{\cal L }_{u, (+,-), (a,b)}\prec \left(\eta_s/\eta_u\right)^4\cdot M_{u}^{-2}+M_{u}^{-1}\prec M_{u}^{-1}  ,\quad u\in [s,t]    
\end{equation}
where we used the inductive hypothesis  \eqref{con_st_ind} to control $(\eta_{s}/\eta_{u})^{4}$. Now, we use the weak local law \eqref{Gtmwc}; this lets us apply \eqref{GiiGEX} and \eqref{GijGEX}. Using these bounds with \eqref{lk2safyas} gives 
$$
\|G_t-m\|^2_{\max}\prec  \max_{a, b}{\cal L }_{u, (+,-), (a,b)}\prec M_{u}^{-1} .
$$
This completes the proof of \eqref{Gt_bound_flow}.   
\end{proof}

\subsection{Fast decay property}
In the second step of the proof for Theorem \ref{lem:main_ind}, we established a strong decay property \eqref{Eq:Gdecay_w} for $2$-loops. We now extend this decay to general loops $\mathcal{L}$ and $\mathcal{L} - \mathcal{K}$. The idea is to use \eqref{GijGEX} to control entries of $G$ in terms of $2$-loops, and then use this to control general loops

We now define the desired $\ell_{u}$-decay property for general loops. We then state the decay property for ${\cal L}_{u}$, whose proof is identical to that of Lemma 5.9 in \cite{YY_25} and is thus omitted.

\begin{definition}\label{Def_decay}
   Let  $\cal A$ be a tensor $\mathbb Z_L^n\to \mathbb R$, and fix $\tau,D>0$. We say $\cal A$ has $(u, \tau, D)$ decay at  time $u$ if   
\begin{equation}\label{deccA}
    \max_i |a_i-a_j|_{L}\ge \ell_u W^{\tau} \implies  {\cal A}_{\textbf{a}}=O(W^{-D}),\quad \textbf{a}=(a_1,a_2\cdots, a_n) 
\end{equation}
\end{definition}
\begin{lemma}\label{lem_decayLoop}  
Assume that \eqref{Gt_bound_flow} and \eqref{Eq:Gdecay_w} hold. Then  for 
any $n\ge 2$, and any $\tau,D,D'>0$, the loop  ${\cal L}_u$ has $(u, \tau, D)$ decay  with probability $1-O(W^{-D'})$: 
\begin{align}\label{res_decayLK}
\mathbb P\left( \max_{\boldsymbol{\sigma}}  \Big(\left|{\cal L}_{u,\boldsymbol{\sigma},\textbf{a}}\right|+\left|{(\cal L-K})_{u,\boldsymbol{\sigma},\textbf{a}}\right|\Big)\cdot{\bf 1}\left(\max_{  i, j } |a_i-a_j|_{L}\ge \ell_u W^{\tau}\right) \ge W^{-D}\right)\le W^{-D'}. 
\end{align}
 \end{lemma}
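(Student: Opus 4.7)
The plan is to reduce a general $n$-loop to entries of $G_u$ and exploit the already-proven $2$-loop decay \eqref{Eq:Gdecay_w} to force one $G$-edge along the cycle to be super-polynomially small in $W$. Unfolding the trace defining $\mathcal{L}_{u,\boldsymbol{\sigma},\textbf{a}}$ (and using that each $E_{a_i}$ is diagonal and supported on $\mathcal{I}_{a_i}^{(2)}$) gives
\begin{equation*}
\mathcal{L}_{u,\boldsymbol{\sigma},\textbf{a}} \;=\; W^{-2n}\sum_{y_i \in \mathcal{I}_{a_i}^{(2)}} \prod_{i=1}^{n} \bigl(G_u(\sigma_i)\bigr)_{y_{i-1},\,y_i},\qquad y_0 := y_n.
\end{equation*}
If $\max_{i,j}|a_i-a_j|_L \geq \ell_u W^\tau$, then walking along the cycle joining an extremal pair $(i,j)$ and applying the triangle inequality, some consecutive pair must satisfy $|a_k-a_{k+1}|_L \geq \ell_u W^\tau/(n-1) \geq \ell_u W^{\tau/2}$ for $W$ large. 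Hence the $(k{+}1)$-th $G$-factor in the product connects two blocks separated by at least $\ell_u W^{\tau/2}$.

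For that long edge, \eqref{GijGEX} (whose hypothesis \eqref{asGMc} holds thanks to the local law \eqref{Gt_bound_flow}) bounds $|(G_u(\sigma_{k+1}))_{y_k,y_{k+1}}|^2$ by a $2$-loop $\mathcal{L}_{u,(+,-),(a',b')}$ with $a'$ within distance $1$ of $a_k$ and $b'$ within distance $1$ of $a_{k+1}$, so $|a'-b'|_L \gtrsim \ell_u W^{\tau/2}$. The deterministic decay of $\mathcal{K}_{u,(+,-),(a',b')} = W^{-2}|m|^2(\Theta^{(B)}_{t|m|^2})_{a'b'}$ from \eqref{Kn2sol2} and \eqref{prop:ThfadC}, combined with the $\mathcal{L}-\mathcal{K}$ tail from \eqref{Eq:Gdecay_w}, shows this $2$-loop is $\prec W^{-D'}$ for any $D'>0$, since both $\exp(-|a'-b'|_L^{1/2}/\ell_u^{1/2})$ and the $\Theta^{(B)}$-decay dominate every inverse power of $W$. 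The remaining $n-1$ edges of the loop are each $\prec 1$ in max-norm by \eqref{Gt_bound_flow}, and the $W^{2n}$ summation terms are compensated by the $W^{-2n}$ prefactor, yielding $|\mathcal{L}_{u,\boldsymbol{\sigma},\textbf{a}}|\prec W^{-D'}$ for any $D'>0$.

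For the $|\mathcal{L}-\mathcal{K}|$ piece it suffices by the triangle inequality to show $|\mathcal{K}_{u,\boldsymbol{\sigma},\textbf{a}}|\prec W^{-D}$ separately, which is entirely deterministic. This follows from the decomposition \eqref{KKpi}: $\mathcal{K}$ is a finite sum of convolutions of $\Theta^{(B)}_{tm_im_{i+1}}$ propagators and $\Sigma^{(\emptyset)}$ kernels, both decaying exponentially at scale $\ell_u$ by \eqref{prop:ThfadC} and \eqref{res_SIGempdd}. Under the separation hypothesis, the same triangle-inequality argument forces at least one factor in each convolution to be evaluated at displacement $\gtrsim \ell_u W^\tau/n$, giving the required super-polynomial smallness. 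A union bound over the $N^{O(n)}$ many choices of $\boldsymbol{\sigma}\in\{+,-\}^n$ and $\textbf{a}\in(\mathbb{Z}_L^2)^n$ upgrades the pointwise $\prec$-bound to the probability statement (using $W \geq N^{\mathfrak{c}}$ to convert $N^{-D}$-tails into $W^{-D'}$-tails for arbitrary $D'$). There is no real obstacle here; the statement is essentially a clean consequence of the $2$-loop decay and the local law, and the argument is parallel to Lemma 5.9 of \cite{YY_25}.
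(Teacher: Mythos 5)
Your proof is correct, and it follows the natural route that the paper itself points to by citing Lemma~5.9 of~\cite{YY_25} (the paper omits the argument entirely). The core steps — unfold the trace, pigeonhole along the cycle to find one $G$-edge connecting two far-apart blocks, bound that edge's modulus via \eqref{GijGEX} in terms of a $2$-loop whose superpolynomial decay follows from \eqref{Eq:Gdecay_w} together with the $\Theta^{(B)}$-decay of $\mathcal{K}$, bound the remaining edges by $O_\prec(1)$ via \eqref{Gt_bound_flow}, and treat $\mathcal{K}$ deterministically through \eqref{KKpi}, \eqref{prop:ThfadC}, and \eqref{res_SIGempdd} — are exactly what one would reconstruct from the cited proof, and the technical prerequisites (\eqref{asGMc} holding because $M_u\ge W^{c}$, the $(\eta_s/\eta_u)^4$ prefactor in \eqref{Eq:Gdecay_w} being swamped by the exponential) are all in order.
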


We now use Lemma \ref{lem_decayLoop} to provide the following estimate for ${\cal E}$ terms.

\begin{lemma}\label{lem_BcalE} 
Assume  that \eqref{Gt_bound_flow} and \eqref{Eq:Gdecay_w} hold.  
Recall $\Xi^{({\cal L})}$ from \eqref{def:XiL}, and define
\begin{align}\label{def:XIL-K}
     \Xi^{({\cal L}-{\cal K})}_{t, m} &:= \max_{\boldsymbol{\sigma}, \textbf{a}} \left|({\cal L}-{\cal K})_{t, \boldsymbol{\sigma}, \textbf{a}} \right|\cdot M_{t}^{m} \cdot {\bf 1}(\boldsymbol{\sigma} \in \{+, -\}^m),
\end{align}Then, we have
\begin{align}\label{CalEbwXi}
\big[\mathcal{K} \sim(\mathcal{L}-\mathcal{K})\big]_{u, \boldsymbol{\sigma}}^{l_{\mathcal{K}}}
   \; \prec \; &
   \left( \max_{k<n }\Xi^{(\cal L-\cal K)}_{u,k}\right)\cdot M_{u}^{-n }\cdot \frac{1}{\eta_u}   
   \nonumber \\
    \mathcal{E}_{u, \boldsymbol{\sigma}}^{((\mathcal{L} - \mathcal{K}) \times (\mathcal{L} - \mathcal{K}))}
    \; \prec \; &
   \left( \max_{k:\;2\le k\le n }\Xi^{(\cal L-\cal K)}_{u,k}\cdot \Xi^{(\cal L-\cal K)}_{u,n-k+2}\cdot M_{u}^{-1}\right)\cdot M_{u}^{-n }\cdot \frac{1}{\eta_u}  
  \nonumber \\  \mathcal{E}_{u, \boldsymbol{\sigma}}^{(\widetilde{G})}
 \; \prec \; &
     \Xi^{(\cal L)}_{u,n+1} \cdot M_{u}^{-n }\cdot \frac{1}{\eta_u}    
  \nonumber \\   
(\mathcal{E} \otimes \mathcal{E}) _{u, \boldsymbol{\sigma}}
\; \prec \; &  \; \Xi^{(\cal L)}_{u, {2n+2}}\cdot   M_{u}^{-2n } \cdot \frac1{\eta_u}
\end{align}
Furthermore, for any $u\in[s,t]$, all of the $\cal E$ terms at time $u$ on the left-hand side have $(u, \tau, D)$ decay.  
 \end{lemma}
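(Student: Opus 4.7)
The plan is to treat all four estimates in a unified framework by exploiting that each $\mathcal{E}$-term has the same underlying structure: $W^{2}\sum_{a,b}S^{(B)}_{ab}\cdot(\text{product of normalized loops evaluated at indices involving }a,b,\textbf{a})$. Since $S^{(B)}_{ab}$ is supported on $|a-b|_{L}\le 1$, the sum over $b$ contributes only an $O(1)$ multiplicity and the problem reduces to a sum over a single index $a\in\mathbb{Z}_{L}^{2}$. For that sum, I would invoke Lemma \ref{lem_decayLoop}: the loops appearing inside are $\mathrm{O}_{\prec}(W^{-D})$ unless $a$ lies within $\ell_{u}W^{\tau}$ of some index in $\textbf{a}$, so the effective range of $a$ has size $O(\ell_{u}^{2}W^{2\tau})$, with the $W^{2\tau}$ factor absorbed by $\prec$. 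The key arithmetic identity throughout is $W^{2}\ell_{u}^{2}=M_{u}/\eta_{u}$.

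With this setup, each estimate reduces to counting powers of $M_{u}$. For $[\mathcal{K}\sim(\mathcal{L}-\mathcal{K})]^{l_{\mathcal{K}}}$ the two resulting loops have lengths $k$ and $n-k+2$ with $k=n-l_{\mathcal{K}}+2$, bounded respectively by $\Xi^{(\mathcal{L}-\mathcal{K})}_{u,k}\cdot M_{u}^{-k}$ and (via Lemma \ref{ML:Kbound}) by $M_{u}^{-(n-k+1)}$; multiplying by the prefactor $W^{2}\ell_{u}^{2}=M_{u}/\eta_{u}$ yields $\Xi^{(\mathcal{L}-\mathcal{K})}_{u,k}\cdot M_{u}^{-n}/\eta_{u}$, and taking the maximum over $k$ gives the claim. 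The estimate on $\mathcal{E}^{((\mathcal{L}-\mathcal{K})\times(\mathcal{L}-\mathcal{K}))}$ follows identically, with the $\mathcal{K}$ piece replaced by a second $\mathcal{L}-\mathcal{K}$, which carries one extra factor $M_{u}^{-1}$ relative to $\mathcal{K}$, producing the extra $M_{u}^{-1}$ in the bound. For $(\mathcal{E}\otimes\mathcal{E})$ the underlying object is a single loop of length $2n+2$ of size $\Xi^{(\mathcal{L})}_{u,2n+2}\cdot M_{u}^{-(2n+1)}$, and the same $W^{2}\ell_{u}^{2}$ count gives $\Xi^{(\mathcal{L})}_{u,2n+2}\cdot M_{u}^{-2n}/\eta_{u}$.

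The $\widetilde{G}$-term requires an additional input. I would identify $\langle\widetilde{G}_{u}(\sigma_{k})E_{a}\rangle$ as a $1$-entry of $(\mathcal{L}-\mathcal{K})_{u,\sigma_{k},(a)}$ and bound it using \eqref{GavLGEX} in Lemma \ref{lem_GbEXP} together with the hypothesis \eqref{Eq:Gdecay_w}, which gives $|\langle\widetilde{G}_{u}E_{a}\rangle|\prec M_{u}^{-1}$. Combined with the cut-and-glue loop $\mathcal{G}^{(b)}_{k}\circ\mathcal{L}_{u,\boldsymbol{\sigma},\textbf{a}}$ of length $n+1$, bounded by $\Xi^{(\mathcal{L})}_{u,n+1}\cdot M_{u}^{-n}$, and the $W^{2}\ell_{u}^{2}$ prefactor, this yields the desired $\Xi^{(\mathcal{L})}_{u,n+1}\cdot M_{u}^{-n}/\eta_{u}$.

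Finally, for the $(u,\tau,D)$-decay of each $\mathcal{E}$-term, I would argue as follows: if $|a_{i_{0}}-a_{j_{0}}|_{L}\ge\ell_{u}W^{\tau}$ for some $i_{0},j_{0}$, then after the cut-and-glue operation, either these two indices lie in the same resulting loop—which is then $\mathrm{O}_{\prec}(W^{-D})$ by Lemma \ref{lem_decayLoop}—or they lie in distinct loops connected through the gluing index $a$ (with $b$ within distance $1$ of $a$), in which case $a$ cannot be within $\ell_{u}W^{\tau/2}$ of both $a_{i_{0}}$ and $a_{j_{0}}$, so at least one of the two loops has internal diameter $\geq\ell_{u}W^{\tau/2}$ and is therefore negligible. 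The main obstacle I anticipate is in the $\widetilde{G}$-term: the naive bound $\|G-m\|_{\max}\prec M_{u}^{-1/2}$ from the local law is not sharp enough, and extracting the averaged $M_{u}^{-1}$ bound requires combining the local law with the sharp $2$-loop decay \eqref{Eq:Gdecay_w}; this is the one place where the hypothesis of Step 2 (rather than only the local law) is genuinely used.
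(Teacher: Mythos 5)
Your proposal is correct and follows essentially the same approach as the paper: reduce via the $O(1)$ range of $S^{(B)}$ to a single sum over $a$, restrict that sum to $O(\ell_u^2)$ indices using the fast-decay Lemma \ref{lem_decayLoop}, bound the constituent loops by their $\Xi$-normalizations (using Lemma \ref{ML:Kbound} for $\mathcal{K}$ and \eqref{GavLGEX} with the sharp $2$-loop estimate for the $\widetilde G$ factor), and then count powers via $W^2\ell_u^2 = M_u/\eta_u$. Your observation that the $\widetilde{G}$-term is the one place where the averaged one-loop bound $\prec M_u^{-1}$ (rather than the entrywise local law $\prec M_u^{-1/2}$) is genuinely needed is accurate and matches the paper's use of $\Xi^{(\mathcal{L})}_{u,2}\prec 1$.
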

 
 \begin{proof}[Proof of Lemma \ref{lem_BcalE}] The proof  follows from  the definitions of these terms and Lemma \ref{lem_decayLoop}. Below, all  $\ell_u^{2}$ factors come from summing an index restricted to a length $\ell_u$-neighborhood in $\Z^{2}$, and $W^{2}$ factors come from our choice of scaling.
     \begin{itemize}
    \item We use the formula \eqref{DefKsimLK} for $[\mathcal{K} \sim(\mathcal{L}-\mathcal{K})]_{u, \boldsymbol{\sigma}}^{l_{\mathcal{K}}}$ and bound $\cal K$  with \eqref{eq:bcal_k}. This gives  
    \begin{align}
    \big[\mathcal{K} \sim(\mathcal{L}-\mathcal{K})\big]_{u, \boldsymbol{\sigma}}^{l_{\mathcal{K}}}
 \; \prec \; &
 W^{2}\cdot M_{u}^{-l_{\mathcal{K}}+1}\cdot \ell_u^{2}\cdot \Xi^{(\cal L-\cal K)}_{u,(n-l_{\mathcal{K}}+2)}\cdot M_{u}^{-(n-l_{\mathcal{K}}+2)}
    \end{align}
    where we used $l_{\mathcal{K}}\ge 3$. The $\ell_u^{2}$ factor comes from the sum over one index $a$ that is restricted by the decay of $\mathcal K$ and another index $b$ that is restricted by $S^{(B)}_{ab}$.
    
    \item By \eqref{def_ELKLK},  $ \mathcal{E}^{((\mathcal{L} - \mathcal{K}) \times (\mathcal{L} - \mathcal{K}))}$ is a product of two loops, whose total length is $n+2$. In particular, we have
    \begin{align}
   \mathcal{E}^{((\mathcal{L} - \mathcal{K}) \times (\mathcal{L} - \mathcal{K}))}
 \; \prec \; &
 W^{2}\cdot \sum_{k=2}^{n} \Xi^{(\cal L-\cal K)}_{u, \,k}
 \cdot M_{u}^{-k}
 \cdot \ell_u\cdot \Xi^{(\cal L-\cal K)}_{u,(n-k+2)}
 \cdot M_{u}^{-(n-k+2)}
      \end{align}
    
    \item By \eqref{def_EwtG}, 
    $\mathcal{E}^{(\widetilde{G})}$  is a product of $(\cal L-\cal K)$ with length one and an $(n+1)$-$G$  loop. We use  \eqref{GavLGEX} to bound said $G$ loop, and then we use \eqref{Eq:Gdecay_w} and \eqref{eq:bcal_k}, which implies $\Xi^{\cal L }_{u, \,2}\prec 1$. This gives 
    \begin{align}
  \mathcal{E}^{(\widetilde{G})}
 \; \prec \; &
 W^{2}\cdot   \Xi^{\cal L }_{u, \,2}
 \cdot M_{u}^{-1}
 \cdot \ell_u\cdot \Xi^{\cal L }_{u,(n+1)}
 \cdot M_{u}^{-n} \prec W^{2}   
 \cdot M_{u}^{-n-1}
 \cdot \ell_u\cdot \Xi^{\cal L }_{u,(n+1)}.
    \end{align} 
 \item By  \eqref{def_diffakn_k}, 
    $(\mathcal{E} \otimes \mathcal{E})$ can be written in terms of  $(2n+2)$-$G$-loops.  Thus 
    \begin{align}
(\mathcal{E} \otimes \mathcal{E}) \prec  \Xi^{(\cal L)}_{u, {2n+2}}\cdot  W^{2}\cdot\ell_u^{2}\cdot  M_{u}^{-2n-1 } .
    \end{align} 
\end{itemize}
This completes the proof.
 \end{proof}

\begin{lemma}\label{lem:STOeq_NQ}  
Suppose that the   assumptions of  Theorem \ref{lem:main_ind},  the local law \eqref{Gt_bound_flow} and the decay property for $\cal L-\cal K$ \eqref{Eq:Gdecay_w} hold, and that $\boldsymbol{\sigma}$ is not the alternating sign vector, in that
\begin{equation}\label{NALsigm}
    \exists k,\; s.t.\; \sigma_k=\sigma_{k+1}
\end{equation}. 
If $\max_{u\in [s,t]}\, \Xi^{(\cal L)}_{u, {2n+2}}\prec {  \Lambda}$ for some deterministic $ \Lambda \ge 1$, then 
\begin{align}\label{am;asoiuw}
 \Xi^{({\cal L-\cal K})}_{u,n}
 \;\prec\;  &\; { \Lambda}^{1/2}+\max_{u\in [s,t]}
  \left(\; \max_{k<n }\Xi^{({\cal L-\cal K})}_{u,k}+\max_{k:\;2\le k\le n }\Xi^{({\cal L-\cal K})}_{u,k}\cdot \Xi^{({\cal L-\cal K})}_{u,n-k+2}\cdot M_{u}^{-1}+\Xi^{({\cal L})}_{u,n+1}\right)
\end{align}

\end{lemma}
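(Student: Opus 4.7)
The plan is to apply the integrated loop hierarchy \eqref{int_K-LcalE} to $(\mathcal{L}-\mathcal{K})_{t,\boldsymbol{\sigma},\textbf{a}}$, which decomposes it into five pieces: the propagated initial data $(\mathcal{U}_{s,t,\boldsymbol{\sigma}}\circ(\mathcal{L}-\mathcal{K})_s)_{\textbf{a}}$, three deterministic time-integral terms involving $[\mathcal{K}\sim(\mathcal{L}-\mathcal{K})]^{l_\mathcal{K}>2}$, $\mathcal{E}^{((\mathcal{L}-\mathcal{K})\times(\mathcal{L}-\mathcal{K}))}$, and $\mathcal{E}^{(\widetilde{G})}$, and one martingale integral of $\mathcal{E}^{(M)}$. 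The non-alternating hypothesis \eqref{NALsigm} plays the decisive role: it guarantees that at least one propagator $\Theta^{(B)}_{u\, m_k m_{k+1}}$ entering $\mathcal{U}_{u,t,\boldsymbol{\sigma}}$ has $m_k m_{k+1}\in\{m^2,\bar m^2\}$, and since $|E|<2-\kappa$ forces $\mathrm{Im}(m^2)\neq 0$, we have $|1-u\, m_k m_{k+1}|\geq c_\kappa>0$ uniformly in $u$. By Lemma \ref{lem_propTH}, the corresponding factor of $\mathcal{U}_{u,t,\boldsymbol{\sigma}}$ is then $O(1)$ as an operator, which replaces the sum-zero cancellation that would be required in the alternating case.

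For each of the three deterministic time-integral terms, I would first apply the pointwise bounds of Lemma \ref{lem_BcalE} to the integrand, then apply row-sum bounds for $\mathcal{U}_{u,t,\boldsymbol{\sigma}}$ based on its tensor-product structure in \eqref{def_Ustz}, and finally integrate $\int_s^t du$. The $1/\eta_u$ factors appearing in Lemma \ref{lem_BcalE} combine with the integration and the closeness of $s,t$ encoded in \eqref{con_st_ind} to produce terms proportional to $\max_{k<n}\Xi^{(\mathcal{L}-\mathcal{K})}_{u,k}$, $\max_k \Xi^{(\mathcal{L}-\mathcal{K})}_{u,k}\cdot\Xi^{(\mathcal{L}-\mathcal{K})}_{u,n-k+2}\cdot M_u^{-1}$, and $\Xi^{(\mathcal{L})}_{u,n+1}$ respectively, matching the deterministic contributions to the right-hand side of \eqref{am;asoiuw}. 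The initial-data term $(\mathcal{U}_{s,t,\boldsymbol{\sigma}}\circ(\mathcal{L}-\mathcal{K})_s)_{\textbf{a}}$ is absorbed into the $\max_{k<n}\Xi^{(\mathcal{L}-\mathcal{K})}_{u,k}$ bucket via the inductive hypothesis $\Xi^{(\mathcal{L}-\mathcal{K})}_{s,n}\prec 1$ from \eqref{Eq:L-KGt+IND}, again using the improved $\mathcal{U}$-bound coming from the non-alternating structure.

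The martingale piece requires Lemma \ref{lem:DIfREP}: a standard continuity argument eliminates the stopping time, and the $2p$-th moment is bounded by the integral of $((\mathcal{U}_{u,t,\boldsymbol{\sigma}}\otimes\mathcal{U}_{u,t,\overline{\boldsymbol{\sigma}}})\circ(\mathcal{E}\otimes\mathcal{E})_{u,\boldsymbol{\sigma}})_{\textbf{a},\textbf{a}}$. By the fourth estimate in Lemma \ref{lem_BcalE} and the a priori bound $\Xi^{(\mathcal{L})}_{u,2n+2}\prec\Lambda$, this quadratic variation is $\prec \Lambda\cdot M_u^{-2n}/\eta_u$; integrating and taking $p$ arbitrarily large, Markov's inequality yields a $\prec$-bound of order $\Lambda^{1/2} M_t^{-n}$, which produces the $\Lambda^{1/2}$ term on the right-hand side of \eqref{am;asoiuw}. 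The main technical obstacle is the bookkeeping of the $\mathcal{U}_{u,t,\boldsymbol{\sigma}}$ row-sums: most of the constituent propagators have $|1-u|m|^2|=1-u\sim\eta_u$, so their row sums scale like $\eta_s/\eta_t$, and one must carefully combine these with the $M_u^{-n}$ normalizations, the integration in $u$, and the closeness \eqref{con_st_ind} to confirm that the single $m^2$ or $\bar m^2$ factor from the non-alternating hypothesis is exactly what is needed to close the estimate without any sum-zero input.
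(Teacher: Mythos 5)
Your proposal is correct and follows essentially the same route as the paper: decompose via the integrated hierarchy \eqref{int_K-LcalE}, control the deterministic pieces using Lemma \ref{lem_BcalE} together with the improved $\max\to\max$ bound on $\mathcal{U}_{u,t,\boldsymbol{\sigma}}$ afforded by the non-alternating hypothesis (this is precisely case 1 of Lemma \ref{lem:sum_decay}, whose mechanism you correctly re-derive from $|1-u m^2|\ge c_\kappa$), and handle the martingale via Lemma \ref{lem:DIfREP} combined with the $\Xi^{(\mathcal{L})}_{u,2n+2}\prec\Lambda$ hypothesis to produce the $\Lambda^{1/2}$ term. The only cosmetic inaccuracy is that the quadratic-variation intermediate bound should carry $M_t^{-2n}$ rather than $M_u^{-2n}$ after the $\mathcal{U}\otimes\mathcal{U}$ row-sum is applied, but this does not affect the conclusion.
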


\begin{proof}
By Lemma \ref{lem:sum_decay}, the $\max\to\max$ norm for ${\cal U}_{u,t,\boldsymbol{\sigma}}$ for fast decay tensors and for $\boldsymbol{\sigma}$ satisfying \eqref{NALsigm} is $\prec[(\ell_{u}^{2}\eta_{u})/(\ell_{t}^{2}\eta_{t})]^{n}$. Using this operator norm bound, Lemma \ref{lem_BcalE}, and \eqref{int_K-LcalE}, we have 
\begin{align}\label{sahwNQ}
M_{t}^n\cdot      (\mathcal{L} - \mathcal{K})_{t, \boldsymbol{\sigma}, \textbf{a}} \;\prec\; \; &
  1+   \int_{s}^t  \left( \max_{k<n }\Xi^{({\cal L-\cal K})}_{u,k}\right) \cdot \frac{1}{\eta_u}   du 
    \nonumber \\
    &+ \int_{s}^t \left( \max_{k:\;2\le k\le n }\Xi^{({\cal L-\cal K})}_{u,k}\cdot \Xi^{({\cal L-\cal K})}_{u,n-k+2}\cdot M_{u}^{-1}\right)\cdot \frac{1}{\eta_u} du 
    \nonumber \\
    &+ \int_{s}^t \Xi^{(\cal L)}_{u,n+1}\cdot \frac1{\eta_u} du 
   +
   M_{t}^n\cdot\int_{s}^t \left(\mathcal{U}_{u, t, \boldsymbol{\sigma}} \circ    \mathcal{E}^{(M)}_{u, \boldsymbol{\sigma}}\right)_{\textbf{a}} du.
\end{align}
 Above, the operator norm bound of $\prec[(\ell_{u}^{2}\eta_{u})/(\ell_{t}^{2}\eta_{t})]^{n}$ is used to turn the factor of $M_{t}^{n}$ on the left-hand side into $M_{u}^{n}$, which is why we have $\Xi_{u,k}$-terms on the right-hand side. Next, we use Lemma \ref{lem:DIfREP} to get
\begin{equation} \label{sahwNQ2}
   \mathbb{E} \left [   M_{t}^{n} \int_{s}^t \left(\mathcal{U}_{u, t, \boldsymbol{\sigma}} \circ \mathcal{E}^{(M)}_{u, \boldsymbol{\sigma}}\right)_{\textbf{a}} \right ]^{2p} 
   \le C_{n,p} \; 
   \mathbb{E}    \left(M_{t}^{n}\int_{s}^t 
   \left(\left(
   \mathcal{U}_{u, t,  \boldsymbol{\sigma}}
   \otimes 
   \mathcal{U}_{u, t,  \overline{\boldsymbol{\sigma}}}
   \right) \;\circ  \;
   \left( \mathcal{E} \otimes  \mathcal{E} \right)
   _{u, \,\boldsymbol{\sigma}  }
   \right)_{{\textbf a}, {\textbf a}}du 
   \right)^{p}
  \end{equation} 
By \eqref{CalEbwXi}, the right-hand side of \eqref{sahwNQ2} is $\mathrm{O}_{\prec}(\int_{s}^{t}\Xi_{u,2n+2}^{({\cal L})}\eta_{u}^{-1}du)^{p}$, which completes the proof.
\end{proof}

\subsection{Sum zero operator \texorpdfstring{${\cal Q}_t$}{Qt} {and mean zero operator \texorpdfstring{${\mathbb  Q}$}{Q}}}\label{subsection-sum-zero}
By Lemma \ref{lem:STOeq_NQ}, we are left to consider the alternating sign vector now. \emph{In particular, we assume $\boldsymbol{\sigma}=\boldsymbol{\sigma}^{(alt)}$ for the rest of this subsection}. To do this, we now introduce the first of two operators that will be important for analyzing the loop hierarchy. 

\begin{definition}\label{Def:QtPt} Let ${\cal A}_\textbf{a}$ be a tensor with $\textbf{a}=(a_{1},\ldots,a_{n})\in (\mathbb Z_L^2)^n$ and $n\ge 2$. Define
$$
\left( {\cal P} \circ {\cal A}\right)_{a_1}= \sum_{a_2,\, a_3\cdots a_n}  {\cal A}_{\textbf{a}}. 
$$
In particular, we fix $a_{1}$ and sum over all other indices. We now define the sum-zero operator to be
$$
 \left({\cal Q}_t\circ {\cal A}\right)_{\textbf{a}}={\cal A}_{\textbf{a}}-\left({\cal P} \circ {\cal A}\right)_{a_1}
 \cdot {  \vartheta}_{t,\,\textbf{a}},\quad 
 {  \vartheta}_{t, \textbf{a}}:=\big(1-t\big)^{n-1}\prod_{i=2}^n \left(\Theta^{(B)}_t\right)_{a_1a_i}.$$
 Finally, we say that a tensor has the sum-zero property if ${\cal P}\circ{\cal A}=0$.
\end{definition}
By the random walk representation of $\Theta_{\xi}^{(B)}$ from Lemma \ref{lem_propTH}, the row sums of $\Theta^{(B)}$ are equal to $(1-\xi)^{-1}$. This implies ${\cal P}\circ\vartheta_{t,\textbf{a}}=1$; thus, ${\cal P}\circ{\cal Q}=0$. 

The following lemma collects important facts about ${\cal Q}_{t}$; it is Lemma 5.13 in \cite{YY_25}, and it relies on rapid decay of $\vartheta_{t,\textbf{a}}$.

\begin{lemma}\label{lem_+Q} 
If ${\cal A}:(\Z_{L}^{2})^{n}\to\R$ has ($t, \tau, D$) decay, then ${\cal Q}_{t}\circ{\cal A}:(\Z_{L}^{2})^{n}\to\R$ has $(t,\tau,D)$ decay. Moreover, for some $C_{n}=O(1)$, we have  
\begin{align}\label{normQA}
        \max_{\textbf{a}} \left({\cal Q}_t\circ \cal A\right)_{\textbf{a}} \le W^{C_n \tau} \cdot \max_{\textbf{a}} {\cal A}_{\textbf{a}}+W^{-D+C_n}.
\end{align}
\end{lemma}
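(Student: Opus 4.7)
The plan is to decompose $({\cal Q}_t \circ {\cal A})_{\textbf{a}} = {\cal A}_{\textbf{a}} - ({\cal P}\circ {\cal A})_{a_1}\, \vartheta_{t, \textbf{a}}$ and to bound the two pieces separately. The only nontrivial inputs are the pointwise exponential decay $(\Theta^{(B)}_t)_{ab}\prec (1-t)^{-1}\ell_t^{-2}\cdot e^{-c|a-b|_L/\ell_t}$ from Lemma \ref{lem_propTH} together with its uniform bound $(\Theta^{(B)}_t)_{ab}\prec (1-t)^{-1}\ell_t^{-2}$, and the crude counting bound $|({\cal P}\circ {\cal A})_{a_1}|\leq L^{2(n-1)}\max|{\cal A}|$.

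For the decay statement, fix $\textbf{a}$ with $\max_{i,j}|a_i-a_j|_L \geq \ell_t W^\tau$. The piece ${\cal A}_{\textbf{a}}=O(W^{-D})$ is immediate from the hypothesis. For the second piece, the triangle inequality applied to any pair $(a_i,a_j)$ that realizes the max gives some $i\ge 2$ with $|a_1-a_i|_L \geq \tfrac{1}{2}\ell_t W^\tau$, so the exponential-decay bound from Lemma \ref{lem_propTH} yields $(\Theta^{(B)}_t)_{a_1 a_i} \leq (1-t)^{-1}\ell_t^{-2}\cdot \exp(-cW^\tau/2)$. Combined with the uniform bound on the remaining $n-2$ factors $(\Theta^{(B)}_t)_{a_1 a_j}$ and the prefactor $(1-t)^{n-1}$, this gives $|\vartheta_{t,\textbf{a}}| \prec \ell_t^{-2(n-1)}\exp(-cW^\tau/2)$, which is super-polynomially small in $W$. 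Multiplying by the crude polynomial bound on $({\cal P}\circ {\cal A})_{a_1}$ proves decay of the second piece (with the exponent $D$ possibly replaced by $D-C_n$, which is harmless since $D$ is arbitrary).

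For the pointwise bound, the uniform $\Theta^{(B)}_t$ estimate again gives $|\vartheta_{t,\textbf{a}}| \prec \ell_t^{-2(n-1)}$. For $({\cal P}\circ{\cal A})_{a_1}$ we split the defining sum according to whether $\max_{i,j}|a_i-a_j|_L \leq \ell_t W^\tau$ or not: the near-diagonal region has cardinality at most $C(\ell_t W^\tau)^{2(n-1)}$ (this is where the dimension $d=2$ enters, via the $2(n-1)$ exponent) and contributes $\lesssim (\ell_t W^\tau)^{2(n-1)}\max|{\cal A}|$, while the complement contributes at most $L^{2(n-1)}W^{-D}$ by the $(t,\tau,D)$-decay of ${\cal A}$. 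Multiplying by the $\vartheta$-bound and using ${\cal A}_{\textbf{a}}\leq \max|{\cal A}|$ for the first term yields
\[
|({\cal Q}_t\circ {\cal A})_{\textbf{a}}| \leq \max|{\cal A}|\cdot\bigl(1+C\,W^{2(n-1)\tau}\bigr)+ L^{2(n-1)}\ell_t^{-2(n-1)}\cdot W^{-D}.
\]
The ratio $L^{2(n-1)}/\ell_t^{2(n-1)}$ is polynomial in $N$ (since $\ell_t=\min(\eta_t^{-1/2},L)$ with $\eta_t\ge N^{-1+\tau}$ from the running hypotheses), hence bounded by $W^{C_n}$ because $W\geq N^{\mathfrak c}$. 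This yields the stated inequality with $C_n = \max\{2(n-1), C_n'\}$ for a suitable constant.

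There is no real obstacle: the argument is a routine combination of pointwise $\Theta^{(B)}_t$ estimates with a splitting of the $\cal P$-sum into a diffusive core and a super-polynomially small tail. The only point that requires care (and is new compared with the $d=1$ treatment in \cite{YY_25}) is the dimension-dependent power counting: volumes of $\ell_t$-balls in $\mathbb Z_L^2$ carry the exponent $2(n-1)$ rather than $n-1$, and the prefactor of $\vartheta$ must use $(1-t)^{n-1}\ell_t^{-2(n-1)}$ rather than $(1-t)^{n-1}\ell_t^{-(n-1)}$, both of which propagate consistently through the computation above.
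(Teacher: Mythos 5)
Your proof is correct and follows exactly the route the paper indicates: the paper does not reprove this lemma but cites Lemma 5.13 of \cite{YY_25} and notes that the key input is the rapid decay of $\vartheta_{t,\textbf{a}}$, which is precisely what your argument exploits. The decomposition $({\cal Q}_t\circ{\cal A})_{\textbf{a}}={\cal A}_{\textbf{a}}-({\cal P}\circ{\cal A})_{a_1}\vartheta_{t,\textbf{a}}$, the bound $|\vartheta_{t,\textbf{a}}|\prec\ell_t^{-2(n-1)}$ with its exponential improvement when $\max_{i,j}|a_i-a_j|_L\geq\ell_t W^\tau$, and the splitting of the ${\cal P}$-sum into an $O((\ell_t W^\tau)^{2(n-1)})$-size diffusive core plus a super-polynomially small tail are all sound, with the dimension-dependent exponent $2(n-1)$ tracked correctly throughout.

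Two minor remarks that do not affect correctness. First, your aside ``with the exponent $D$ possibly replaced by $D-C_n$'' is overly cautious: in the decay part of the claim, the factor $\exp(-cW^\tau/2)$ from $\vartheta_{t,\textbf{a}}$ already dominates every polynomial in $W$ (provided $\max|{\cal A}|$ is at most polynomial in $W$, which is the case wherever the lemma is applied), so you in fact recover the same $D$, not a weaker one. Second, your constant $C_n$ implicitly depends on the fixed parameter $\mathfrak c$ through $(L/\ell_t)^{2(n-1)}\leq N^{n-1}\leq W^{(n-1)/\mathfrak c}$; this is still $O(1)$ in the sense required since $\mathfrak c>0$ is a fixed constant of the model, but it is worth being aware that the bound is not uniform over $\mathfrak c\to 0$.
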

We now introduce a version of the loop hierarchy obtained after applying the sum-zero operator. First, define $\dot{\vartheta}_{t,\textbf{a}}=\frac{d}{dt}\vartheta_{t,\textbf{a}}$. We have the following equation, which follows from (5.90) in \cite{YY_25} (the proof is identical, since we only use that $\varTheta_{t, \boldsymbol{\sigma}}$  from \eqref{DefTHUST}  preserves the sum-zero property, which itself follows because the row and column sums of $\Theta^{(B)}_{\xi}$ are equal to $(1-\xi)^{-1}$):
 \begin{align}\label{int_K-L+Q}
  {\cal Q}_t\circ   (\mathcal{L} - \mathcal{K})_{t, \boldsymbol{\sigma}, \textbf{a}} \;=\; \; &
    \left(\mathcal{U}_{s, t, \boldsymbol{\sigma}} \circ  {\cal Q}_s\circ (\mathcal{L} - \mathcal{K})_{s, \boldsymbol{\sigma}}\right)_{\textbf{a}} \nonumber \\
    &+ \sum_{l_\mathcal{K} > 2} \int_{s}^t \left(\mathcal{U}_{u, t, \boldsymbol{\sigma}} \circ  {\cal Q}_u\circ \Big[\mathcal{K} \sim (\mathcal{L} - \mathcal{K})\Big]^{l_\mathcal{K}}_{u, \boldsymbol{\sigma}}\right)_{\textbf{a}} du \nonumber \\
    &+ \int_{s}^t \left(\mathcal{U}_{u, t, \boldsymbol{\sigma}} \circ  {\cal Q}_u\circ \mathcal{E}^{((\mathcal{L} - \mathcal{K}) \times (\mathcal{L} - \mathcal{K}))}_{u, \boldsymbol{\sigma}}\right)_{\textbf{a}} du \nonumber \\
    &+ \int_{s}^t \left(\mathcal{U}_{u, t, \boldsymbol{\sigma}} \circ  {\cal Q}_u\circ \mathcal{E}^{(\widetilde{G})}_{u, \boldsymbol{\sigma}}\right)_{\textbf{a}} du + \int_{s}^t \left(\mathcal{U}_{u, t, \boldsymbol{\sigma}} \circ  {\cal Q}_u\circ \mathcal{E}^{(M)}_{u, \boldsymbol{\sigma}}\right)_{\textbf{a}} du
    \nonumber \\
     &+ \int_{s}^t 
     \left(\mathcal{U}_{u, t, \boldsymbol{\sigma}} 
     \circ \left( \big[{\cal Q}_u , \varTheta_{u,\boldsymbol{\sigma}} \big]\circ (\mathcal{L} - \mathcal{K}) _{u, \boldsymbol{\sigma} }\right) \right)_{\textbf{a}} du
        \\\nonumber
    &+ \int_{s}^t 
     \left(\mathcal{U}_{u, t, \boldsymbol{\sigma}} 
     \circ \left[ {\cal P} \circ
      \left(\mathcal{L} - \mathcal{K}\right)_{u, \boldsymbol{\sigma}}
        \cdot \dot \vartheta_{u } \right]\right)_{\textbf{a}} du.
\end{align}
Above, $[{\cal Q}_{u},\varTheta_{u,\boldsymbol{\sigma}}]:={\cal Q}_{u}\circ\varTheta_{u,\boldsymbol{\sigma}}-\varTheta_{u,\boldsymbol{\sigma}}\circ{\cal Q}_{u}$ is the commutator for operators. We also record the following identity below, which is (5.88) in \cite{YY_25}:
\begin{align}\label{zjuii2}
{\cal Q}_t \circ \Big[\mathcal{K} \sim (\mathcal{L} - \mathcal{K})\Big]^{l_\mathcal{K} = 2}_{t, \boldsymbol{\sigma}, \textbf{a}}
& = {\cal Q}_t \circ \varTheta_{t, \boldsymbol{\sigma}} \circ (\mathcal{L} - \mathcal{K})_{t, \boldsymbol{\sigma}, \textbf{a}}
\\\nonumber
& = \varTheta_{t, \boldsymbol{\sigma}} \circ {\cal Q}_t \circ (\mathcal{L} - \mathcal{K})_{t, \boldsymbol{\sigma}, \textbf{a}}
+ \big[{\cal Q}_t , \varTheta_{t, \boldsymbol{\sigma}} \big] \circ (\mathcal{L} - \mathcal{K})_{t, \boldsymbol{\sigma}, \textbf{a}},
\end{align}
Not only does ${\cal Q}_{t}\circ{\cal A}$ satisfy the sum zero property, but so does $\Theta_{t,\boldsymbol{\sigma}}\circ{\cal Q}_{t}$ by definition of $\varTheta_{t, \boldsymbol{\sigma}}$  from \eqref{DefTHUST} and because the row and column sums of $\Theta_{\xi}^{(B)}$ are $1/(1-\xi)$. Thus, the last term in \eqref{zjuii2} is also sum-zero:
\begin{align}\label{pqthlk}
   {\cal P}\circ \big[{\cal Q}_t , \varTheta_{t, \boldsymbol{\sigma}} \big] \circ (\mathcal{L} - \mathcal{K})_{t, \boldsymbol{\sigma}, \textbf{a}}=0.
\end{align}
Now, note that if ${\cal A}$ is sum zero, then ${\cal A}={\cal Q}_{t}\circ{\cal A}$. Using this and \eqref{pqthlk} gives
$$ \int_{s}^t 
     \left(\mathcal{U}_{u, t, \boldsymbol{\sigma}} 
     \circ \left( \big[{\cal Q}_u , \varTheta_{u,\boldsymbol{\sigma}} \big]\circ (\mathcal{L} - \mathcal{K}) _{u, \boldsymbol{\sigma} }\right) \right)_{\textbf{a}} du=\int_{s}^t 
     \left(\mathcal{U}_{u, t, \boldsymbol{\sigma}} 
     \circ {\cal Q}_u\circ \left( \big[{\cal Q}_u , \varTheta_{u,\boldsymbol{\sigma}} \big]\circ (\mathcal{L} - \mathcal{K}) _{u, \boldsymbol{\sigma} }\right) \right)_{\textbf{a}} du.
$$
Since ${\cal P}\vartheta=1$, differentiating this identity gives ${\cal P}\dot{\vartheta}=0$. In particular, we deduce
$$ \int_{s}^t 
     \left(\mathcal{U}_{u, t, \boldsymbol{\sigma}} 
     \circ \left[ {\cal P} \circ
      \left(\mathcal{L} - \mathcal{K}\right)_{u, \boldsymbol{\sigma}}
        \cdot \dot \vartheta_{u } \right]\right)_{\textbf{a}} du=\int_{s}^t 
     \left(\mathcal{U}_{u, t, \boldsymbol{\sigma}} 
     \circ {\cal Q}_u\circ \left[ {\cal P} \circ
      \left(\mathcal{L} - \mathcal{K}\right)_{u, \boldsymbol{\sigma}}
        \cdot \dot \vartheta_{u } \right]\right)_{\textbf{a}} du.
$$
Using the previous two identities, we can rewrite \eqref{int_K-L+Q} into a compact form:

\begin{align}\label{int_K-L+Q2}
  {\cal Q}_t\circ   (\mathcal{L} - \mathcal{K})_{t, \boldsymbol{\sigma}, \textbf{a}} \;=\; \; &
    \left(\mathcal{U}_{s, t, \boldsymbol{\sigma}} \circ  {\cal Q}_s\circ {\cal B}_0\right)_{\textbf{a}} \\\nonumber
    &+   \int_{s}^t \left(\mathcal{U}_{u, t, \boldsymbol{\sigma}} \circ  {\cal Q}_u\circ  \sum_{m=1}^5{\cal B}_m  \right)_a du
   \\\nonumber
    &+  \int_{s}^t \left(\mathcal{U}_{u, t, \boldsymbol{\sigma}} \circ  {\cal Q}_u\circ \mathcal{E}^{(M)}_{u, \boldsymbol{\sigma}}\right)_{\textbf{a}} du
\end{align}
where the integrands are defined as follows:
\begin{align}
{\cal B}_0\;&\;:=\;(\mathcal{L} - \mathcal{K})_{s, \boldsymbol{\sigma}}
   \\[5pt]\nonumber {\cal B}_1\;&\;:=\;\sum_{l_\mathcal{K}>2}\Big[\mathcal{K} \sim (\mathcal{L} - \mathcal{K})\Big]^{l_\mathcal{K}}_{u, \boldsymbol{\sigma}}\\[5pt]\nonumber
    {\cal B}_2\;&\;:=\mathcal{E}^{((\mathcal{L} - \mathcal{K}) \times (\mathcal{L} - \mathcal{K}))}_{u, \boldsymbol{\sigma}}\;\\[5pt]\nonumber
    {\cal B}_3\;&\;:=\mathcal{E}^{(\widetilde{G})}_{u, \boldsymbol{\sigma}}\;\\[5pt]\nonumber
    {\cal B}_4\;&\;:= \big[{\cal Q}_u , \varTheta_{u,\boldsymbol{\sigma}} \big]\circ (\mathcal{L} - \mathcal{K}) _{u, \boldsymbol{\sigma} }\;\\[5pt]\nonumber
    {\cal B}_5\;&\;:= {\cal P} \circ
      \left(\mathcal{L} - \mathcal{K}\right)_{u, \boldsymbol{\sigma}}
        \cdot \dot \vartheta_{u } \; 
\end{align}
Before we analyze the integrals of ${\cal B}_{m}$, we provide estimates for these terms. For the rest of this subsection, we fix the length $n$ and adopt the following assumption, in which $\Lambda\geq1$ constants are deterministic:
 \begin{align}
  \Xi^{({\cal L})}_{u, { m }}\prec \Lambda^{({\cal L})}_{u, { m }},\quad  \Xi^{({\cal L-\cal K})}_{u, { m}}\prec \Lambda^{({\cal L-\cal K})}_{u, {m}},\quad \forall m\le 2n+2 \label{STOeq_Qt_assume_0}
 \end{align}
First, ${\cal B}_0$ is bounded using the assumption \eqref{Eq:L-KGt+IND}. For $1\le m\le 3$, the ${\cal B}_{m}$ are bounded using Lemma  \ref{lem_BcalE}. Next, we have
\begin{align}\label{jfasiuu}
\|{\cal B}_5\|_{\max}  \prec 
          \max_{a} \left|\left[{\cal P} \circ
      \left(\mathcal{L} - \mathcal{K}\right)_{u, \boldsymbol{\sigma}}\right]_a\right| \cdot   \max_{\textbf{a}} \left| 
          \dot \vartheta_{u,\textbf{a} } \right|
\end{align}  
Recall that $\boldsymbol{\sigma}$ is alternating. In this case, we claim the following holds for any loop of length $n$:
\begin{align}\label{jywiiwsoks}
    \left[{\cal P} \circ
    \left(\mathcal{L} - \mathcal{K}\right)_{u, \boldsymbol{\sigma}}\right]_a \prec (W^2\eta_u)^{-n}\cdot  \ell_u^{2(n-2)} \cdot \ell_{u}^{-2(n-1)} \cdot \Xi^{(\mathcal{L}-\mathcal{K})}_{u, n-1}.
\end{align}
To see this, since $\boldsymbol{\sigma}$ is non-constant, there exists $1 < k \leq n$ such that $\sigma_k = \overline{\sigma}_{k+1}$. By Lemma \ref{lem_WI_K}, we can sum $\mathcal{L} - \mathcal{K}$ of rank $n$ over $a_k$ and write the resulting sum in terms of $\mathcal{L} - \mathcal{K}$ of rank $n-1$ with a multiplicative factor $(W^{2} \eta_u)^{-1}$. The summation over the remaining $n-2$ many indices results in a factor of $\prec\ell_{u}^{2(n-2)}$ because ${\cal L}-{\cal K}$ has the $(u,\tau,D)$ decay property for any $\tau,D>0$ by assumption \eqref{Eq:Gdecay_w}. Finally, the additional $(W^{2}\ell_{u}^{2}\eta_{u})^{-n+1}=M_{u}^{-n+1}$ comes from compensating the factor of $M_{u}^{n-1}$ in the definition of $\Xi^{({\cal L}-{\cal K})}_{u,n-1}$. Next, 
    \begin{align}
   \max_{\textbf{a}}\left| 
       \dot \vartheta_{u,\textbf{a} } \right|\prec \ell_u^{-2n+2}\cdot \eta_u^{-1}  \label{eq:thetadot_bound}
   \end{align}
by definition of $\vartheta$ and the estimate \eqref{prop:ThfadC} for $\Theta^{(B)}$. Inserting these two bounds into \eqref{jfasiuu} and using \eqref{STOeq_Qt_assume_0} gives
\begin{align}\label{kkuuwsaf}
  \|{\cal B}_5\|_{\max}  \prec (M_u)^{-n}\cdot \eta_u^{-1}\cdot   \Lambda^{(\cal L-K)}_{u,\; n-1}
 \end{align}
By a similar argument, for ${\cal B}_4$, we have
 \begin{align}\label{kkuuwsaf5}
  \|{\cal B}_4\|_{\max}
   \;=\; & \; \left[ \left({\cal P} \circ \left(\varTheta_{u, \boldsymbol{\sigma}} \circ (\mathcal{L} - \mathcal{K})\right)\right) \cdot \vartheta_u \right]_{\textbf{a}} 
    - \varTheta_{u, \boldsymbol{\sigma}} \circ \left|\left[ {\cal P} \circ \left( \mathcal{L} - \mathcal{K} \right) \cdot \vartheta_t \right]_{\textbf{a}}\right|.
\\\nonumber
 \;\prec \; & \;  \|\vartheta_u\|_{\max}\cdot \eta_u^{-1}\max_a\left[ {\cal P} \circ \left(  \mathcal{L} - \mathcal{K} \right)_{u,\boldsymbol{\sigma}} \right]_{a} 
  \\\nonumber
 \;\prec \; & \;(M_u)^{-n}\cdot \eta_u^{-1}\cdot   \Lambda^{(\cal L-K)}_{u,\; n-1}
\end{align} 

In \cite{YY_25}, the sum zero operator and the previous bounds on ${\cal B}_{m}$ terms are enough to analyze the loop hierarchy. For dimension $2$, we need another ``mean zero operator" $\mathbb Q:=1-\mathbb E$. We now decompose \eqref{int_K-L+Q2} according to $\mathbb E$ and $\mathbb Q$:
\begin{align}\label{int_K-L+QQ}
  {\cal Q}_t\circ \mathbb Q\circ  (\mathcal{L} - \mathcal{K})_{t, \boldsymbol{\sigma}, \textbf{a}} \;=\; \; &
    \left(\mathcal{U}_{s, t, \boldsymbol{\sigma}} \circ  {\cal Q}_s\circ \mathbb Q\circ  {\cal B}_0\right)_{\textbf{a}} \\\nonumber
    &+   \int_{s}^t \left(\mathcal{U}_{u, t, \boldsymbol{\sigma}} \circ  {\cal Q}_u\circ \mathbb Q\circ  \sum_{m=1}^5{\cal B}_m  \right)_a du
   \\\nonumber
    &+  \int_{s}^t \left(\mathcal{U}_{u, t, \boldsymbol{\sigma}} \circ  {\cal Q}_u\circ \mathcal{E}^{(M)}_{u, \boldsymbol{\sigma}}\right)_{\textbf{a}} du
\end{align}
\begin{align}\label{int_K-L+QE}
  {\cal Q}_t\circ \mathbb E\circ  (\mathcal{L} - \mathcal{K})_{t, \boldsymbol{\sigma}, \textbf{a}} \;=\; \; &
    \left(\mathcal{U}_{s, t, \boldsymbol{\sigma}} \circ  {\cal Q}_s\circ \mathbb E\circ  {\cal B}_0\right)_{\textbf{a}} \\\nonumber
    &+   \int_{s}^t \left(\mathcal{U}_{u, t, \boldsymbol{\sigma}} \circ  {\cal Q}_u\circ \mathbb E\circ  \sum_{m=1}^5{\cal B}_m  \right)_a du
\end{align}
By definition, each of ${\cal Q}_{s}\circ {\cal B}_{0}$ and ${\cal Q}_{u}\circ {\cal B}_{m}$ for $m=1,\ldots,5$ has fast decay, the sum-zero property, and the form \eqref{a-local-form}. Thus, since we have the mean-zero operator $\mathbb{Q}$, we can use the enhanced ${\cal U}_{u,t,\boldsymbol{\sigma}}$ estimate from case 3 of Lemma \ref{lem:sum_decay}. In the case of ${\cal B}_{0}$, we have $\|{\cal B}_{0}\|_{\max}\prec M_{s}^{-n}$, and thus
$$
M_{t}^{n}\cdot  \left(\mathcal{U}_{s, t, \boldsymbol{\sigma}} \circ  {\cal Q}_s\circ \mathbb Q\circ  {\cal B}_0\right)_{\textbf{a}}\prec M_{t}^{n}\cdot\left(\frac{\ell_{s}^{2}\eta_{s}}{\ell_{t}^{2}\eta_{s}}\right)^{n}M_{s}^{-n}+W^{-D+C_{n}}\prec \Lambda^{(\cal L-\cal K)}_{s,n}+W^{-D+C_{n}}
 $$
for $D>0$ large and $C_{n}=O(1)$, where the last bound follows since $\Lambda\geq1$. If we let $\Lambda_{m}\geq1$ be deterministic numbers such that $\|{\cal B}_{m}\|_{\max}\prec \Lambda_{m}$, then we also get
 $$ M_{t}^{n}\cdot \mathcal{U}_{u, t, \boldsymbol{\sigma}} \circ  {\cal Q}_u\circ \mathbb Q\circ   {\cal B}_m  \prec M_{t}^{n}\left(\frac{\ell_{u}^{2}\eta_{u}}{\ell_{t}^{2}\eta_{t}}\right)^n \Lambda_{m}+W^{-D+C_{n}}\prec M_{u}^{n}\|{\cal B}_{m}\|_{\max}+W^{-D+C_{n}}.
$$
For clarity of presentation, we will not keep track of the $W^{-D+C_{n}}$ errors; they will not affect the argument below in any case. For $1\le m\le 3$, the ${\cal B}_{m}$ terms are bounded in Lemma  \ref{lem_BcalE}, whereas ${\cal B}_4$, ${\cal B}_5$ are bounded in \eqref{kkuuwsaf} and \eqref{kkuuwsaf5}. Ultimately, we obtain that
\begin{align}\label{juaspuwp}
   & \left(\mathcal{U}_{s, t, \boldsymbol{\sigma}} \circ  {\cal Q}_s\circ \mathbb Q\circ  {\cal B}_0\right)_{\textbf{a}} +  \int_{s}^t \left(\mathcal{U}_{u, t, \boldsymbol{\sigma}} \circ  {\cal Q}_u\circ \mathbb Q\circ  \sum_{m=1}^5{\cal B}_m  \right)_a du
    \\\nonumber
    &\; \prec \;(M_t)^{-n}\max_{u\in[s,t]}\left(\; \max_{k<n }\Lambda^{({\cal L-\cal K})}_{u,k}+\max_{k:\;2\le k\le n }\Lambda^{({\cal L-\cal K})}_{u,k}\cdot \Lambda^{({\cal L-\cal K})}_{u,n-k+2}\cdot M_u^{-1}+\Lambda^{({\cal L})}_{u,n+1}\right)
\end{align}
We now analyze the terms in \eqref{int_K-L+QE}. First, we note that each of ${\cal Q}_s\circ \mathbb E\circ   {\cal B}_0$ and ${\cal Q}_u\circ \mathbb E\circ   {\cal B}_m$ for $m=1,\ldots,5$ has fast decay, sum zero and symmetry, in the sense that we have
 \begin{align}
{\cal P} \left( {\cal Q} \circ \mathbb E\circ   {\cal B} \right)=0,\quad {\cal R}\circ  \left( {\cal Q} \circ \mathbb E\circ   {\cal B} \right)= {\cal Q} \circ \mathbb E\circ   {\cal B} \label{eq:case4_B}
 \end{align}
where $$\left( {\cal R}\circ {\cal A}\right)_{a,a+s_2,\ldots a+s_n} : = {\cal A}_{a,a-s_{2}\cdots a-s_{n}}$$ for any $a, s_2,\ldots, s_n\in \mathbb{Z}_L^2$. For such tensors, we have another enhanced ${\cal U}_{u,t,\boldsymbol{\sigma}}$ estimate from case 4 of Lemma \ref{lem:sum_decay}. This gives
 $$ M_{t}^{n}\cdot \mathcal{U}_{u, t, \boldsymbol{\sigma}} \circ  {\cal Q}_u\circ \mathbb E\circ   {\cal B}_m  \prec M_{u}^n \left\|\mathbb E \,{\cal B}_m\right\|_{\max} 
$$
Similar to \eqref{juaspuwp},  With estimates for ${\cal B}_{m}$ from \eqref{Eq:L-KGt+IND}), Lemma  \ref{lem_BcalE}, \eqref{kkuuwsaf}, and \eqref{kkuuwsaf5},  we deduce
\begin{align} \label{juaspuwp234}
   & \left(\mathcal{U}_{s, t, \boldsymbol{\sigma}} \circ  {\cal Q}_s\circ \mathbb E\circ  {\cal B}_0\right)_{\textbf{a}} +  \int_{s}^t \left(\mathcal{U}_{u, t, \boldsymbol{\sigma}} \circ  {\cal Q}_u\circ \mathbb E\circ  \sum_{m=1}^5{\cal B}_m  \right)_a du
    \\\nonumber
    &\; \prec \;(M_t)^{-n}\max_{u\in[s,t]}\left(\; \max_{k<n }\mathbb E \,\Xi^{({\cal L-\cal K})}_{u,k}+\max_{k:\;2\le k\le n }\mathbb E \, \Xi^{({\cal L-\cal K})}_{u,k}\cdot \Xi^{({\cal L-\cal K})}_{u,n-k+2}\cdot M_u^{-1}+\mathbb E \,\Xi^{({\cal L})}_{u,n+1}\right)
\end{align}
We are left to estimate the martingale in \eqref{int_K-L+QQ}. First, we introduce the notation
\begin{align}
\left(\left( \mathcal{Q} _u\otimes \mathcal{Q}_u \right)\circ {\cal A}\right)_{\textbf{a}, \textbf{b}} 
 \;=\; & \; 
{\cal A }_{\textbf{a}, \textbf{b} }-
\delta_{a_1', a_1}\sum_{a_2'\cdots a_n'}{\cal A }_{ \textbf{a}' , \,\textbf{b}}\cdot \vartheta_{u, \textbf{a}}
-\delta_{b_1', b_1}
\sum_{b_2'\cdots b_n'}{\cal A }_{\textbf{a}, \textbf{b}'}\cdot \vartheta_{u, \textbf{b} }
\nonumber \\ \;+\; & \; 
\delta_{a_1', a_1}
\delta_{b_1', b_1}
\sum_{a'_2\cdots a'_n}\sum_{b_2'\cdots b_n'}{\cal A }_{\textbf{a}', \textbf{b}'}\cdot \vartheta_{u, \textbf{a}}\cdot \vartheta_{u, \textbf{b}}  , 
\end{align}
where the used the notation
$
\textbf{a}=(a_1,\cdots, a_n), \textbf{a}'=(a'_1,\cdots, a'_n)$ and similarly for 
 $\textbf{b}$ and $\textbf{b}'$. The definition of  $\vartheta$ and an elementary calculation gives 
$$
\sum_{a_2,\cdots,a_n} \left(\left( \mathcal{Q} _u\otimes \mathcal{Q}_u \right)\circ {\cal A}\right)_{\textbf{a}, \textbf{b}}=\sum_{b_2,\cdots,b_n} \left(\left( \mathcal{Q} _u\otimes \mathcal{Q}_u \right)\circ {\cal A}\right)_{\textbf{a}, \textbf{b}}=0.
$$
By case 5 in Lemma \ref{lem:sum_decay}, we have the following for any tensor ${\cal A}$ of rank-$2n$ that has fast decay at scale $\ell_{u}$: 
$$
\left(\left(\mathcal{U}_{u, t, \boldsymbol{\sigma}} \otimes \mathcal{U}_{u, t, \bar{\sigma}}\right) \circ\left(\mathcal{Q}_u \otimes \mathcal{Q}_u\right) \circ {\cal A}\right)_{\mathrm{a}, \mathrm{a}} \prec \|{\cal A}\|_{\max}\cdot \left(\frac{\ell^2_u\eta_u}{\ell^2_t\eta_t}\right)^{2n}.
$$
By this estimate applied to ${\cal E}\otimes{\cal E}$, we have
$$
\left(\left(\mathcal{U}_{u, t, \boldsymbol{\sigma}} \otimes \mathcal{U}_{u, t, \bar{\sigma}}\right) \circ\left(\mathcal{Q}_u \otimes \mathcal{Q}_u\right) \circ(\mathcal{E} \otimes \mathcal{E})_{u, \sigma}\right)_{\mathrm{a}, \mathrm{a}} \prec \|(\mathcal{E} \otimes \mathcal{E})\|_{\max} \cdot \left(\frac{\ell^2_u\eta_u}{\ell^2_t\eta_t}\right)^{2n}
$$
Next, by \eqref{alu9_STime}, if for some deterministic quantity $\Lambda^{(L)}_{u, 2n+2}$, we have 
$ \Xi_{u, 2 n+2}^{(\mathcal{L})} \prec\Lambda^{(L)}_{u, 2n+2}$, then
\begin{equation}\label{uwyu92osk}
    \int_{s}^t \left(\mathcal{U}_{u, t, \boldsymbol{\sigma}} \circ  {\cal Q}_u\circ \mathcal{E}^{(M)}_{u, \boldsymbol{\sigma}}\right)_{\textbf{a}}\prec \max_{u\in [s,t]} \left(\Lambda^{(L)}_{u, 2n+2}\right)^{1/2}.
\end{equation}
 We now combine the estimates in this subsection to prove the following.

\begin{lemma}\label{lem:STOeq_Qt} 
Suppose the assumptions of 
 Theorem \ref{lem:main_ind}, local law \eqref{Gt_bound_flow} and decay of $\cal L-\cal K$ in \eqref{Eq:Gdecay_w} all hold. 
Assume that for some  deterministic quantities $\Lambda\geq1$, we have 
 \begin{align}
  \Xi^{({\cal L})}_{u, { m }}\prec \Lambda^{({\cal L})}_{u, { m }},\quad  \Xi^{({\cal L-\cal K})}_{u, { m}}\prec \Lambda^{({\cal L-\cal K})}_{u, {m}},\quad \forall m\le 2n+2. \label{STOeq_Qt_assume}
 \end{align}
Then, we have 
\begin{align}\label{am;asoi222}
 \Xi^{({\cal L-\cal K})}_{t,n}
 \;\prec\;  &\;   \max_{u\in [s,t]}
  \left(\; \max_{k<n }\Lambda^{({\cal L-\cal K})}_{u,k}+\max_{2\le k\le n }\Lambda^{({\cal L-\cal K})}_{u,k}\cdot \Lambda^{({\cal L-\cal K})}_{u,n-k+2}\cdot M_{u}^{-1}+\Lambda^{({\cal L})}_{u,n+1}+\left(\Lambda^{(\cal L)}_{u,2n+2}\right)^{1/2}\right)\\
  &+ \max_{u\in[s,t]}\left(\; \max_{k<n }\mathbb E \,\Xi^{({\cal L-\cal K})}_{u,k}+\max_{k:\;2\le k\le n }\mathbb E \, \Xi^{({\cal L-\cal K})}_{u,k}\cdot \Xi^{({\cal L-\cal K})}_{u,n-k+2}\cdot M_u^{-1}+\mathbb E \,\Xi^{({\cal L})}_{u,n+1}\right).\nonumber
\end{align}
\end{lemma}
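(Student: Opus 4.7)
The strategy is to decompose $({\cal L}-{\cal K})_{t,\boldsymbol{\sigma},\textbf{a}}$ into its sum-zero and non-sum-zero components using Definition \ref{Def:QtPt}, and then control each piece using the machinery set up earlier in the section. Writing
\begin{align*}
({\cal L}-{\cal K})_{t,\boldsymbol{\sigma},\textbf{a}} = {\cal Q}_t \circ ({\cal L}-{\cal K})_{t,\boldsymbol{\sigma},\textbf{a}} + \big({\cal P} \circ ({\cal L}-{\cal K})_{t,\boldsymbol{\sigma}}\big)_{a_1} \cdot \vartheta_{t,\textbf{a}},
\end{align*}
the non-sum-zero piece can be controlled exactly as in the derivation of \eqref{jywiiwsoks}: because $\boldsymbol{\sigma}$ is alternating, the Ward identity (Lemma \ref{lem_WI_K}) can be applied at some index $a_k$ to produce a factor $(W^2\eta_t)^{-1}$, the remaining $n-2$ summations cost $\ell_t^{2(n-2)}$ by the decay property (Lemma \ref{lem_decayLoop}), and $\|\vartheta_{t,\cdot}\|_{\max} \prec \ell_t^{-2(n-1)}$ follows from \eqref{prop:ThfadC}. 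Altogether, the contribution of this piece to $M_t^{n} \cdot ({\cal L}-{\cal K})_{t,\boldsymbol{\sigma},\textbf{a}}$ is $\prec \Lambda^{({\cal L}-{\cal K})}_{t,n-1}$, which is absorbed into $\max_{k<n}\Lambda^{({\cal L}-{\cal K})}_{u,k}$ on the right-hand side of \eqref{am;asoi222}.

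For the sum-zero piece ${\cal Q}_t \circ ({\cal L}-{\cal K})_{t,\boldsymbol{\sigma},\textbf{a}}$, I would use the integrated identity \eqref{int_K-L+Q2} and split it further via $\mathrm{Id} = \mathbb E + \mathbb Q$ into \eqref{int_K-L+QQ} and \eqref{int_K-L+QE}. For every $m=0,1,\ldots,5$, the integrands ${\cal Q}_u \circ {\cal B}_m$ are tensors with fast decay at scale $\ell_u$, the sum-zero property, and the $a$-local form needed to invoke case 3 of Lemma \ref{lem:sum_decay}; after taking expectation, the further reversal symmetry \eqref{eq:case4_B} holds, which permits invoking case 4 of the same lemma. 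Combining these enhanced $\cal U$-norm bounds with the pointwise estimates on the ${\cal B}_m$ already proved in \eqref{Eq:L-KGt+IND}, Lemma \ref{lem_BcalE}, \eqref{kkuuwsaf} and \eqref{kkuuwsaf5} yields the deterministic estimates \eqref{juaspuwp} and \eqref{juaspuwp234}.

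For the martingale $\int_s^t ({\cal U}_{u,t,\boldsymbol{\sigma}} \circ {\cal Q}_u \circ {\cal E}^{(M)}_{u,\boldsymbol{\sigma}})_\textbf{a} \, du$ appearing in \eqref{int_K-L+QQ}, I would apply Lemma \ref{lem:DIfREP}, which reduces the $L^{2p}$-norm to the integral of $({\cal U}_{u,t,\boldsymbol{\sigma}} \otimes {\cal U}_{u,t,\overline{\boldsymbol{\sigma}}}) \circ ({\cal Q}_u \otimes {\cal Q}_u) \circ ({\cal E} \otimes {\cal E})$. The operator $({\cal Q}_u \otimes {\cal Q}_u)$ makes the relevant rank-$2n$ tensor doubly sum-zero; case 5 of Lemma \ref{lem:sum_decay} then yields the enhanced $({\cal U}\otimes{\cal U})$-norm bound, and the pointwise bound on $\|{\cal E} \otimes {\cal E}\|_{\max}$ from Lemma \ref{lem_BcalE} produces the martingale estimate \eqref{uwyu92osk}. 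Summing the non-sum-zero contribution, the $\mathbb Q$-integral \eqref{juaspuwp}, the $\mathbb E$-integral \eqref{juaspuwp234}, and the martingale bound \eqref{uwyu92osk}, after multiplication by $M_t^n$ one obtains precisely the right-hand side of \eqref{am;asoi222}.

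The main obstacle, and the essential new feature relative to the one-dimensional analysis of \cite{YY_25}, is the decomposition via the mean-zero operator $\mathbb Q$ and the CLT-type cancellations it encodes, which underlie cases 3--5 of Lemma \ref{lem:sum_decay}. In $d=1$ the sum-zero operator ${\cal Q}_u$ alone suffices, since each summation costs only $\ell_u$; in $d=2$, each summation costs $\ell_u^2$, and the bare $\cal U$-norm bound from Lemma \ref{lem:sum_Ndecay} leaves an uncompensated power of $\ell_u$ that must be extracted either from the $a$-local reversal structure (for the expectation part) or from the doubly sum-zero structure of the quadratic variation (for the martingale). Carefully verifying that ${\cal Q}_u \circ {\cal B}_m$ and $({\cal Q}_u \otimes {\cal Q}_u)\circ ({\cal E} \otimes {\cal E})$ have the right tensor-algebraic form to trigger these enhanced bounds is the principal technical work.
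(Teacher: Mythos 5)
Your proposal is correct and follows essentially the same route as the paper's proof: decompose into sum-zero and non-sum-zero parts via $\mathcal{Q}_t$ and $\mathcal{P}\circ(\cdot)\cdot\vartheta_t$, bound the latter directly via \eqref{jywiiwsoks} and the $\vartheta_t$ estimate to get $\mathrm{O}_\prec(M_t^{-n}\Lambda^{(\mathcal{L}-\mathcal{K})}_{t,n-1})$ as in \eqref{kolkisaf}, then split the sum-zero piece via $\mathrm{Id}=\mathbb{E}+\mathbb{Q}$ and invoke cases 3, 4, 5 of Lemma \ref{lem:sum_decay} together with the $\mathcal{B}_m$ bounds to assemble \eqref{juaspuwp}, \eqref{juaspuwp234}, \eqref{uwyu92osk}. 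One small omission worth noting: the bound \eqref{am;asoi222} is stated for the max over all $\boldsymbol{\sigma}\in\{+,-\}^n$, while your argument (like the paper's) treats only the alternating case; you should state explicitly, as the paper does at the outset, that the non-alternating case is already covered by the stronger Lemma \ref{lem:STOeq_NQ}, so one may assume $\boldsymbol{\sigma}=\boldsymbol{\sigma}^{(\mathrm{alt})}$ without loss of generality.
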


\begin{proof}
Lemma \ref{lem:STOeq_NQ} treats \eqref{am;asoi222} for non-alternating case $\boldsymbol{\sigma}$, so we assume $\boldsymbol{\sigma}$ is alternating, so that $\sigma_{k}=\overline{\sigma}_{k+1}$ for all $k$. We combine \eqref{int_K-L+QQ}, \eqref{int_K-L+QE}, \eqref{juaspuwp}, \eqref{juaspuwp234} and \eqref{uwyu92osk},  to get

\begin{align}
    {\cal Q}_t\circ   (\mathcal{L} - \mathcal{K})_{t, \boldsymbol{\sigma}, \textbf{a}} &\prec M^{-n}_t\cdot \max_{u\in[s,t]} \left(\; \max_{k<n }\Lambda^{({\cal L-\cal K})}_{u,k}+\max_{2\le k\le n }\Lambda^{({\cal L-\cal K})}_{u,k}\cdot \Lambda^{({\cal L-\cal K})}_{u,n-k+2}\cdot M_{u}^{-1}+\Lambda^{({\cal L})}_{u,n+1}+\left(\Lambda^{\cal L}_{u,2n+2}\right)^{1/2}\right)\nonumber\\
    &+M_{t}^{-n} \cdot \max_{u\in[s,t]}\left(\; \max_{k<n }\mathbb E \,\Xi^{({\cal L-\cal K})}_{u,k}+\max_{k:\;2\le k\le n }\mathbb E \, \Xi^{({\cal L-\cal K})}_{u,k}\cdot \Xi^{({\cal L-\cal K})}_{u,n-k+2}\cdot M_u^{-1}+\mathbb E \,\Xi^{({\cal L})}_{u,n+1}\right).\nonumber
\end{align}
We now write the left-hand side of the previous display as follows:
\begin{align}\label{kolkisaf} 
{\cal Q}_t\circ   (\mathcal{L} - \mathcal{K})_{t, \boldsymbol{\sigma}, \textbf{a}}
 =(\mathcal{L} - \mathcal{K})_{t, \boldsymbol{\sigma}, \textbf{a}}-{\cal P} \circ   (\mathcal{L} - \mathcal{K})_{t, \boldsymbol{\sigma}, \textbf{a}}\cdot \vartheta_{\textbf{a}}
 =(\mathcal{L} - \mathcal{K})_{t, \boldsymbol{\sigma}, \textbf{a}}
 +\mathrm{O}_{\prec} \left(M_t^{-n} \cdot \Lambda^{(\cal L-K)}_{t,\; n-1}\right)
\end{align}
This completes the proof of Lemma \ref{lem:STOeq_Qt}. 
\end{proof}

 \subsection{Proof of Theorem \ref{lem:main_ind}, Step 3}
The argument is the exact same as the proof of Step 3 in \cite{YY_25}, since it uses only \eqref{am;asoi222}. Thus, let us give a summary of the argument. For the sake of stochastic domination, we can drop the second line of \eqref{am;asoi222}. We will proceed inductively in $n$. In particular, we can trade the first two terms on the right-hand side of \eqref{am;asoi222} for $1$, assuming we proven the main estimate \eqref{Eq:LGxb} of Step 3 for $k<n$. This gives us the following, in which we think of $\Lambda\geq1$ constants as our ``best bounds available" for $\Xi$ parameters: 
\begin{align*}
\Lambda_{t,n}^{({\cal L}-{\cal K})}\prec1+\max_{u\in[s,t]}\Big(\Lambda^{({\cal L})}_{u,2n+2}\Big)^{\frac12}+\max_{u\in[s,t]}\Lambda_{u,n+1}^{({\cal L})}.
\end{align*}
Because the scaling in $\Xi^{({\cal L}-{\cal K})}$ is smaller than the scaling in $\Xi^{({\cal L})}$ by $M_{u}^{-1}$, we have
\begin{align*}
\Lambda_{u,n+1}^{({\cal L})}\prec1+M_{u}^{-1}\Lambda_{u,n+1}^{({\cal L}-{\cal K})}.
\end{align*}
Combining the previous two displays gives
\begin{align*}
\Lambda^{({\cal L}-{\cal K})}_{t,n}\prec1+\max_{u\in[s,t]}\Big(\Lambda^{({\cal L})}_{u,2n+2}\Big)^{\frac12}+M_{u}^{-1}\max_{u\in[s,t]}\Lambda^{({\cal L}-{\cal K})}_{u,n+1}.
\end{align*}
At this point, we can formally continue by plugging the previous estimate (but replacing $n$ by $n+1$) for the last term on the right-hand side. Because $M_{u}^{-1}\leq W^{-\delta}$ is small, this type of argument closes. We also need control on the second term on the right-hand side. However, as explained in the proof of Step 3 in \cite{YY_25}, the a priori upper bound $\Lambda^{({\cal L})}$ that is consistent with this bootstrapping is 
\begin{align*}
\max_{u\in[s,t]}\Big(\Lambda^{({\cal L})}_{u,2n+2}\Big)\prec M_{u}^{\frac12}+M_{u}^{-\frac12}\max_{u\in[s,t]}\Lambda_{u,n+1}^{({\cal L}-{\cal K})}.
\end{align*}
Thus, we ultimately get 
\begin{align*}
\Lambda^{({\cal L}-{\cal K})}_{t,n}\prec M_{t}^{\frac12},
\end{align*}
which is enough to obtain $\Lambda^{({\cal L})}_{t,n}\prec1$, i.e. the desired estimate, because of our ${\cal K}$ estimate in Lemma \ref{ML:Kbound}.

\subsection{Proof of  Theorem \ref{lem:main_ind}: Step 4 and 5.}

For step 4, the argument is the same as the proof of step 4 in \cite{YY_25}, so we give a summary. We use step $3$ to control all $\Xi^{({\cal L})}$ terms in \eqref{am;asoi222} by $\mathrm{O}_{\prec}(1)$. Thus, we arrive at
\begin{align}
 \Lambda^{({\cal L-\cal K})}_{t,n}
 \;\prec\;  &\;  1+ \max_{u\in [s,t]}
  \left(\; \max_{k<n }\Lambda^{({\cal L-\cal K})}_{u,k}+\max_{2\le k\le n }\Xi^{({\cal L-\cal K})}_{u,k}\cdot \Lambda^{({\cal L-\cal K})}_{u,n-k+2}\cdot M_{u}^{-1}\right)\nonumber\\
  &\prec\; 1+ \max_{u\in [s,t]}
  \left(\; \max_{k<n }\Lambda^{({\cal L-\cal K})}_{u,k}+\max_{2\le k\le n }\Lambda^{({\cal L-\cal K})}_{u,k}\cdot M_{u}^{-\frac12}\right),\label{step4_main}
\end{align}
where the second line follows from 
$$\Lambda^{({\cal L}-{\cal K})}_{u,n-k+2}\prec M_{u}^{1/2}+M_{u}^{1/2}\Lambda^{({\cal L})}_{u,n-k+2}\prec M_{u}^{1/2}.$$
Now, we use \eqref{GavLGEX} to get $\Lambda^{({\cal L}-{\cal K})}_{t,1}\prec1$. At this point, because $M_{u}\geq W^{\delta}$ for some $\delta>0$, we can induct in $n$ using \eqref{step4_main} to deduce $\Lambda^{({\cal L}-{\cal K})}_{t,n}\prec1$. This is the desired estimate for step 4.

For step 5, we use step 4 to get $({\cal L}-{\cal K})_{t,\, \boldsymbol{\sigma},  \,\textbf{a}}\prec M_{t}^{-2} $ in the case of $n=2$. Since ${\cal T}_{t,D}(\ell)$ is bounded away from $0$ for $\ell=O(\ell_{t}^{*})$, this implies that $({\cal L}-{\cal K})_{t,\, \boldsymbol{\sigma},  \,\textbf{a}}/{\cal T}_{t, D}(|a_1-a_2|_{L}) \prec1$ in the case where $|a_{1}-a_{2}|_{L}=O(\ell_{t}^{*})$. This is the desired estimate. So, it suffices to assume that $|a_{1}-a_{2}|_{L}>6\ell_{t}^{*}$. In this case, the desired estimate \eqref{Eq:Gdecay_flow} follows immediately from \eqref{53}. This completes the proofs of step 4 and 5. \qed

 \subsection{Proof of Lemma \ref{ML:exp}: Step 6}\label{subsection:step6}

 We recall that Lemma \ref{ML:exp} asks for an improved estimate on the expectation of ${\cal L}$ of length $n=2$. The first ingredient is the following improved estimate on loops of length $1$, which is Lemma 5.15 in \cite{YY_25}.
 \begin{lemma}\label{lemma:step6-1}
Fix any $a\in\Z_{L}^{2}$, any $\tau,\kappa>0$, any $E\in[-2+\kappa,2-\kappa]$, and $0\leq u\leq 1-N^{-1+\tau}$. We have 
\begin{align}
|\E{\cal L}_{u,(+),(a)}-{\cal K}_{u,(+),(a)}|=|\E\langle(G_{u}-m)E_{a}\rangle|\prec M_{u}^{-2}.
\end{align}
\end{lemma}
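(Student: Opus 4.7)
The plan is to derive and solve an ODE for $\phi_u := \mathbb{E}\mathcal{L}_{u,(+),(a)} - m$, which is $a$-independent by translation invariance. The cut-and-glue sum in the loop hierarchy (Lemma \ref{lem:SE_basic}) is empty for $n=1$, so $d\mathcal{L}_{t,(+),(a)} = \mathcal{E}^{(M)}_{t,(+),(a)} + \mathcal{E}^{(\widetilde G)}_{t,(+),(a)}$; taking expectation eliminates the martingale. Writing $\langle\widetilde G_t E_{a'}\rangle = \phi_t + (\langle\widetilde G_t E_{a'}\rangle - \phi_t)$ in the resulting expression for $\mathbb{E}\mathcal{E}^{(\widetilde G)}_{t,(+),(a)}$ and using the identities $\sum_b W^2 E_b = I$ and $\sum_{a'}S^{(B)}_{a'b} = 1$ to collapse $W^2\sum_b\mathbb{E}\mathcal{L}_{t,(+,+),(b,a)} = \mathbb{E}\,\mathrm{tr}[G_t^2 E_a]$, I obtain
$$\phi_t' = \alpha_t\,\phi_t + \mathrm{Cov}_t, \quad \alpha_t := \mathbb{E}\,\mathrm{tr}[G_t^2 E_a], \quad \mathrm{Cov}_t := W^2\sum_{a',b} S^{(B)}_{a'b}\,\mathrm{Cov}\bigl(\langle\widetilde G_t E_{a'}\rangle,\mathcal{L}_{t,(+,+),(b,a)}\bigr).$$

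The Ward identity gives $\sum_y|G_s(x,y)|^2 = \mathrm{Im}\,G_s(x,x)/\eta_s \prec \eta_s^{-1}$, and Cauchy--Schwarz yields $|\mathrm{tr}[G_s^2 E_a]|\prec\eta_s^{-1}$, so $\int_0^t|\alpha_s|\,ds\prec\log\eta_t^{-1}\prec 1$. Duhamel with $\phi_0 = 0$ therefore gives $|\phi_t|\prec\int_0^t|\mathrm{Cov}_s|\,ds$. To bound $\mathrm{Cov}_s$, I would apply Lemma \ref{lem_decayLoop} to restrict the effective summation to $b$ with $|b-a|\lesssim\ell_s W^\tau$ (contributing the 2D-area factor $(\ell_s W^\tau)^2$), and Cauchy--Schwarz in $L^2(\mathbb{P})$ with the moment-version stochastic domination bounds $\mathbb{E}|\langle\widetilde G_s E_{a'}\rangle|^2\prec M_s^{-2}$ and $\mathrm{Var}(\mathcal{L}_{s,(+,+),(b,a)})\prec M_s^{-4}$ (both from Theorem \ref{lem:main_ind}) to obtain $|\mathrm{Cov}(\cdot,\cdot)|\prec M_s^{-3}$, so $|\mathrm{Cov}_s|\prec W^{C\tau}M_s^{-2}/\eta_s$.

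To conclude, I would split $[0,t]$ into the two regimes $s\le 1-L^{-2}$ (where $\ell_s=\eta_s^{-1/2}$, $M_s\asymp W^2$ is $s$-independent, and $\int(1-s)^{-1}\,ds\prec\log L$) and $s>1-L^{-2}$ (where $\ell_s=L$, $M_s\asymp N\eta_s$, and $\int(1-s)^{-3}\,ds$ contributes $\asymp M_t^{-2}$); both yield $\prec W^{C\tau}M_t^{-2}$, hence $|\phi_t|\prec M_t^{-2}$. The main obstacle is the bulk regime: without the fast-decay restriction, the naive sum over all $L\times L$ torus indices would produce a spurious $L^2$ factor and only the weaker bound $L^2M_t^{-2}$; it is precisely the 2D area scaling $\ell_s^2$ arising from Lemma \ref{lem_decayLoop}, combined with the sharp pointwise $2$-loop variance bound from Theorem \ref{lem:main_ind}, that closes the estimate at the correct scale $M_t^{-2}$.
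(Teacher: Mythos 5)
You take a genuinely different route from the paper. The paper's proof is static: starting from $G_u - m = -m(H_u + um)G_u$, it applies Gaussian integration by parts to obtain a linear self-consistent equation for $\E\langle (G_u - m)E_a\rangle$ whose kernel is $\Theta^{(B)}_{um^2}$, and concludes using the row-sum bound $\sum_{a'}|(\Theta^{(B)}_{um^2})_{aa'}| = O(1)$, which holds because $|1-um^2|$ is bounded away from zero in the bulk. Your ODE/Duhamel approach is the dynamical analogue, and the main ideas (the $n=1$ loop hierarchy has no cut-and-glue term, the mean-plus-fluctuation decomposition, the fast-decay restriction of the summation, and the Cauchy--Schwarz bound $|\mathrm{Cov}|\prec M_s^{-3}$ followed by the two-regime $s$-integral) are all correct and lead to the right scaling.

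However, there is a genuine gap at the Duhamel step. You bound $|\alpha_s|\prec\eta_s^{-1}$ via Ward and Cauchy--Schwarz and conclude $\int_0^t|\alpha_s|\,ds\prec\log\eta_t^{-1}\prec 1$, then assert $|\phi_t|\prec\int_0^t|\mathrm{Cov}_s|\,ds$. This does not follow: the Duhamel solution has the propagator $\exp\bigl(\int_s^t\alpha_r\,dr\bigr)$, and the only control available from $|\alpha_r|\leq C\eta_r^{-1}$ is $\bigl|\exp\bigl(\int_s^t\alpha_r\,dr\bigr)\bigr|\leq\exp\bigl(C\log\eta_t^{-1}\bigr)\sim\eta_t^{-C}\sim N^{C(1-\tau)}$, a polynomial blowup. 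The notation $\int|\alpha|\prec 1$ means only $\int|\alpha|\leq N^{\varepsilon}$, which is consistent with $\int|\alpha|\sim\log N$; exponentiation is not compatible with $\prec$. To close the argument one must use the sharp deterministic approximation: by the $\mathcal{K}$ identity $W^2\sum_b\mathcal{K}_{t,(+,+),(b,a)} = m^2/(1-tm^2)$ and the $(u,\tau,D)$ decay of $\mathcal{L}-\mathcal{K}$ together with $|\mathcal{L}-\mathcal{K}|\prec M_t^{-2}$, one finds $\alpha_t = \frac{m^2}{1-tm^2} + \mathrm{O}_\prec\bigl(\eta_t^{-1}M_t^{-1}\bigr)$, hence $\int_0^t\alpha_s\,ds = -\log(1-tm^2) + \mathrm{o}(1)$ and $\exp\bigl(\int_s^t\alpha_r\,dr\bigr) = \frac{1-sm^2}{1-tm^2}(1+\mathrm{o}(1)) = O(1)$, since $|1-tm^2|\geq c>0$ in the bulk. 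This is precisely the role played by the $\Theta^{(B)}_{um^2}$ row-sum in the paper's static proof. With this replacement your argument is correct.
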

\begin{proof}
We start with the identity $G_{u}-m=-m(H_{u}+m)G_{u}$. This gives us the first line of the following calculation, after which we use Gaussian integration by parts (note that the variance of the entries of $H_{u}$ are all scaled by $u$):
\begin{align*}
\E\langle(G_{u}-m)E_{a}\rangle&=m\cdot\E\langle(-H-m)G_{u}E_{a}\rangle\\
&=u\cdot m\sum_{b}\E\left(\langle(G_{u}-m)E_{b}\rangle W^{2}\cdot S^{(B)}_{bb'}\cdot \langle E_{b'}G_{u}E_{a}\rangle\right)\\
&=u\cdot m\sum_{b}\E\left(\langle(G_{u}-m)E_{b}\rangle S^{(B)}_{ba}\langle GE_{a}\rangle\right).
\end{align*}
(In the above calculation, we use the structure of $S^{(B)}_{ba}:=\frac15\mathbf{1}(|b-a|_{L}\leq1)$ and of $E_{b'}$ from \eqref{Def_matE}.) At this point, we decompose $\langle G_{u}E_{a}\rangle=m+\langle(G_{u}-m)E_{a}\rangle$ and plug it into the last line above. As in (5.124) of \cite{YY_25}, this gives us a linear equation for the vector $a\mapsto\E\langle(G_{u}-m)E_{a}\rangle$. Recalling that $\Theta^{(B)}_{um^{2}}:=(1-um^{2}S^{(B)})^{-1}$, we arrive at
\begin{align*}
\E\langle(G_{u}-m)E_{a}\rangle&=\sum_{a'}(\Theta^{(B)}_{um^{2}})_{aa'}\cdot\E\left(u\cdot m\sum_{b}\langle(G_{u}-m)E_{b}\rangle S^{(B)}_{ba'}\langle(G_{u}-m)E_{a'}\rangle\right).
\end{align*}
By \eqref{Eq:L-KGt}, we know that $\langle(G_{u}-m)E_{a}\rangle\prec M_{u}^{-1}$. Moreover, since $|1-um^{2}|$ is bounded uniformly away from $0$, the random walk representation in Lemma \ref{lem_propTH} gives $\sum_{a'}|(\Theta^{(B)}_{um^{2}})_{aa'}|=O(1)$. If we plug these two bounds into the right-hand side above, then the proof is complete.
\end{proof}
\begin{proof}[Proof of Lemma \ref{ML:exp}]

We will take expectation of the integrated loop hierarchy after applying the sum-zero operator. In particular, consider \eqref{int_K-L+Q2} with $s=0$ and $n=2$, and take the expectation of this identity. When we do this, the $m=1$ term in \eqref{int_K-L+Q2} vanishes because $n=2$, and the martingale term vanishes because we take expectation. Thus, we get
\begin{align}
{\cal Q}_{t}\circ \E({\cal L}-{\cal K})_{t,\boldsymbol{\sigma},\textbf{a}}&=\int_{0}^{t}\Big({\cal U}_{u,t,\boldsymbol{\sigma}} \circ {\cal Q}_{u} \circ \E {\cal E}^{(({\cal L}-{\cal K})\times({\cal L}-{\cal K}))}_{u,\boldsymbol{\sigma}}\Big)_{\textbf{a}}du\label{Eexpint_K-L}\\
&+\int_{0}^{t}\Big({\cal U}_{u,t,\boldsymbol{\sigma}} \circ {\cal Q}_{u} \circ \E {\cal E}^{(\tilde{G})}_{u,\boldsymbol{\sigma}}\Big)_{\textbf{a}}du\nonumber\\
&+\int_{0}^{t}\Big({\cal U}_{u,t,\boldsymbol{\sigma}} \circ {\cal Q}_{u} \circ \E\big[{\cal Q}_u , \varTheta_{u,\boldsymbol{\sigma}} \big]\circ (\mathcal{L} - \mathcal{K}) _{u, \boldsymbol{\sigma}}\Big)_{\textbf{a}}du\nonumber\\
&+\int_{0}^{t}\Big({\cal U}_{u,t,\boldsymbol{\sigma}} \circ {\cal Q}_{u} \circ \E\big[{\cal P} \circ
      \left(\mathcal{L} - \mathcal{K}\right)_{u, \boldsymbol{\sigma}}
        \cdot \dot \vartheta_{u }\big]\Big)_{\textbf{a}}du.\nonumber
\end{align}
(Note that there is no initial data term, since ${\cal L}-{\cal K})$ vanishes at time $0$.)

We can now control the expectation in the first term on the right-hand side, which is defined in \eqref{def_ELKLK}, as follows. First, in \eqref{def_ELKLK}, we use $(u,\tau,D)$ decay from Lemma \ref{lem_decayLoop} and the $O(1)$ range of $S^{(B)}$ to restrict the sum over $a,b$ on the right-hand side of \eqref{def_ELKLK} to a total of $W^{\tau}\ell_{u}^{2}$ many indices (for $\tau>0$ small). Then, we bound each loop on the right-hand side of \eqref{def_ELKLK} using \eqref{Eq:L-KGt}. This gives the following analogue of (5.129) in \cite{YY_25}:
\begin{align}
\E{\cal E}^{((L-K)\times(L-K))}_{u,\boldsymbol{\sigma}}\prec W^{2}\ell_{u}^{2}M_{u}^{-4}\prec \eta_{u}^{-1}M_{u}^{-3}.\label{eq:LKLKstep6}
\end{align}
(We recall that $M_{u}=W^{2}\ell_{u}^{2}\eta_{u}$.) For the second term on the right-hand side of \eqref{Eexpint_K-L}, we can control the expectation therein in the following way. Recall the definition of ${\cal E}^{(\tilde{G})}_{u,\boldsymbol{\sigma}}$ from \eqref{def_EwtG}. Each summand in \eqref{def_EwtG} is a product of a one-loop and a three-loop. In particular, we can use the same fast decay property from Lemma \ref{lem_decayLoop} to restrict the sum on the right-hand side of \eqref{def_EwtG} to $\mathrm{O}_{\prec}(\ell_{u}^{2})$-many indices. We get

\begin{align}
    \mathbb E \mathcal{E}^{(\widetilde{G})}_{u, \boldsymbol{\sigma},\textbf{a}}
   \; \prec \; & \;
W^{2}\ell_{u}^{2}\cdot  \max_a \max_{\textbf{a}}\max_{ \boldsymbol{\sigma}\in \{+,-\}^3} \Big|\mathbb E
\Big [ \langle (G_{u}-m) E_a\rangle \cdot {\cal L}_{u,\textbf{a}, \boldsymbol{\sigma} } \Big ] \Big|
 \nonumber \\
  \; \prec \; & \;
W^{2}\ell_{u}^{2}\cdot \max_a  \big|\mathbb E \langle (G_{u}-m) E_a\rangle \big|
 \cdot 
 \max_{\textbf{a}}\max_{ \boldsymbol{\sigma}\in \{+,-\}^3} |{\cal K}_{u,\textbf{a}, \boldsymbol{\sigma} }|
\nonumber \\
 + \; \;& W^{2}\ell_{u}^{2}\cdot \max_a \big|{({\cal L-\cal K})}_{u,(+), (a),   }\rangle \big|
 \cdot 
 \max_{\textbf{a}} \max_{ \boldsymbol{\sigma}\in \{+,-\}^3} |{({\cal L-\cal K})}_{u,\textbf{a}, \boldsymbol{\sigma} }|.
 \end{align}
 We now use Lemma \ref{lemma:step6-1} along with the estimates \eqref{eq:bcal_k} and \eqref{Eq:L-KGt} to obtain 
 \begin{align}
     \mathbb E \mathcal{E}^{(\widetilde{G})}_{u, \boldsymbol{\sigma},\textbf{a}} \prec \eta_{u}^{-1}M_{u}^{-3}.\label{eq:wtGstep6}
 \end{align}
Let us now move to the last term in \eqref{Eexpint_K-L}. Since the lengths of ${\cal L}$ and ${\cal K}$ are both equal to $n=2$, we can use Lemma \ref{lem_WI_K} (the Ward identity) to write
\begin{align}
\E[{\cal P} \circ ({\cal L}-{\cal K})_{u,\boldsymbol{\sigma}}]&=\frac{1}{2W^{2}\eta_{u}}\left(\E[{\cal L}_{u,(+)}-{\cal K}_{u,(+)}]-\E[{\cal L}_{u,(-)}-{\cal K}_{u,(-)}]\right) \prec \ell_{u}^{2}M_{u}^{-3},\label{eq:step6_improvedexpectation}
\end{align}
where the last bound follows from Lemma \ref{lemma:step6-1} and $M_{u}=W^{2}\ell_{u}^{2}\eta_{u}$. We now combine the above display with \eqref{eq:thetadot_bound} to deduce
\begin{align}
\|\E[{\cal P}\circ ({\cal L}-{\cal K})_{u,\boldsymbol{\sigma}} \cdot \dot{\vartheta}_{u}]\|_{\max}\prec \eta_{u}^{-1}M_{u}^{-3}.\label{eq:p_term_step6}
\end{align}
Finally, for the third term on the right-hand side of \eqref{Eexpint_K-L}, we follow \eqref{kkuuwsaf5}, except we use the improved expectation bound \eqref{eq:step6_improvedexpectation}. In particular, we have 
\begin{align}
\|\E\big[{\cal Q}_u , \varTheta_{u,\boldsymbol{\sigma}} \big]\circ (\mathcal{L} - \mathcal{K}) _{u, \boldsymbol{\sigma}}\|_{\max}&\prec \|\vartheta_{u}\|_{\max}\cdot \eta_{u}^{-1}\max_{a}|\E[{\cal P} \circ ({\cal L}-{\cal K})_{u,\boldsymbol{\sigma}}]_{a}|\\
&\prec \eta_{u}^{-1}M_{u}^{-3}.\label{commutator_step6}
\end{align}
We clarify that the last bound above uses also the bound $\|\vartheta_{u}\|_{\max}\prec \ell_{u}^{-2}$ for $n=1$ which can be deduced from Definition \ref{Def:QtPt} and Lemma \ref{lem_propTH}. Now, we recall \eqref{eq:case4_B}, and we recall from the sentence before \eqref{eq:case4_B} that each term which is acted on by ${\cal U}_{u,t,\boldsymbol{\sigma}}$ in \eqref{Eexpint_K-L} has $(u,\tau,D)$ decay for any $\tau,D>0$. This lets us apply the ${\cal U}_{u,t,\boldsymbol{\sigma}}|_{\max\to\max}$ operator norm from case 4 of Lemma \ref{lem:sum_decay}. If we combine this operator norm estimate with \eqref{Eexpint_K-L}, \eqref{eq:LKLKstep6}, \eqref{eq:wtGstep6}, \eqref{eq:p_term_step6}, and \eqref{commutator_step6}, we deduce the following (for any $D>0$ fixed):
\begin{align*}
\|{\cal Q}_{t} \circ \E({\cal L}-{\cal K})_{t,\boldsymbol{\sigma}}\|_{\max}\prec\int_{0}^{t}\eta_{u}^{-1}M_{u}^{-3}du + W^{-D}\prec M_{t}^{-3}.
\end{align*}
Finally, we again use \eqref{eq:step6_improvedexpectation} and the bound $\|\vartheta_{t}\|_{\max}\prec \ell_{t}^{-2}$ to deduce the estimate
\begin{align*}
\|{\cal P}\circ \E({\cal L}-{\cal K})_{t,\boldsymbol{\sigma}}\cdot\vartheta_{t}\|_{\max}\prec M_{t}^{-3}.
\end{align*}
Combining the previous two displays and using ${\cal Q}_{t}\circ \E({\cal L}-{\cal K})_{t,\boldsymbol{\sigma}}+{\cal P}\circ \E({\cal L}-{\cal K})_{t,\boldsymbol{\sigma}}\cdot\vartheta_{t}=\E({\cal L}-{\cal K})_{t,\boldsymbol{\sigma}}$ completes the proof.
\end{proof}

\section{Proof of Lemma \ref{lem_ConArg}}\label{sec:Inh_LE}
As explained in the proof of Lemma 5.1 in \cite{YY_25}, Cauchy-Schwarz allows us to reduce to loops ${\cal L}_{t,\boldsymbol{\sigma},\textbf{a}}$ of even length $2m$ that are symmetric. Precisely, if we use the chain notation for ${\cal C}$ in Definition \ref{def_Gchain}, then
\begin{align}\label{sym_calL}
        \mathcal{L}_{t, \boldsymbol{\sigma}', \textbf{a}'} = & \langle E_{a_0} \cdot \mathcal{C}_{t, \boldsymbol{\sigma}, \textbf{a}} \cdot E_{a_m} \cdot \mathcal{C}^\dagger_{t, \boldsymbol{\sigma}, \textbf{a}} \rangle, \quad \boldsymbol{\sigma} = (z_1, z_2, \ldots, z_m), \quad \textbf{a} = (a_1, a_2, \ldots, a_{m-1}) \nonumber \\
 \boldsymbol{\sigma}' = & (z_1, z_2, \ldots, z_m, \bar{z}_m, \bar{z}_{m-1}, \ldots, \bar{z}_1), \quad \textbf{a}' = ( a_1, a_2, \ldots, a_{m-1}, a_m, a_{m-1}, \ldots, a_1, a_0).
\end{align}
The reduction to even-loops via Cauchy-Schwarz is justified by the following, which is (6.4) in \cite{YY_25}:
\begin{align}\label{adummzps}
    \max_{\textbf{a}, \boldsymbol{\sigma}} \left|\mathcal{L}^{(\text{length} = 2m+1)}_{t, \boldsymbol{\sigma}, \textbf{a}}\right|^2 \le \left|\max_{\textbf{a}, \boldsymbol{\sigma}} \mathcal{L}^{(\text{length} = 2m)}_{t, \boldsymbol{\sigma}, \textbf{a}} \cdot \max_{\textbf{a}, \boldsymbol{\sigma}} \mathcal{L}^{(\text{length} = 2m+2)}_{t, \boldsymbol{\sigma}, \textbf{a}}\right|.  
  \end{align}
We record this bound because it will be important shortly for another reason. Our proof of Lemma \ref{lem_ConArg} proceeds inductively in $m$, where we recall that $2m$ is the length of ${\cal L}$. Recall $\Omega$ from \eqref{usuayzoo}. We claim 
    \begin{equation}\label{LWLIMZ}
      {\bf 1}_{\Omega}\cdot    \mathcal{L}_{t_2, \boldsymbol{\sigma}', \textbf{a}'} \prec \left( W^{2} \ell_{t_1}^{2} \eta_{t_2} \right)^{-2m+1} + {\bf 1}_{\Omega}\frac{\eta_{t_1}}{\eta_{t_2}}
        \cdot
        \left(
        \mathcal{L}_{t, \boldsymbol{\sigma}_m^{(1)}, \textbf{a}_m} - \mathcal{L}_{t, \boldsymbol{\sigma}_m^{(2)}, \textbf{a}_m} \right) M_{t_{1}}^{-1},
\end{equation}
Recall that $2m$ is the length of ${\cal L}$. Above, we have also used the notation
\begin{align*}
\textbf{a}_l &= (a_0, a_1, \ldots, a_{l-1}, a_{l-1}, \ldots, a_1) \\
\boldsymbol{\sigma}_l^{(1)}  &= (\sigma_1, \sigma_2, \ldots, \sigma_{l-1}, \sigma_l, \bar{\sigma}_{l-1}, \ldots, \bar{\sigma}_2, \bar{\sigma}_1),\\
\boldsymbol{\sigma}_l^{(2)}  &= (\sigma_1, \sigma_2, \ldots, \sigma_{l-1}, \bar{\sigma}_l, \bar{\sigma}_{l-1}, \ldots, \bar{\sigma}_2, \bar{\sigma}_1).
\end{align*}
In particular, $\mathcal{L}_{t_2, \boldsymbol{\sigma}_m^{(1)}, \textbf{a}_m}$ and $\mathcal{L}_{t_2, \boldsymbol{\sigma}_m^{(2)}, \textbf{a}_m}$ are  loops of length $2m-1$, and $\boldsymbol{\sigma}_m^{(1)}, \; \boldsymbol{\sigma}_m^{(2)}\in \{+,-\}^{2m-1}$. The estimate \eqref{LWLIMZ} is (6.13) in \cite{YY_25}. It follows from the same argument; indeed, the only difference between \eqref{LWLIMZ} and (6.13) in \cite{YY_25} is the scaling of $W^{2}$ versus $W$ and $\ell_{\cdot}^{2}$ versus $\ell_{\cdot}$. But in the proof of (6.13) in \cite{YY_25}, the factors of $W$ come from the scaling of $E_{a}$ matrices, the form of the right-hand side of \eqref{55} when we apply induction in $m$, and the coefficient in the Ward identity in Lemma \ref{lem_WI_K}. In our setting, the correct factor is $W^{2}$. The scaling of $\ell_{\cdot}$ in the proof of (6.13) in \cite{YY_25} comes from the size of ${\cal I}_{a}$ sets therein (which are replaced by ${\cal I}_{a}^{(2)}$ here) and the form of the right-hand side of \eqref{55} when we apply induction in $m$. Thus, in our argument, $\ell_{\cdot}$ should be replaced by $\ell_{\cdot}^{2}$.

To perform the induction, we note that the event $\Omega$ in \eqref{usuayzoo} implies ${\bf 1}_{\Omega}\cdot  \mathcal{L}_{t_2, \boldsymbol{\sigma}_m^{(j)}, \textbf{a}_m}\prec 1$ when $m=1$ for $j=1,2$. Next, we apply \eqref{adummzps} with the left-hand side given by a loop of size $2m-1$ to bound it by a loop of length $2m-2$ and another of length $2m$. Then, we apply the inductive assumption that \eqref{res_lo_bo_eta} holds for the loop of length $2m-2$. This gives the following for $j=1,2$ on the left-hand side below:
$$
   {\bf 1}_{\Omega}\cdot  \mathcal{L}_{t_2, \boldsymbol{\sigma}_m^{(j)}, \textbf{a}_m} \prec {\bf 1}_{\Omega}\left[ \left( W^{2} \ell_{t_1}^{2} \eta_{t_2} \right)^{-2m+3} \max_{\boldsymbol{\sigma} , \textbf{a} } \left| \mathcal{L}^{\text{length=2m}}_{t_2, \boldsymbol{\sigma} , \textbf{a} } \right| \right]^{1/2}.
    $$
Plugging this into \eqref{LWLIMZ} gives
$$
      {\bf 1}_{\Omega}\cdot\max_{\boldsymbol{\sigma}' , \textbf{a}' } \left| \mathcal{L}^{\text{length=2m}}_{t_2, \boldsymbol{\sigma}' , \textbf{a}' } \right|
    \prec \left( W^{2} \ell_{t_1}^{2} \eta_{t_2} \right)^{-2m+1 } 
    +\left( W^{2} \ell_{t_1}^{2} \eta_{t_2} \right)^{-m+1/2}{\bf 1}_{\Omega}
     \max_{\boldsymbol{\sigma}' , \textbf{a}' } \left| \mathcal{L}^{\text{length=2m}}_{t_2, \boldsymbol{\sigma}' , \textbf{a}' } \right|^{1/2}
$$
If $W^{2}\ell_{t_{1}}^{2}\eta_{t_{2}}\geq W^{\tau}$ for any $\tau>0$, then we can move the last term to the left-hand side and deduce the desired estimate \eqref{res_lo_bo_eta}. If $W^{2}\ell_{t_{1}}^{2}\eta_{t_{2}}\leq W^{\tau}$ for $\tau>0$ small, then the desired estimate \eqref{res_lo_bo_eta} follows by the restriction to the event $\Omega$ from \eqref{usuayzoo} anyway. This completes the proof. \qed

\section{Evolution kernel estimates}\label{ks}

In this section we collect the bounds on $L_{\max\to \max}$ norm of $\mathcal{U}$ defined in \eqref{def_Ustz}. First, we state the trivial bound, proved in Lemma 7.1 of \cite{YY_25}. We note that the proof in \cite{YY_25} is independent of the dimension of the band matrix $d$. 

\begin{lemma}[$\|{\cal U}\|_{\max\to \max}$ estimate]\label{lem:sum_Ndecay}
Let $\cal A$ be a tensor $\left(\mathbb{Z}_L^2\right)^n\to \mathbb C$ and $n\ge 2$.  
Then 
\begin{align}\label{sum_res_Ndecay}
   \| {\cal U}_{s,t,\boldsymbol{\sigma}}\;\circ {\cal A}\|_{\max } \prec \|{\cal A}\|_{\max}
    \cdot \left(  \eta_s/ \eta_t\right)^{n } ,\quad s<t. 
\end{align}
\end{lemma}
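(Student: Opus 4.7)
The strategy is direct and essentially deterministic: take absolute values, use that the kernel of $\mathcal{U}_{s,t,\boldsymbol{\sigma}}$ factorizes as a tensor product in the indices $(a_i,b_i)$, and bound each one-dimensional row sum using the algebraic identity \eqref{def_Ustz_2} together with the random walk representation of $\Theta^{(B)}$ from Lemma \ref{lem_propTH}. Because the bound I will obtain is pointwise deterministic, the $\prec$ in the statement is merely cosmetic—it accommodates the possibility that $\mathcal{A}$ itself is random and $\|\mathcal{A}\|_{\max}$ is an $\mathrm{O}_\prec(\cdot)$ quantity.

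First I would pull $\|\mathcal{A}\|_{\max}$ out of the sum and factorize, writing
\begin{align*}
|(\mathcal{U}_{s,t,\boldsymbol{\sigma}}\circ\mathcal{A})_{\mathbf{a}}|
\;\le\;\|\mathcal{A}\|_{\max}\cdot\prod_{i=1}^n\sum_{b_i}\left|\left(\frac{1-s\,m_im_{i+1}S^{(B)}}{1-t\,m_im_{i+1}S^{(B)}}\right)_{a_ib_i}\right|.
\end{align*}
The problem thus reduces to showing that, for each $\xi\in\{|m|^2,m^2,\bar m^2\}$ (the only values $m_im_{i+1}$ can take) and each fixed $a$,
\begin{align*}
\sum_{b}\left|\left(\frac{1-s\xi S^{(B)}}{1-t\xi S^{(B)}}\right)_{ab}\right|\;\le\;C_\kappa\cdot\frac{\eta_s}{\eta_t}.
\end{align*}
Taking the product of $n$ such row-sum bounds will immediately produce the claimed $(\eta_s/\eta_t)^n$ factor.

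To prove the per-factor estimate, I would use \eqref{def_Ustz_2} to write the kernel as $\delta_{ab}-(s-t)\xi(\Theta^{(B)}_{t\xi}S^{(B)})_{ab}$, so that the row sum is bounded by $1+(t-s)|\xi|\sum_b|(\Theta^{(B)}_{t\xi}S^{(B)})_{ab}|$. Since $S^{(B)}$ has non-negative entries and is row-stochastic, and since the random walk representation of Lemma \ref{lem_propTH} gives $|(\Theta^{(B)}_{t\xi})_{ac}|\le(\Theta^{(B)}_{t|\xi|})_{ac}=\sum_{k\ge0}(t|\xi|)^k(S^{(B)})^k_{ac}$, summing over $c$ produces $\sum_c|(\Theta^{(B)}_{t\xi})_{ac}|\le(1-t|\xi|)^{-1}=(1-t)^{-1}$ (using $|\xi|=|m|^2=1$). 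Therefore $\sum_b|(\Theta^{(B)}_{t\xi}S^{(B)})_{ab}|\le(1-t)^{-1}$, and the per-factor row sum is at most
\begin{align*}
1+\frac{t-s}{1-t}\;=\;\frac{1-s}{1-t}\;\asymp\;\frac{\eta_s}{\eta_t},
\end{align*}
using $\eta_u=(1-u)\,\mathrm{Im}\,m^{(E)}$ with $\mathrm{Im}\,m^{(E)}\asymp1$ in the bulk.

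There is no substantive obstacle: the argument is an elementary combination of the resolvent identity \eqref{def_Ustz_2}, the fact that $S^{(B)}$ is stochastic, and the $\ell^1$ bound on $\Theta^{(B)}_{t\xi}$ coming from its random walk expansion. The only mild caveat is verifying $|\xi|=1$, which holds because $|m^{(E)}|=1$ throughout the bulk (as $m^{(E)}$ solves $m^2+Em+1=0$ with $|E|\le2-\kappa$).
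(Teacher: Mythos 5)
Your proof is correct. Note, however, that the paper itself does not provide a proof of this lemma; it simply cites Lemma 7.1 of \cite{YY_25}, remarking that the argument there is dimension-independent. So there is no in-paper proof to compare against, but your argument is exactly the expected one: factorize the product kernel, reduce to a single-factor row sum, expand each factor via \eqref{def_Ustz_2}, dominate the complex resolvent entrywise by $\Theta^{(B)}_{t}$ using the random walk representation and $|\xi|=|m^{(E)}|^2=1$ in the bulk, use row-stochasticity of $S^{(B)}$ to collapse the $\ell^1$ bound to $(1-t)^{-1}$, and finally combine $1+(t-s)/(1-t)=(1-s)/(1-t)\asymp\eta_s/\eta_t$. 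Your observation that the resulting bound is deterministic and that $\prec$ accommodates a possibly random $\|\mathcal{A}\|_{\max}$ is also correct. The argument is sound and complete.
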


The following lemma shows how $\mathcal{U}_{s,t,(+,-)}$ affects the decay property of the 2-tensors.

\begin{lemma}[Tail Estimates]\label{TailtoTail}
Recall  
$$
{\cal T}_{t}(\ell) := M_t^{-2}\exp \left(- \left| \ell /\ell_t\right|^{1/2} \right)
$$
Fix $\boldsymbol{\sigma}=(+,-)$. Assume that for ${\cal A}_{\textbf{a}}$, $\textbf{a}\in \mathbb Z^2_L$ and for some large $D>0$, we have 
$$
{\cal A}_{\textbf{a}}\le {\cal T}_{s }(|a_1-a_2|_L)+W^{-D}
$$
Recall $\cal U$ from Definition \ref{def_Ustz}. We have 
\begin{align}
\label{neiwuj} 
\left({\cal U}_{s,t,\boldsymbol{\sigma}} \circ 
{\cal A}\right)_{\textbf{a}} & \prec
 {\cal T}_{t}(|a_1-a_2|_L)+W^{-D} \cdot(\eta_s/\eta_t)^2, \quad {\rm if}\quad  |a_1-a_2|_{L}\ge \ell_t^* :=(\log W)^{3/2}\ell_t 
\end{align}
 \end{lemma}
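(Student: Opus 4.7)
The plan is to exploit the tensor-product structure of $\mathcal U_{s,t,(+,-)}$. Since $\boldsymbol{\sigma}=(+,-)$ gives $m_1 m_2 = m_2 m_1 = |m|^2 = 1$, the formula \eqref{def_Ustz} combined with \eqref{def_Ustz_2} shows that $\mathcal U_{s,t,(+,-)}$ acts on $2$-tensors as $M\otimes M$, where $M := \Theta_s^{-1}\Theta_t = I + (t-s)\,S^{(B)}\,\Theta_t^{(B)}$ is an $L^2\times L^2$ matrix. From Lemma \ref{lem_propTH} I would extract two key properties: the row-sum identity $\sum_b M_{ab}=(1-s)/(1-t)\sim \eta_s/\eta_t$ (from $\Theta_t^{(B)}\mathbf 1=(1-t)^{-1}\mathbf 1$ and $S^{(B)}\mathbf 1=\mathbf 1$), and the off-diagonal exponential decay $|M_{ab}|\prec (\eta_s/\eta_t)\,\ell_t^{-2}e^{-c|a-b|_L/\ell_t}$ for $a\ne b$.

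I would then split the convolution $\sum_{b_1,b_2} M_{a_1 b_1} M_{a_2 b_2}\,\mathcal A_{b_1,b_2}$ according to whether both $|a_i-b_i|_L\le \ell_t^{**}:=C_0\ell_t\log W$ (the \emph{near region}) or not (the \emph{far region}). For the far region, summing the exponential tail of $M$ in $2$D gives $\sum_{|a-b|_L>\ell_t^{**}}|M_{ab}|\prec (\eta_s/\eta_t)W^{-cC_0}$, so the full contribution is $\prec (\eta_s/\eta_t)^2 W^{-D}$ once $C_0$ is chosen large relative to $D$. For the near region, the triangle inequality forces $|b_1-b_2|_L\ge r-2\ell_t^{**}$ with $r:=|a_1-a_2|_L\ge \ell_t^*$ (note $\ell_t^{**}\ll \ell_t^*$ for large $W$), and the hypothesis on $\mathcal A$ combined with the row-sum bound yields
\[
\sum_{\text{near}} M_{a_1 b_1} M_{a_2 b_2}\,\mathcal A_{b_1,b_2}\le (\eta_s/\eta_t)^2\bigl(\mathcal T_s(r-2\ell_t^{**})+W^{-D}\bigr).
\]

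The remaining ingredient, and the step I expect to be the main obstacle, is the purely deterministic inequality $(\eta_s/\eta_t)^2\,\mathcal T_s(r-2\ell_t^{**})\prec \mathcal T_t(r)$ for $r\ge \ell_t^*$. Using the algebraic identity $(\eta_s/\eta_t)^2 M_s^{-2}=(\ell_t/\ell_s)^4 M_t^{-2}$ (which follows directly from $M_\bullet = W^2\ell_\bullet^2\eta_\bullet$), this reduces to the deterministic claim
\[
(\ell_t/\ell_s)^4\exp\bigl(\sqrt{r/\ell_t}-\sqrt{(r-2\ell_t^{**})/\ell_s}\bigr)\prec 1.
\]
Setting $\lambda:=\ell_t/\ell_s\ge 1$ and using the threshold $\sqrt{r/\ell_t}\ge (\log W)^{3/4}$ built into $\ell_t^*$, Taylor-expanding $\sqrt{r-2\ell_t^{**}}$ shows the logarithm of the left side is bounded above by $4\log\lambda - (\log W)^{3/4}(\sqrt\lambda-1) + O((\log W)^{1/4}\sqrt\lambda)$, and an elementary one-variable calculus check shows this is $\le O((\log W)^{1/4})$ uniformly in $\lambda\ge 1$ (the function is concave on $[1,\infty)$ and its maximum lies near $\lambda=1$). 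The power $(\log W)^{3/2}$ in the definition of $\ell_t^*$ is chosen precisely so that the negative exponential $(\log W)^{3/4}(\sqrt\lambda-1)$ dominates both the polynomial factor $4\log\lambda$ and the shift correction uniformly in $\lambda\ge 1$. Combining the three estimates yields \eqref{neiwuj}.
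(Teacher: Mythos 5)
Your proof is correct and self-contained. The paper itself does not spell out a proof here: it simply refers to Lemma 7.2 of the cited one-dimensional paper, noting that the only adjustments are the $d=2$ versions of the $\|\Theta^{(B)}_t\|_{\max}$ bound and the volume bound $\sum_{|a-b|_L\le\ell}1\prec\ell^2$. Your argument supplies the details that the reference would contain, so it is genuinely useful to write out.

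The structure is right: since $\boldsymbol\sigma=(+,-)$ and $|m|=1$, $\mathcal U_{s,t,(+,-)}$ factors as $M\otimes M$ with $M=(1-sS^{(B)})\Theta^{(B)}_t=I+(t-s)S^{(B)}\Theta^{(B)}_t$, a matrix with nonnegative entries (random-walk representation), row sums $(1-s)/(1-t)\sim\eta_s/\eta_t$, and exponential off-diagonal decay at scale $\ell_t$ from \eqref{prop:ThfadC}. The near/far split at $\ell_t^{**}=C_0\ell_t\log W$ is the standard device; the far contribution is killed by choosing $C_0$ large relative to $D$, and the near contribution reduces the claim to the deterministic inequality $(\eta_s/\eta_t)^2\,\mathcal T_s(r-2\ell_t^{**})\prec\mathcal T_t(r)$ for $r\ge\ell_t^*$. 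Your reduction via $(\eta_s/\eta_t)^2M_s^{-2}=(\ell_t/\ell_s)^4M_t^{-2}$ is correct, and the calculus check that $\lambda\mapsto 4\log\lambda-x(\sqrt\lambda-1)+\epsilon\sqrt\lambda$ is bounded by $\epsilon=O((\log W)^{1/4})$ for $\lambda\ge1$ is right, and this is exactly the point where the exponent $3/2$ in $\ell_t^*=(\log W)^{3/2}\ell_t$ is needed (it produces $x=\sqrt{r/\ell_t}\ge(\log W)^{3/4}\gg 1$, overwhelming both the polynomial $4\log\lambda$ and the shift correction).

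One small inaccuracy, not affecting correctness: you write that the function is ``concave on $[1,\infty)$.'' In fact $f''(1)=-4+(x-\epsilon)/4>0$ for large $W$, so it is locally convex near $\lambda=1$. What actually holds (and what you implicitly use) is that $f'(\lambda)=4/\lambda-(x-\epsilon)/(2\sqrt\lambda)<0$ for all $\lambda\ge1$ once $x-\epsilon>8$, so $f$ is strictly decreasing on $[1,\infty)$ and its supremum is $f(1)=\epsilon$. I would state it that way rather than via concavity. Also, you should say explicitly (as the paper does implicitly) that $\mathcal A$ is taken nonnegative, or replace $\mathcal A$ by $|\mathcal A|$ throughout; since $M$ has nonnegative entries this costs nothing.
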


The analogous statement was proved in Lemma 7.2 of \cite{YY_25} in case of one-dimensional band matrices $d=1$. The proof of \cite{YY_25} goes through for $d=2$ as well. The only two adjustments necessary are the infinity norm bound $\|\Theta^{(B)}_t\|_{\max} \prec \ell_t^{-2}\eta_t^{-1}$, and the summation bound $\sum_{b, |a-b|_{L}\le \ell} 1 \prec \ell^2$ for any $a\in\mathbb{Z}_L^2$ and $\ell\in\mathbb{Z}_+$.
 
 \bigskip

The main technical part of this section is the following improved $L_{\max\to \max}$ bounds on $\mathcal{U}$.
 
\begin{lemma}[${\cal U}_{s,t}$ on fast decay tensor]\label{lem:sum_decay}

Let $\cal A$ be a tensor $\left(\mathbb{Z}_L^2\right)^n\to \mathbb R$, $n\ge 2$. We say that $\cal A$ has $(\tau, D)$ decay at time $s$ if  for some $\tau,D>0$, the following bound holds:
\begin{equation}\label{deccA0}
    \max_i\|a_i-a_j\|\ge \ell_s W^{\tau} \implies  {\cal A}_{\textbf{a}}=O(W^{-D}),\quad \textbf{a}=(a_1,a_2\cdots, a_n) .
\end{equation}
Now, suppose that $\cal A$ has $(\tau, D)$ decay at time $s$, and that $\cal A$ has rank $n\geq2$. We have
\begin{align}\label{sum_res_1}
    \left({\cal U}_{s,t,\boldsymbol{\sigma}} \circ {\cal A}\right)_\textbf{a} \le C_n W^{C_n\tau}\cdot   \|{\cal A}\|_{\max}
    \cdot \left(\frac{\ell_t}{\ell_s}\right)^{2}
    \cdot \left(\frac{\ell_s^{2}\eta_s}{\ell_t^{2}\eta_t}\right)^{n} +W^{-D+C_n}.
\end{align}
We will now consider five cases in which the bound \eqref{sum_res_1} can be improved.

Case 1: Suppose that for $1\leq k\leq n$, we have
$$ 
\sigma_k=\sigma_{k-1}, \quad \boldsymbol{\sigma}=(\sigma_1,\cdots, \sigma_n).
$$ 
Then the following estimate holds:
\begin{align}\label{nonalternating}
    \left({\cal U}_{s,t,\boldsymbol{\sigma}} \circ {\cal A}\right)_\textbf{a} \le W^{C_n\tau}\cdot   \|{\cal A}\|_{\max}\left(\frac{\ell_s^{2}\eta_s}{\ell_t^{2}\eta_t}\right)^{n } +W^{-D+C_n}
\end{align}

Case 2: Suppose that ${\cal A}_{\textbf{b}}$ has the sum zero property, i.e.
\begin{align}\label{sumAzero}
 \sum_{a_2,\,\ldots,\, a_n}{\cal A}_{\textbf{a}}=0, \forall a_1.
\end{align}
Then the following estimate holds:
\begin{align}\label{sum_res_2}
    \left({\cal U}_{s,t,\boldsymbol{\sigma}} \circ {\cal A}\right)_\textbf{a} \le W^{C_n\tau}\cdot   \|{\cal A}\|_{\max}\cdot\frac{\ell_{t}}{\ell_{s}}
    \cdot \left(\frac{\ell_s^{2}\eta_s}{\ell_t^{2}\eta_t}\right)^{n } +W^{-D+C_n}
\end{align} 

Case 3: Suppose that ${\cal A}_{\textbf{b}}$ has the sum zero property \eqref{sumAzero}. Suppose also that ${\cal A}_{\textbf{b}}$ has the following form, which we explain afterwards:
\begin{align}\label{a-local-form}
{\cal A}_{\textbf{b}}=\sum_{u=0}^{K}\sum_{\textbf{x},\textbf{y}\in(\Z_{WL}^{2})^{u}}C_{\textbf{b},\textbf{x},\textbf{y}}\cdot\prod_{i=1}^{u}G_{s}(\sigma_{i})_{x_{i}y_{i}}.
\end{align}
Here, $K=O(1)$ is fixed. The coefficients $C_{\textbf{b},\textbf{x},\textbf{y}}$ are deterministic, and they satisfy the three conditions below. First, we have the following for some constant $C'=O(1)$:
\begin{align*}
\|C\|_{\max}\leq N^{C'}.
\end{align*}
Second, there exists $\tau>0$ such that 
\begin{align*}
\max_{i}\min_{k}\Big(|[x_{i}]-b_{k}|_{L}+|[y_{i}]-b_{k}|_{L}\Big) \geq W^{\tau}\ell_{s} \Rightarrow C_{\textbf{b},\textbf{x},\textbf{y}}=0.
\end{align*}
{Third, let $\Lambda>0$ be a deterministic constant such that 
\begin{align*}
\|{\cal A}\|_{\max}\prec\Lambda.
\end{align*}
}
Now, suppose $0\leq s\leq 1-N^{-1+\delta}$ for some fixed $\delta>0$, and that \eqref{Gt_bound_flow} and \eqref{Eq:Gdecay_w} hold at time $s$. Then
\begin{align}\label{sum_res_3}
\left({\cal U}_{s,t,\boldsymbol{\sigma}}\circ({\cal A}-\E{\cal A})\right)_{\textbf{a}}&\prec C_{n}W^{C_{n}\tau}\cdot{\Lambda} \cdot\left(\frac{\ell_{s}^{2}\eta_{s}}{\ell_{t}^{2}\eta_{t}}\right)^{n}+W^{-D+C_{n}}.
\end{align}
Case 4: Suppose that $\mathcal{A}_{\textbf{b}}$ has the sum zero property \eqref{sumAzero} and that $\mathcal{A}$ is symmetric, i.e.:
$$
\mathcal{A}_{a, a+s_2,\ldots, a+s_n} = \mathcal{A}_{a, a-s_2,\ldots, a-s_n}
$$
for all $a, s_2, \ldots, s_n \in \mathbb{Z}_L^2$. Then we have the following bound:
\begin{equation}\label{symmetric_tensor}
    \left(\mathcal{U}_{s,t,\boldsymbol{\sigma}} \circ \mathcal{A}\right)_\textbf{a} \le W^{C_n\tau}\cdot   \|\mathcal{A}\|_{\max} 
    \cdot \left(\frac{\ell_s^2\eta_s}{\ell_t^2\eta_t}\right)^{n } +W^{-D+C_n}.
\end{equation}

Case 5: Let $\mathcal{A}$ be a $2n$-tensor $\left(\mathbb{Z}^2_L\right)^{n} \times \left(\mathbb{Z}^2_L\right)^{n} \rightarrow \mathbb{R}$, $n\ge 2$ with $(\tau, D)$ decay at time $s$. Suppose that $\mathcal{A}$ has the double sum zero property, meaning that
\begin{equation*}
    \sum_{a_2, \ldots a_n \in \mathbb{Z}_L^2} \mathcal{A}_{\textbf{a}, \textbf{a}'} = 0\; \quad \forall a_1,\textbf{a}', \quad\text{and}\quad  \sum_{a'_2, \ldots a'_n \in \mathbb{Z}_L^2} \mathcal{A}_{\textbf{a}, \textbf{a}'} = 0\; \quad \forall \textbf{a}, a'_1.
\end{equation*}
Then we have the bound
\begin{equation}\label{eq:double_sum_zero_tensor}
    \left(\left(\mathcal{U}_{s,t,\boldsymbol{\sigma}}\otimes \mathcal{U}_{s,t,\boldsymbol{\bar{\sigma}}}\right) \circ \mathcal{A}\right)_{\textbf{a},\textbf{a}} \le W^{C_n\tau}\cdot   \|\mathcal{A}\|_{\max} 
    \cdot \left(\frac{\ell_s^2\eta_s}{\ell_t^2\eta_t}\right)^{2n} +W^{-D+C_n}.
\end{equation}.    

\end{lemma}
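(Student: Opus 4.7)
The plan is to follow the strategy of Lemma 7.3 in \cite{YY_25}, adjusting carefully for the dimension-dependent power counting. The starting point is the identity \eqref{def_Ustz_2}, which lets us write each factor $K_i := \frac{1-s\xi_i S^{(B)}}{1-t\xi_i S^{(B)}} = I + (t-s)\xi_i \Theta^{(B)}_{t\xi_i} S^{(B)}$, and expand $\mathcal{U}_{s,t,\boldsymbol{\sigma}}\circ\mathcal{A}$ as a sum over subsets $I\subseteq\{1,\ldots,n\}$ of positions where the nontrivial $\Theta$-piece is selected. Positions outside $I$ contribute a trivial $\delta_{a_ib_i}$. For each $I$, the basic bound \eqref{sum_res_1} follows by combining: (i) the $(\tau,D)$-decay of $\mathcal{A}$, which restricts the $\textbf{b}$-summation to a neighborhood of volume $\prec W^{C\tau}\ell_s^{2(n-1)}$ after fixing one index; (ii) the pointwise and row-sum bounds on $\Theta^{(B)}_{t\xi}$ from Lemma \ref{lem_propTH}; and (iii) the fact that $\Theta^{(B)}_{t\xi}$ effectively restricts $b_i$ to a ball of radius $\ell_t$ around $a_i$, which costs a factor $(\ell_t/\ell_s)^2$ per slack coordinate compared to the ``tight'' case.

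The four structural improvements require exploiting additional structure of either $\boldsymbol{\sigma}$ or $\mathcal{A}$. For Case 1, the assumption $\sigma_{k-1}=\sigma_k$ forces $\xi_{k-1}=m^2$, for which $|1-t\xi_{k-1}|$ is bounded away from zero in the bulk; hence $\Theta^{(B)}_{t\xi_{k-1}}$ has $O(1)$ entries with exponentially fast decay, removing the $(\ell_t/\ell_s)^2$ enhancement at that position. For Case 2, one rewrites $\sum_{\textbf{b}}K(\textbf{a},\textbf{b})\mathcal{A}_{\textbf{b}}$ by subtracting $\mathcal{A}$ evaluated at a reference $\textbf{b}$-point (the sum-zero property makes this subtraction consistent), then applies the first-order gradient bound \eqref{prop:BD1} on $\Theta^{(B)}$ to gain a factor $|b_j-a_j|_L/(|a_j-a_1|_L+1)$; summing yields the saving $\ell_t/\ell_s$. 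Case 4 is analogous but uses the second-order bound \eqref{prop:BD2} together with the symmetry to obtain quadratic cancellation. Case 5 follows by applying the Case 2 argument simultaneously to each of the two tensor factors in $\mathcal{U}_{s,t,\boldsymbol{\sigma}}\otimes\mathcal{U}_{s,t,\bar{\boldsymbol{\sigma}}}$.

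The main obstacle is Case 3, which is the new technical contribution of this paper. The hypothesis \eqref{a-local-form} says $\mathcal{A}_{\textbf{b}}$ is a bounded-degree polynomial in local resolvent entries at time $s$, with coefficients supported near $\textbf{b}$, and the centering $\mathcal{A}-\mathbb{E}\mathcal{A}$ produces a pure fluctuation term. Pointwise one only has $|\mathcal{A}_{\textbf{b}}-\mathbb{E}\mathcal{A}_{\textbf{b}}|\prec\Lambda$, so the trivial bound gives Case 1 type scaling without the $(\ell_t/\ell_s)^2$ enhancement only if summing against the smooth kernel $K(\textbf{a},\cdot)$ exhibits square-root cancellation. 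The plan is to expand the fluctuation via Gaussian integration by parts in the entries of $H_s$, producing a martingale-type term plus explicit higher-order polynomial expressions in $G_s$; the martingale is controlled by the Burkholder-type bound in Lemma \ref{lem:DIfREP}, whose quadratic variation reduces to $2$-loop expressions already estimated by \eqref{Eq:Gdecay_w} and Theorem \ref{ML:GLoop}, while the higher-order terms are handled iteratively using the same local law bounds. The net effect is that two distant $G$-entries $(G_s)_{xy}$ and $(G_s)_{x'y'}$ are approximately independent when the pairs $(x,y),(x',y')$ are separated by more than $\ell_s$, as described after \eqref{Main_DEL_COND}; quantifying this via the variance estimate replaces the naive sum bound $\sum_{|y|\le \ell}(G_s)_{xy}=O(\ell^2)$ in $d=2$ by the improved $O(\ell\cdot\ell_s)$, and this is precisely what converts the $(\ell_t/\ell_s)^2$ loss in \eqref{sum_res_1} into the sharper bound \eqref{sum_res_3}.
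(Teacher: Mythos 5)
Your treatment of the basic bound and Cases 1, 2, 4, 5 matches the paper's approach: expand each $\psi_i = \delta_{a_ib_i} + \Xi_i$ with $\Xi_i = -(s-t)\xi_i\,(S^{(B)}\Theta^{(B)}_{t\xi_i})_{a_ib_i}$, use the $(\tau,D)$-decay to localize the $\textbf{b}$-sum, and gain factors of $\ell_t/\ell_s$ via the gradient bounds of Lemma~\ref{lem_propTH} after killing the leading (constant-in-shift) contribution by the sum-zero property (first-order for Case 2, second-order with symmetry for Case 4, applied in both tensor factors for Case 5). The paper organizes this via the decomposition $\Xi_i = \Xi^0_i+\Xi^1_i+\Xi^2_i$ around the reference point $b_i=b_1$, which is equivalent to what you describe.

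Case 3 is where your proposal has a genuine gap. You propose to expand the centered quantity $\mathcal{A}-\mathbb{E}\mathcal{A}$ by Gaussian integration by parts into a ``martingale-type term'' and then control it with the Burkholder-type bound in Lemma~\ref{lem:DIfREP}. But Lemma~\ref{lem:DIfREP} estimates the time-martingale $\int_s^\tau(\mathcal{U}_{u,t,\boldsymbol{\sigma}}\circ\mathcal{E}^{(M)}_{u,\boldsymbol{\sigma}})\,du$ arising from the $dB_t$ term in the loop-hierarchy SDE; in Case 3 everything sits at a \emph{single fixed time} $s$, and there is no stochastic integral or time-filtration martingale structure to invoke. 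The spatial square-root cancellation in $\sum_b Z_{ab}(Y_{s,b}-\mathbb{E}Y_{s,b})$ is not a Burkholder estimate. The mechanism the paper actually uses (Lemma~\ref{clt-lemma}) is a \emph{high-moment calculation}: one writes out $\mathbb{E}\bigl|\sum_b Z_{ab}(Y_{s,b}-\mathbb{E}Y_{s,b})\bigr|^{2p}$, observes that cross-moments $\mathbb{E}\prod_k(Y_{s,b_k}-\mathbb{E}Y_{s,b_k})$ are negligible whenever one $b_k$ is $\gg W^\tau\ell_s$-separated from all the others, and proves this near-independence by a coupling/interpolation argument — swapping the relevant entries $H_{ij}$ for independent copies $H'_{ij}$ one at a time, Taylor-expanding in each entry, and using Gaussian integration by parts together with the fast-decay local law at time $s$ (\eqref{Gt_bound_flow}, \eqref{Eq:Gdecay_w}) to show each swap changes the expectation by $O(W^{-D})$. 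Your intuition about approximate independence of distant resolvent entries is correct, but the route through Lemma~\ref{lem:DIfREP} does not lead there; you would need to supply the high-moment/coupling argument (or an equivalent variance-type estimate) to close Case 3.
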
 

The cases 1-2 of this Lemma \ref{lem:sum_decay} are analogous to Lemma 7.3 of \cite{YY_25}. However, in two dimensions the the additional factor $\frac{\ell_s}{\ell_t}$ coming from the sum zero property of $\mathcal{A}$ is insufficient for the analysis of the loop hierarchy in Section \ref{Sec:Stoflo}. Thus, we show that another $\frac{\ell_s}{\ell_t}$ factor can be obtained from symmetry, double sum zero property of $\mathcal{A}$ in cases 4-5, or a CLT-like cancellation in case 3.

\begin{proof}[Proof of Lemma \ref{lem:sum_decay}]
By definition of ${\cal U}_{s, t}$ in \eqref{def_Ustz} and \eqref{def_Ustz_2}, we have 
\begin{align}
      \left(\mathcal{U}_{s, t, \boldsymbol{\sigma}} \circ \mathcal{A}\right)_a=\sum_{b_1, \ldots, b_n} \prod_{i=1}^n\psi_i  \cdot \mathcal{A}_{\boldsymbol{b}}, \quad \boldsymbol{b}=\left(b_1, \ldots, b_n\right),
\end{align}
\begin{align}\label{def_psixi}
     \psi_i=\delta_{a_ib_i}+\Xi_i,  \quad \quad 
\Xi_i:=-(s-t)\xi_i \cdot \left(S^{(B)}\cdot \Theta^{(B)}_{t \xi_i}\right)_{a_ib_i},\quad\quad \xi_i=m(\sigma_i)m(\sigma_{i+1}).
 \end{align} 
We now expand
\begin{align*}
\sum_{b_{1},\ldots,b_{n}}\prod_{i=1}^{n}\psi_{i}\cdot{\cal A}_{\textbf{b}}&=\sum_{ A\subset\llbracket1,n\rrbracket}\sum_{b_{1},\ldots,b_{n}}\left(\prod_{j\in A^{c}}\Xi_{j}\right)\left(\prod_{j\in A}\delta_{a_{j}b_{j}}\right)\cdot{\cal A}_{\textbf{b}}.    
\end{align*}
By \eqref{def_psixi}, the entries of $\Xi$ are $O(\eta_{s})$. For any $A\neq\emptyset$, there must be at least one factor of $\delta_{a_{j}b_{j}}$ on the r.h.s. above. By the $(\tau,D)$ decay property of ${\cal A}_{\textbf{b}}$ at time $s$, this implies that all but $O(W^{\tau}\ell_{s}^{2})^{|A^{c}|}$ many indices in the sum over $b_{1},\ldots,b_{n}$ on the r.h.s. must be $O(W^{-D})$. Ultimately, we deduce that 
\begin{align}\label{iukwjn-d=2}
\sum_{\emptyset\neq A\subset\llbracket1,n\rrbracket}\sum_{b_{1},\ldots,b_{n}}\prod_{i=1}^{n}\psi_{i}\cdot{\cal A}_{\textbf{b}}&\leq\sum_{\emptyset\neq A\subset\llbracket1,n\rrbracket} C_{n}\eta_{s}^{|A^{c}|}(\ell_{s}^{2})^{|A^{c}|}\|{\cal A}\|_{\max}+W^{-D+C_{n}}\nonumber\\
&\leq C_{n}\left(\frac{\ell_{s}^{2}\eta_{s}}{\ell_{t}^{2}\eta_{t}}\right)^{n}\|{\cal A}\|_{\max}+W^{-D+C_{n}},
\end{align}
where the last bound follows because $\ell_{s}^{2}\eta_{s}$ is constant order and positive.

Thus we only need to prove the bounds \eqref{sum_res_1}, \eqref{nonalternating}, \eqref{sum_res_2}, \eqref{symmetric_tensor} and \eqref{sum_res_3} on the main term $\sum_{\textbf{b}}  \left(\prod_{i=1}^n  \Xi_i \right) {\cal A}_{\textbf{b}}$. By definition of $\Theta^{(B)}$, we have 
$$
\Xi_i \prec \eta_s \,\ell_t^{-2}\eta_t^{-1}.
$$
Then, by $(\tau,D)$ decay property of $\cal A$ at time $s$ in \eqref{deccA0}, we have
\begin{align}\label{llswuwg}
  \sum_{\textbf{b}}   \prod_{i=1}^n  \Xi_i  \cdot 
{\cal A}_{\textbf{b}}
\;\le\; & C W^{\tau}\cdot \frac{\ell_s^{2} \eta_s}{\ell_t^{2} \eta_t} \max_{b_n}
\cdot \left|\sum_{b_1\cdots b_{n-1}} \prod_{i=1}^{n-1}  \Xi_i  \cdot 
{\cal A}_{\textbf{b}}\right|+W^{-D+C}
\\\nonumber
\;\le\; & C_n W^{C_n\tau} \left(\frac{\ell_s^{2} \eta_s}{\ell_t^{2} \eta_t}\right)^{n-1} \max_{b_2,\cdots, b_n}  \left|\sum_{b_1 }\Xi_1 {\cal A}_{\textbf{b}}\right|+W^{-D+C_n}
\end{align}
By the decay of $(\tau,D)$ decay of $\Theta^{(B)}_t$ at time $t$, we have 
\begin{align}\label{tyzcsq}
  \sum_{b_1 }\Xi_1 \left|{\cal A}_{\textbf{b}}\right|\le CW^\tau (\eta_s/\eta_t)\|{\cal A}\|_{\max}+W^{-D}  .
\end{align}
Together with \eqref{llswuwg} and \eqref{iukwjn-d=2}, we deduce  \eqref{sum_res_1}. For a non-alternating tensor, we can assume without loss of generality that $\sigma_1=\sigma_2=+$. Then $\sum_{b_1}|\Xi_1|=O(1)$, and we get an improved bound
$$
\sum_{b_1 }\Xi_1\left| {\cal A}_{\textbf{b}}\right|\le  C\|{\cal A}\|_{\max} .
$$
Together with \eqref{llswuwg} and \eqref{iukwjn-d=2}, this proves \eqref{nonalternating}.

Now we assume that $\cal A$ has the sum zero property \eqref{sumAzero}. Since we proved \eqref{nonalternating} for case 1,  we can assume that $\sigma_i\ne \sigma_{i+1}$ for any $1\le i\le n$, and thus $\xi_i=|m|^2=1$ (in \eqref{def_psixi}). For each $\Xi_i$ with $i\ge 2$,  we write 
$$
\Xi_i=\Xi^0_i+\Xi^1_i+\Xi^2_i:=\Xi^0(a_i-b_1) + \Xi^1(a_i-b_1,r_i) + \Xi^2(a_i-b_1,r_i),
$$ 
where the shifts are defined by $r_i = b_i - b_1$ for $2\le i\le n$, and
\begin{align*}
    \Xi^0(a) &= (t-s)\left(S^{(B)}\Theta^{(B)}_t\right)_{a, 0  },\\
    \Xi_i^1(a, r) &= (t-s)\left[\frac12\left(S^{(B)}\Theta^{(B)}_t\right)_{a, r} - \frac12\left(S^{(B)}\Theta^{(B)}_t\right)_{a, -r}\right],\\
    \Xi_i^2(a, r) &= (t-s)\left[\frac12\left(S^{(B)}\Theta^{(B)}_t\right)_{a, r} + \frac12\left(S^{(B)}\Theta^{(B)}_t\right)_{a, -r} - \left(S^{(B)}\Theta^{(B)}_t\right)_{a, 0}\right].
\end{align*}
By Lemma \ref{lem_propTH} and $t-s = O(\eta_s)$, we have 
\begin{align}
    \left|\Xi^{0,1,2}(a)\right| &\prec \frac{\eta_s}{\ell_t^2\eta_t},\label{Xi-bound-0}\\
    \left|\Xi^{1,2}(a, r)\right| &\prec \frac{\eta_s|r|_L}{|a|_L+1}+\frac{\eta_{s}|r|_{L}}{\ell_{t}^{2}\eta_{t}^{1/2}},\label{Xi-bound-1}\\
    \left|\Xi^2(a, r)\right| &\prec \frac{\eta_s|r|_L^2}{(|a|_L+1)^2}+\frac{\eta_{s}|r|_{L}^{2}}{\ell_{t}^{2}}.\label{Xi-bound-2}
\end{align}
We expand the main term
\begin{align}
\sum_{\textbf{b}}   \left(\prod_{i=1}^n  \Xi_i\right)  \cdot 
{\cal A}_{\textbf{b}}&=\sum_{\textbf{b}} \Xi_1
\left(\prod_{i=2}^n  \left(\Xi^0_i+  \Xi^1_i + \Xi^2_i\right)\right)\cdot
{\cal A}_{\textbf{b}}\nonumber\\
&= \sum_{\llbracket2,n\rrbracket = A\sqcup B\sqcup C} \sum_{\textbf{b}}\prod_{i\in A} \Xi_i^0 \prod_{i\in B} \Xi_i^1 \prod_{i\in C} \Xi_i^2 \cdot \Xi_1 \mathcal{A}_{\textbf{b}}.\label{Xi-expansion}
\end{align}
The leading term, with $B, C =\emptyset$, vanishes due to the sum zero property of $\mathcal{A}$. For other partitions of $\llbracket 2,n\rrbracket$, we pick an $i\in B\cup C$ and bound the corresponding $\Xi^{1,2}_i$ with \eqref{Xi-bound-1}. The rest of the $\Xi^{0,1,2}_i$ factors are bounded by \eqref{Xi-bound-0}. By the decay \eqref{deccA0} of $\cal A$,  the main contribution to the last equation comes from $|r_i|_L\le W^\tau \ell_s$, thus the summation over each $r_i$ contributes a factor of $W^{2\tau}\ell_s$. Therefore, we obtain the following bound for  \eqref{llswuwg}.
\begin{align}\label{llswuwg2}
  \sum_{\textbf{b}}   \prod_{i=1}^n  \Xi_i  \cdot 
{\cal A}_{\textbf{b}}
\;\le\; & C_n W^{C_n\tau} \left(\frac{\ell_s^{2} \eta_s}{\ell_t^{2} \eta_t}\right)^{n-2}  \max_{b_2,\cdots, b_n}  \sum_{b_1 }\left[\sum_{i=2}^n \frac{\eta_s\ell_s^3}{|a_i-b_1|_L+1}+\frac{\eta_s\ell_s^3}{\ell_t^2\eta_t^{1/2}}\right]\Xi_1\left| {\cal A}_{\textbf{b}}\right|+W^{-D+C_n}
\end{align}
Due to the exponential decay property of $\Xi_1$, we can restrict the summation over $b_1$ to $|a_1-b_1|_L\le W^\tau\ell_t$. For $i\ge 2$, we have the following summation bound
\begin{equation}\label{eq-1sum}
    \sum_{|b_1-a_1|\le W^\tau\ell_t} \frac{1}{|a_i-b_1|_L+1} \le CW^\tau \ell_t.
\end{equation}
Using this together with the entrywise bound \eqref{Xi-bound-0} on $\Xi_1$ and \eqref{tyzcsq}, we have 
\begin{align}\label{llswuwg3}
  \sum_{\textbf{b}}   \prod_{i=1}^n  \Xi_i  \cdot 
{\cal A}_{\textbf{b}}
\;\le\; & C_n W^{C_n\tau} \left(\frac{\ell_s^{2} \eta_s}{\ell_t^{2} \eta_t}\right)^{n-2}  \left[\frac{(\ell_s^2\eta_s)^2}{\ell_t^2\eta_t}\cdot\frac{\ell_t}{\ell_s}+\frac{(\ell_s^2\eta_s)^2}{(\ell_t^2\eta_t)^{3/2}}\cdot\frac{\ell_t}{\ell_s}\right]\| {\cal A}\|_{\max}+W^{-D+C_n}.
\end{align}
This implies \eqref{sum_res_2}, since $\ell_t^2\eta_t=O(1)$. In case 4, we additionally have the symmetry property of $\mathcal{A}$, so that $\Xi^1(a, r) = -\Xi^1(a, -r)$ by definition, and thus
$$
\sum_{r_2,\ldots,r_n} \Xi^1(a_i-b_1, r_i) \mathcal{A}_{b_1, b_1+r_2,\ldots,b_1+r_n} = 0.
$$
Then the terms in the expansion of \eqref{Xi-expansion} with $|B| = 1$ and $C= \emptyset$ vanish as well. We split the remaining non-zero terms in \eqref{Xi-expansion} into two groups. The first group contains the partitions with $C \neq \emptyset$, where we use \eqref{Xi-bound-2} on one of $\Xi_i^2$ with $i\in C$ and \eqref{Xi-bound-0} on the other $\Xi^{0,1,2}_i$. The second group covers the partitions with $C = \emptyset$, $|B|\ge 2$, where we use \eqref{Xi-bound-1} for two terms $\Xi^1_{i,j}$ with $i, j\in B$ and \eqref{Xi-bound-0} on the other $\Xi^{0,1}_i$ terms. In total we get
\begin{align}
\sum_{\textbf{b}}   \prod_{i=1}^n  \Xi_i  \cdot 
{\cal A}_{\textbf{b}}
\;&\le\; C_n W^{C_n\tau} \left(\frac{\ell_s^{2} \eta_s}{\ell_t^{2} \eta_t}\right)^{n-2} \max_{b_2,\cdots, b_n}  \sum_{b_1}\left[\sum_{i=2}^n\frac{\ell_s^4\eta_s}{(|a_i-b_1|_L+1)^2}+ \frac{\ell_s^4\eta_s}{\ell_t^2}\right] \Xi_1 \left|{\cal A}_{\textbf{b}}\right|+W^{-D+C_n}\nonumber\\
&+\delta_{n\ge 3}C_n W^{C_n\tau} \left(\frac{\ell_s^{2} \eta_s}{\ell_t^{2} \eta_t}\right)^{n-3} \max_{b_2,\cdots, b_n}  \sum_{b_1}\left[\sum_{i=2}^n \frac{\ell_s^3\eta_s}{|a_i-b_1|_L+1}+\frac{\ell_s^3\eta_s}{\ell_t^2\eta_t^{1/2}}\right]^2 \Xi_1 \left|{\cal A}_{\textbf{b}}\right|\label{eq-2-first-der}\\
\end{align}
Similarly to the proof of case 2, we now bound $\Xi_1$ entrywise using \eqref{Xi-bound-0}, use the decay of $\Xi_1$ to restrict $b_1$ summation to $|b_1-a_1|_L\le W^\tau\ell_t$ and sum out the prefactors using \eqref{eq-1sum} and
\begin{equation}\label{eq-2sum}
   \sum_{|b_1-a_1|_L\le W^\tau\ell_t} \frac{1}{(|a_i-b_1|_L+1)(|a_j-b_1|_L+1)} \le W^{\tau} 
\end{equation}
for $i, j\ge 2$. Then 
\begin{align*}
\sum_{\textbf{b}}   \prod_{i=1}^n  \Xi_i  \cdot 
{\cal A}_{\textbf{b}} \;&\le\; C_n W^{C_n\tau} \left(\frac{\ell_s^{2} \eta_s}{\ell_t^{2} \eta_t}\right)^{n-2} \frac{(\ell_s^2\eta_s)^2}{\ell_t^2\eta_t}\|\mathcal{A}\|_{\max} \\
&+ \delta_{n\ge 3}C_n W^{C_n\tau} \left(\frac{\ell_s^{2} \eta_s}{\ell_t^{2} \eta_t}\right)^{n-3} \left[\frac{(\ell_s^2\eta_s)^3}{\ell_t^2\eta_t}+\frac{(\ell_s^2\eta_s)^3}{(\ell_t^2\eta_t)^{3/2}}+\frac{(\ell_s^2\eta_s)^3}{(\ell_t^2\eta_t)^2}\right] \|\mathcal{A}\|_{\max} + W^{-D+C_n}.
\end{align*}
This finishes the proof of \eqref{symmetric_tensor}.

{We now prove \eqref{sum_res_3}. Because ${\cal A}_{\textbf{b}}$ has the fast decay property, at the cost of an error term of the form $O(W^{-D})$ for any $D>0$ fixed, we can restrict ${\cal A}_{\textbf{b}}$ to indices $\textbf{b}=(b_{1},\ldots,b_{n})$ such that 
\begin{align*}
\max_{i}|b_{i}-b_{1}|_{L}\leq W^{\tau}\ell_{s}.
\end{align*}
In particular, we can assume that in the representation \eqref{a-local-form}, the coefficients $C_{\textbf{b},\textbf{x},\textbf{y}}$ satisfy
\begin{align}
\max_{i}\Big(|[x_{i}]-b_{1}|_{L}+|[y_{i}]-b_{1}|_{L}\Big)\geq W^{2\tau}\ell_{s} \Rightarrow C_{\textbf{b},\textbf{x},\textbf{y}}=0.\label{c-decay-new}
\end{align}
Take \eqref{Xi-expansion}. Because ${\cal A}_{\textbf{b}}$ has the sum-zero property by assumption, the term with $B,C=\emptyset$ vanishes as explained after \eqref{Xi-expansion}. For other partitions $\llbracket2,n\rrbracket=A\sqcup B\sqcup C$, there must be an index $i\in B\sqcup C$ with a factor of either $\Xi^{1}_{i}$ or $\Xi^{2}_{i}$; as before, call this factor $\Xi^{1,2}_{i}$, and without loss of generality (upon relabeling indices), assume that $i=2$. Thus, the quantity from \eqref{Xi-expansion} that we must control has the form
\begin{align*}
\sum_{\llbracket2,n\rrbracket=A\sqcup B\sqcup C}\sum_{b_{1}}\Xi_{1}\Big(\sum_{b_{2},\ldots,b_{n}}\Xi^{1,2}_{2}\prod_{i\in A}\Xi^{0}_{i}\prod_{i\in B\setminus\{2\}}\Xi^{1}_{i}\prod_{i\in C\setminus\{2\}}\Xi^{2}_{i}\cdot ({\cal A}_{\textbf{b}}-\E{\cal A}_{\textbf{b}})\Big)
\end{align*}
By the fast decay property of ${\cal A}_{\textbf{b}}$, the sum over $b_{2},\ldots,b_{n}$ can be restricted to $b_{2},\ldots,b_{n}$ such that $|b_{i}-b_{1}|_{L}\leq W^{\tau}\ell_{s}$ for every $i=2,\ldots,n$. Moreover, if we use the fast decay of $\Xi_{1}$, it suffices to restrict to $b_{1}$ such that $|b_{1}-a_{1}|_{L}\leq W^{\tau}\ell_{t}$ in the above display. In particular, the object we must control (in terms of the right-hand side of the desired estimate \eqref{sum_res_3}) is
\begin{align}
    \Gamma:=\sum_{|b_{1}-a_{1}|_{L}\leq W^{\tau}\ell_{t}}\Xi_{1}\Big(\sum_{b_{2},\ldots,b_{n}}\Xi^{1,2}_{2}\prod_{i\in A}\Xi^{0}_{i}\prod_{i\in B\setminus\{2\}}\Xi^{1}_{i}\prod_{i\in C\setminus\{2\}}\Xi^{2}_{i}\cdot ({\cal A}_{\textbf{b}}-\E{\cal A}_{\textbf{b}})\Big).\label{gamma_def_clt}
\end{align}
Now, recall that ${\cal A}_{\textbf{b}}\prec \Lambda$. Also, by \eqref{a-local-form} and the assumption that $s\leq N^{-1+\delta}$ for some fixed $\delta>0$, we have ${\cal A}_{\textbf{b}}=O(W^{C})$ for some $C=O(1)$. Thus, we deduce $\E{\cal A}_{\textbf{b}}\prec\Lambda +W^{-D'}$ for any $D'=O(1)$. So, if we follow the derivation of \eqref{llswuwg2}, we deduce the following for any $D=O(1)$:
\begin{align*}
\sum_{b_{2},\ldots,b_{n}}\Xi^{1,2}_{2}\prod_{i\in A}\Xi^{0}_{i}\prod_{i\in B\setminus\{2\}}\Xi^{1}_{i}\prod_{i\in C\setminus\{2\}}\Xi^{2}_{i} ({\cal A}_{\textbf{b}}-\E{\cal A}_{\textbf{b}})\prec \left(\frac{\ell_s^{2} \eta_s}{\ell_t^{2} \eta_t}\right)^{n-2} \eta_{s}\ell_{s}^{3}\Big(\frac{1}{1+|a_{2}-b_{1}|_{L}}+\frac{1}{\ell_{t}^{2}\eta_{t}^{1/2}}\Big)\Lambda + W^{-D}.
\end{align*}
We now use Lemma \ref{clt-lemma} in two ways. First, we use case 2 of Lemma \ref{clt-lemma} with
\begin{align*}
 Z_{ab_{1}}&:=(1+|a-b_{1}|_{L})^{-1}\cdot \eta_{s}^{-1}\ell_{t}^{2}\eta_{t} \cdot (\Xi_{1})_{a_{1}b_{1}}\cdot \mathbf{1}[|b_{1}-a_{1}|_{L}\leq W^{\tau}\ell_{t}],\\
 Y_{s,b_{1}}&:=\Big(\frac{1}{1+|a_{2}-b_{1}|_{L}}+\frac{1}{\ell_{t}^{2}\eta_{t}^{1/2}}\Big)^{-1}\cdot \sum_{b_{2},\ldots,b_{n}}\Xi^{1,2}_{2}\prod_{i\in A}\Xi^{0}_{i}\prod_{i\in B\setminus\{2\}}\Xi^{1}_{i}\prod_{i\in C\setminus\{2\}}\Xi^{2}_{i}\cdot ({\cal A}_{\textbf{b}}-\E{\cal A}_{\textbf{b}}).
\end{align*}
(In this application of Lemma \ref{clt-lemma}, the $\tau$ parameter there is equal to two times the $\tau$ parameter here; see the decay condition in \eqref{c-decay-new}, for example. Moreover, recall from \eqref{Xi-bound-0} that $\Xi_{1}\prec \eta_{s}\ell_{t}^{-2}\eta_{t}^{-1}$. Thus, our choice of $Z_{ab_{1}}$ satisfies the assumption \eqref{clt-z-form}.) Recall also that 
\begin{align*}
Y_{s,b_{1}}\prec \left(\frac{\ell_{s}^{2}\eta_{s}}{\ell_{t}^{2}\eta_{t}}\right)^{n-2}\cdot\eta_{s}\ell_{s}^{3}\cdot\Lambda.
\end{align*}
Thus, when we do this application of Lemma \ref{clt-lemma}, we arrive at the following for any $D=O(1)$:
\begin{align*}
\eta_{s}\ell_{t}^{-2}\eta_{t}^{-1}\cdot \sum_{b_{1}}Z_{a_{2}b_{1}}Y_{s,b_{1}}&\prec W^{C\tau}\ell_{s}\cdot \left(\frac{\ell_s^{2} \eta_s}{\ell_t^{2} \eta_t}\right)^{n-2} \cdot \eta_{s}\ell_{t}^{-2}\eta_{t}^{-1} \cdot \eta_{s}\ell_{s}^{3} \cdot \Lambda + W^{-D}\\
&\prec W^{C\tau}\left(\frac{\ell_s^{2} \eta_s}{\ell_t^{2} \eta_t}\right)^{n}\ell_{t}^{2}\eta_{t}\cdot \Lambda + W^{-D}.
\end{align*}
Recalling that $\ell_{t}^{2}\leq\eta_{t}^{-1}$ (see \eqref{eq:bcal_k} and \eqref{eta}) then yields
\begin{align}
\eta_{s}\ell_{t}^{-2}\eta_{t}^{-1}\cdot\sum_{b_{1}}Z_{a_{2}b_{1}}Y_{s,b_{1}}&\prec W^{C\tau}\left(\frac{\ell_s^{2} \eta_s}{\ell_t^{2} \eta_t}\right)^{n}\cdot \Lambda + W^{-D}.\label{clt-lemma-application-1}
\end{align}
Next, we use \eqref{clt-lemma-final-result} (using $\tilde{Z}$ instead of $Z$ in order to avoid confusion with the above notation) with
\begin{align*}
\tilde{Z}_{ab_{1}}:=\ell_{t}^{-2}\eta_{t}^{-\frac12}\cdot\eta_{s}^{-1}\ell_{t}^{2}\eta_{t}\cdot (\Xi_{1})_{ab_{1}}\cdot\mathbf{1}[|b_{1}-a|_{L}\leq W^{\tau}\ell_{t}]
\end{align*}
and the same choice of $Y_{s,b_{1}}$ as above. Note that by \eqref{Xi-bound-0}, this choice of $\tilde{Z}_{ab_{1}}$ satisfies $\max_{b}|\tilde{Z}_{ab_{1}}|\prec \ell_{t}^{-2}\eta_{t}^{-1/2}$. Thus, we deduce the following for any $D=O(1)$:
\begin{align*}
\eta_{s}\ell_{t}^{-2}\eta_{t}^{-1}\cdot \sum_{b_{1}}\tilde{Z}_{a_{1}b_{1}}Y_{s,b_{1}}&\prec W^{C\tau}\ell_{t}\ell_{s}\cdot \ell_{t}^{-2}\eta_{t}^{-\frac12}\cdot \eta_{s}\ell_{t}^{-2}\eta_{t}^{-1}\cdot\eta_{s}\ell_{s}^{3}\cdot \left(\frac{\ell_s^{2} \eta_s}{\ell_t^{2} \eta_t}\right)^{n-2}\cdot \Lambda+W^{-D}\\
&\prec W^{C\tau}\cdot \ell_{t}\eta_{t}^{\frac12} \cdot \left(\frac{\ell_s^{2} \eta_s}{\ell_t^{2} \eta_t}\right)^{n}\cdot \Lambda +W^{-D}.
\end{align*}
If we again use $\ell_{t}\eta_{t}^{\frac12}\leq1$, then we obtain
\begin{align*}
\eta_{s}\ell_{t}^{-2}\eta_{t}^{-1}\cdot\sum_{b_{1}}\tilde{Z}_{a_{1}b_{1}}Y_{s,b_{1}}&\prec W^{C\tau}\cdot \left(\frac{\ell_s^{2} \eta_s}{\ell_t^{2} \eta_t}\right)^{n}\cdot \Lambda +W^{-D}.
\end{align*}
Finally, we observe that, in the above notation, we have 
\begin{align*}
\Gamma=\eta_{s}\ell_{t}^{-2}\eta_{t}^{-1}\sum_{b_{1}}Z_{a_{2}b_{1}}Y_{s,b_{1}}+\eta_{s}\ell_{t}^{-2}\eta_{t}^{-1}\sum_{b_{1}}\tilde{Z}_{a_{1}b_{1}}Y_{s,b_{1}}.
\end{align*}
Combining the previous two displays with \eqref{clt-lemma-application-1} shows that $\Gamma$ is controlled by the right-hand side of the desired estimate \eqref{sum_res_3}. As noted immediately prior to \eqref{gamma_def_clt}, this completes the proof of case 3, i.e. \eqref{sum_res_3}.

}
Finally, we consider the case 5 of a double sum zero tensor $\mathcal{A}$. Due to \eqref{nonalternating}, we only need to consider the case when $\boldsymbol{\sigma}$ is alternating. Similarly to the previous cases, we have 
\begin{equation*}
    \left(\left(\mathcal{U}_{s,t,\boldsymbol{\sigma}}\otimes \mathcal{U}_{s,t,\boldsymbol{\bar{\sigma}}}\right) \circ \mathcal{A}\right)_{\textbf{a},\textbf{a}} = \sum_{\textbf{b}, \textbf{b}'} \prod_{i=1}^n \psi_i \psi'_i \cdot \mathcal{A}_{\textbf{b},\textbf{b}'},
\end{equation*}
where $\psi_i$ are defined in \eqref{def_psixi} and $\psi'_i$ are defined similarly, up to replacing the $b_i$ with $b'_i$ and $\xi_i$ with $\bar{\xi_i}$. More precisely,
\begin{align}\label{def_psixi_prime}
     \psi'_i=\delta_{a_ib'_i}+\Xi'_i,  \quad \quad 
\Xi'_i:=-(s-t)\bar{\xi_i} \cdot \left(S^{(B)}\cdot \Theta^{(B)}_{t \bar{\xi_i}}\right)_{a_ib'_i}.
\end{align} 
Using a $2n$-tensor version of \eqref{iukwjn-d=2}, we get that
\begin{align*}
    \left|\sum_{\textbf{b}, \textbf{b}'} \left(\prod_{i=1}^n \psi_i \psi'_i-\prod_{i=1}^n\Xi_i\Xi'_i\right) \cdot \mathcal{A}_{\textbf{b},\textbf{b}'}\right| \le C_n \left(\frac{\ell_{s}^{2}\eta_{s}}{\ell_{t}^{2}\eta_{t}}\right)^{2n}\|{\cal A}\|_{\max}+W^{-D+C_{n}}.
\end{align*}
Thus, we only need to bound $\sum_{\textbf{b}, \textbf{b}'} \prod_{i=1}^n \Xi_i\Xi'_i \cdot \mathcal{A}_{\textbf{b},\textbf{b}'}$. We decompose $\Xi_i = \Xi_i^0 + \Xi_i^\ast$ and $\Xi'_i =  \Xi_i^{\prime,0} + \Xi_i^{\prime,\ast}$, where $r_i = b_i - b_1$, $r_i' = b_i' - b_1$ and
\begin{align*}
    \Xi_i^0 &= \Xi^0(a_i - b_1), \quad &\Xi^\ast_i &= \Xi^1(a_i - b_1, r_i) + \Xi^2(a_i - b_1, r_i),\\
    \Xi_i^{\prime,0} &= \Xi^0(a_i - b_1), \quad &\Xi^{\prime,\ast}_i &= \Xi^1(a_i - b_1, r'_i) + \Xi^2(a_i - b_1, r'_i).
\end{align*}
Due to the double sum zero property of $\mathcal{A}$, we have
\begin{align*}
    \sum_{\textbf{b}\setminus b_1} \prod_{i=1}^n \Xi_i^0 \mathcal{A}_{\textbf{b},\textbf{b}'} = 0, \qquad \sum_{\textbf{b}'\setminus b'_1} \prod_{i=1}^n \Xi_i^{\prime,0} \mathcal{A}_{\textbf{b},\textbf{b}'} = 0.
\end{align*}
These properties allow us to expand
\begin{align*}
    \sum_{\textbf{b}\setminus b_1, \textbf{b}'\setminus b_1'} \prod_{i=1}^n \Xi_i\Xi'_i \cdot \mathcal{A}_{\textbf{b},\textbf{b}'} = \Xi_1 \Xi_1^{\prime}\sum_{B, B'\subset \llbracket 2,n\rrbracket,\, B,B'\neq \emptyset} \prod_{i\in B^{c}} \Xi_i^0 \prod_{i\in B^{\prime,c}} \Xi_i^{\prime,0} \sum_{\textbf{b}\setminus b_1, \textbf{b}'\setminus b_1'} \prod_{i\in B} \Xi_i^\ast \prod_{i\in B'} \Xi_i^{\prime,\ast} \mathcal{A}_{\textbf{b},\textbf{b}'}.
\end{align*}
We bound one of the factors $\Xi_i^\ast$ with $i\in B$ and one of the factors $\Xi_i^{\prime,\ast}$ using \eqref{Xi-bound-1}. We control the rest of the factors $\Xi_i^0, \Xi_i^{\prime,0}, \Xi_i^\ast, \Xi_i^{\prime,\ast}$ for $i\ge2$ and $\Xi'_1$ using \eqref{Xi-bound-0}. Then
\begin{equation}\label{eq-dsz}
    \sum_{\textbf{b}, \textbf{b}'} \prod_{i=1}^n \Xi_i\Xi'_i \cdot \mathcal{A}_{\textbf{b},\textbf{b}'} \le C_n W^{C_n\tau} \left(\frac{\ell_s^2\eta_s}{\ell_t^2\eta_t}\right)^{2n-3}\cdot \max_{\textbf{b}\setminus b_1, \textbf{b}'} \sum_{b_1}\left[\sum_{i=2}^n \frac{\ell_s^3\eta_s}{|a_i-b_1|_L+1}+\frac{\ell_s^3\eta_s}{\ell_t^2\eta_t^{1/2}}\right]^2 \cdot \Xi_1 \left|\mathcal{A}_{\textbf{b},\textbf{b}'} \right| + W^{-D+C_n}.
\end{equation}
The bound in this case is completely analogous to the bound on the second line of \eqref{eq-2-first-der} and we get
$$
\sum_{\textbf{b}, \textbf{b}'} \prod_{i=1}^n \Xi_i\Xi'_i \cdot \mathcal{A}_{\textbf{b},\textbf{b}'} \le C_n W^{C_n\tau} \left(\frac{\ell_s^2\eta_s}{\ell_t^2\eta_t}\right)^{2n}\cdot  \|\mathcal{A}\|_{\max}  + W^{-D+C_n},
$$
which completes the proof.
\end{proof}

We now record the following lemma, which was used in the proof of \eqref{sum_res_3}. 
\begin{lemma}\label{clt-lemma}
Fix $u=O(1)$, and consider a function of $b\in\Z_{L}^{2}$ of the following form:
\begin{align}
Y_{s,b}=\sum_{\textbf{x},\textbf{y}\in(\Z_{WL}^{2})^{u}}C_{b,\textbf{x},\textbf{y}}\cdot\prod_{i=1}^{u}G_{s}(\sigma_{i})_{x_{i}y_{i}}.\label{clt-yform}
\end{align}
Above, $u=O(1)$ is fixed. The coefficients $C_{b,\textbf{x},\textbf{y}}$ are deterministic, and they satisfy the following conditions. First, for some $C'=O(1)$, we have 
\begin{align*}
\|C\|_{\max}\leq N^{C'}.
\end{align*}
Second, there exists $\tau>0$ such that 
\begin{align*}
\max_{i}\Big(|[x_{i}]-b|_{L}+|[y_{i}]-b|_{L}\Big)\geq W^{\tau}\ell_{s} \Rightarrow C_{b,\textbf{x},\textbf{y}}=0.
\end{align*}
{Third, there exists a deterministic constant $\Lambda>0$ such that 
\begin{align*}
\max_{b}|Y_{s,b}|\prec\Lambda.   
\end{align*}
}
Moreover, suppose that $0\leq s\leq 1-N^{-1+\delta}$ for fixed $\delta>0$, and that \eqref{Gt_bound_flow} and \eqref{Eq:Gdecay_w} hold at time $s$. 

Now, let $Z=(Z_{ab})_{a,b\in\Z_{L}^{2}}$ be a deterministic matrix. We consider two separate cases.
\medskip

Case 1: the matrix $Z$ satisfies the following constraint: 
\begin{align}
|a-b|_{L}\geq W^{\tau}\ell_{t} \Rightarrow Z_{ab}=0.\label{clt-zrange}
\end{align}
In this case, for any $D>0$ and for some $C=O(1)$, we have
\begin{align}
\sum_{b}Z_{ab}(Y_{s,b}-\E Y_{s,b})&\prec W^{C\tau}\ell_{t}\ell_{s}\cdot\max_{b}|Z_{ab}|\cdot\Lambda+W^{-D}\cdot \max_{b}|Z_{ab}|.\label{clt-lemma-final-result}
\end{align}

{Case 2: suppose that we have the bound
\begin{align}
Z_{ab}\prec(1+|a-b|_{L})^{-1},\quad a,b\in\Z_{L}^{2}.\label{clt-z-form}
\end{align}
Then, for any $D>0$ and some $C=O(1)$, we have
\begin{align}
 \sum_{b}Z_{ab}(Y_{s,b}-\E Y_{s,b})&\prec W^{C\tau}\cdot \ell_{s}\cdot\Lambda+W^{-D}.   \label{clt-lemma-final-result2}
\end{align}
}\end{lemma}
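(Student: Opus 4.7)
The plan is a high-moment estimate exploiting the Gaussian structure of $H_s$ together with the loop-hierarchy decay bounds already established at time $s$. Set $\tilde Y_{s,b} := Y_{s,b} - \E Y_{s,b}$ and $M_a := \sum_b Z_{ab} \tilde Y_{s,b}$. By Markov's inequality, it suffices to bound $\E |M_a|^{2p}$ for arbitrary fixed $p$; expand
\begin{align*}
\E |M_a|^{2p} = \sum_{b_1,\ldots,b_{2p}} \prod_k Z^{(\ast_k)}_{a b_k} \cdot \E \prod_k \tilde Y^{(\ast_k)}_{s,b_k},
\end{align*}
where $(\ast_k)$ denotes conjugation on exactly $p$ of the indices. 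The strategy is to show that the joint expectation above is approximately Gaussian and concentrated on configurations where the $b_k$'s cluster into pairs of block-distance at most $W^\tau \ell_s$.

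The central technical input is a two-point decorrelation estimate: for $|b-b'|_L \geq W^\tau \ell_s$, one has $|\E \tilde Y_{s,b}\, \tilde Y_{s,b'}^{(\ast)}| \prec W^{-D}$ for any fixed $D$. I would prove this by expanding the covariance into a deterministic linear combination (with coefficients bounded by $\|C\|_{\max}^2 \leq N^{2C'}$) of joint expectations of products of $G_s$-entries, one group with indices constrained to within $W^\tau \ell_s$ of $b$ and the other within $W^\tau \ell_s$ of $b'$. After centering each factor of $G_s$ around $m$ and grouping terms into chains/loops via Lemma \ref{lem_GbEXP_n2}, every such joint expectation reduces to loops that must span from the $b$-ball to the $b'$-ball. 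The loop decay estimate \eqref{Eq:Gdecay_w}, together with the assumed local law \eqref{Gt_bound_flow} at time $s$, then produces the required $W^{-D}$ factor. To extend this to joint cumulants of $r\geq 3$ factors, I would iterate Gaussian integration by parts on the $H_s$-entries implicitly carried by the $\tilde Y_{s,b_k}$'s: each contraction forces two of the $b_k$'s into a common $W^\tau\ell_s$-ball (by locality of $C$ and \eqref{Eq:Gdecay_w}), and any leftover uncontracted $b_k$ is killed by its mean-zero property up to $W^{-D}$. Since $u=O(1)$, the expansion terminates after boundedly many steps.

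With these decorrelation estimates in hand, standard Gaussian combinatorics on the right-hand side of the moment formula shows that the leading contribution comes from pair partitions of $\{1,\ldots,2p\}$, yielding
\begin{align*}
\E|M_a|^{2p} \le C_p \Bigl(\,\sum_{b,b'\,:\, |b-b'|_L \leq W^\tau \ell_s} |Z_{ab}|\,|Z_{ab'}|\,\Lambda^2\Bigr)^{p} + W^{-D},
\end{align*}
with partitions containing a block of size $\geq 3$ smaller by a power of $M_s^{-1/2}$ using \eqref{Gt_bound_flow}. In case 1, the support condition \eqref{clt-zrange} restricts the $b$ sum to a ball of volume $\prec W^{C\tau}\ell_t^2$ and the $b'$ sum to a ball of volume $\prec W^{C\tau}\ell_s^2$, so the pair sum is $\prec W^{C\tau}\ell_t^2\ell_s^2 (\max_b|Z_{ab}|)^2 \Lambda^2$; taking the $(2p)$-th root (with $p$ large) gives \eqref{clt-lemma-final-result}. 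In case 2, applying Young's inequality to the pair sum together with $\sum_b |Z_{ab}|^2 \prec W^{C\tau}$ (which follows from \eqref{clt-z-form} since $\sum_b (1+|a-b|_L)^{-2} \prec \log L$) yields pair sum $\prec W^{C\tau}\ell_s^2 \Lambda^2$, producing \eqref{clt-lemma-final-result2}.

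The main obstacle is the rigorous derivation of the multi-point decorrelation step, namely, controlling the Gaussian integration-by-parts expansion to sufficiently high order while precisely tracking the resulting loop/chain structures and invoking \eqref{Eq:Gdecay_w} to show that every ``cross-cluster'' Green's function edge contributes a genuine $W^{-D}$ decay. This is the step that genuinely exploits the two-dimensional geometry (through the $\ell_s^2$ block volume) and constitutes the new CLT-type input foreshadowed in the introduction.
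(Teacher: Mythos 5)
Your overall strategy — bound $\E|\sum_b Z_{ab}\tilde Y_{s,b}|^{2p}$ and show that configurations with a ``lonely'' $b_k$ (at block distance $\geq W^{O(\tau)}\ell_s$ from all other $b_{k'}$) contribute only $O(W^{-D})$, then count the clustered configurations — matches the paper's plan, and your treatment of the combinatorial counting in Cases 1 and 2 would lead to the same final power of $\ell_t\ell_s$ (respectively $\ell_s$). However, your argument for the decorrelation step, which is the heart of the lemma, contains two genuine errors.

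First, the claim that after centering $G_s=m+\widetilde G_s$ one ``groups terms into chains/loops via Lemma \ref{lem_GbEXP_n2}'' so that ``every such joint expectation reduces to loops that must span from the $b$-ball to the $b'$-ball'' does not hold. Expanding $\E[\tilde Y_{s,b}\tilde Y_{s,b'}]$ produces sums of expectations of products of \emph{disjoint} resolvent entries $\prod_i \widetilde G_{x_iy_i}\cdot\prod_j\widetilde G_{x'_jy'_j}$ with both $x_i,y_i$ near $b$ and both $x'_j,y'_j$ near $b'$; these are not algebraically chained into loops or $G$-chains, so Lemma \ref{lem_GbEXP_n2} and the 2-loop decay \eqref{Eq:Gdecay_w} do not apply to them directly. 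Each $\widetilde G_{x_iy_i}$ is only $\mathrm{O}_{\prec}(M_s^{-1/2})$ — small, but not $W^{-D}$ — so centering alone cannot give the required smallness: that must come from the \emph{covariance structure}, which is exactly what needs to be proved.

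Second, the assertion that ``Since $u=O(1)$, the expansion terminates after boundedly many steps'' is false. Although each $Y_{s,b}$ is a degree-$u$ polynomial in resolvent entries, resolvent entries are not polynomials in $H_s$: each Gaussian integration by parts $\E[H_{ij}G_{xy}f]=S_{ij}\E[-G_{xi}G_{jy}f+G_{xy}\partial_{\bar H_{ij}}f]$ \emph{increases} the number of resolvent factors, so the IBP expansion is infinite. The paper circumvents this by an interpolation (Lindeberg) argument with an independent copy $H'$: one telescopes $Y_{s,b_1}(H)-Y_{s,b_1}(H')=\sum_{i\leq j}\Delta_{ij}Y_{s,b_1}$, Taylor-expands each $\Delta_{ij}Y_{s,b_1}$ to a fixed but arbitrarily high order $D_1$ with a controlled remainder, and only then applies Gaussian IBP to the (finite-degree) polynomial pieces $H_{ij}^v\overline H_{ij}^{\alpha-v}$. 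Your sketch has no substitute for this device, and without it the ``iterate Gaussian IBP'' step does not close. This is not a matter of missing details — it is the one mechanism that turns the heuristic decorrelation you (correctly) identify into a rigorous $W^{-D}$ bound.
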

Note that the trivial bound on the left-hand side of \eqref{clt-lemma-final-result} has $\ell_t$ instead of $\ell_s$ on the right-hand side of \eqref{clt-lemma-final-result}. The key observation leading to the improved bound is that $Y_{s,b_1}$ and $Y_{s, b_2}$ are essentially independent when $|b_1-b_2|_{L}\gg \ell_s$. Then the additional factor of $\frac{\ell_s}{\ell_t}$ comes from the CLT-like cancellation of the left-hand side. Here we explain the source of this independence. We claim that, heuristically, the resolvent entries $G_{s,xy}$ in the $\ell_s$ range around the $(b, b)$ block of the matrix (meaning that $|[x]-b|_{L}+|[y]-b|_{L} \le W^{\tau}\ell_s$) depend only on the entries of $H$ in the $\ell_s$ range around $b$. Indeed, we can compute the correlations of $H_{ij}$ with $G_{xy}$ by integration by parts:
\begin{equation}\label{eq:ibp-heuristic}
    \E H_{ij} G_{xy} = S_{ij}\E G_{xj} G_{iy},\quad \E \bar{H}_{ij} G_{xy} = S_{ij}\E G_{xi} G_{jy}.
\end{equation}
Due to the decay property of the resolvent entries \eqref{Eq:Gdecay_w}, the correlations \eqref{eq:ibp-heuristic} are $O(W^{-D})$ for any $D>0$ when {$|[i]-b|_{L}+|[j]-b|_{L}\gg W^{\tau} \ell_s$}. {This (heuristically) suggests that $G_{xy}$ and $G_{x'y'}$ are almost independent if $(x,y)$ is far from $(x',y')$.} From the definition \eqref{clt-yform} of $Y_{s,b}$, we see that the random functions $Y_{s,b_1}$ and $Y_{s,b_2}$ depend only on the resolvent entries in the $\ell_s$ range of $b_1$ and $b_2$ blocks, respectively, up to a $O(W^{-D})$ error term. 

Thus, the randomness in $Y_{s,b_1}$ and $Y_{s,b_2}$ essentially comes from two disjoint subsets of the entries of $H$ in case $|b_1-b_2|_{L}\gg {W^{\tau}}\ell_s$. This implies their {(almost)} independence. 

In order to make this argument rigorous, we compute $\E Y_{s,b_1} Y_{s,b_2}$ by interpolating between this correlation and $\E Y'_{s,b_1} Y_{s,b_2}$, where $Y'_{s,b_1}$ is an independent copy of $Y_{s,b_1}$. Interpolation is achieved by viewing $Y_{s,b_1}$ as a function of entries of $H$ and replacing the entries by their independent copies one by one. Then on a single step of the interpolation the change in the correlation is bounded in one of the two ways. One option is that the replaced entry $H_{ij}$ is located far away from $b_1$ ($|[i]-b_1|_{L}+|[j]-b_1|_{L}\gg {W^{\tau}}\ell_s$), then the change in the function $Y_{s,b_1}$ is $O(W^{-D})$. The other option is that $H_{ij}$ is far away from $b_2$ ($|[i]-b_2|_{L}+|[j]-b_2|_{L}\gg {W^{\tau}}\ell_s$). In this case, we Taylor expand $Y_{s,b_1}$ in $H_{ij}$ variable and use the bounds on the correlations of $H_{ij}$ with $Y_{s,b_2}$ explained below \eqref{eq:ibp-heuristic}. {A similar argument controls higher moments of the left-hand side of \eqref{clt-lemma-final-result}.} We present the full proof with details below.

\begin{proof}
Throughout this argument, we will let $G_{s}=(\sqrt{s}H-z_{s})^{-1}$; since the final conclusion \eqref{clt-yform} is a statement about the law of $G_{s}$, this convention is okay. We compute $2p$-th moments. We have 
\begin{align}
\E\left|\sum_{b}Z_{ab}(Y_{s,b}-\E Y_{s,b})\right|^{2p}&=\sum_{b_{1},\ldots,b_{2p}}\prod_{k=1}^{p}Z_{ab_{k}}\prod_{k=p+1}^{2p}\overline{Z}_{ab_{k}}\E\left[\prod_{k=1}^{p}(Y_{s,b_{k}}-\E Y_{s,b_{k}})\prod_{k=p+1}^{2p}(\overline{Y}_{s,b_{k}}-\E \overline{Y}_{s,b_{k}})\right].\label{clt-lemmamoment}
\end{align}
Suppose that there exists an index, say $b_{1}$, such that $\min_{k=2,\ldots,2p}|b_{1}-b_{k}|_{L}\geq W^{2\tau}\ell_{s}$. We claim that
\begin{align}
\left|\E\left[\prod_{k=1}^{p}(Y_{s,b_{k}}-\E Y_{s,b_{k}})\prod_{k=p+1}^{2p}(\overline{Y}_{s,b_{k}}-\E \overline{Y}_{s,b_{k}})\right]\right|\leq c_{D} W^{-D}\label{clt-lemmafar}
\end{align}
for any $D>0$, where $c_{D}$ depends only on $D$. To prove this, we first realize $Y_{s,b_{k}}=Y_{s,b_{k}}(H)$ as a function of the entries of $H$. Let $H'$ be an i.i.d. copy of $H$. For any indices $i\leq j$, define
\begin{align}
\Delta_{ij}Y_{s,b_{k}}&:=Y_{s,b_{k}}(H_{11},H_{12},\ldots,H_{ij},H_{i(j+1)}',\ldots,H_{NN}')\label{clt-delta}\\
&-Y_{s,b_{k}}(H_{11},H_{12},\ldots,H_{ij}',H_{i(j+1)}',\ldots,H_{NN}').\nonumber
\end{align}
We also set $Y'_{s,b_{k}}:=Y_{s,b_{k}}(H')$. (Here, and throughout this argument, indices $i,j$ are reserved for elements of $\{1,\ldots,N\}$, and they will only be used as indices for $H$ and $H'$. We will identify these indices with elements in $\Z_{WL}^{2}$ when we apply the $[\cdot]$ operation, but the exact choice of identification $\{1,\ldots,N\}\simeq \Z_{WL}^{2}$ is unimportant.) By telescoping sum, we have 
\begin{align}
\sum_{i\leq j}\Delta_{ij}Y_{s,b_{k}}=Y_{s,b_{k}}(H)-Y_{s,b_{k}}(H').\label{clt-telescope-general}
\end{align}
Now, for the $k=1$ factor in \eqref{clt-lemmafar}, we rewrite it as $Y_{s,b_{1}}-\E Y_{s,b_{1}}=Y_{s,b_{1}}-Y_{s,b_{1}}(H')+Y_{s,b_{1}}(H')-\E Y_{s,b_{1}}$. Using \eqref{clt-telescope-general} to rewrite $Y_{s,b_{1}}-Y_{s,b_{1}}(H')$ then gives
\begin{align}
&\E\left[\prod_{k=1}^{p}(Y_{s,b_{k}}-\E Y_{s,b_{k}})\prod_{k=p+1}^{2p}(\overline{Y}_{s,b_{k}}-\E \overline{Y}_{s,b_{k}})\right]\\
&=\E\left[(Y_{s,b_{1}}(H')-\E Y_{s,b_{1}})\prod_{k=2}^{p}(Y_{s,b_{k}}-\E Y_{s,b_{k}})\prod_{k=p+1}^{2p}(\overline{Y}_{s,b_{k}}-\E \overline{Y}_{s,b_{k}})\right]\label{clt-lemma'}\\
&+\sum_{i\leq j}\E\left[\Delta_{ij}Y_{s,b_{1}}\prod_{k=2}^{p}(Y_{s,b_{k}}-\E Y_{s,b_{k}})\prod_{k=p+1}^{2p}(\overline{Y}_{s,b_{k}}-\E \overline{Y}_{s,b_{k}})\right].\label{clt-lemmatelescope}
\end{align}
Of course, we can certainly restrict to $i,j$ such that $S_{ij}\neq0$ in \eqref{clt-lemmatelescope}; otherwise, $H_{ij}$ and $H_{ij}'$ are both $0$, and thus $\Delta_{ij}Y_{s,b_{1}}=0$ in this case. We will implicitly make this assumption for the rest of the proof of \eqref{clt-lemmafar}.

Since $Y'_{s,b_{1}}$ is independent of all the $Y_{s,b_{k}}$, we know that \eqref{clt-lemma'} is zero. Since the number of summands in \eqref{clt-lemmatelescope} is $O(W^{C})$ for some $C=O(1)$, in order to prove \eqref{clt-lemmafar}, it suffices to control each summand in \eqref{clt-lemmatelescope}. Next, consider the function $\rho\mapsto Y_{s,b_{1}}(H_{11},\ldots,\rho H_{ij},H_{i(j+1)}',\ldots,H_{NN}')$. We think of this as a function $f_{s,b_{1}}(\rho H_{ij},\rho\overline{H}_{ij})$, so that $f_{s,b_{1}}$ is smooth in its two inputs. Taylor expanding in $\rho$ gives
\begin{align}
Y_{s,b_{1}}(H_{11},\ldots,H_{ij},H_{i(j+1)}',\ldots,H_{NN}')&=f_{s,b_{1}}(0)+\sum_{\alpha=1}^{D_{1}}\frac{1}{\alpha!}\frac{d^{\alpha}}{d\rho^{\alpha}}f_{s,b_{1}}(0,0)\nonumber\\
&+\frac{1}{(D_{1}+1)!}\frac{d^{D_{1}+1}}{d\rho^{D_{1}+1}}f_{s,b_{1}}(\rho_{ij}H_{ij},\rho_{ij}\overline{H}_{ij}),\label{clt-y-taylor}
\end{align}
where $D_{1}>0$ is any large constant independent of $W$, and where $|\rho_{ij}|$ is random (and depends on all $H_{uv},H'_{uv}$ entries). Using the chain rule, we compute the following for any $\alpha\geq1$:
\begin{align}
\frac{d^{\alpha}}{d\rho^{\alpha}}f_{s,b_{1}}(\rho H_{ij},\rho\overline{H}_{ij})=\sum_{v=0}^{\alpha}\binom{\alpha}{v}\partial_{1}^{v}f_{s,b_{1}}(\rho H_{ij},\rho\overline{H}_{ij})\cdot\partial_{2}^{\alpha-v}f_{s,b_{1}}(\rho H_{ij},\rho\overline{H}_{ij})\cdot H_{ij}^{v}\overline{H}_{ij}^{\alpha-v}.\label{clt-f-total}
\end{align}
Similarly, for the function $\rho\mapsto Y_{s,b_{1}}(H_{11},\ldots,\rho H_{ij}',H_{i(j+1)}',\ldots,H_{NN}')=f_{s,b_{1}}(\rho H_{ij}',\rho\overline{H}_{ij}')$, we have 
\begin{align}
Y_{s,b_{1}}(H_{11},\ldots,H_{ij}',H_{i(j+1)}',\ldots,H_{NN}')&=f_{s,b_{1}}(0)+\sum_{\alpha=1}^{D_{1}}\frac{1}{\alpha!}\frac{d^{\alpha}}{d\rho^{\alpha}}f_{s,b_{1}}(0,0)\\
&+\frac{1}{(D_{1}+1)!}\frac{d^{D_{1}+1}}{d\rho^{D_{1}+1}}f_{s,b_{1}}(\rho_{ij}'H_{ij}',\rho_{ij}'\overline{H}_{ij}'),\label{clt-y'-taylor}
\end{align}
where $|\rho_{ij}'|\leq1$ is also random and depends on all $H_{uv},H_{uv}'$ entries. If we now combine \eqref{clt-y-taylor}, \eqref{clt-y'-taylor}, \eqref{clt-f-total}, and \eqref{clt-delta}, we deduce
\begin{align}
\Delta_{ij}Y_{s,b_{1}}&=\sum_{\alpha=1}^{D_{1}}\frac{1}{\alpha!}\sum_{v=0}^{\alpha}\binom{\alpha}{v}\partial_{1}^{v}f_{s,b_{1}}(0,0)\cdot\partial_{2}^{\alpha-v}f_{s,b_{1}}(0,0)\cdot\left[H_{ij}^{v}\overline{H}_{ij}^{\alpha-v}-(H_{ij}')^{v}(\overline{H}_{ij}')^{\alpha-v}\right]\\
&+\frac{1}{(D_{1}+1)!}\frac{d^{D_{1}+1}}{d\rho^{D_{1}+1}}f_{s,b_{1}}(\rho_{ij}H_{ij},\rho_{ij}\overline{H}_{ij})-\frac{1}{(D_{1}+1)!}\frac{d^{D_{1}+1}}{d\rho^{D_{1}+1}}f_{s,b_{1}}(\rho_{ij}'H_{ij}',\rho_{ij}'\overline{H}_{ij}').
\end{align}
For convenience, let $\Gamma_{s}$ denote the product of the two products in \eqref{clt-lemmatelescope}, so that
\begin{align*}
\Gamma_{s}:=\prod_{k=2}^{p}(Y_{s,b_{k}}-\E Y_{s,b_{k}})\prod_{k=p+1}^{2p}(\overline{Y}_{s,b_{k}}-\E \overline{Y}_{s,b_{k}}).
\end{align*}
We have 
\begin{align}
\E[\Delta_{ij}Y_{s,b_{1}}\cdot\Gamma_{s}]&=\sum_{\alpha=1}^{D_{1}}\sum_{v=0}^{\alpha}\frac{1}{\alpha!}\binom{\alpha}{v}\E\left[\partial_{1}^{v}f_{s,b_{1}}(0,0)\partial_{2}^{\alpha-v}f_{s,b_{1}}(0,0)\cdot\Gamma_{s}\cdot\left\{H_{ij}^{v}\overline{H}_{ij}^{\alpha-v}-(H_{ij}')^{v}(\overline{H}_{ij}')^{\alpha-v}\right\}\right]\nonumber\\
&+\frac{1}{(D_{1}+1)!}\E\left[\frac{d^{D_{1}+1}}{d\rho^{D_{1}+1}}f_{s,b_{1}}(\rho_{ij}H_{ij},\rho_{ij}\overline{H}_{ij})\cdot\Gamma_{s}\right]\nonumber\\
&-\frac{1}{(D_{1}+1)!}\E\left[\frac{d^{D_{1}+1}}{d\rho^{D_{1}+1}}f_{s,b_{1}}(\rho_{ij}H_{ij}',\rho_{ij}\overline{H}_{ij}')\cdot\Gamma_{s}\right].\label{clt-taylor}
\end{align}
We will estimate each term on the right-hand side of \eqref{clt-taylor}. For this, we need some auxiliary estimates. Let $G^{i,j}=(\sqrt{s}H^{i,j}-z_{s})^{-1}$, where $H^{i,j}=H-(1-\gamma_{ij})H_{ij}e_{i}e_{j}^{\dagger}-(1-\gamma_{ij})H_{ji}e_{j}e_{i}^{\dagger}$, be the resolvent $G_{s}$ except we replace the entries $H_{ij},H_{ji}$ by $\gamma_{ij}H_{ij},\gamma_{ij}H_{ji}$, respectively. We will only assume that $\gamma_{ij}=O(1)$; they can be random and depend on $H_{uv},H'_{uv}$ entries, and we will be interested in either $\gamma_{ij}=0$ or $\gamma_{ij}=\rho_{ij}$ from \eqref{clt-y-taylor}.
\begin{itemize}
\item By resolvent expansion, Gaussianity of $H$ entries, and the assumption $\|G_{s}\|_{\max}\prec1$ from \eqref{Gt_bound_flow}, we have $G^{i,j}_{xy}=(G_{s})_{xy}+\mathrm{O}_{\prec}(W^{-1/2}\|G^{i,j}\|_{\max})$. Taking a maximum over $a,b$ gives 
\begin{align}
\|G^{i,j}\|_{\max}\prec\|G_{s}\|_{\max}\prec1.\label{clt-gij-bound}
\end{align}
\item By the same resolvent expansion, for any $K_{0}>0$ large and any indices $x,y=1,\ldots,N$, we have 
\begin{align*}
G^{i,j}_{xy}&=(G_{s})_{xy}+\sum_{k=1}^{K_{0}}(G_{s}(X^{i,j}G_{s})^{k})_{xy}+\left(G^{i,j}(X^{i,j}G_{s})^{K_{0}+1}\right)_{xy},
\end{align*}
where $X^{i,j}$ is supported at $(i,j)$ and $(j,i)$ entries, and its entries are all $\mathrm{O}_{\prec}(S_{ij}^{1/2})$. We now compute
\begin{align}
(G_{s}(X^{i,j}G_{s})^{k})_{xy}&=\sum_{\substack{\{i_{1},i_{2}\}=\{i,j\}\\\ldots\\\{i_{2k-1},i_{2k}\}=\{i,j\}}}(G_{s})_{xi_{1}}X^{i,j}_{i_{1}i_{2}}(G_{s})_{i_{2}i_{3}}\ldots X^{i,j}_{i_{2k-1}i_{2k}}(G_{s})_{i_{2k}y},\label{clt-resolvent}\\
(G^{i,j}(X^{i,j}G_{s})^{k})_{xy}&=\sum_{\substack{\{i_{1},i_{2}\}=\{i,j\}\\\ldots\\\{i_{2k-1},i_{2k}\}=\{i,j\}}}G^{i,j}_{xi_{1}}X^{i,j}_{i_{1}i_{2}}(G_{s})_{i_{2}i_{3}}\ldots X^{i,j}_{i_{2k-1}i_{2k}}(G_{s})_{i_{2k}y}.
\end{align}
In each identity, the number of summands is bounded by a constant depending only on $k$. Thus, as long as $K_{0}$ is finite, we can use $\|X^{i,j}\|_{\max}\prec W^{-1/2}$ and $\|G^{i,j}\|_{\max}+\|G_{s}\|_{\max}\prec1$ to deduce that $(G^{i,j}(X^{i,j}G_{s})^{K_{0}+1})_{xy}\prec W^{-(K_{0}+1)/2}$. 

Next, fix any $\tau',D'>0$. We also fix $a,b\in\Z_{L}^{2}$ so that $x\in{\cal I}^{(2)}_{a}$ and $y\in{\cal I}^{(2)}_{b}$. We claim that if $|a-b|_{L}>W^{2\tau'}\ell_{s}$, where $|\cdot|_L$ is periodic distance on $\Z_{L}^{2}$, then $(G_{s}(X^{i,j}G_{s})^{k})_{xy}\prec W^{-D'}$. Indeed, take \eqref{clt-resolvent}. If $|a-b|_{L}>W^{2\tau'}\ell_{s}$ and $k=O(1)$, then at least one of the following must be true:
\begin{itemize}
\item There exists some $i_{n},i_{n+1}$ indices such that $|i_{n}-i_{n+1}|_{L}\geq W^{1+\tau'}\ell_{s}$; indeed, if $|a-b|_{L}>W^{2\tau'}\ell{s}$, then $|x-y|\geq W^{1+2\tau'}\ell_{s}$. If this pair of indices corresponds to the factor $X^{i,j}_{i_{n}i_{n+1}}$, then this factor is equal to $0$. If this pair of indices corresponds to the factor $(G_{s})_{i_{n}i_{n+1}}$, then by the assumption \eqref{Eq:Gdecay_w} and \eqref{GijGEX}, we have $|(G_{s})_{i_{n}i_{n+1}}|_{L}\prec W^{-D'}$. Moreover, since $\|X^{i,j}\|_{\max}+\|G_{s}\|_{\max}\prec1$, this implies that the corresponding summand on the right-hand side of \eqref{clt-resolvent} is $\mathrm{O}_{\prec}(W^{-D'})$.
\item We have either $|a-i_{1}|_{L}\geq W^{1+\tau'}\ell_{s}$ or $|i_{2k}-b|_{L}\geq W^{1+\tau'}\ell_{s}$. The same reasoning in the previous paragraph implies that the corresponding summand in \eqref{clt-resolvent} is $\mathrm{O}_{\prec}(W^{-D'})$.
\end{itemize}
Ultimately, we deduce that for any $\tau',D'>0$, we have $(G_{s}(X^{i,j}G_{s})^{k})_{xy}\prec W^{-D'}$ if $|a-b|_{L}\geq W^{\tau'}\ell_{s}$. Finally, since $|(G_{s})_{xy}|\prec W^{-D'}$ if $|a-b|_{L}\geq W^{\tau'}\ell_{s}$, we conclude that for any $\tau',D'>0$, we have
\begin{align}
\mathbf{1}[|a-b|_{L}\geq W^{\tau'}\ell_{s}]\sup_{x\in{\cal I}^{(2)}_{a}}\sup_{y\in{\cal I}^{(2)}_{b}}|G^{i,j}_{xy}|\prec W^{-D'}.\label{clt-gij-decay}
\end{align}
\item Our last auxiliary estimate is a bound on $\partial_{1}^{v}f_{s,b_{1}}$ and $\partial_{2}^{\alpha-v}f_{s,b_{1}}$ in \eqref{clt-f-total}. Recall that $f_{s,b_{1}}(\rho H_{ij},\rho \overline{H}_{ij})=Y_{s,b_{1}}(H_{11},\ldots,\rho H_{ij},H_{i(j+1)}',\ldots,H_{NN}')$, and recall that $Y_{s,b_{1}}$ has the form \eqref{clt-yform}. In particular, $\partial_{1}f_{s,b_{1}}$ is the derivative of $Y_{s,b_{1}}$ from \eqref{clt-yform} with respect to $H_{ij}$ (treating $H_{ij}$ and $\overline{H}_{ij}$ as separate variables), and $\partial_{2}f_{s,b_{1}}$ is the derivative of $Y_{s,b_{1}}$ in $\overline{H}_{ij}$. Each such derivative can only act on entries of $G_{s}$ or $G_{s}^{\dagger}$ in \eqref{clt-yform}; when it hits one such entry, it returns two entries of $G_{s}$ or $G_{s}^{\dagger}$. More precisely, we have $\partial_{H_{ij}}(G_{s})_{xy}=-(G_{s})_{xi}(G_{s})_{jy}$, and similar standard formulas hold for $\overline{H}_{ij}$ instead of $H_{ij}$ and/or $G_{s}^{\dagger}$ instead of $G_{s}$. Therefore, since $\gamma_{\textbf{c}}=O(W^{C})$ in \eqref{clt-yform} for some $C=O(1)$ by assumption, we deduce from \eqref{clt-gij-bound} that for any $\alpha=O(1)$ and $0\leq v\leq \alpha$ and $\gamma_{ij}=O(1)$, we have
\begin{align}
|\partial_{1}^{v}f_{s,b_{1}}(\gamma_{ij} H_{ij},\gamma_{ij}\overline{H}_{ij})|+|\partial_{2}^{\alpha-v}f_{s,b_{1}}(\gamma_{ij} H_{ij},\gamma_{ij}\overline{H}_{ij})|\prec W^{C}.\label{clt-f-total-bound-1}
\end{align}
If we instead bound entries of Green's functions that appear in $\partial_{1}^{u}f_{s,b_{1}}$ and $\partial_{2}^{\alpha-u}f_{s,b_{1}}$ using the deterministic bound $O(W^{C})$ for some $C=O(1)$, then we instead have the following deterministic bound for any $\alpha=O(1)$ and $0\leq v\leq \alpha$ and $\gamma_{ij}=O(1)$, where now $C_{D_{1}}$ depends on $D_{1}>0$:
\begin{align}
|\partial_{1}^{v}f_{s,b_{1}}(\gamma_{ij} H_{ij},\gamma_{ij}\overline{H}_{ij})|+|\partial_{2}^{\alpha-v}f_{s,b_{1}}(\gamma_{ij} H_{ij},\gamma_{ij}\overline{H}_{ij})|= O(W^{C_{D_{1}}}).\label{clt-f-total-bound-2}
\end{align}
\item We note that \eqref{clt-gij-bound}, \eqref{clt-gij-decay}, \eqref{clt-f-total-bound-1}, and \eqref{clt-f-total-bound-2} all hold if we replace $H_{ij},\overline{H}_{ij}$ by $H_{ij}',\overline{H}_{ij}'$, respectively. Indeed, the proofs of \eqref{clt-gij-bound}, \eqref{clt-gij-decay} use only $|H_{ij}|\prec S_{ij}^{1/2}$. The proof of \eqref{clt-f-total-bound-1} uses only the distribution of $H_{ij}$. The proof of \eqref{clt-f-total-bound-2} is deterministic in $H_{ij}$.
\end{itemize}
We will now use the bounds \eqref{clt-gij-bound}, \eqref{clt-gij-decay}, \eqref{clt-f-total-bound-1}, and \eqref{clt-f-total-bound-2} to control the second term on the right-hand side of \eqref{clt-taylor}. First, we recall that $\Gamma_{s}$ is the product of the $k\geq2$ factors in \eqref{clt-lemmatelescope}. By \eqref{clt-yform}, we know that $|\Gamma_{s}|=O(W^{C})$ deterministically for some $C=O(1)$ depending only on $p$. This gives
\begin{align}
&\left|\E\left[\frac{d^{D_{1}+1}}{d\rho^{D_{1}+1}}f_{s,b_{1}}(\rho_{ij}H_{ij},\rho_{ij}\overline{H}_{ij})\cdot\Gamma_{s}\right]\right|\nonumber\\
&\leq O(W^{C})\E\left[\left|\frac{d^{D_{1}+1}}{d\rho^{D_{1}+1}}f_{s,b_{1}}(\rho_{ij}H_{ij},\rho_{ij}\overline{H}_{ij})\right|\right]\nonumber\\
&\leq O(W^{C})\sum_{v=0}^{D_{1}+1}\E\left|\partial_{1}^{v}f_{s,b_{1}}(\rho_{ij} H_{ij},\rho_{ij}\overline{H}_{ij})\cdot\partial_{2}^{D_{1}+1-v}f_{s,b_{1}}(\rho_{ij} H_{ij},\rho_{ij}\overline{H}_{ij})\cdot H_{ij}^{v}\overline{H}_{ij}^{D_{1}+1-v}\right|,\label{clt-remainder-bound-1}
\end{align}
where the last line follows by \eqref{clt-f-total} and the triangle inequality. Now, fix any $D_{2}>0$ and any $0\leq v\leq D_{1}+1$. By \eqref{clt-f-total-bound-1} for $\alpha=D_{1}+1$, there exists a good event ${\cal E}$ such that $\mathbb{P}({\cal E})=1-O(W^{-D_{2}})$, and such that on ${\cal E}$, we have $|\partial_{1}^{v}f_{s,b_{1}}(\rho_{ij} H_{ij},\rho_{ij}\overline{H}_{ij})\cdot\partial_{2}^{D_{1}-v}f_{s,b_{1}}(\rho_{ij} H_{ij},\rho_{ij}\overline{H}_{ij})|=O(W^{2C})$ for some $C=O(1)$. We now estimate the expectation in the last line above by separating into ${\cal E}$ and its complement ${\cal E}^{c}$. On ${\cal E}$, we use the bound $|\partial_{1}^{v}f_{s,b_{1}}(\rho_{ij} H_{ij},\rho_{ij}\overline{H}_{ij})\cdot\partial_{2}^{D_{1}-v}f_{s,b_{1}}(\rho_{ij} H_{ij},\rho_{ij}\overline{H}_{ij})|=O(W^{2C})$. On the bad event ${\cal E}^{c}$, we use the deterministic bound \eqref{clt-f-total-bound-2} for $\alpha=D_{1}+1$. This implies that 
\begin{align}
&\E\left|\partial_{1}^{v}f_{s,b_{1}}(\rho_{ij} H_{ij},\rho_{ij}\overline{H}_{ij})\cdot\partial_{2}^{D_{1}+1-v}f_{s,b_{1}}(\rho_{ij} H_{ij},\rho_{ij}\overline{H}_{ij})\cdot H_{ij}^{v}\overline{H}_{ij}^{D_{1}+1-v}\right|\\
&\leq O(W^{2C})\E|H_{ij}^{v}\overline{H}_{ij}^{D_{1}+1-v}|+O(W^{2C_{D_{1}}})\E[\mathbf{1}_{{\cal E}^{c}}|H_{ij}^{v}\overline{H}_{ij}^{D_{1}+1-v}|]\\
&\leq O(W^{2C}W^{-(D_{1}+1)/2})+O(W^{2C_{D_{1}}})\mathbb{P}({\cal E}^{c})^{1/2}\\
&\leq O(W^{2C}W^{-(D_{1}+1)/2})+O(W^{2C_{D_{1}}}W^{-D_{2}/2}).
\end{align}
Fix any $D'>0$. We can choose $D_{1}>0$ large enough and then $D_{2}>0$ large enough depending on $D_{1}$ such that the last line above is $O(W^{-D'})$. In particular, combining this with \eqref{clt-remainder-bound-1} implies the following. For any $D_{3}>0$, there exists $D_{1}>0$ such that 
\begin{align}
\E\left[\frac{d^{D_{1}+1}}{d\rho^{D_{1}+1}}f_{s,b_{1}}(\rho_{ij}H_{ij},\rho_{ij}\overline{H}_{ij})\cdot\Gamma_{s}\right]=O(W^{-D_{3}}).\label{clt-remainder-bound}
\end{align}
The same argument provides the same bound on the last term in \eqref{clt-taylor}, i.e. for any $D_{3}>0$, there exists $D_{1}>0$ such that 
\begin{align}
\E\left[\frac{d^{D_{1}+1}}{d\rho^{D_{1}+1}}f_{s,b_{1}}(\rho_{ij}H'_{ij},\rho_{ij}\overline{H}'_{ij})\cdot\Gamma_{s}\right]=O(W^{-D_{3}}).\label{clt-remainder-bound-same}
\end{align}
We are left to control the first term on the right-hand side of \eqref{clt-taylor}, which is a sum over $\alpha,v$; fix any such $\alpha,v$. We first expand $H_{ij}^{v}\overline{H}_{ij}^{\alpha-v}$ as a linear combination of Hermite polynomials in $H_{ij},\overline{H}_{ij}$ of degree at most $\alpha$. We do the same to $(H_{ij}')^{v}(\overline{H}_{ij}')^{\alpha-v}$. In these expansions, the zero-th degree (constant) terms cancel each other out. Then, by Gaussian integration-by-parts, we get
\begin{align}
&\E\left[\partial_{1}^{v}f_{s,b_{1}}(0,0)\partial_{2}^{\alpha-v}f_{s,b_{1}}(0,0)\cdot\Gamma_{s}\cdot\left\{H_{ij}^{v}\overline{H}_{ij}^{\alpha-v}-(H_{ij}')^{v}(\overline{H}_{ij}')^{\alpha-v}\right\}\right]\\
&=\sum_{\substack{q_{1},q_{2}=0,\ldots,\alpha\\q_{1}+q_{2}>0}}c_{q_{1},q_{2}}\E\left[\left\{S_{ij}^{q_{1}+q_{2}}\partial_{H_{ij}}^{q_{1}}\partial_{\overline{H}_{ij}}^{q_{2}}\right\}\left(\partial_{1}^{v}f_{s,b_{1}}(0,0)\partial_{2}^{\alpha-v}f_{s,b_{1}}(0,0)\cdot\Gamma_{s}\right)\right]\\
&-\sum_{\substack{q_{1},q_{2}=0,\ldots,\alpha\\q_{1}+q_{2}>0}}c_{q_{1},q_{2}}\E\left[\left\{S_{ij}^{q_{1}+q_{2}}\partial_{H_{ij}'}^{q_{1}}\partial_{\overline{H}_{ij}'}^{q_{2}}\right\}\left(\partial_{1}^{v}f_{s,b_{1}}(0,0)\partial_{2}^{\alpha-v}f_{s,b_{1}}(0,0)\cdot\Gamma_{s}\right)\right].
\end{align}
Above, $c_{q_{1},q_{2}}=O(1)$ are deterministic constants. The last line vanishes, since nothing inside the expectation depends on $H_{ij}',\overline{H}_{ij}'$. For the first term on the right-hand side above, we note that because we evaluate the $f_{s,b_{1}}$ derivatives at $(0,0)$, they have no dependence on $H_{ij},\overline{H}_{ij}$, so the derivatives act only on $\Gamma_{s}$ (which we recall to be the product of the $k\geq2$ factors in \eqref{clt-lemmatelescope}). Thus, we have 
\begin{align}
&\E\left[\partial_{1}^{v}f_{s,b_{1}}(0,0)\partial_{2}^{\alpha-v}f_{s,b_{1}}(0,0)\cdot\Gamma_{s}\cdot\left\{H_{ij}^{v}\overline{H}_{ij}^{\alpha-v}-(H_{ij}')^{v}(\overline{H}_{ij}')^{\alpha-v}\right\}\right]\nonumber\\
&=\sum_{\substack{q_{1},q_{2}=0,\ldots,\alpha\\q_{1}+q_{2}>0}}c_{q_{1},q_{2}}\E\left[\partial_{1}^{v}f_{s,b_{1}}(0,0)\partial_{2}^{\alpha-v}f_{s,b_{1}}(0,0)\cdot\left\{S_{ij}^{q_{1}+q_{2}}\partial_{H_{ij}}^{q_{1}}\partial_{\overline{H}_{ij}}^{q_{2}}\right\}\Gamma_{s}\right].\label{clt-ibp}
\end{align}
We now study further the derivatives of $\Gamma_{s}$ in \eqref{clt-ibp}; again, recall that $\Gamma_{s}$ is the product of $k\geq2$ factors in \eqref{clt-lemmatelescope}. Thus, by the Leibniz rule, we have the following for some deterministic constants $c_{\textbf{w}_{1},\textbf{w}_{2}}=O(1)$:
\begin{align}
&\left\{\partial_{H_{ij}}^{q_{1}}\partial_{\overline{H}_{ij}}^{q_{2}}\right\}\Gamma_{s}\label{clt-ibp-leibniz}\\
&=\sum_{\substack{\mathbf{w}_{1}:=(w_{1,2},\ldots,w_{1,2p})\in\Z_{\geq0}^{2p-1}\\w_{1,2}+\ldots+w_{1,2p}=q_{1}\\\mathbf{w}_{2}:=(w_{2,2},\ldots,w_{2,2p})\in\Z_{\geq0}^{2p-1}\\w_{2,2}+\ldots+w_{2,2p}=q_{2}}}c_{\textbf{w}_{1},\textbf{w}_{2}}\prod_{k=2}^{p}\partial_{H_{ij}}^{w_{1,k}}\partial_{\overline{H}_{ij}}^{w_{2,k}}(Y_{s,b_{k}}-\E Y_{s,b_{k}})\cdot\prod_{k=p+1}^{2p}\partial_{H_{ij}}^{w_{1,k}}\partial_{\overline{H}_{ij}}^{w_{2,k}}(\overline{Y}_{s,b_{k}}-\E\overline{Y}_{s,b_{k}}).\nonumber
\end{align}
Now, we use the form \eqref{clt-yform} for $Y_{s,b_{k}}$, and suppose that $w_{1,k}+w_{2,k}>0$. Since the $C_{b,\textbf{x},\textbf{y}}$ coefficients therein are deterministic, we have
\begin{align}
\partial_{H_{ij}}^{w_{1,k}}\partial_{\overline{H}_{ij}}^{w_{2,k}}(Y_{s,b_{k}}-\E Y_{s,b_{k}})&=\sum_{\textbf{x},\textbf{y}\in(\Z_{WL}^{2})^{u}}C_{b_{k},\textbf{x},\textbf{y}}\sum_{\substack{\boldsymbol{\gamma}_{1}=(\gamma_{11},\ldots,\gamma_{1u})\\\gamma_{11}+\ldots+\gamma_{1u}=w_{1,k}\\\boldsymbol{\gamma}_{2}=(\gamma_{21},\ldots,\gamma_{2u})\\\gamma_{21}+\ldots+\gamma_{2u}=w_{2,k}}}\prod_{r=1}^{u}\partial_{H_{ij}}^{\gamma_{1r}}\partial_{\overline{H}_{ij}}^{\gamma_{2r}}G_{s}(\sigma_{r})_{x_{r}y_{r}}.\label{clt-ibp-leibniz-formula}
\end{align}
(The expectation on the left-hand side above vanishes under the derivatives.) Now, recall $w_{1,k}+w_{2,k}>0$. In this case, there must exist a factor of $G_{s}(\sigma)_{\beta i}$ and a factor of $G_{s}(\sigma')_{\beta j}$ (for some $\sigma,\sigma'\in\{+,-\}$), where $\beta=x_{r}$ or $\beta=y_{r}$ for some $r$. This follows by differentiation formulas for the resolvent, e.g. $\partial_{H_{ij}}(G_{s})_{xy}=-(G_{s})_{xi}(G_{s})_{jy}$. Given this information, suppose now that $|[i]-b_{k}|_{L}+|[j]-b_{k}|_{L}\geq 3W^{\tau}\ell_{s}$; without loss of generality, suppose that $|[i]-b_{k}|_{L}\geq 3W^{\tau}\ell_{s}/2$. Take the factor $G_{s}(\sigma)_{\beta i}$, where $\beta=x_{r}$ or $\beta=y_{r}$ for some $r$. If $|[\beta]-b_{k}|\geq W^{\tau}\ell_{s}$, then the corresponding coefficient $C_{b_{k},\textbf{x},\textbf{y}}$ vanishes by assumptions. If $|[\beta]-b_{k}|_{L}\leq W^{\tau}\ell_{s}$, then we have $|[i]-[\beta]|_{L}\geq W^{\tau}\ell_{s}/2$. In this case, we can use the fast decay property for $G_{s}$ (see \eqref{Eq:Gdecay_w} and \eqref{GijGEX}) to deduce the bound $G_{s}(\sigma)_{\beta i}\prec W^{-D_{0}}$ for any $D_{0}>0$. Finally, we can bound the remaining factors in \eqref{clt-ibp-leibniz-formula} by $\mathrm{O}_{\prec}(N^{C'})$ for some $C'=O(1)$ (this uses the assumption \eqref{Gt_bound_flow}). We deduce
\begin{align}
\mathbf{1}[w_{1,k}+w_{2,k}>0]\mathbf{1}[|[i]-b_{k}|_{L}+|[j]-b_{k}|_{L}\geq 3W^{\tau}\ell_{s}]|\partial_{H_{ij}}^{w_{1,k}}\partial_{\overline{H}_{ij}}^{w_{2,k}}(Y_{s,b_{k}}-\E Y_{s,b_{k}})|\prec W^{-D'}\label{clt-ibp-decay}
\end{align}
for any $D'=O(1)$. On the other hand, even if $w_{1,k}+w_{2,k}=0$, we can use the the bounds $\|C\|_{\max}=O(W^{C'})$ and $\|G_{s}\|_{\max}=O(W^{C'})$ for some $C'=O(1)$ to get
\begin{align}
|\partial_{H_{ij}}^{w_{1,k}}\partial_{\overline{H}_{ij}}^{w_{2,k}}(Y_{s,b_{k}}-\E Y_{s,b_{k}})|\prec W^{C'}\label{clt-ibp-other-factor-bound}
\end{align}
for some (possibly different) $C'=O(1)$. We emphasize that the previous two bounds are true if we replace $Y_{s,b_{k}}$ by its complex conjugate as well. We also clarify that the bound $\|G_{s}\|_{\max}=O(W^{C'})$ comes from the assumption that $s\leq 1-N^{-1+\delta}$ for some $\delta>0$, and thus $\eta_{s}=O(N^{1-\delta})$.

Now, take \eqref{clt-ibp-leibniz}, and recall that $q_{1}+q_{2}>0$ therein; see \eqref{clt-ibp} for this constraint. Thus, at least one pair $(w_{1,k},w_{2,k})$ must satisfy $w_{1,k}+w_{2,k}>0$. For this pair, we bound the corresponding factor using \eqref{clt-ibp-decay}. The other factors in \eqref{clt-ibp-leibniz} can be bounded using \eqref{clt-ibp-decay} and \eqref{clt-ibp-other-factor-bound}. Ultimately, we deduce
\begin{align}
\mathbf{1}\left[\min_{k=2,\ldots,2p}(|[i]-b_{k}|_{L}+|[j]-b_{k}|_{L})\geq 3W^{\tau}\ell_{s}\right]\cdot \left\{\partial_{H_{ij}}^{q_{1}}\partial_{\overline{H}_{ij}}^{q_{2}}\right\}\Gamma_{s}&\prec W^{-D'},\label{clt-ibp-bound1}\\
\left\{\partial_{H_{ij}}^{q_{1}}\partial_{\overline{H}_{ij}}^{q_{2}}\right\}\Gamma_{s}&=O(W^{C}),\label{clt-ibp-bound2}
\end{align}
where $D'=O(1)$ is arbitrary and $C=O(1)$ is fixed. This bounds the last factor in the expectation in \eqref{clt-ibp}. 

To control $\partial_{1}^{v}f_{s,b_{1}}(0,0)\partial_{2}^{\alpha-v}f_{s,b_{1}}(0,0)$, we note that $\partial_{1}^{v}f_{s,b_{1}}(0,0)\partial_{2}^{\alpha-v}f_{s,b_{1}}(0,0)$ has the same form as the summand on the right-hand side of \eqref{clt-ibp-leibniz} (or its complex conjugate) with $p=1$ and $w_{1,2}+w_{2,2}>1$, except we evaluate the Green's functions at $H_{ij},\overline{H}_{ij}=0$. (Indeed, recall from \eqref{clt-y-taylor} that $f_{s,b_{1}}(H_{ij},\overline{H}_{ij})=Y_{s,b_{1}}$.) Therefore, if we instead use the Green's function estimates \eqref{clt-gij-bound} and \eqref{clt-gij-decay} to account for setting $H_{ij},\overline{H}_{ij}=0$, then for any $\tau',D'>0$ and for some $C=O(1)$, we have the same bounds
\begin{align}
\mathbf{1}[|[i]-b_{1}|_{L}+|[j]-b_{1}|_{L}\geq 3W^{\tau}\ell_{s}]\cdot\partial_{1}^{v}f_{s,b_{1}}(0,0)\partial_{2}^{\alpha-v}f_{s,b_{1}}(0,0)&\prec W^{-D'},\label{clt-ibp-bound3}\\
\partial_{1}^{v}f_{s,b_{1}}(0,0)\partial_{2}^{\alpha-v}f_{s,b_{1}}(0,0)&=O(W^{C}).\label{clt-ibp-bound4}
\end{align}
Now, consider \eqref{clt-ibp}, and recall the assumption $\min_{k=2,\ldots,2p}|b_{k}-b_{1}|_{L}\geq W^{2\tau}\ell_{s}$ that was stated before \eqref{clt-lemmafar}. This implies that for any $i,j$ indices, we either have $|[i]-b_{1}|_{L}+|[j]-b_{1}|_{L}\geq 3W^{\tau}\ell_{s}$, or $|[i]-b_{k}|_{L}+|[j]-b_{k}|_{L}\geq 3W^{\tau}\ell_{s}$ for all $k=2,\ldots,2p$. So, by \eqref{clt-ibp-bound1}, \eqref{clt-ibp-bound2}, \eqref{clt-ibp-bound3}, and \eqref{clt-ibp-bound4}, we get the following. For any $D_{4},D_{5}>0$ and any $i,j$ indices, there exists another good event ${\cal E}_{good}$ such that $\mathbb{P}({\cal E}_{good})=1-O(W^{-D_{4}})$ and
\begin{align}
\mathbf{1}_{{\cal E}_{good}}\partial_{1}^{v}f_{s,b_{1}}(0,0)\partial_{2}^{\alpha-v}f_{s,b_{1}}(0,0)\cdot\left\{S_{ij}^{q_{1}+q_{2}}\partial_{H_{ij}}^{q_{1}}\partial_{\overline{H}_{ij}}^{q_{2}}\right\}\Gamma_{s}=O(W^{-D_{5}}).\label{clt-ibp-bound5}
\end{align}
When we estimate the expectation in \eqref{clt-ibp}, we separate into the good event ${\cal E}_{good}$ and the bad event ${\cal E}_{good}^{c}$. On the former, we use \eqref{clt-ibp-bound5}. On the bad event, we use \eqref{clt-ibp-bound2} and \eqref{clt-ibp-bound4}. This gives
\begin{align}
&\E\left[\partial_{1}^{v}f_{s,b_{1}}(0,0)\partial_{2}^{\alpha-v}f_{s,b_{1}}(0,0)\cdot\left\{S_{ij}^{q_{1}+q_{2}}\partial_{H_{ij}}^{q_{1}}\partial_{\overline{H}_{ij}}^{q_{2}}\right\}\Gamma_{s}\right]\\
&=O(W^{-D_{5}})+O(W^{2C})\mathbb{P}({\cal E}_{good}^{c})\leq O(W^{-D_{5}})+O(W^{2C-D_{4}}).
\end{align}
We plug this bound into \eqref{clt-ibp}, which we then plug into the first term on the right-hand side of \eqref{clt-taylor}. Since $D_{4},D_{5}>0$ are any large constants independent of $W$ in the previous display, this shows that for any $D>0$ independent of $W$, the first term on the right-hand side of \eqref{clt-taylor} is $O(W^{-D})$. Combining this with \eqref{clt-remainder-bound} and \eqref{clt-remainder-bound-same} and \eqref{clt-taylor} ultimately gives $\E[\Delta_{ij}Y_{s,b_{1}}]=O(W^{-D})$ for any $D>0$ independent of $W$. By \eqref{clt-lemmatelescope}, this completes the proof of \eqref{clt-lemmafar}.

{We now finish the proof. Take any large $D_{0}>0$. By \eqref{clt-lemmafar}, if we give up an error term of $O(W^{-D_{0}})$, we can restrict to indices $b_{1},\ldots,b_{2p}$ so that for every $i$, we have $\min_{k\neq i}|b_{i}-b_{k}|_{L}\leq W^{2\tau}\ell_{s}$. In particular, for any $j=1,\ldots,p$, choose $j$ many ``free indices", and label them $b_{1},\ldots,b_{j}$. The remaining indices $b_{j+1},\ldots,b_{2p}$ must be assigned to one of the $b_{1},\ldots,b_{j}$ in the following sense. If $b_{j+1}$ is assigned to $b_{1}$, then $|b_{j+1}-b_{1}|_{L}\leq W^{3\tau}\ell_{s}$. Thus, from \eqref{clt-lemmamoment} and \eqref{clt-lemmafar}, we have
\begin{align}
\E\left|\sum_{b}Z_{ab}(Y_{s,b}-\E Y_{s,b})\right|^{2p}&\prec\sum_{j=1}^{p}\sum_{\substack{k_{1},\ldots,k_{j}\geq1\\k_{1}+\ldots+k_{j}=2p-j}}\prod_{i=1}^{j}\sum_{b_{i}}|Z_{ab_{i}}|\left(\sum_{|b_{i,2}-b_{i}|_{L}\leq W^{3\tau}\ell_{s}}|Z_{ab_{i,2}}|\right)^{k_{i}}\cdot \E(\|Y_{s}\|_{\max}^{2p})\label{eq:cltmomentfinal}\\
&+O(W^{-D}\max_{b}|Z_{ab}|^{2p}).\nonumber
\end{align}
We now focus on case $1$, i.e. the proof of \eqref{clt-lemma-final-result}. By our assumption on the range of $Z_{ab}$, we can restrict each $b_{i}$ summation to $O(W^{C\tau}\ell_{t}^{2})$ many terms. Thus, the total number of summands in the first term on the right-hand side of \eqref{eq:cltmomentfinal} is $O(W^{2pC\tau}\ell_{t}^{2p}\ell_{s}^{2p})$. (There are at most $p$-many $b_{i}$-sums, and a total of $2p$-sums overall.) Then we bound entries of $Z$ by its max-norm. Also, recall the assumption $\|Y_{s}\|_{\max}\prec\Lambda$, and note that $\|Y_{s}\|_{\max}=O(W^{C})$ for some $C=O(1)$, which follows from \eqref{clt-yform} and $s\leq N^{-1+\delta}$ for some $\delta>0$. This implies that $\E(\|Y_{s}\|_{\max}^{2p})\prec\Lambda +W^{-D'}$ for any $D'=O(1)$. Ultimately, we deduce (for any $D=O(1)$) that
\begin{align*}
\E\left|\sum_{b}Z_{ab}(Y_{s,b}-\E Y_{s,b})\right|^{2p} \prec W^{C\tau}\ell_{t}^{2p}\ell_{s}^{2p}\max_{b}|Z_{ab}|^{2p}\cdot\Lambda^{2p}+W^{-D}\max_{b}|Z_{ab}|^{2p}.
\end{align*}
Since $D,p=O(1)$ are arbitrary, we obtain the desired result \eqref{clt-lemma-final-result} in case $1$.

We now move to case $2$. On the right-hand side of \eqref{eq:cltmomentfinal}, we fix some sequence of $k_{1},\ldots,k_{p}$ and some $i\in\{1,\ldots,p\}$. Consider the object
\begin{align}
\sum_{b_{i}}|Z_{ab_{i}}|\cdot \left(\sum_{|b_{i,2}-b_{i}|_{L}\leq W^{3\tau}\ell_{s}}|Z_{ab_{i,2}}|\right)^{k_{i}}.\label{clt-lemma-case-2}   
\end{align}
We emphasize that $k_{i}\geq1$. We now rewrite \eqref{clt-lemma-case-2} as follows:
\begin{align*}
\sum_{b_{i}}|Z_{ab_{i}}|\sum_{|b_{i,2}-b_{i}|_{L}\leq W^{3\tau}\ell_{s}}|Z_{ab_{i,2}}|\cdot\left(\sum_{|b_{i,2}-b_{i}|_{L}\leq W^{3\tau}\ell_{s}}|Z_{ab_{i,2}}|\right)^{k_{i}-1}.
\end{align*}
Since $k_{i}\geq1$, we can use the a priori estimate of $|Z_{ab}|\prec(1+|a-b|_{L})^{-1}$ to show that the last factor in the previous display is $\mathrm{O}_{\prec}(W^{3\tau(k_{i}-1)}\ell_{s}^{k_{i}-1})$. On the other hand, we can use this same bound on $|Z_{ab}|$ to get
\begin{align*}
\sum_{b_{i}}|Z_{ab_{i}}|\sum_{|b_{i,2}-b_{i}|_{L}\leq W^{3\tau}\ell_{s}}|Z_{ab_{i,2}}|&\prec\sum_{b_{i}}\frac{1}{|a-b_{i}|^{2}+1}\sum_{|b_{i,2}-b_{i}|_{L}\leq W^{3\tau}\ell_{s}}\frac{|a-b_{i}|_{L}+1}{|a-b_{i,2}|_{L}+1}\\
&\leq\sum_{b_{i}}\frac{1}{|a-b_{i}|^{2}+1}\sum_{|b_{i,2}-b_{i}|_{L}\leq W^{3\tau}\ell_{s}}\Big(1+\frac{|b_{i}-b_{i,2}|_{L}}{1+|a-b_{i,2}|_{L}}\Big)\\
&\prec W^{6\tau}\ell_{s}^{2} \cdot \sum_{b_{i}}\frac{1}{|a-b_{i}|^{2}+1}\prec W^{6\tau}\ell_{s}^{2}.
\end{align*}
Therefore, we deduce
\begin{align}
\sum_{b_{i}}|Z_{ab_{i}}|\sum_{|b_{i,2}-b_{i}|_{L}\leq W^{3\tau}\ell_{s}}|Z_{ab_{i,2}}|\cdot\left(\sum_{|b_{i,2}-b_{i}|_{L}\leq W^{3\tau}\ell_{s}}|Z_{ab_{i,2}}|\right)^{k_{i}-1}&\prec W^{6\tau}W^{3\tau(k_{i}-1)}\ell_{s}^{k_{i}+1}.
\end{align}
The benefit of this bound is that there is no dependence on the range of $Z$. Now, when we take a product over all $i=1,\ldots,j$, we deduce
\begin{align*}
\prod_{i=1}^{j}\sum_{b_{i}}|Z_{ab_{i}}|\cdot \left(\sum_{|b_{i,2}-b_{i}|_{L}\leq W^{3\tau}\ell_{s}}|Z_{ab_{i,2}}|\right)^{k_{i}}\prec \prod_{i=1}^{j} W^{C\tau k_{i}}\ell_{s}^{k_{i}+1}\prec W^{(2p-j)C\tau}\ell_{s}^{2p-j}\cdot\ell_{s}^{j}\prec W^{2pC\tau}\ell_{s}^{2p}
\end{align*}
since the $k_{i}$ sum to $j$ over $i=1,\ldots,2p-j$. In particular, if we plug the previous display into the first term on the right-hand side of \eqref{eq:cltmomentfinal} and use $|Z_{ab}|\prec1$, we obtain the following for any $D=O(1)$:
\begin{align*}
\E\left|\sum_{b}Z_{ab}(Y_{s,b}-\E Y_{s,b})\right|^{2p}\prec W^{2pC\tau}\ell_{s}^{2p}\cdot \E(\|Y_{s}\|_{\max}^{2p})+W^{-D}.
\end{align*}
If we again use $\|Y_{s}\|_{\max}\prec \Lambda$ and $\|Y_{s}\|_{\max}=O(W^{C})$ for some $C=O(1)$, then the previous display yields the desired estimate \eqref{clt-lemma-final-result2} (since $D,p=O(1)$ are arbitrary). This completes the proof.}
\end{proof}

\section{Proof of Lemma \ref{lem_propTH}}\label{sec_TH}
The matrix $S^{(B)}$ is the transition matrix for a symmetric random walk on $\Z_{L}^{2}$. Thus, it is a real-symmetric matrix with operator norm $1$. This implies that the random walk representation below converges absolutely for all $|\xi|<1$:
\begin{align}
(\Theta^{(B)}_{\xi})_{ab}&=\sum_{k=0}^{\infty}\xi^{k}[(S^{B})^{k}]_{ab}=\delta_{ab}+\sum_{k=1}^{\infty}\xi^{k}[(S^{B})^{k}]_{ab}.\label{theta_rw}
\end{align}
Since $S^{(B)}$ satisfies translation invariance and symmetry, by \eqref{theta_rw}, the same is true for $\Theta^{(B)}$. The commutativity in point 3 of Lemma \ref{lem_propTH} also follows by \eqref{theta_rw}.

Thus, we are left to prove properties 5 and 6 in Lemma \ref{lem_propTH}. Let $k\mapsto Y_{k}$ be a discrete-time lazy simple random walk on $\Z^{2}$, so that $\mathbb{P}(Y_{k+1}=\beta|Y_{k}=\alpha)=1/5$ if and only if $|\beta-\alpha|\leq1$, where $|\cdot|$ here denotes $L^{1}$ distance on $\Z^{2}$. Let $p_{k}(\alpha,\beta)=\mathbb{P}(Y_{k}=\beta|Y_{0}=\alpha)$. We have the standard representation
\begin{align}
[(S^{(B)})^{k}]_{ab}=\sum_{\beta\sim b}p_{k}(a,\beta),
\end{align}
where $\beta\sim b$ means that $\beta,b$ are equal after projecting $\Z^{2}\to\Z_{L}^{2}$. We now present bounds for $p_{k}$ that will be used to bound \eqref{theta_rw}. First, the simple random walk $Y_{k}$ is a discrete-time martingale whose step sizes are $O(1)$ deterministically. Thus, we can use the Azuma-Hoeffding martingale inequality to deduce
\begin{align}
p_{k}(\alpha,\beta)\leq \exp\Big(-\frac{|\alpha-\beta|^{2}}{Ck}\Big)
\end{align}
for some constant $C=O(1)$. Moreover, a standard computation for $Y_{k}$ in terms of binomial coefficients and Stirling's formula gives $p_{k}(\alpha,\beta)\leq C/(1+k)$. 

Now, fix any $k\geq1$, and let $k_{1},k_{2}$ be non-negative integers such that $k_{1}+k_{2}=k$ and $k_{1},k_{2}\geq (k/2)-1$. By the semigroup property, we have
\begin{align*}
p_{k}(\alpha,\beta)=\sum_{\zeta\in\Z}p_{k_{1}}(\alpha,\zeta)p_{k_{2}}(\zeta,\beta)=\sum_{|\zeta-\alpha|\geq|\alpha-\beta|/2}p_{k_{1}}(\alpha,\zeta)p_{k_{2}}(\zeta,\beta)+\sum_{|\zeta-\beta|\geq|\alpha-\beta|/2}p_{k_{1}}(\alpha,\zeta)p_{k_{2}}(\zeta,\beta).
\end{align*}
We control the first term on the far right-hand side by the on-diagonal estimate for $p_{k_{2}}$ and the off-diagonal estimate for $p_{k_{1}}$. Since $1+k_{j}\leq 1+k\leq 2(1+k_{j})$ for $j=1,2$, when we apply these estimates, we can insert $k$ for $k_{1},k_{2}$. Ultimately, this gives
\begin{align*}
\sum_{|\zeta-\alpha|\geq|\alpha-\beta|/2}p_{k_{1}}(\alpha,\zeta)p_{k_{2}}(\zeta,\beta)\leq \frac{C}{1+k}\sum_{|\zeta-\alpha|\geq|\alpha-\beta|/2}e^{-|\alpha-\zeta|^{2}/Ck}\leq \frac{C'}{1+k}e^{-|\alpha-\beta|^{2}/C'k},
\end{align*}
where the last bound follows because of $|\zeta-\alpha|\geq|\alpha-\beta|/2$. We can reverse the roles of $\alpha,\beta$ as well to get
\begin{align*}
\sum_{|\zeta-\beta|\geq|\alpha-\beta|/2}p_{k_{1}}(\alpha,\zeta)p_{k_{2}}(\zeta,\beta)\leq \frac{C'}{1+k}e^{-|\alpha-\beta|^{2}/C'k}.
\end{align*}
Therefore, we deduce the bound
\begin{align*}
p_{k}(\alpha,\beta)\leq\frac{C}{1+k}e^{-|\alpha-\beta|^{2}/Ck}.
\end{align*}
Now plug the previous display into \eqref{theta_rw}. We also use the bound $|\xi|^{k}\leq\exp(-k|1-\xi|)$. This gives
\begin{align}
(\Theta^{(B)}_{\xi})_{ab}&\leq \delta_{ab}+C\sum_{k=1}^{\infty}e^{-k|1-\xi|}\sum_{\beta\sim b}\frac{1}{1+k}e^{-|a-\beta|^{2}/Ck}\\
&\leq \delta_{ab}+C'\int_{1}^{\infty}e^{-s|1-\xi|}\sum_{\beta\sim b}\frac{1}{s}e^{-\frac{|a-\beta|^{2}}{C's}}ds.\label{theta_rw_int}
\end{align}
We will break the integration over $[1,\infty)$ into integration over $[1,L^{2}]$ and $[L^{2},\infty)$. On the former domain, consider the sum over $\beta\sim b$. Let $\beta_{0}$ be a member of this equivalence class that is closest to $a$, so $|a-\beta_{0}|^{2}=|a-b|_{L}^{2}$. Every other $\beta\sim b$ has the form $\beta=\beta_{0}+L\gamma$, where $\gamma\in\Z^{2}$. Therefore, if $s\leq L^{2}$, we have
\begin{align}
\sum_{\beta\sim b}\frac{1}{s}e^{-\frac{|a-\beta|^{2}}{C's}}&\leq\frac{1}{s}e^{-|a-b|_{L}^{2}/C's}\sum_{\gamma\in\Z^{2}}e^{-cL^{2}|\gamma|^{2}/C'L^{2}}\leq \frac{C}{s}e^{-|a-b|_{L}^{2}/C's}\label{eq:hke_short_time}
\end{align}
for some $C=O(1)$. Thus, the contribution of the integral in \eqref{theta_rw_int} over $[1,L^{2}]$ is bounded above by
\begin{align*}
\int_{1}^{L^{2}}e^{-s|1-\xi|}\frac{1}{s}e^{-|a-b|_{L}^{2}/C's}ds&=\int_{1}^{L^{2}\wedge(|a-b|_{L}|1-\xi|^{-1/2})}e^{-s|1-\xi|}\frac{1}{s}e^{-|a-b|_{L}^{2}/C's}ds\\
&+\int_{L^{2}\wedge(|a-b|_{L}|1-\xi|^{-1/2})}^{L^{2}}e^{-s|1-\xi|}\frac{1}{s}e^{-|a-b|_{L}^{2}/C's}ds.
\end{align*}
(The implicit understanding is that if $|a-b|_{L}|1-\xi|^{-1/2}\leq1$, then the first integral is dropped, and the second integral starts from $1$.) For the first integral, we bound $\exp(-s|1-\xi|)$ by $1$. We then evaluate the remaining exponential factor at $s=|a-b|_{L}|1-\xi|^{-1/2}$ for the sake of an upper bound. Then, we integrate $ds/s$. This gives
\begin{align*}
\int_{1}^{L^{2}\wedge(|a-b|_{L}|1-\xi|^{-1/2})}e^{-s|1-\xi|}\frac{1}{s}e^{-|a-b|_{L}^{2}/C's}ds&\leq e^{-c|a-b|_{L}|1-\xi|^{1/2}}\int_{1}^{L^{2}}\frac{ds}{s}\\
&\leq e^{-c|a-b|_{L}|1-\xi|^{1/2}}\log L^{2}.
\end{align*}
For the second integral, we similarly have 
\begin{align*}
\int_{L^{2}\wedge(|a-b|_{L}|1-\xi|^{-1/2})}^{L^{2}}e^{-s|1-\xi|}\frac{1}{s}e^{-|a-b|_{L}^{2}/C's}ds&\leq e^{-|a-b|_{L}|1-\xi|^{1/2}}\int_{1}^{L^{2}}\frac{ds}{s}\\
&\leq e^{-c|a-b|_{L}|1-\xi|^{1/2}}\log L^{2},
\end{align*}
where the first bound follows by bounding the sub-Gaussian factor by $1$, by evaluating $\exp(-s|1-\xi|)$ at $s=|a-b|_{L}|1-\xi|^{-1/2}$ for the sake of an upper bound, and by integrating $ds/s$. (We note that it suffices to assume that the lower limit of integration is equal to $|a-b|_{L}|1-\xi|^{-1/2}$, since the integral would otherwise vanish. Also, recall that we are assuming $|a-b|_{L}|1-\xi|^{-1/2}\geq1$.) Combining the previous three displays yields
\begin{align}
\int_{1}^{L^{2}}e^{-s|1-\xi|}\frac{1}{s}e^{-|a-b|_{L}^{2}/C's}ds&\leq e^{-c|a-b|_{L}|1-\xi|^{1/2}}\log L^{2}.\label{eq:theta_rw_first}
\end{align}
We now look at the contribution of the integral in \eqref{theta_rw_int} for $s\geq L^{2}$. In this case, we instead have the pointwise bound
\begin{align}
\sum_{\beta\sim b}\frac{1}{s}e^{-\frac{|a-\beta|^{2}}{C's}}&\leq\frac{1}{L^{2}}e^{-|a-b|_{L}^{2}/C's}\sum_{\gamma\in\Z^{2}}\frac{L^{2}}{s}e^{-cL^{2}|\gamma|^{2}/C's}\leq \frac{C}{L^{2}}\label{eq:hke_long_time}
\end{align}
where all $C,C'$ constants are $O(1)$. Thus, for $s\geq L^{2}$, we have the estimate
\begin{align*}
\int_{L^{2}}^{\infty}e^{-s|1-\xi|}\sum_{\beta\sim b}\frac{1}{s}e^{-|\alpha-\beta|^{2}}{C's}ds&\leq \frac{C}{L^{2}}\int_{L^{2}}^{\infty}e^{-s|1-\xi|}ds\\
&\leq \frac{C}{L^{2}|1-\xi|}e^{-L^{2}|1-\xi|}.
\end{align*}
Combining the previous display with \eqref{theta_rw_int} and \eqref{eq:theta_rw_first} and recalling that $\hat{\ell}(\xi):=\min(|1-\xi|^{-1/2},L)$ gives
\begin{align}
(\Theta^{(B)}_{\xi})_{ab} &\prec \delta_{ab}+ e^{-c|a-b|_{L}|1-\xi|^{1/2}}+\frac{e^{-L^{2}|1-\xi|}}{L^{2}|1-\xi|}\\
&\prec \frac{e^{-c|a-b|/\hat{\ell}(\xi)}}{|1-\xi|[\hat{\ell}(\xi)]^{2}}+\frac{e^{-cL/\hat{\ell}(\xi)}}{|1-\xi|[\hat{\ell}(\xi)]^{2}}.
\end{align}
This gives the desired pointwise estimate \eqref{prop:ThfadC} in property 5 of Lemma \ref{lem_propTH}.

We are left to show property 6 of Lemma \ref{lem_propTH}. Recall $p_{k}$ as the transition probability kernel for a lazy simple random walk on $\Z^{2}$. A standard computation for $Y_{k}$ in terms of binomial coefficients and Stirling's formula gives the following for any $\sigma\in\Z^{2}$: 
\begin{align*}
\nabla_{\sigma}p_{k}(\alpha,\beta):=p_{k}(\alpha,\beta+\sigma)-p_{k}(\alpha,\beta)=O\Big(\frac{|\sigma|}{1+k^{3/2}}\Big).
\end{align*}
Moreover, since $Y_{k}$ is a space-homogeneous random walk, the kernel $p_{k}(\alpha,\beta)$ depends only on $\alpha-\beta$. Thus, we have $\nabla_{\sigma}p_{k}(\alpha,\beta)=p_{k}(\alpha-\sigma,\beta)-p_{k}(\alpha,\beta)$. By another application of the semigroup property, we get 
\begin{align}
\nabla_{\sigma}p_{k}(\alpha,\beta)=\sum_{|\zeta-\alpha|\geq|\alpha-\beta|/2}\nabla_{\sigma}p_{k_{1}}(\alpha,\zeta)p_{k_{2}}(\zeta,\beta)+\sum_{|\zeta-\beta|\geq|\alpha-\beta|/2}\nabla_{\sigma}p_{k_{1}}(\alpha,\zeta)p_{k_{2}}(\zeta,\beta).
\end{align}
For the second term on the right-hand side, we can apply the gradient bound and the off-diagonal estimate for $p_{k_{2}}(\zeta,\beta)$. This gives
\begin{align*}
\left|\sum_{|\zeta-\beta|\geq|\alpha-\beta|/2}\nabla_{\sigma}p_{k_{1}}(\alpha,\zeta)p_{k_{2}}(\zeta,\beta)\right|\leq \frac{C|\sigma|}{1+k^{3/2}}e^{-|\alpha-\beta|^{2}/Ck}.
\end{align*}
On the other hand, we can use summation-by-parts to get
\begin{align*}
&\sum_{|\zeta-\alpha|\geq|\alpha-\beta|/2}\nabla_{\sigma}p_{k_{1}}(\alpha,\zeta)p_{k_{2}}(\zeta,\beta)\\
&=\sum_{\zeta\in\Z}p_{k_{1}}(\alpha,\zeta)\nabla_{\sigma}\left\{\mathbf{1}_{|\zeta-\alpha|\geq|\alpha-\beta|/2}p_{k_{2}}(\zeta,\beta)\right\}\\
&=\sum_{|\zeta-\alpha|\geq|\alpha-\beta|/2}p_{k_{1}}(\alpha,\zeta)\nabla_{\sigma}p_{k_{2}}(\zeta,\beta)+\sum_{\zeta\in\Z}p_{k_{1}}(\alpha,\zeta)p_{k_{2}}(\zeta,\beta)\nabla_{\sigma}\mathbf{1}_{|\zeta-\alpha|\geq|\alpha-\beta|/2}.
\end{align*}
We already showed that the first term in the last line is $O(|\sigma|(1+k^{3/2})^{-1}e^{-|\alpha-\beta|^{2}/Ck})$. As long as $|\alpha-\beta|\geq B$ for some universal constant $B>0$, the second line in the last line is supported on points $\zeta\in\Z^{2}$ whose distance from the circle of radius $|\alpha-\beta|/2$ around $\alpha$ is $O(|\sigma|)$. This set is at most $O(|\sigma||\alpha-\beta|)$ in size because we are on the two-dimensional lattice $\Z^{2}$. On the other hand, for \emph{any} $\zeta\in\Z^{2}$, at least one of $|\alpha-\zeta|$ and $|\beta-\zeta|$ is greater than or equal to $|\alpha-\beta|/2$. Thus, we have $p_{k_{1}}(\alpha,\zeta)p_{k_{2}}(\zeta,\beta)\leq C(1+k^{2})^{-1}\exp(-|\alpha-\beta|^{2}/Ck)$ for some $C>0$. We deduce that the second sum in the last line above satisfies
\begin{align*}
\left|\sum_{\zeta\in\Z}p_{k_{1}}(\alpha,\zeta)p_{k_{2}}(\zeta,\beta)\nabla_{\sigma}\mathbf{1}_{|\zeta-\alpha|\geq|\alpha-\beta|/2}\right|\leq \frac{C|\sigma|}{1+k^{2}}e^{-|\alpha-\beta|^{2}/Ck}\cdot |\alpha-\beta|.
\end{align*}
We can drop the last factor $|\alpha-\beta|$ if we modify the exponent $C$ slightly and include an additional factor of $1+k^{1/2}$; this is because $|x|e^{-|x|}$ is uniformly bounded for $x\in\R$. Ultimately, we have shown that 
\begin{align*}
|\nabla_{\sigma}p_{k}(\alpha,\beta)|\leq \frac{C|\sigma|}{1+k^{3/2}}e^{-|\alpha-\beta|^{2}/Ck}.
\end{align*}
(Technically, we assumed that $|\alpha-\beta|\geq B$ for some fixed constant $B$. However, the previous display follows by the on-diagonal gradient estimate if $|\alpha-\beta|\leq B$.) We can plug the previous display into the identity \eqref{theta_rw}. This gives the following, in which $\nabla_{\sigma}$ acts on the $b$-variable:
\begin{align}
|\nabla_{\sigma}(\Theta^{(B)}_{\xi})_{ab}|&\leq \mathbf{1}_{|a-b|_{L}\leq|\sigma|}+\sum_{k=1}^{\infty}\xi^{k}\sum_{\beta\sim b}\frac{C|\sigma|}{k^{3/2}}e^{-|a-\beta|^{2}/Ck}\\
&\leq\mathbf{1}_{|a-b|_{L}\leq|\sigma|}+C'|\sigma|\int_{1}^{\infty}e^{-s|1-\xi|}\sum_{\beta\sim b}\frac{1}{s^{3/2}}e^{-|a-\beta|^{2}/Cs}ds.\label{eq:theta_rw_int_grad}
\end{align}
We will now restrict the integration to $[1,L^{2}]$ and $[L^{2},\infty)$ and treat these integrals separately. On the former domain with $s\leq L^{2}$, we can use the estimate \eqref{eq:hke_short_time} and drop the factor $\exp(-s|1-\xi|)$. This gives
\begin{align*}
\int_{1}^{L^{2}}e^{-s|1-\xi|}\sum_{\beta\sim b}\frac{1}{s^{3/2}}e^{-|a-\beta|^{2}/Cs}ds&\leq C'\int_{1}^{L^{2}}s^{-\frac32}e^{-|a-b|_{L}^{2}/Cs}ds\\
&\leq C''\int_{1}^{\infty}s^{-\frac32}e^{-(1+|a-b|_{L}^{2})/Cs}ds\\
&\leq \frac{C''}{1+|a-b|_{L}}\int_{0}^{\infty}s^{-\frac32}e^{-1/s}ds\\
&\leq \frac{C'''}{1+|a-b|_{L}}.
\end{align*}
The second line follows since $\exp(1/Cs)=O(1)$ for $s\geq1$. The third line follows by the change-of-variables $s\mapsto |a-b|_{L}^{2}s$. For the integral on $[L^{2},\infty)$. We instead use the heat kernel estimate \eqref{eq:hke_long_time}. This gives
\begin{align*}
\int_{L^{2}}^{\infty}e^{-s|1-\xi|}\sum_{\beta\sim b}\frac{1}{s^{3/2}}e^{-|a-\beta|^{2}/Cs}ds&\leq C'\int_{L^{2}}^{\infty}L^{-2}s^{-\frac12}e^{-s|1-\xi|}ds\\
&\leq C'L^{-2}|1-\xi|^{-\frac12}\int_{L^{2}|1-\xi|}^{\infty}s^{-\frac12}e^{-s}ds.
\end{align*}
The second line follows by the change-of-variables $s\mapsto |1-\xi|^{-1}s$. We now consider two cases. First, if $|1-\xi|^{-1/2}\leq L$, i.e. $L^{2}|1-\xi|\geq1$, then we can drop $s^{-1/2}$ in the last line above and integrate the exponential. Second, if $|1-\xi|^{-1/2}\geq L$, so that $L^{2}|1-\xi|\leq1$, we bound the remaining integral in the last line by $O(1)$. This gives
\begin{align*}
L^{-2}|1-\xi|^{-\frac12}\int_{L^{2}|1-\xi|}^{\infty}s^{-\frac12}e^{-s}ds&\leq \mathbf{1}_{L^{2}|1-\xi|\geq1}\frac{e^{-L^{2}|1-\xi|}}{L^{2}|1-\xi|^{1/2}}+\mathbf{1}_{L^{2}|1-\xi|\leq1}\frac{1}{L^{2}|1-\xi|^{1/2}}.
\end{align*}
Recall that $\hat{\ell}(\xi)=\min(|1-\xi|^{-1/2},L)$. The last term in the above display forces $\hat{\ell}(\xi)=L$. Thus, this term is $O([\hat{\ell}(\xi)]^{-2}|1-\xi|^{-1/2})$. For the first term, we instead have $\hat{\ell}(\xi)=|1-\xi|^{-1/2}$. In particular, for this term, we have a bound of $|1-\xi|^{1/2}=[\hat{\ell}(\xi)]^{-2}|1-\xi|^{-1/2}$. We deduce that the above display is $O([\hat{\ell}(\xi)]^{-2}|1-\xi|^{-1/2})$, and ultimately that
\begin{align*}
|\nabla_{\sigma}(\Theta^{(B)}_{\xi})_{ab}|&\prec \mathbf{1}_{|a-b|_{L}\leq|\sigma|}+\frac{|\sigma|}{|a-b|_{L}+1}+\frac{|\sigma|}{[\hat{\ell}(\xi)]^{2}|1-\xi|^{-1/2}}.
\end{align*}
This implies the desired first derivative estimate \eqref{prop:BD1}. We now show the second derivative estimate \eqref{prop:BD2}. For any $\sigma_{1},\sigma_{2}\in\Z^{2}$, another standard computation with binomial coefficients gives 

$$|\nabla_{\sigma_{1}}\nabla_{\sigma_{2}}p_{k}(\alpha,\beta)| \leq \frac{C|\sigma_{1}||\sigma_{2}|}{1+k^{2}}.$$

Another semigroup property and summation-by-parts estimate takes this estimate and produces for us
\begin{align*}
|\nabla_{\sigma_{1}}\nabla_{\sigma_{2}}p_{k}(\alpha,\beta)|\leq\frac{C|\sigma_{1}||\sigma_{2}|}{1+k^{2}}e^{-|\alpha-\beta|^{2}/Ck}
\end{align*}
for some $C=O(1)$. If we plug the previous display into \eqref{theta_rw}, then we obtain the following estimate, where the gradients act on the $b$-variable:
\begin{align}
|\nabla_{\sigma_{1}}\nabla_{\sigma_{2}}(\Theta^{(B)}_{\xi})_{ab}|&\leq \mathbf{1}_{|a-b|_{L}=O(|\sigma_{1}|+|\sigma_{2}|)}+\sum_{k=1}^{\infty}\xi^{k}\sum_{\beta\sim b}\frac{C|\sigma_{1}||\sigma_{2}|}{1+k^{2}}e^{-|a-\beta|^{2}/Ck}\\
&\leq \mathbf{1}_{|a-b|_{L}=O(1)}+C|\sigma_{1}||\sigma_{2}|\int_{1}^{\infty}e^{-s|1-\xi|}\sum_{\beta\sim b}s^{-2}e^{-|a-\beta|^{2}/Cs}ds.
\end{align}
We will again split the integral into integrals on $[1,L^{2}]$ and $[L^{2},\infty)$, respectively. On the former domain, we use \eqref{eq:hke_short_time} and drop the $\exp(-s|1-\xi|)$ factor. This gives
\begin{align*}
\int_{1}^{L^{2}}e^{-s|1-\xi|}\sum_{\beta\sim b}s^{-2}e^{-|a-\beta|^{2}/Cs}ds&\leq C\int_{1}^{\infty}s^{-2}e^{-|a-b|_{L}^{2}/Cs}ds\\
&\leq C'\int_{1}^{\infty}s^{-2}e^{-(1+|a-b|_{L}^{2})/Cs}ds\\
&\leq \frac{C'}{1+|a-b|_{L}^{2}}\int_{0}^{\infty}s^{-2}e^{-1/s}ds\leq \frac{C'}{1+|a-b|_{L}^{2}}.
\end{align*}
For the integral on $[L^{2},\infty)$, we instead use \eqref{eq:hke_long_time}. This gives
\begin{align*}
\int_{L^{2}}^{\infty}e^{-s|1-\xi|}\sum_{\beta\sim b}s^{-2}e^{-|a-\beta|^{2}/Cs}ds&\leq C L^{-2}\int_{L^{2}}^{\infty}e^{-s|1-\xi|}s^{-1}ds\\
&=C L^{-2}\int_{L^{2}|1-\xi|}^{\infty}e^{-s}s^{-1}ds\\
&\leq C L^{-2}e^{-L^{2}|1-\xi|}.
\end{align*}
If $L^{2}|1-\xi|\leq1$, then $\hat{\ell}(\xi):=\min(|1-\xi|^{-1/2},L)=L$. In this case, the last line is $O([\hat{\ell}(\xi)]^{-2})$. On the other hand, if $L^{2}|1-\xi|\geq1$, then $\hat{\ell}(\xi)=|1-\xi|^{-1/2}$. In this case, since $|x|e^{-|x|}=O(1)$ uniformly in $x\in\R$, the last line is $O|1-\xi|=O([\hat{\ell}(\xi)]^{-2})$. In particular, the last line is $O([\hat{\ell}(\xi)]^{-2})$ in general. Ultimately, we have
\begin{align*}
|\nabla_{\sigma_{1}}\nabla_{\sigma_{2}}(\Theta^{(B)}_{\xi})_{ab}|&\prec \frac{|\sigma_{1}||\sigma_{2}|}{1+|a-b|_{L}^{2}}+\frac{|\sigma_{1}||\sigma_{2}|}{[\hat{\ell}(\xi)]^{2}}.
\end{align*}
This proves \eqref{prop:BD2}. \qed

\printbibliography

\end{document}